\numberwithin{equation}{section}
\begin{document}
\theoremstyle{plain}
\newtheorem{thm}{Theorem}[section]
\newtheorem{lem}[thm]{Lemma}
\newtheorem{cor}[thm]{Corollary}
\newtheorem{cor*}[thm]{Corollary*}
\newtheorem{prop}[thm]{Proposition}
\newtheorem{prop*}[thm]{Proposition*}
\newtheorem{conj}[thm]{Conjecture}
\theoremstyle{definition}
\newtheorem{construction}{Construction}
\newtheorem{notations}[thm]{Notations}
\newtheorem{question}[thm]{Question}
\newtheorem{prob}[thm]{Problem}
\newtheorem{rmk}[thm]{Remark}
\newtheorem{remarks}[thm]{Remarks}
\newtheorem{defn}[thm]{Definition}
\newtheorem{claim}[thm]{Claim}
\newtheorem{assumption}[thm]{Assumption}
\newtheorem{assumptions}[thm]{Assumptions}
\newtheorem{properties}[thm]{Properties}
\newtheorem{exmp}[thm]{Example}
\newtheorem{comments}[thm]{Comments}
\newtheorem{blank}[thm]{}
\newtheorem{observation}[thm]{Observation}
\newtheorem{defn-thm}[thm]{Definition-Theorem}
\newtheorem*{Setting}{Setting}

\newcommand{\sA}{\mathscr{A}}
\newcommand{\sB}{\mathscr{B}}
\newcommand{\sC}{\mathscr{C}}
\newcommand{\sD}{\mathscr{D}}
\newcommand{\sE}{\mathscr{E}}
\newcommand{\sF}{\mathscr{F}}
\newcommand{\sG}{\mathscr{G}}
\newcommand{\sH}{\mathscr{H}}
\newcommand{\sI}{\mathscr{I}}
\newcommand{\sJ}{\mathscr{J}}
\newcommand{\sK}{\mathscr{K}}
\newcommand{\sL}{\mathscr{L}}
\newcommand{\sM}{\mathscr{M}}
\newcommand{\sN}{\mathscr{N}}
\newcommand{\sO}{\mathscr{O}}
\newcommand{\sP}{\mathscr{P}}
\newcommand{\sQ}{\mathscr{Q}}
\newcommand{\sR}{\mathscr{R}}
\newcommand{\sS}{\mathscr{S}}
\newcommand{\sT}{\mathscr{T}}
\newcommand{\sU}{\mathscr{U}}
\newcommand{\sV}{\mathscr{V}}
\newcommand{\sW}{\mathscr{W}}
\newcommand{\sX}{\mathscr{X}}
\newcommand{\sY}{\mathscr{Y}}
\newcommand{\sZ}{\mathscr{Z}}
\newcommand{\bZ}{\mathbb{Z}}
\newcommand{\bN}{\mathbb{N}}
\newcommand{\bQ}{\mathbb{Q}}
\newcommand{\bC}{\mathbb{C}}
\newcommand{\bR}{\mathbb{R}}
\newcommand{\bH}{\mathbb{H}}
\newcommand{\bD}{\mathbb{D}}
\newcommand{\bE}{\mathbb{E}}
\newcommand{\bV}{\mathbb{V}}
\newcommand{\cV}{\mathcal{V}}
\newcommand{\cF}{\mathcal{F}}
\newcommand{\bfM}{\mathbf{M}}
\newcommand{\bfN}{\mathbf{N}}
\newcommand{\bfX}{\mathbf{X}}
\newcommand{\bfY}{\mathbf{Y}}
\newcommand{\spec}{\textrm{Spec}}
\newcommand{\dbar}{\bar{\partial}}
\newcommand{\ddbar}{\partial\bar{\partial}}
\newcommand{\redref}{{\color{red}ref}}

\title[$L^2$-Dolbeault resolution of the lowest Hodge piece of a Hodge module] {$L^2$-Dolbeault resolution of the lowest Hodge piece of a Hodge module}

\author[Junchao Shentu]{Junchao Shentu}
\email{stjc@ustc.edu.cn}
\address{School of Mathematical Sciences,
	University of Science and Technology of China, Hefei, 230026, China}
\author[Chen Zhao]{Chen Zhao}
\email{czhao@ustc.edu.cn}
\address{School of Mathematical Sciences,
	University of Science and Technology of China, Hefei, 230026, China}

\begin{abstract}

In this paper, we introduce a coherent subsheaf of Saito's $S$-sheaf, which is a combination of the $S$-sheaf and the multiplier ideal sheaf. We construct its $L^2$-Dolbeault resolution, which generalizes MacPherson's conjecture on the $L^2$ resolution of the Grauert-Riemenschneider sheaf. We also prove various vanishing theorems for the $S$-sheaf (Saito's vanishing theorem, Kawamata-Viehweg vanishing theorem and some new ones like Nadel vanishing theorem) transcendentally. Finally, we discuss some applications of our results on the relative version of Fujita's conjecture (e.g. Kawamata's conjecture).
\end{abstract}

\maketitle

\section{Introduction}
The technique of $L^2$-estimates developed by Andreotti-Vesentini \cite{AV1965} and H\"ormander \cite{Hormander1965} and Saito's theory of Hodge modules \cite{MSaito1988,MSaito1990} have been of great importance in the development of algebraic and complex geometry. The purpose of this paper is to resolve Saito's $S$-sheaf \cite{MSaito1991} (a generalization of the dualizing sheaf) by locally $L^2$-integrable differential forms. We then prove various vanishing theorems for the $S$-sheaf by a transcendental approach. Some of these theorems have previously been proved by Hodge-theoretic approach (e.g. \cite{Suh2018,Wu2017,MSaito1991(2)}) and have applications in the investigation of Shimura varieties \cite{Suh2018}.

Let $X$ be a reduced, irreducible complex space of dimension $n$ and $X^o\subset X_{\rm reg}$ a Zariski open subset. Let $ds^2$ be a hermitian metric on $X^o$ and $\bV:=(\cV,\nabla,\cF^\bullet,h_\bV)$ an $\bR$-polarized variation of Hodge structure on $X^o$, where $h_\bV$ is the Hodge metric defined by $(u,v)_{h_\bV}:=Q(Cu,\overline{v})$ with $Q$ being the polarization of $\bV$ and $C$ the Weil operator. As a generalization of the dualizing sheaf, Saito \cite{MSaito1991} defines the $S$-sheaf $S(IC_X(\bV))$ associated to $\bV$ as the lowest Hodge piece of the intermediate extension $IC_X(\bV)$ and uses it to give a solution to a Koll\'ar's conjecture \cite{Kollar1986}. Saito's $S$-sheaf has been of great importance in the application of the theory of Hodge modules to complex algebraic geometry (see Popa \cite{Popa2018} for a survey).
Let $\varphi:X\to[-\infty,\infty)$ be a quasi-plurisubharmonic (quasi-psh for short) function and $j:X^o\to X$ the open immersion. We introduce the multiplier $S$-sheaf as
\begin{align*}
S(IC_X(\bV),\varphi):=\left\{\alpha\in j_\ast\left(K_{X^o}\otimes S(\bV)\right) \bigg|\int|\alpha|^2_{ds^2,h_\bV}e^{-\varphi}{\rm vol}_{ds^2}<\infty \textrm{  locally at every point }x\in X\right\},
\end{align*}
where $S(\bV):=\cF^{\max\{k|\cF^k\neq0\}}$ is the top indexed nonzero piece of the Hodge filtration $\cF^\bullet$ and $K_{X^o}$ is the holomorphic canonical bundle of $X^o$. This kind of sheaf is a combination of Saito's $S$-sheaf and the multiplier ideal sheaf $\sI(\varphi)$, and has the following features:

\begin{enumerate}
	\item $S(IC_X(\bV),\varphi)$ is a coherent subsheaf of Saito's $S$-sheaf $S(IC_X(\bV))$ (Proposition \ref{prop_S_coherent}). 
	\item $S(IC_X(\bV),\varphi)$ is independent of the choice of $ds^2$ (Proposition \ref{prop_L2_extension_independent}).
	\item $S(IC_X(\bV),0)=S(IC_X(\bV))$ (Theorem \ref{cor_S_phi=0}). $S(IC_X(\bV),-\infty)=0$. This gives an alternative definition of  $S(IC_X(\bV))$ without using the language of Hodge modules. A similar relation regarding $(0,0)$-forms is also recently observed by  Schnell-Yang \cite{SY2023}.
	\item $S(IC_X(\bV),\varphi)$ has the functorial property (Proposition \ref{prop_L2ext_birational}).
	\item 
Assume that $X$ is smooth, $X\backslash X^o\subset X$ is a (possibly empty) normal crossing divisor and $\bV$ is an $\bR$-polarized variation of Hodge structure with unipotent local monodromies. Let $\varphi$ be a plurisubharmonic (psh for short) function with generalized analytic singularities along $X\backslash X^o$ (Definition \ref{defn_generalized_analytic_singularity}). Then, as shown in Proposition \ref{prop_S_vs_mulidealsheaf}, there is an isomorphism
\begin{align}\label{align_S_multiideal}
S(IC_X(\bV),\varphi)\simeq S(IC_X(\bV))\otimes \sI(\varphi).
\end{align}
When the local monodromies are not necessarily unipotent, (\ref{align_S_multiideal}) only holds on $X\backslash X^o$. Nevertheless, as stated in Proposition \ref{prop_key_est}, there is a decomposition
\begin{align}\label{align_S_vs_multiideal}
S(IC_X(\bV),\varphi)\simeq\bigoplus_{i=1}^m\sI(\varphi_i)\otimes\omega_{X}
\end{align} 
locally at every point of $X\backslash X^o$. Here $\varphi_i$ are quasi-psh functions which depend on $\varphi$ and the eigenvalues of the local monodromies of $\bV$. (\ref{align_S_vs_multiideal}) plays a crucial role in the study of the relative Fujita conjecture (\S \ref{section_geo_app}).

	\item $S(IC_X(\bV),\varphi)$ satisfies an Ohsawa-Takegoshi extension theorem, at least when $\varphi$ is generically smooth (Theorem \ref{thm_S_OT_extension}). 
\end{enumerate}
Let $(E,h_\varphi)$ be a holomorphic vector bundle on $X$ with a possibly singular hermitian metric $h_\varphi$. Throughout this paper, by a singular hermitian metric $h_\varphi=e^{-\varphi}h_0$ we always assume that $h_0$ is a smooth hermitian metric and $\varphi$ is a quasi-psh function.  $S(IC_X(\bV),\varphi)$ is independent of the choice of the decomposition $h_\varphi=e^{-\varphi}h_0$ (Lemma \ref{lem_S_change_varphi_bounded}). Let  $\sD^{n,q}_{X,ds^2}(S(\bV)\otimes E,h_\bV\otimes h_\varphi)$ denote the sheaf of measurable $S(\bV)\otimes E$-valued $(n,q)$-forms $\alpha$ such that $\alpha$ and $\dbar\alpha$ are locally square integrable with respect to $ds^2$ and $h_\bV\otimes h_\varphi$. Let $\omega$ be the $(1,1)$-form associated to $ds^2$.
The main result of the present paper is
\begin{thm}[=Theorem \ref{thm_main_local1}]\label{thm_main_resolution}
	Assume that locally at every point $x\in X$ there is a neighborhood $U$ of $x$, a strictly psh function $\lambda\in C^2(U)$ and a bounded psh function $\Phi\in C^2(U\cap X^o)$ such that $\sqrt{-1}\ddbar\lambda\lesssim\omega|_{U\cap X^o}\lesssim\sqrt{-1}\ddbar\Phi$.
	Then the complex of sheaves
	\begin{align*}
	0\to S(IC_X(\bV),\varphi)\otimes E\to \sD^{n,0}_{X,ds^2}(S(\bV)\otimes E,h_\bV\otimes h_\varphi)\stackrel{\dbar}{\to}
	\cdots\stackrel{\dbar}{\to} \sD^{n,n}_{X,ds^2}(S(\bV)\otimes E,h_\bV\otimes h_\varphi)\to0
	\end{align*}
	is exact. If $X$ is moreover compact, then there is an isomorphism
	\begin{align}\label{align_MacPherson_Hodge_module}
	H^q(X,S(IC_X(\bV),\varphi)\otimes E)\simeq H^{n,q}_{(2),\rm max}(X^o, S(\bV)\otimes E;ds^2,h_\bV\otimes h_\varphi),\quad\forall q.
	\end{align}
\end{thm}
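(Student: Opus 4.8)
The plan is to show that the displayed complex is a fine resolution of $S(IC_X(\bV),\varphi)\otimes E$, from which both assertions follow; essentially all the content sits in a local $L^2$-$\dbar$-Poincar\'e lemma obtained from H\"ormander's estimates.

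\emph{Step 1: fineness and reduction to a local statement.} Each $\sD^{n,q}_{X,ds^2}(S(\bV)\otimes E,h_\bV\otimes h_\varphi)$ is a module over the sheaf of $C^\infty$ functions on $X$, since multiplication by a smooth function preserves both local square-integrability conditions (use $\dbar(\rho\alpha)=\dbar\rho\wedge\alpha+\rho\dbar\alpha$); hence these sheaves are fine, in particular soft and $\Gamma$-acyclic. Therefore, once the complex is known to be exact as a complex of sheaves, it computes $H^\bullet(X,S(IC_X(\bV),\varphi)\otimes E)$ by global sections. When $X$ is compact, $\Gamma(X,\sD^{n,q})$ is exactly the space of global $S(\bV)\otimes E$-valued $(n,q)$-forms $\alpha$ on $X^o$ with $\alpha$ and $\dbar\alpha$ globally $L^2$ (local equals global square-integrability on a compact space), i.e. the degree-$q$ part of the domain of $\dbar_{\rm max}$; so the global section complex is the one defining $H^{n,\bullet}_{(2),\rm max}$ and (\ref{align_MacPherson_Hodge_module}) follows. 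Exactness at $\sD^{n,0}$ is immediate: the kernel of $\dbar$ consists of locally $L^2$ forms on $X^o$ that are holomorphic by interior elliptic regularity, which is precisely the defining subsheaf $S(IC_X(\bV),\varphi)\otimes E$ of $j_\ast(K_{X^o}\otimes S(\bV)\otimes E)$ (coherence being Proposition \ref{prop_S_coherent}). It remains to prove \emph{local exactness} at $\sD^{n,q}$ for $q\ge1$.

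\emph{Step 2: the local $L^2$-$\dbar$ lemma — the main point.} Fix $x\in X$ and a neighbourhood $U\ni x$ as in the hypothesis, carrying $\lambda$ and $\Phi$; shrink $U$ so that it is Stein and produce solutions on a relatively compact $U'\Subset U$. Put $Z:=(X\setminus X^o)\cap U'$ and $M:=U'\cap X^o=U'\setminus Z$. Given $f\in\Gamma(U,\sD^{n,q})$ with $\dbar f=0$, the restriction $f|_{U'}$ is \emph{globally} $L^2$ on $M$ for $ds^2$ and $h_\bV\otimes h_\varphi$. On $M$ the bundle $G:=S(\bV)\otimes E$ carries $h_\bV\otimes h_\varphi$; its Chern curvature satisfies $\Theta_{h_\varphi}(E)\ge\Theta_{h_0}(E)\ge-C_0\,\omega\otimes\mathrm{Id}$ (dropping $\sqrt{-1}\ddbar\varphi\ge0$), while $\Theta_{h_\bV}(S(\bV))$ is Griffiths-seminegative — it is the second fundamental form of the holomorphic subbundle $S(\bV)=\cF^{p}$ ($\cF^{p+1}=0$) inside the flat bundle $\cV$ — with its negative part controlled near $Z$ and near $X_{\rm sing}$ by the norm estimates of Schmid and Cattani--Kaplan--Schmid; after passing, via the birational functoriality of Proposition \ref{prop_L2ext_birational}, to a log resolution on which $X$ is smooth, $X\setminus X^o$ is normal crossing and the local monodromies are unipotent, these estimates give $\Theta_{h_\bV}(S(\bV))\ge-C_1\,\sqrt{-1}\ddbar\Phi\otimes\mathrm{Id}$ on $M$. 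Replace the metric on $G$ by $h_\bV\otimes h_\varphi\,e^{-N\Phi}$ with $N\gg0$: since $\Phi$ is bounded this alters $L^2$ norms by a bounded factor, while the curvature becomes $\ge(Nc-C_0-C_1)\,\omega\otimes\mathrm{Id}\ge\omega\otimes\mathrm{Id}$, using $\sqrt{-1}\ddbar\Phi\ge c\,\omega$ from the hypothesis $\omega\lesssim\sqrt{-1}\ddbar\Phi$. The Bochner--Kodaira--Nakano inequality for $(n,q)$-forms then yields the uniform bound $\langle[\Theta,\Lambda]u,u\rangle\ge q\|u\|^2\ge\|u\|^2$, and the H\"ormander--Andreotti--Vesentini $L^2$-existence theorem — in Demailly's form valid for the possibly incomplete, possibly non-K\"ahler $\omega$, which is licensed by the sandwich $\sqrt{-1}\ddbar\lambda\lesssim\omega\lesssim\sqrt{-1}\ddbar\Phi$ together with an auxiliary complete K\"ahler metric on $M$ built from $\lambda$ and a $-\log\log$ term along $Z$, followed by a weak limit over an exhaustion — produces $u$ on $M$ with $\dbar u=f$ and $\int_M|u|^2_{ds^2,h_\bV\otimes h_\varphi}e^{-N\Phi}\le\int_M|f|^2_{ds^2,h_\bV\otimes h_\varphi}<\infty$. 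Boundedness of $e^{-N\Phi}$ gives $u\in L^2$ for $h_\bV\otimes h_\varphi$, and $\dbar u=f\in L^2_{\rm loc}$; a standard cut-off argument ($\rho_\varepsilon\to1$ with $\|\dbar\rho_\varepsilon\|_{L^2}\to0$, available since $Z$ has lower dimension) shows $\dbar u=f$ holds distributionally across $Z$ on all of $U'$, so $u\in\Gamma(U',\sD^{n,q-1})$. This establishes exactness at $\sD^{n,q}$ and completes the proof.

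\emph{Main obstacle.} The heart of the matter is the curvature estimate in Step 2: bounding the negativity of the Hodge metric on $S(\bV)$ by a constant multiple of $\sqrt{-1}\ddbar\Phi$ near $X\setminus X^o$ and near $X_{\rm sing}$, which forces one into asymptotic Hodge theory (the Schmid and Cattani--Kaplan--Schmid norm estimates) and the reduction to the normal-crossing/unipotent model through a resolution, relying on the birational functoriality of the multiplier $S$-sheaf. A secondary difficulty is running the $L^2$-estimate for a metric $ds^2$ that is neither complete nor K\"ahler, where the two-sided bound by Hessians of psh functions is precisely the input that makes Demailly's non-complete technique go through.
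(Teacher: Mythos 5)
Your overall architecture (fine resolution plus a local $L^2$ $\dbar$-lemma via Demailly's form of H\"ormander on a complete K\"ahler Zariski-open set, twisting by $e^{-N\Phi}$ and using boundedness of $\Phi$ to keep the norm unchanged) is the same as the paper's. But the central curvature step of your Step 2 is wrong, and it is precisely the point where the whole theorem lives. You assert that $\Theta_{h_\bV}(S(\bV))$ is Griffiths-seminegative, being the curvature of a subbundle of the flat bundle $\cV$, and that its negativity can be bounded below by $-C_1\sqrt{-1}\ddbar\Phi$ using the Schmid/Cattani--Kaplan--Schmid norm estimates after a log resolution. This has the sign backwards: the Hodge metric on $\cV$ is \emph{not} flat for $\nabla$ (the flat pairing is the indefinite polarization $Q$, not $h_\bV=Q(C\cdot,\bar\cdot)$), and Schmid's Lemma 7.18 (Theorem \ref{thm_geq0_S(V)} in the paper) says $\sqrt{-1}\Theta_{h_\bV}(S(\bV))\geq 0$ in the sense of Nakano. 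That semipositivity is the indispensable input; without it there is no argument, and your proposed substitute estimate $\Theta_{h_\bV}(S(\bV))\geq -C_1\sqrt{-1}\ddbar\Phi$ is unjustified and in general false --- the CKS norm estimates control the growth of $h_\bV$ along the boundary, not a lower curvature bound against the Hessian of an arbitrary bounded psh $\Phi$ dominating $\omega$; nor does passing through a resolution via Proposition \ref{prop_L2ext_birational} help, since that functoriality concerns the sheaf, not the curvature of the fixed metrics $ds^2$, $h_\bV$.

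Two smaller but real slips in the same step: you write $\Theta_{h_\varphi}(E)\geq\Theta_{h_0}(E)$ by ``dropping $\sqrt{-1}\ddbar\varphi\geq 0$,'' but $\varphi$ is only quasi-psh, so $\sqrt{-1}\ddbar\varphi$ is not nonnegative; one must bound it below by $-A\sqrt{-1}\ddbar\lambda\gtrsim-A'\omega\gtrsim -A''\sqrt{-1}\ddbar\Phi$ using the strictly psh $\lambda$ from the hypothesis (this is also how $\Theta_{h_0}(E)\geq -C_0\omega$ is actually obtained). And your fineness claim needs the cut-off functions to have $\dbar$ bounded in the $ds^2$-norm, which again uses $\sqrt{-1}\ddbar\lambda\lesssim\omega$ (the paper's Lemma \ref{lem_fine_sheaf}); mere module structure over $C^\infty_X$ is not enough when $ds^2$ degenerates. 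Once you replace your curvature paragraph by the citation of Schmid's Nakano semipositivity and handle the quasi-psh term via $\lambda$, the rest of your argument matches the paper's proof.
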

When $ds^2$ is the hermitian metric on $X$ (Definition \ref{defn_hermitian_metric}), $\bV=\bC_{X_{\rm reg}}$, $\varphi=0$ and $E=\sO_X$ is endowed with the trivial metric, (\ref{align_MacPherson_Hodge_module}) implies the results by Pardon-Stern \cite{Pardon_Stern1991} and by Ruppenthal \cite{Ruppenthal2014} on MacPherson's conjecture. 
\subsection{Vanishing theorems}
The $L^2$-resolution of the multiplier $S$-sheaf allows us to investigate the $S$-sheaf by means of analytical methods. Theorem \ref{thm_main_resolution} is used to give a transcendental prove to Koll\'ar's conjecture (\cite[\S 5]{Kollar1986}) on the derived pushforward of $S(IC_X(\bV))$ in \cite{SC2021_kollar}, as well as its generalizations. In the present paper we deduce from Theorem \ref{thm_main_resolution} various vanishing theorems for Saito's $S$-sheaf. 
\begin{thm}[Nadel type vanishing theorem, =Corollary \ref{cor_Nadel_rel}]\label{thm_R_Nadel_vanishing}
	Let $f:X\to Y$ be a surjective proper K\"ahler holomorphic map between irreducible complex spaces. Let $\bV$ be an $\bR$-polarized variation of Hodge structure defined on a Zariski open subset of $X$. Let $(L,h_\varphi)$ a holomorphic line bundle on $X$ with a possibly singular hermitian metric $h_\varphi:=e^{-\varphi}h$. Assume that $\sqrt{-1}\Theta_{h_\varphi}(L)$ is $f$-positive. Then
	\begin{align*}
	R^if_\ast(S(IC_X(\bV),\varphi)\otimes L)=0,\quad \forall i>0.
	\end{align*}
\end{thm}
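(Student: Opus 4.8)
The plan is to reduce the relative vanishing statement to the absolute one implicit in Theorem \ref{thm_main_resolution}, by a standard combination of Leray's theorem, the theory of $L^2$-cohomology on the fibres of a Stein covering, and a Bochner--Kodaira--Nakano type estimate with respect to a complete metric. First I would observe that the question is local on $Y$: it suffices to show $R^if_\ast(S(IC_X(\bV),\varphi)\otimes L)=0$ over every sufficiently small Stein open $V\subset Y$, equivalently $H^i(f^{-1}(V),S(IC_X(\bV),\varphi)\otimes L)=0$ for $i>0$. By the $f$-positivity hypothesis, after shrinking $V$ we may assume that on $U:=f^{-1}(V)$ there is a K\"ahler metric $ds^2$ and a bounded psh function $\Phi$ (pulled back from a strictly psh exhaustion on $V$, plus the relative K\"ahler form) with $\sqrt{-1}\ddbar\lambda\lesssim\omega\lesssim\sqrt{-1}\ddbar\Phi$ on $U\cap X^o$, so the hypotheses of Theorem \ref{thm_main_resolution} are met; moreover we may absorb $\Phi$ (or a large multiple of it) into the weight, replacing $\varphi$ by $\varphi+Nf^\ast\psi$ where $\psi$ is the exhaustion of $V$, which changes neither $S(IC_X(\bV),\varphi)$ (by boundedness of $\psi$ on relatively compact pieces, cf.\ Lemma \ref{lem_S_change_varphi_bounded}) nor the curvature positivity. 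Thus on $U$ the line bundle $L$ carries a metric whose curvature dominates a complete K\"ahler metric.

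Next, using the $L^2$-Dolbeault resolution of Theorem \ref{thm_main_resolution} over $U$, I would identify $H^i(U,S(IC_X(\bV),\varphi)\otimes L)$ with the $L^2$-cohomology $H^{n,i}_{(2),\mathrm{max}}(U\cap X^o,S(\bV)\otimes L;ds^2,h_\bV\otimes h_\varphi)$ — here one needs the resolution to compute sheaf cohomology on a possibly non-compact $U$, which is fine since the $\sD^{n,q}$ are fine sheaves (they admit partitions of unity) and the complex is an acyclic resolution. Then the core is a vanishing statement for this $L^2$-cohomology: one shows that every $\dbar$-closed $L^2$ form of degree $i>0$ is $\dbar$-exact, by the H\"ormander--Andreotti--Vesentini method. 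The curvature of $h_\bV$ on $S(\bV)$ is semi-negative in the relevant sign convention only up to the ``Hodge-theoretic'' correction, but the point of Saito's $S$-sheaf formalism (and the Nakano-type positivity built into $h_\bV$ via the Higgs/Hodge curvature) is that the combined curvature operator $[\sqrt{-1}\Theta_{h_\bV\otimes h_\varphi},\Lambda_\omega]$ acting on $(n,q)$-forms, $q>0$, is bounded below by a positive multiple coming from $\sqrt{-1}\Theta_{h_\varphi}(L)\geq c\,\omega$; completeness of $ds^2$ (guaranteed by the lower bound $\sqrt{-1}\ddbar\lambda\lesssim\omega$ together with the construction over a Stein base) allows the usual density argument so that the a priori estimate $\|\alpha\|^2\lesssim(\|\dbar\alpha\|^2+\|\dbar^\ast\alpha\|^2)$ holds on the maximal domain, yielding solvability of $\dbar$ and hence the vanishing.

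I would then feed the $L^2$-vanishing back through the resolution to conclude $H^i(U,S(IC_X(\bV),\varphi)\otimes L)=0$ for all $i>0$ and all small Stein $V$, whence $R^if_\ast(\cdot)=0$ by the local characterization of higher direct images. The main obstacle, and where I expect the real work to lie, is establishing the requisite curvature positivity of the singular metric $h_\bV\otimes h_\varphi$ on the \emph{singular} space $X^o$ in a form strong enough to run the H\"ormander estimate with the \emph{maximal} (as opposed to minimal) $\dbar$-operator: one must control the contribution of the Hodge metric $h_\bV$ near $X\setminus X^o$ — this is precisely where the decomposition (\ref{align_S_vs_multiideal}) and the norm estimates behind Proposition \ref{prop_key_est} enter, reducing the local model near the boundary to a finite sum of ordinary multiplier ideal sheaves twisted by $\omega_X$, for which the positivity of $\sqrt{-1}\Theta_{h_\varphi}(L)$ combined with $f$-positivity suffices. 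A secondary technical point is the passage between $f$-positivity (which only gives positivity relative to $Y$) and the genuine positivity needed on $U$: this is exactly resolved by adding the pullback of a strictly psh exhaustion of the Stein base $V$, which is legitimate because such an addition is bounded on each $f^{-1}(V'{\Subset}V)$ and therefore invisible to the sheaf $S(IC_X(\bV),\varphi)$.
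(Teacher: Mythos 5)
Your proposal is correct and follows essentially the same route as the paper: localize over a Stein piece of $Y$, absorb $Cf^\ast\psi$ (for $\psi$ a strictly psh exhaustion of the base) into the weight to upgrade $f$-positivity to genuine positivity without changing $S(IC_X(\bV),\varphi)$ by Lemma \ref{lem_S_change_varphi_bounded}, then run the $L^2$-Dolbeault resolution of Theorem \ref{thm_main_local1} together with the H\"ormander--Demailly estimate (Theorem \ref{thm_Hormander_incomplete}) on truncations carrying complete K\"ahler metrics (Lemma \ref{lem_complete_Kahler_pseudoconvex}) and pass to a weak limit. One small correction: the curvature input is simply the Nakano semipositivity $\sqrt{-1}\Theta_{h_\bV}(S(\bV))\geq 0$ of Theorem \ref{thm_geq0_S(V)} on all of $X^o$, so no boundary analysis via Proposition \ref{prop_key_est} or the decomposition (\ref{align_S_vs_multiideal}) is needed for the vanishing itself -- that analysis only enters in identifying the degree-zero kernel of the resolution.
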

When $X$ is smooth, $Y$ is a point and $\bV=\bC_X$, we recover the Nadel vanishing theorem \cite{Nadel1990}. Many interesting generalizations are obtained, such as \cite{Demailly1982,Matsumura2014,Matsumura2015,Iwai2021}. 
When $X$ is a projective variety, with a careful choice of $h_\varphi$ we obtain 
\begin{cor}[Demailly-Kawamata-Viehweg type vanishing theorem, =Corollary \ref{cor_KV_vanishing}]\label{cor_main_algebraic}
	Let $X$ be a projective algebraic variety of dimension $n$ and $\bV$ an $\bR$-polarized variation of Hodge structure defined on a Zariski open subset of $X$. Let $L$ be a line bundle such that some positive multiple $mL=F+D$ where $F$ is a nef line bundle and $D$ is an effective divisor. Then
	$$H^q(X,S(IC_X(\bV),\frac{\varphi_D}{m})\otimes L)=0,\quad \forall q>n-{\rm nd}(L).$$
	Here $\varphi_D$ is the psh function associated to $D$.
\end{cor}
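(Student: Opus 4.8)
The plan is to reduce the statement to the Nadel-type vanishing Theorem~\ref{thm_R_Nadel_vanishing} (applied with the structure morphism $X\to\{\mathrm{pt}\}$, which is proper K\"ahler since $X$ is projective) by the Kawamata--Viehweg numerical-dimension argument, carried out for the coherent sheaf $\mathcal G:=S(IC_X(\bV),\varphi_D/m)\otimes L$. Throughout I write $\nu:={\rm nd}(L)={\rm nd}(F)$ and fix the decomposition $mL=F+D$ together with $\varphi_D=\log|s_D|^2_g$ for a smooth metric $g$ on $\mathcal O_X(D)$, so that $\tfrac1m\sqrt{-1}\ddbar\varphi_D=\tfrac1m[D]-\tfrac1m\sqrt{-1}\Theta_g$. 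The elementary observation I would use repeatedly is: if $N$ is a line bundle with $\tfrac1m F+N$ ample, then for a suitable smooth metric $h_0$ the singular metric $e^{-\varphi_D/m}h_0$ on $L\otimes N$ has curvature current $\beta+\tfrac1m[D]$, where $\beta$ is any prescribed smooth representative of the K\"ahler class $c_1(\tfrac1m F+N)$; choosing $\beta\geq\varepsilon\omega$ this curvature is $\geq\varepsilon\omega>0$, so Theorem~\ref{thm_R_Nadel_vanishing} gives $H^q\!\big(X,S(IC_X(\bV),\varphi_D/m)\otimes L\otimes N\big)=0$ for all $q>0$.

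\textbf{The case $\nu=n$.} Then $F$ is big and nef, and I would invoke Demailly's regularization theorem to get, for every $\delta>0$, a K\"ahler current $T_\delta=\beta_\delta+\sqrt{-1}\ddbar\tau_\delta\in c_1(\tfrac1m F)$ with $\beta_\delta$ smooth, $T_\delta\geq\varepsilon_\delta\omega$, Lelong numbers $\nu(\tau_\delta,x)\leq\delta$ everywhere, and $\tau_\delta\leq0$. Choosing $h_0$ on $L$ so that the $\varphi_D/m$-twisted curvature equals $\beta_\delta+\tfrac1m[D]$, the metric $e^{-\varphi_D/m-\tau_\delta}h_0$ has curvature $T_\delta+\tfrac1m[D]\geq\varepsilon_\delta\omega>0$, so Theorem~\ref{thm_R_Nadel_vanishing} yields $H^q\!\big(X,S(IC_X(\bV),\varphi_D/m+\tau_\delta)\otimes L\big)=0$ for $q>0$. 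It then remains to check $S(IC_X(\bV),\varphi_D/m+\tau_\delta)=S(IC_X(\bV),\varphi_D/m)$ for $\delta$ small, which I would deduce from the local presentation of the multiplier $S$-sheaf as a finite direct sum of twisted multiplier ideals (the isomorphisms~\eqref{align_S_multiideal} and~\eqref{align_S_vs_multiideal}), the strong openness theorem, and a H\"older estimate showing that perturbing the weight by a quasi-psh function with Lelong numbers $\leq\delta$ changes a multiplier ideal only by an $\varepsilon(\delta)$-shift of the weight, with $\varepsilon(\delta)\to0$.

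\textbf{The case $\nu<n$.} I would argue by induction on $\dim X$, the base being the case $\nu=n$ above. Fix a very ample $A$ and a very general $Y\in|A|$; then $Y$ is reduced and irreducible of dimension $n-1$, meets $X^o$ in a dense Zariski-open set, and is not contained in $\mathrm{Supp}(D)$. Since $\mathcal G$ is torsion-free (Proposition~\ref{prop_S_coherent}), a section cutting out $Y$ is a non-zero-divisor on $\mathcal G$, so there is a short exact sequence
\[
0\longrightarrow\mathcal G\longrightarrow\mathcal G\otimes\mathcal O_X(A)\longrightarrow\big(\mathcal G\otimes\mathcal O_X(A)\big)\big|_Y\longrightarrow 0.
\]
As $\tfrac1m F+A$ is ample, the middle term has vanishing cohomology in degrees $>0$ by the first paragraph. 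Granting the compatibility of the multiplier $S$-sheaf with a very general hyperplane section, namely
\[
\big(\mathcal G\otimes\mathcal O_X(A)\big)\big|_Y\;\cong\;S\big(IC_Y(\bV|_Y),\varphi_D|_Y/m\big)\otimes L|_Y,
\]
where the twist by $\mathcal O_X(A)$ is absorbed into the $S$-sheaf through an adjunction isomorphism $S(IC_X(\bV))|_Y\otimes\mathcal O_Y(A)\cong S(IC_Y(\bV|_Y))$, the right-hand side is a multiplier $S$-sheaf on $Y$ for the data $\bV|_Y$, $\varphi_D|_Y/m=\varphi_{D|_Y}/m$ and $L|_Y$, and $m(L|_Y)=F|_Y+D|_Y$ with $F|_Y$ nef, $D|_Y$ effective and ${\rm nd}(F|_Y)=\min(\nu,n-1)=\nu$. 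By the inductive hypothesis it has vanishing cohomology in degrees $>(n-1)-\nu$, and the long exact sequence then gives $H^q(X,\mathcal G)=0$ for $q>n-\nu$, as desired.

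\textbf{The main obstacle} is the hyperplane-restriction compatibility invoked in the last step: that for a very general section $Y$ the restriction of the multiplier $S$-sheaf is again a multiplier $S$-sheaf on $Y$, with the normal-bundle twist absorbed via adjunction. This splits into the compatibility of Saito's lowest Hodge piece of the intermediate extension with restriction to a general section (so that $S(IC_X(\bV))|_Y$, twisted by the conormal data, becomes $S(IC_Y(\bV|_Y))$) and the restriction theorem for multiplier ideals, glued through the local structure in~\eqref{align_S_vs_multiideal}. I expect one inclusion to come from the Ohsawa--Takegoshi extension property (Theorem~\ref{thm_S_OT_extension}) and the reverse from a Fubini argument applied to the $L^2$-description~\eqref{align_MacPherson_Hodge_module}; verifying it carefully when $X$ is singular and the local monodromies of $\bV$ are non-unipotent is the delicate point. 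By contrast, the invariance ${\rm nd}(F|_Y)=\nu$ under general very ample sections, and the weight-perturbation estimate used in the case $\nu=n$, are comparatively routine.
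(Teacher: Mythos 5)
Your architecture coincides with the paper's: Nadel-type vanishing for a suitably positive singular twist when the numerical dimension is maximal, then induction on $\dim X$ via general hyperplane sections. Your case $\nu<n$ is essentially the paper's Case II; the ``main obstacle'' you flag is precisely Proposition \ref{prop_adjunction}, which the paper has already established (via the $L^2$-adapted frame of Proposition \ref{prop_adapted_frame} and restriction of multiplier ideals on a log resolution), so you can cite it instead of re-deriving it from Ohsawa--Takegoshi and Fubini. Where you genuinely diverge is the case $\nu=n$: you regularize $c_1(\tfrac1m F)$ into a K\"ahler current with small Lelong numbers and invoke strong openness to absorb $\tau_\delta$, whereas the paper writes $bF=A+E$ with $A$ ample and $E$ effective, interpolates the metrics $h_{F,\epsilon}^{1-\delta}\otimes h_{F,0}^{\delta}\otimes h_D$, and identifies $S(IC_X(\bV),\tfrac{\delta\varphi_E}{bm}+\tfrac{\varphi_D}{m})=S(IC_X(\bV),\tfrac{\varphi_D}{m})$ by Proposition \ref{prop_approx_S}. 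The paper's choice is not cosmetic: its perturbation $\tfrac{\delta}{bm}\varphi_E$ has analytic singularities, so the valuation computation of Propositions \ref{prop_key_est} and \ref{prop_approx_S} applies verbatim, while your $\tau_\delta$ need not have generalized analytic singularities, and the local decomposition of the multiplier $S$-sheaf into twisted multiplier ideals is only proved in the paper under that hypothesis. Your step is repairable (take $T_\delta$ with analytic singularities, or extend Proposition \ref{prop_key_est} by strong openness), but as written it assumes a stronger local structure theory than is available; the paper's decomposition $bF=A+E$ avoids the issue entirely.
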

When $L$ is nef and big, this vanishing theorem has been  established by Suh \cite{Suh2018} and Wu \cite{Wu2017} by means of Hodge theoretic methods, which generalizes Saito's vanishing theorem for the $S$-sheaf  \cite{MSaito1991(2)}. When $X$ is smooth and $\bV=\bC_X$ is the trivial Hodge module, it reduces to the Demailly-Kawamata-Viehweg vanishing theorem \cite[6.25]{Demailly2012}, with its roots traced back to Kawamata and Viehweg in \cite{Kawamata1982,Viehweg1982}. Recent developments include \cite{Cao2014,Wu2022,Inayama2022,DP2003,Demailly1991}.
The Kodaira-Nakano-Kazama  vanishing theorem, the relative vanishing theorem  and Fujino-Enoki-Koll\'ar  injectivity theorem are generalized to coefficients in $S(IC_X(\bV),\varphi)$ (Theorem \ref{thm_partial_vanishing}, Corollary \ref{cor_Nadel_rel}, Theorem \ref{thm_Enoki_Kollar_inj}). It also implies the Esnault-Viehweg type injectivity theorem (Corollary \ref{cor_wulei}) which has been proved in \cite[Theorem 1.4]{Wu2017}  using Hodge theoretic methods.  For the case that $X$ is smooth and $\bV$ is trivial, these vanishing theorems are mainly due to the efforts of Kodaira \cite{Kodaira1953}, Nakano \cite{Nakano1974}, Kazama \cite{Kazama1973}, Takegoshi \cite{Takegoshi1985}, Koll\'ar \cite{Kollar1986}, Enoki \cite{Enoki1993}, Fujino \cite{Fujino2017} and Cao-P\v{a}un \cite{Paun2020}.
Readers may also refer to the works of Fujino and Matsumura \cite{Matsumura2018,Matsumura20182,Matsumura20183,Matsumura20184} and the references therein.

\subsection{Application to the relative version of Fujita's conjecture}\label{section_geo_app}
As a relative version of Fujita's conjecture \cite{Fujita1987}, Kawamata raised the following conjecture in \cite{Kawamata2002} with the case $\dim_\bC X\leq 4$ settled therein.
\begin{conj}[Kawamata \cite{Kawamata2002}]\label{conj_kawamata}
	Let $f:Y\to X$ be a proper morphism between smooth projective algebraic varieties. Assume that the degenerate loci of $f$ is contained in a normal crossing divisor $D\subset X$. Let $L$ be an ample line bundle on $X$. Then $R^qf_\ast\omega_Y\otimes L^{\dim X+1}$ is generated by global sections for every $0\leq q\leq \dim_\bC X$.
\end{conj}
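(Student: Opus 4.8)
The plan is to deduce the conjecture from the $L^2$-package above: first reduce it, via Saito's decomposition theorem, to a global generation statement for the $S$-sheaf, then run the classical ``cut out a point with a singular weight'' argument, now available for the multiplier $S$-sheaf through Theorem~\ref{thm_main_resolution} and the Nadel-type vanishing Theorem~\ref{thm_R_Nadel_vanishing}. Since $Y$ is smooth projective and $f$ is projective, the theory of pure Hodge modules (Saito, refining Kollár's structure results \cite{Kollar1986}) gives an isomorphism $R^qf_\ast\omega_Y\cong\bigoplus_j S(IC_{Z_j}(\bV_j))$, where each $Z_j\subseteq X$ is an irreducible closed subvariety and $\bV_j$ an $\bR$-polarized variation of Hodge structure generically defined on $Z_j$, with quasi-unipotent local monodromies. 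It therefore suffices, by descending induction on $d=\dim Z$, to prove: for every projective variety $Z$ of dimension $d$, every such $\bV$, every ample $L$ and every nef $N$ on $Z$, the sheaf $S(IC_Z(\bV))\otimes L^{d+1}\otimes N$ is globally generated. (The nef factor makes the step from a stratum $Z_j$ to $X$ immediate: writing $L_j:=L|_{Z_j}$, one has $L^{\dim X+1}|_{Z_j}=L_j^{\otimes(\dim Z_j+1)}\otimes N_j$ with $N_j:=L_j^{\otimes(\dim X-\dim Z_j)}$ nef, and a closed immersion preserves global generation; the (standard but notationally heavy) further reduction to $Z$ \emph{smooth} with $D:=Z\setminus Z^o$ a normal crossing divisor uses a log resolution, the functoriality of the multiplier $S$-sheaf (Proposition~\ref{prop_L2ext_birational}), and the Grauert--Riemenschneider-type vanishing for $S(IC_{\widetilde Z}(\bV))$, and I will suppress it.)

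Fix $x\in Z$ and set $\sF:=S(IC_Z(\bV))\otimes L^{d+1}\otimes N$; by Nakayama it suffices to produce global sections of $\sF$ surjecting onto the fiber $\sF\otimes k(x)$. Using the ampleness of $L$ (sections of $L^m$, $m\gg0$, with prescribed high-order vanishing at $x$, together with the usual tie-breaking), I would construct a psh weight $\varphi$ for a singular metric $h_\varphi$ on $L^{d+1}$ such that $\sqrt{-1}\Theta_{h_\varphi}(L^{d+1})$ is strictly positive, $\sI(\varphi)=\mathcal O_Z$ on $Z\setminus\{x\}$, and $\sI(\varphi)_x\subsetneq\mathcal O_{Z,x}$ (e.g. with $\varphi$ comparable to $(d+\eta)\log|z|^2$ near $x$); choosing a metric $h_N$ on $N$ with $\sqrt{-1}\Theta_{h_N}(N)\geq-\epsilon\,\omega$ for small $\epsilon$, the total curvature on $L^{d+1}\otimes N$ stays strictly positive. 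By Proposition~\ref{prop_S_coherent} there is a short exact sequence
\[
0\to S(IC_Z(\bV),\varphi)\otimes L^{d+1}\otimes N\to\sF\to\sQ\to0,
\]
and Theorem~\ref{thm_R_Nadel_vanishing} with base a point yields $H^1(Z,S(IC_Z(\bV),\varphi)\otimes L^{d+1}\otimes N)=0$, hence $H^0(Z,\sF)\twoheadrightarrow H^0(Z,\sQ)$.

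It remains to identify the cokernel $\sQ$. Wherever $\varphi$ is locally bounded --- that is, on $Z\setminus\{x\}$ --- one has $S(IC_Z(\bV),\varphi)=S(IC_Z(\bV))$ by Lemma~\ref{lem_S_change_varphi_bounded} and Theorem~\ref{cor_S_phi=0}, so $\sQ$ is a skyscraper sheaf supported at $x$ and $H^0(Z,\sQ)=\sQ_x$. Near $x$ the decomposition \eqref{align_S_vs_multiideal} (Proposition~\ref{prop_key_est}) gives $S(IC_Z(\bV),\varphi)\cong\bigoplus_{i=1}^m\sI(\varphi_i)\otimes\omega_Z$ with $\varphi_i=\varphi+\sum_\ell a_{i\ell}\log|z_\ell|^2$, $a_{i\ell}\in[0,1)$ determined by the monodromy eigenvalues, while the same decomposition with $\varphi\equiv0$ (Theorem~\ref{cor_S_phi=0}) exhibits $S(IC_Z(\bV))\cong\omega_Z^{\oplus m}$ there; compatibly, the inclusion is the direct sum of the inclusions $\sI(\varphi_i)\otimes\omega_Z\hookrightarrow\omega_Z$. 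Since $\varphi_i\leq\varphi$ near $x$ we have $\sI(\varphi_i)\subseteq\sI(\varphi)$, and $\sI(\varphi)_x\subseteq\mathfrak m_x$, so $S(IC_Z(\bV),\varphi)_x\subseteq\mathfrak m_x\, S(IC_Z(\bV))_x$, whence $\sQ_x\twoheadrightarrow\sF\otimes k(x)$. Combining, $H^0(Z,\sF)\twoheadrightarrow H^0(Z,\sQ)=\sQ_x\twoheadrightarrow\sF\otimes k(x)$, which closes the induction.

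The main obstacle is the \emph{sharpness} of the exponent $\dim X+1$: everything above is routine once one has, on each stratum and for each point $x$, a psh weight on a line bundle of type ``$(\dim Z+1)\times$ ample'' with strictly positive curvature and an isolated non-log-canonical point exactly at $x$ --- and producing such a weight from the mere positivity of $L$, with the optimal exponent $\dim Z+1$, \emph{is} Fujita's base-point-freeness conjecture, still open already in the absolute case $f=\mathrm{id}$, $\bV=\bC_X$. (The secondary point, that the normal-crossing corrections $\varphi_i$ of \eqref{align_S_vs_multiideal} do no harm, is a one-line integrability estimate since their coefficients are $<1$; and the reduction of singular strata to the smooth normal-crossing situation is the usual functoriality of the $S$-sheaf, pulling the high-Lelong-number weight back through a local resolution.) Thus this plan yields a clean reduction of Conjecture~\ref{conj_kawamata} to Fujita's freeness conjecture for the strata $Z_j$ --- in particular an unconditional proof whenever $\dim X$ lies in the range where the latter is known --- and, feeding Angehrn--Siu--type effective bounds into the construction of $\varphi$, an unconditional effective version of the relative conjecture with $L^{\dim X+1}$ replaced by a larger explicit power.
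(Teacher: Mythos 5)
This statement is stated in the paper as a \emph{conjecture} (Kawamata's relative Fujita conjecture); the paper does not prove it and does not claim to. What the paper actually establishes are effective but non-optimal versions: Corollaries \ref{cor_Siu} and \ref{cor_Demailly} (via Theorems \ref{thm_Hodge_Siu} and \ref{thm_sepjets}, using the Angehrn--Siu and Demailly singular metrics), and the sharp exponent only under the extra hypothesis that $L$ is ample \emph{and} base point free (Theorem \ref{thm_relFujita_semiample}). Your proposal, as you yourself say in the last paragraph, is also not a proof: it is a reduction of Conjecture \ref{conj_kawamata} to Fujita's base-point-freeness conjecture on the (strata of the) base, which is open. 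So the honest verdict is that neither you nor the paper proves the statement, and you should not present this as a proof of the conjecture.

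That said, the reduction you describe is essentially sound and is exactly the mechanism the paper runs with the weights it \emph{can} construct: coherence of $S(IC_X(\bV),\varphi)$ (Proposition \ref{prop_S_coherent}), the Nadel-type vanishing (Theorem \ref{thm_Nadel_vanishing}) applied to the subsheaf, the identification of the cokernel via the $L^2$-adapted frame and Proposition \ref{prop_key_est}, and the observation that the corrections $\psi_i$ have exponents in $[0,1)$ so that $\sI(\varphi+\psi_i)\subset\sI(\varphi)\subset\mathfrak m_x$. Two small remarks on your write-up: (i) under the hypothesis of the conjecture (degenerate locus contained in a normal crossing divisor $D$), the paper notes that $R^qf_\ast\omega_Y\simeq S(IC_X(\bV^q))$ with $\bV^q=R^qf_\ast\bC_{f^{-1}(X\setminus D)}$ and is locally free, so the decomposition over lower-dimensional strata $Z_j$ and the ``descending induction on $\dim Z$'' are unnecessary here — there is a single summand with strict support $X$; (ii) your final clause promising ``an unconditional effective version with a larger explicit power'' is precisely the content of Corollaries \ref{cor_Siu} and \ref{cor_Demailly}, so that part is not new. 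The genuine gap, which you correctly isolate, is the construction of the optimal weight on $L^{\dim X+1}$, i.e.\ Fujita's conjecture itself.
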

With the help of the $L^2$-Dolbeault resolution (Theorem \ref{thm_main_resolution}) on $R^qf_\ast\omega_Y$ (an example of Saito's $S$-sheaf), we are able to investigate the separation of jets of $R^qf_\ast\omega_Y\otimes L^{\dim X+1}$ using the transendental method developed by Angehrn-Siu \cite{Siu1995} and Demailly \cite[Theorem 7.4]{Demailly2012}.
\begin{cor}\label{cor_Siu}
	Let $f:Y\to X$ be a proper holomorphic morphism from a K\"ahler manifold to a projective algebraic variety where $\dim_{\bC}X=n$. Assume that the degenerate loci of $f$ is contained in a normal crossing divisor $D\subset X$. Let $L$ be an ample line bundle on $X$. Assume that there is a positive number $\kappa>0$ such that $$L^k\cdot W\geq\left(\frac{1}{2}n(n+2r-1)+\kappa\right)^d$$
	for any irreducible subvariety $W$ of dimension $0\leq d\leq n$ in $X$. Let $0\leq q\leq n$. Then the global holomorphic sections of $R^qf_\ast\omega_Y\otimes L$ separate any set of $r$ distinct points $x_1,\dots, x_r\in X$, i.e. there is a surjective map
	$$H^0(X,R^qf_\ast\omega_Y\otimes L)\to \bigoplus_{1\leq k\leq r}R^qf_\ast\omega_Y\otimes L\otimes \sO_{X,x_k}/m_{X,x_k}.$$
\end{cor}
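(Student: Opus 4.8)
The plan is to reduce, via Saito's decomposition theorem, from $R^qf_\ast\omega_Y$ to Saito's $S$-sheaf, then to use the Angehrn--Siu method to build a singular metric on $L$ whose multiplier $S$-sheaf differs from $S(IC_X(\bV))$ exactly by the maximal ideals at the $r$ given points, and finally to conclude via the Nadel type vanishing theorem (Theorem \ref{thm_R_Nadel_vanishing}). \emph{Step 1: reduction to the $S$-sheaf.} By Saito's decomposition theorem for the direct image of the trivial Hodge module on $Y$ (the result underlying Koll\'ar's conjecture, see \cite{MSaito1991} and \cite{SC2021_kollar}), $R^qf_\ast\omega_Y$ is a finite direct sum of sheaves $\iota_{Z\ast}S(IC_Z(\bV'))$, where $Z\subseteq X$ is irreducible and $\bV'$ is an $\bR$-polarized variation of Hodge structure on a Zariski open subset of $Z$. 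Since the fibre of $\iota_{Z\ast}\mathcal F$ at $x_k$ is $\mathcal F\otimes(\sO_{Z,x_k}/m_{Z,x_k})$ when $x_k\in Z$ and $0$ otherwise, since $S(IC_\bullet(\cdot),\varphi)$ commutes with direct sums, since $L|_Z$ is ample, and since $t\mapsto t(t+2r-1)$ is increasing on $[0,\infty)$ so that the numerical hypothesis passes to every subvariety of $Z$, it suffices to prove: for $X$ projective of dimension $n$, $\bV$ an $\bR$-polarized variation of Hodge structure on a Zariski open $X^o\subseteq X$, $L$ ample with $L^d\cdot W\ge(\tfrac12 n(n+2r-1)+\kappa)^d$ for all irreducible $W\subseteq X$ of dimension $d$, and $x_1,\dots,x_r\in X$ distinct, the evaluation map $H^0(X,S(IC_X(\bV))\otimes L)\to\bigoplus_{k=1}^r S(IC_X(\bV))\otimes L\otimes(\sO_{X,x_k}/m_{X,x_k})$ is surjective.

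\emph{Step 2: the singular metric.} Fix a smooth metric $h_0$ on $L$ with $\omega_L:=\sqrt{-1}\Theta_{h_0}(L)>0$. By the Angehrn--Siu method \cite{Siu1995} in the quantitative form of \cite[Theorem 7.4]{Demailly2012}, the numerical hypothesis produces a quasi-psh function $\varphi$ on $X$ that is smooth outside a finite set containing $\{x_1,\dots,x_r\}$, satisfies $\sqrt{-1}\ddbar\varphi\ge-(1-\delta)\omega_L$ on $X$ for some $\delta>0$, and has $\sI(\varphi)_{x_k}\subseteq m_{X,x_k}$ while $\sI((1-\epsilon)\varphi)_{x_k}=\sO_{X,x_k}$ for all sufficiently small $\epsilon>0$ and all $k$. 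The shape of the constant, $\tfrac12 n(n+2r-1)=\tfrac12 n(n-1)+nr$, is precisely the bookkeeping of this inductive construction: the quadratic part records the successive dimension drops of the common pole locus and the linear part $nr$ the simultaneous treatment of the $r$ isolated poles together with the $L^2$-extension step at each stage. After a harmless regularization we may further assume $\varphi$ has generalized analytic singularities along $X\setminus X^o$ and along its pole set.

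\emph{Step 3: conclusion.} Set $h_\varphi=e^{-\varphi}h_0$, so that $\sqrt{-1}\Theta_{h_\varphi}(L)\ge\delta\omega_L>0$, which is in particular $f$-positive for the constant map $X\to\mathrm{pt}$. By Theorem \ref{cor_S_phi=0} we have $S(IC_X(\bV),0)=S(IC_X(\bV))$, and near each pole of $\varphi$ the local descriptions (\ref{align_S_multiideal}) and (\ref{align_S_vs_multiideal}) of $S(IC_X(\bV),\varphi)$ (Propositions \ref{prop_S_vs_mulidealsheaf} and \ref{prop_key_est}), together with the two conditions on $\sI(\varphi)$ at the $x_k$, yield a short exact sequence
\begin{align*}
0\to S(IC_X(\bV),\varphi)\otimes L\to S(IC_X(\bV))\otimes L\to\mathcal{Q}\to0,
\end{align*}
where $\mathcal{Q}$ is a skyscraper sheaf on $\{x_1,\dots,x_r\}$ surjecting onto $\bigoplus_{k}S(IC_X(\bV))\otimes L\otimes(\sO_{X,x_k}/m_{X,x_k})$. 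Theorem \ref{thm_R_Nadel_vanishing} gives $H^1(X,S(IC_X(\bV),\varphi)\otimes L)=0$, so the associated long exact sequence shows the evaluation map of Step 1 is surjective, which proves the corollary.

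\emph{The main obstacle.} The crux is the compatibility of Steps 2 and 3 at points $x_k$ lying on $X\setminus X^o$, or on a singular locus of one of the subvarieties $Z$: there $S(IC_X(\bV))$ need not be locally free and $S(IC_X(\bV),\varphi)$ is controlled only through the local splitting (\ref{align_S_vs_multiideal}) into summands $\sI(\varphi_i)\otimes\omega_X$, so one must verify that the weight $\varphi$, constructed on $X$ with no reference to $\bV$, still cuts $S(IC_X(\bV))$ down by precisely the maximal ideal --- this uses that $\varphi$ has generalized analytic singularities together with the explicit dependence of the $\varphi_i$ on $\varphi$ and on the eigenvalues of the local monodromies from Proposition \ref{prop_key_est}. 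Handling the summands supported on proper, possibly singular, subvarieties may in addition require a preliminary reduction to the normal crossing situation via the functoriality of $S(IC_\bullet(\cdot),\varphi)$ (Proposition \ref{prop_L2ext_birational}); the remaining Angehrn--Siu induction, though technically demanding, is by now standard.
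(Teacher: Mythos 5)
Your overall architecture (an Angehrn--Siu weight, a short exact sequence of multiplier $S$-sheaves, Nadel vanishing, and local analysis along $D$ via the $L^2$-adapted frames of Proposition \ref{prop_adapted_frame} and Proposition \ref{prop_key_est}) is the same as the paper's, which obtains the corollary by specializing Theorem \ref{thm_Hodge_Siu}. But two steps do not go through as written. First, Step 1 is a detour that manufactures your ``main obstacle'': under the hypothesis that the degeneration locus of $f$ lies in the normal crossing divisor $D$, one has $R^qf_\ast\omega_Y\simeq S(IC_X(\bV^q))$ with $\bV^q=R^qf_\ast\bC|_{X\setminus D}$, and this sheaf is locally free (\cite[Theorem 2.6]{Kollar1986}, or \cite[Theorem V]{Takegoshi1995}); this is exactly how the paper reduces to Theorem \ref{thm_Hodge_Siu}. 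There are no summands supported on proper subvarieties for this sheaf, so the deferred problem --- separation for $S(IC_Z(\bV'))$ on a possibly singular $Z$, which is \emph{not} covered by Theorem \ref{thm_Hodge_Siu} (that theorem needs a smooth ambient variety and a normal crossing $D$) --- does not arise; but your write-up neither proves that case nor observes that it is vacuous, so as stated the reduction is incomplete.

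Second, and more seriously, Step 2 asserts a single quasi-psh weight $\varphi$, smooth outside a finite set, with $\sI(\varphi)_{x_k}\subseteq m_{X,x_k}$ at \emph{every} $x_k$ simultaneously. The Angehrn--Siu construction (Proposition \ref{prop_Siu}, i.e.\ statement $(*)_0$ of \cite{Siu1995}) does not deliver this: it guarantees only that one designated point, say $x_1$, is isolated in ${\rm supp}\,\sO_X/\sI(h')$, while $x_2,\dots,x_r$ merely lie in that support and may sit on positive-dimensional components of it. (Also, \cite[Theorem 7.4]{Demailly2012} is the source of the bound $2+\sum\binom{3n+2s_k-1}{n}$ used in Corollary \ref{cor_Demailly}, not of the bound $\tfrac12 n(n+2r-1)+\kappa$.) Consequently your exact sequence with middle term $S(IC_X(\bV))\otimes L$ and skyscraper quotient is not available. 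The paper repairs this by induction on $r$: it keeps the weight $\varphi$ isolating $x_1$, replaces the middle term by $S(IC_X(\bV),\varphi')\otimes L$ for an auxiliary quasi-psh $\varphi'\geq\varphi+C$ that is smooth near $x_1$ and non-integrable at $x_2,\dots,x_r$ (forcing all its sections to vanish there), and applies Nadel vanishing to $S(IC_X(\bV),\varphi)\otimes L$. With that modification your Step 3 computation of the quotient at $x_1$ via Proposition \ref{prop_key_est} (using $\alpha_{E_j}(\widetilde{v_i})\in(-1,0]$, hence the integrability of $e^{-\psi_i}$ and the inclusions $\sI(\varphi+\psi_i)_{x_1}\subseteq\sI(\varphi)_{x_1}\subseteq m_{X,x_1}$) is exactly the paper's.
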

A similar result is obtained by Wu \cite{Wu2017} Hodge theoretically.  When $f={\rm Id}$, this reduces to the result in \cite{Siu1995}.

{By using Demailly's singular metric on the adjoint bundles (\cite[Theorem 7.4]{Demailly2012}) we are able obtain the relative version of \cite[Theorem 7.4]{Demailly2012}.}
\begin{cor}\label{cor_Demailly}
	Let $f:Y\to X$ be a proper holomorphic morphism from a K\"ahler manifold to a projective algebraic variety where $\dim_{\bC}X=n$. Assume that the degenerate loci of $f$ is contained in a normal crossing divisor $D\subset X$. Let $L$ be an ample line bundle and $G$ a nef line bundle on $X$. Then there is a surjective map
	$$H^0(X,R^qf_\ast\omega_Y\otimes \omega_X\otimes L^{\otimes m}\otimes G)\to \bigoplus_{1\leq k\leq r}R^qf_\ast\omega_Y\otimes \omega_X\otimes L^{\otimes m}\otimes G\otimes \sO_{X,x_k}/m^{s_k+1}_{X,x_k}$$
	at arbitrary points $x_1,\dots, x_r\in X$  for every $0\leq q\leq \dim_\bC Y-\dim_\bC X$,
	provided that $m\geq 2+\sum_{1\leq k\leq r}\binom{3n+2s_k-1}{n}$. 
\end{cor}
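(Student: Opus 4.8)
The plan is to adapt Demailly's proof of the jet-separation statement \cite[Theorem~7.4]{Demailly2012} for adjoint bundles, replacing the structure sheaf there by $R^qf_\ast\omega_Y$. We may assume $X$ smooth and $f$ surjective (otherwise replace $X$ by $f(Y)$ and then by a resolution). The first step is to record, via Saito's theory — the Hodge-module refinement of Koll\'ar's package, cf.\ \cite{Kollar1986,MSaito1991} — that $R^qf_\ast\omega_Y$ is a direct summand of Saito's $S$-sheaf $S(IC_X(\bV))$ of a suitable $\bR$-polarized variation of Hodge structure $\bV$ defined on $X\setminus D'$, with $D'\supseteq D$ a normal crossing divisor. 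Since the jet-evaluation map
\[
H^0(X,\sG\otimes\sL)\longrightarrow \bigoplus_{k} (\sG\otimes\sL)\otimes\sO_{X,x_k}/m^{s_k+1}_{X,x_k}
\]
is functorial in the coherent sheaf $\sG$ and so respects $\sO_X$-linear direct sum decompositions, it suffices to prove its surjectivity for $\sG=S(IC_X(\bV))$ and $\sL=\omega_X\otimes L^{\otimes m}\otimes G$, and then to restrict to the summand $R^qf_\ast\omega_Y$.

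The second step is the cohomological mechanism. Fixing $x_1,\dots,x_r$ and orders $s_1,\dots,s_r$, Demailly's construction in the proof of \cite[Theorem~7.4]{Demailly2012} uses the ampleness of $L$, the nefness of $G$ and the hypothesis $m\geq 2+\sum_k\binom{3n+2s_k-1}{n}$ to produce a singular hermitian metric $h_\varphi=e^{-\varphi}h_0$ on $\sL=\omega_X\otimes L^{\otimes m}\otimes G$ such that: (i) $\sqrt{-1}\Theta_{h_\varphi}(\sL)\geq\varepsilon\omega$ for some $\varepsilon>0$ and some K\"ahler form $\omega$; (ii) $\varphi$ is smooth, hence locally bounded, on $X\setminus\{x_1,\dots,x_r\}$; (iii) $\varphi$ has an isolated logarithmic pole at each $x_k$ of Lelong number as large as one pleases, in particular $\geq n+s_k$, so that $\sI(\varphi)_{x_k}\subseteq m^{s_k+1}_{X,x_k}$. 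From (ii) and the invariance of the multiplier $S$-sheaf under locally bounded changes of the weight (Lemma~\ref{lem_S_change_varphi_bounded}, together with $S(IC_X(\bV),0)=S(IC_X(\bV))$), we have $S(IC_X(\bV),\varphi)=S(IC_X(\bV))$ on $X\setminus\{x_1,\dots,x_r\}$. Applying the Nadel-type vanishing Theorem~\ref{thm_R_Nadel_vanishing} to the proper K\"ahler map $X\to\{\mathrm{pt}\}$, the bundle $\sL$ and the metric $h_\varphi$ gives $H^1(X,S(IC_X(\bV),\varphi)\otimes\sL)=0$. Taking cohomology of $0\to S(IC_X(\bV),\varphi)\otimes\sL\to S(IC_X(\bV))\otimes\sL\to\sQ\to0$, the map $H^0(X,S(IC_X(\bV))\otimes\sL)\to H^0(X,\sQ)$ is then surjective, and $\sQ$ is a finite-length sheaf supported on $\{x_1,\dots,x_r\}$, so $H^0(X,\sQ)=\bigoplus_k\sQ_{x_k}$. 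Everything reduces to the local statement
\[
S(IC_X(\bV),\varphi)_{x_k}\subseteq m^{s_k+1}_{X,x_k}\cdot S(IC_X(\bV))_{x_k},\qquad 1\leq k\leq r,
\]
which makes each $\sQ_{x_k}$ surject onto $(S(IC_X(\bV))\otimes\sL)\otimes\sO_{X,x_k}/m^{s_k+1}_{X,x_k}$.

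This last containment is the step I expect to be the real obstacle. When $x_k\notin D'$ it is immediate: there $S(IC_X(\bV))$ is the locally free sheaf attached to $\cF^{\max}$, the Hodge metric is smooth and nondegenerate, so $S(IC_X(\bV),\varphi)=\sI(\varphi)\otimes S(IC_X(\bV))$ near $x_k$ and the claim is (iii). When $x_k\in D'$ the Hodge metric degenerates and $S(IC_X(\bV))$ need not be locally free; here I would invoke the local structure of the multiplier $S$-sheaf established in Proposition~\ref{prop_key_est}, which near $x_k$ gives decompositions
\[
S(IC_X(\bV),\varphi)\simeq\bigoplus_i\sI(\varphi_i)\otimes\omega_X,\qquad S(IC_X(\bV))\simeq\bigoplus_i\sI(\varphi^0_i)\otimes\omega_X,
\]
the second being the case $\varphi\equiv0$ of (\ref{align_S_vs_multiideal}), where $\varphi^0_i$ depends only on the eigenvalues of the local monodromies and $\varphi_i$ coincides with $\varphi^0_i+\varphi$ up to a locally bounded term. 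By subadditivity of multiplier ideals and (iii), $\sI(\varphi_i)=\sI(\varphi^0_i+\varphi)\subseteq\sI(\varphi^0_i)\cdot\sI(\varphi)\subseteq m^{s_k+1}_{X,x_k}\,\sI(\varphi^0_i)$ for each $i$, and summing over $i$ gives the desired containment at $x_k$. The delicate point is thus the interaction of Demailly's singular weight with the degeneracy locus $D'$, i.e.\ passing from the classical multiplier-ideal estimate to its $S$-sheaf counterpart; the remaining bookkeeping is formal. Restricting the resulting surjection $H^0(X,S(IC_X(\bV))\otimes\sL)\twoheadrightarrow\bigoplus_k(S(IC_X(\bV))\otimes\sL)\otimes\sO_{X,x_k}/m^{s_k+1}_{X,x_k}$ to the direct summand $R^qf_\ast\omega_Y$ gives the corollary; the range $0\leq q\leq\dim_\bC Y-\dim_\bC X$ serves only to single out the (possibly) nonzero sheaves $R^qf_\ast\omega_Y$.
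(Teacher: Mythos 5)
Your proposal is correct and takes essentially the same route as the paper: the paper proves the jet-separation statement for a general $S$-sheaf (Theorem \ref{thm_sepjets}) via Demailly's singular metric (Proposition \ref{prop_Demailly}), the Nadel-type vanishing theorem for $S(IC_X(\bV),\varphi)$, and the local decomposition from Proposition \ref{prop_key_est} with the $L^2$-adapted frame, and then deduces the corollary from the identification $R^qf_\ast\omega_Y\simeq S(IC_X(\bV^q))$ with $\bV^q=R^qf_\ast\bC_{f^{-1}(X\setminus D)}$ (Koll\'ar/Takegoshi). The only cosmetic difference is that your appeal to subadditivity of multiplier ideals is unnecessary: since $\psi_i\leq 0$ and $\alpha_{E_j}(\widetilde{v_i})\in(-1,0]$ one has $\sI(\varphi+\psi_i)\subseteq\sI(\varphi)\subseteq m_{X,x_k}^{s_k+1}$ and $\sI(\psi_i)=\sO_X$ directly, which is exactly what the paper uses.
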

Since Theorem \ref{thm_R_Nadel_vanishing} holds for a general Hodge module, we actually prove the analogues of Corollary \ref{cor_Siu} and Corollary \ref{cor_Demailly} for Hodge modules (Theorem \ref{thm_Hodge_Siu} and Theorem \ref{thm_sepjets}). We also obtain the results on separating jets with an optimal bound (Theorem \ref{thm_relFujita_semiample}) when $L$ is ample and base point free.

\begin{rmk}
	For a general Hodge module $M$, the semi-simplicity of $M$ provides a unique decomposition $M=\bigoplus IC_{Z_i}(\bV_i)$ where  $IC_{Z_i}(\bV_i)$ is a  Hodge module with its strict support an irreducible Zariski closed subset $Z_i\subset X$ for each $i$. Then $S(M,\varphi)$ could be defined as $\bigoplus S(IC_{Z_i}(\bV_i),\varphi|_{Z_i})$. The main results in the present paper hold for a general Hodge module as long as they are valid for Hodge modules with strict support. Therefore, we only consider Hodge modules with strict support in the present paper.
\end{rmk}
\textbf{Acknowledgment:} Both authors would like to thank Zhenqian Li, Ya Deng and Ruijie Yang for many helpful conversations. The first author also thanks Lei Zhang for his interest in this work.

{\bf Conventions and Notations:} 
\begin{itemize}
	\item All complex spaces are separated, irreducible, reduced, paracompact and countable at infinity. Let $X$ be a complex space. A Zariski closed subset (=closed analytic subset) $Z\subset X$ is a closed subset which is locally defined as the zeros of a set of holomorphic functions. A subset $Y\subset X$ is called Zariski open if $X\backslash Z\subset X$ is Zariski closed.
	\item Let $Y$ be a complex manifold and $E$ a hermitian vector bundle on $Y$. Let $\Theta\in A^{1,1}(Y,End(E))$. Denote $\Theta\geq0$ if $\Theta$ is Nakano semipositive.
	Let $\Theta_1,\Theta_2\in A^{1,1}(Y,End(E))$. Then $\Theta_1\geq\Theta_2$ stands for $\Theta_1-\Theta_2\geq0$.
	\item A $C^\infty$ form on a complex space $X$ is a $C^\infty$ form $\alpha$ on $X_{\rm reg}$ so that the following statement hold: Locally at every point $x\in X$ there is an open neighborhood $U$ of $x$, a holomorphic embedding $\iota:U\to \bC^N$ and $\beta\in C^\infty(\bC^N)$ such that $\iota^\ast\beta=\alpha$ on $U\cap X_{\rm reg}$.
	\item A psh (resp. strictly psh) function on a complex space $X$ is a function $\lambda:X\to[-\infty,\infty)$ such that locally at every point $x\in X$ there is a neighborhood $U$ of $x$, a closed immersion $\iota:U\to\Omega$ into a holomorphic manifold $\Omega$ and a  psh (resp. strictly psh) function $\Lambda$ on $\Omega$ such that $\iota^\ast\Lambda=\lambda$. A function $\varphi$ on $X$ is called quasi-psh if it can	be written locally as a sum $\varphi=\alpha+\psi$ of a $C^\infty$ function $\alpha$ and a psh  function $\psi$.
	
	\item Let $\varphi$ be a quasi-psh function on a holomorphic manifold $X$. $\sI(\varphi)\subset\sO_X$ denotes the multiplier ideal sheaf consisting of holomorphic functions $f$ such that $|f|^2e^{-\varphi}$ is locally integrable.
	\item Let $(Y,ds^2)$ be a hermitian manifold and $E$ a holomorphic vector bundle on $Y$. A singular hermitian metric on $E$ is a measurable section $h\in E^\ast\otimes \overline{E}^\ast$ such that $h=e^{-\varphi}h_0$ for some smooth hermitian metric $h_0$ and some quasi-psh function $\varphi$. 
	\item Let $\alpha$, $\beta$ be functions (resp. metrics or $(1,1)$-forms). We denote $\alpha\lesssim\beta$ if $\alpha\leq C\beta$ for some constant $C>0$. Denote $\alpha\sim\beta$ if both $\alpha\lesssim\beta$ and $\beta\lesssim\alpha$ hold.
\end{itemize}
\section{Preliminaries on Saito's $S$-sheaf}\label{section_Hodge_module}
\subsection{Saito's $S$-sheaf}\label{subsection_Hodge_module}
Readers may see \cite{MSaito1988,MSaito1990,MSaito1991(2),Schnell_introMHS,Peter_Steenbrink2008} for the theory of Hodge module. In the present paper we will not use the theory of Hodge module. Instead, a concrete construction of Saito's $S$-sheaf using Deligne's extension will be used. This construction is originated by Koll\'ar in \cite{Kollar1986}.

Let $X$ be a complex space and $X^o\subset X_{\rm reg}$ a Zariski open subset. Let $\bV:=(\cV,\nabla,\cF^\bullet,h_\bV)$ be an $\bR$-polarized variation of Hodge structure (\cite[\S 1]{Cattani_Kaplan_Schmid1986}) on $X^o$.
The $S$-sheaf $S(IC_X(\bV))$ associated with $\bV$ is defined as follows.
\begin{enumerate}
	\item (Log smooth case): Assume that $X$ is smooth and $E:=X\backslash X^o$ is a simple normal crossing divisor. Let $E=\cup E_i$ be the irreducible decomposition. By \cite[\S II, Proposition 5.4]{Deligne1970}, there is a logarithmic flat holomorphic vector bundle  $(\widetilde{\cV}_{-1},\widetilde{\nabla})$ (unique up to isomorphisms):
	\begin{align*}
	\widetilde{\nabla}: \widetilde{\cV}_{-1}\to\Omega_{X}(\log E)\otimes\widetilde{\cV}_{-1},
	\end{align*}
	such that $(\widetilde{\cV}_{-1},\widetilde{\nabla})|_{X^o}$ is holomorphically equivalent to $(\cV,\nabla)$ and the eigenvalues of the residue operator
	\begin{align*}
	{\rm Res}_{E_i}\widetilde{\nabla}:\widetilde{\cV}_{-1}|_{E_i}\to \widetilde{\cV}_{-1}|_{E_i}
	\end{align*}
	lie in $(-1,0]$. 
	Let $j:X^o\to X$ denote the open immersion. By \cite[Theorem 1.1]{MSaito1991}, the $S$-sheaf can be described as $$S(IC_{X}(\bV))=R(IC_{X}(\bV))\otimes\omega_{X}$$ where
	\begin{align*}
	R(IC_{X}(\bV))=j_\ast(S(\bV))\cap\widetilde{\cV}_{-1},\quad S(\bV):=\cF^{\max\{k|\cF^k\neq0\}}.
	\end{align*}
	Moreover, $R(IC_{X}(\bV))$ is a holomorphic subbundle of $\widetilde{\cV}_{-1}$ according to the nilpotent orbit theorem (Schmid \cite{Schmid1973} and Cattani-Kaplan-Schmid \cite{Cattani_Kaplan_Schmid1986}).
	\item (General case): Let $\pi:\widetilde{X}\to X$ be a proper bimeromorphic morphism such that $\pi$ is biholomorphic over $X^o$ and the exceptional loci $E:=\pi^{-1}(X\backslash X^o)$ is a simple normal crossing divisor. One defines
	\begin{align*}
	S(IC_X(\bV)):=\pi_\ast\left(S(IC_{\widetilde{X}}(\pi^\ast\bV))\right)\simeq R\pi_\ast\left(S(IC_{\widetilde{X}}(\pi^\ast\bV))\right).
	\end{align*}
	Saito \cite{MSaito1991} shows that $S(IC_X(\bV))$ is independent of the choice of the desingularization $\pi$. We will provide another proof of this fact by characterizing $S(IC_X(\bV))$ using $L^2$ holomorphic sections (Corollary \ref{cor_S_phi=0}).
\end{enumerate}
The existence of the $S$-sheaf (associated to $\bV$) was conjectured by  Koll\'ar \cite{Kollar1986}, as a generalization of the dualizing sheaf,  to admit a package of theorems such as Koll\'ar's vanishing theorem, torsion freeness and the decomposition theorem. The construction of $S$-sheaf and its package of theorems are settled by Saito in \cite{MSaito1991} through his theory of Hodge modules.
\subsection{Geometric behavior of the Hodge metric}\label{section_norm_est_VHS}
Let $\bV=(\cV,\nabla,\cF^\bullet,h_\bV)$ be an $\bR$-polarized variation of Hodge structure on $(\Delta^\ast)^n\times \Delta^{n'}$. Let $s_1,\dots,s_n$ be holomorphic coordinates on $(\Delta^\ast)^n$ and denote $D_i=\{s_i=0\}\subset\Delta^{n+n'}$. Let $N_i$ be the unipotent part of ${\rm Res}_{D_i}\nabla$ and let 
$$p:\bH^{n}\times \Delta^{n'}\to (\Delta^\ast)^n\times \Delta^{n'},$$ 
$$(z_1,\dots,z_n,w_1,\dots,w_{n'})\mapsto(e^{2\pi\sqrt{-1}z_1},\dots,e^{2\pi\sqrt{-1}z_n},w_1,\dots,w_{n'})$$
be the universal covering. Let
$W^{(1)}=W(N_1),\dots,W^{(n)}=W(N_1+\cdots+N_n)$ be the monodromy weight filtrations on $V:=\Gamma(\bH^n\times \Delta^{n'},p^\ast\cV)^{p^\ast\nabla}$.
The following important norm estimate for flat sections is proved by Cattani-Kaplan-Schmid in \cite[Theorem 5.21]{Cattani_Kaplan_Schmid1986} for the case when $\bV$ has quasi-unipotent local monodromy and by Mochizuki in \cite[Part 3, Chapter 13]{Mochizuki20072} for the general case.

\begin{thm}\label{thm_Hodge_metric_asymptotic}
	For any $0\neq v\in {\rm Gr}_{l_n}^{W^{(n)}}\cdots{\rm Gr}_{l_1}^{W^{(1)}}V$, one has
	\begin{align*}
	|v|_{h_\bV}\sim \left(\frac{\log|s_1|}{\log|s_2|}\right)^{l_1}\cdots\left(-\log|s_n|\right)^{l_n}
	\end{align*}
	over any region of the form
	$$\left\{(s_1,\dots s_n,w_1,\dots,w_{n'})\in (\Delta^\ast)^n\times \Delta^{n'}\bigg|\frac{\log|s_1|}{\log|s_2|}>\epsilon,\dots,-\log|s_n|>\epsilon,(w_1,\dots,w_{n'})\in K\right\}$$
	for any $\epsilon>0$ and an arbitrary compact subset $K\subset \Delta^{n'}$ .
\end{thm}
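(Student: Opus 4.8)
The plan is to reduce the assertion first to the corresponding estimate for a nilpotent orbit, and then to an $SL_2$-orbit, for which the Hodge norm can be computed by elementary $\mathfrak{sl}_2$-representation theory. As a preliminary reduction I would pass to a finite cover $s_i\mapsto s_i^{\mu_i}$ with the $\mu_i$ chosen so that all monodromies become unipotent; every quantity in the claimed equivalence is only multiplied by a fixed positive constant under such a base change and the filtrations $W^{(i)}$ are unchanged, so this reduces the quasi-unipotent case to the unipotent one. Writing $z_j=x_j+\sqrt{-1}y_j$ with $y_j\sim -\log|s_j|$, the several-variable nilpotent orbit theorem of Schmid and Cattani--Kaplan--Schmid \cite{Schmid1973,Cattani_Kaplan_Schmid1986} shows that over the corner region in the statement the Hodge metric $h_\bV$ agrees, up to bounded factors, with the metric of the nilpotent orbit $\exp\!\big(\sum_j z_jN_j\big)F_\infty$; hence it suffices to establish the estimate for this orbit.

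The main input is then the several-variable $SL_2$-orbit theorem of Cattani--Kaplan--Schmid: over the region where $y_1/y_2,\dots,y_{n-1}/y_n$ and $y_n$ are bounded below, the nilpotent orbit is approximated, with controlled error, by the orbit of a homomorphism $SL_2(\bR)^n\to G_\bR$. This furnishes $n$ mutually commuting $\mathfrak{sl}_2$-triples with semisimple generators $Y_1,\dots,Y_n$ acting on $V$, and the corresponding \emph{model} Hodge norm of a vector $v$ lying in the simultaneous weight space for $Y_1,\dots,Y_n$ is, in the variables $t_j:=y_j/y_{j+1}$ (with $y_{n+1}:=1$), a monomial $\prod_j t_j^{l_j}$ up to a constant, where $l_j$ is the $Y_j$-weight of $v$. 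This last point is the one-variable computation --- the successive action of $\exp(\tfrac12(\log t_j)Y_j)$ together with the Cartan involution of the limiting polarized Hodge structure --- applied inductively.

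It then remains to identify the simultaneous $Y_\bullet$-weight spaces with the iterated graded pieces ${\rm Gr}^{W^{(n)}}_{l_n}\cdots{\rm Gr}^{W^{(1)}}_{l_1}V$. For this I would invoke Cattani--Kaplan's theorem that $W^{(1)},\dots,W^{(n)}$ form a compatible system --- each $W^{(k)}$ is the relative monodromy weight filtration of $N_1+\cdots+N_k$ with respect to $W^{(k-1)}$, and the $SL_2$-orbit data splits all of them simultaneously --- so that grading by $(l_1,\dots,l_n)$ coincides with successive grading by $W^{(1)},\dots,W^{(n)}$ after normalizing indices. Equivalently one argues by induction on $n$: the $SL_2$-orbit theorem reduces the $n$-variable orbit to a one-variable orbit in $y_1$ splitting $W^{(1)}=W(N_1)$, which produces the factor $(y_1/y_2)^{l_1}$, tensored on each ${\rm Gr}^{W^{(1)}}_{l_1}$ with an $(n-1)$-variable orbit for the induced cone $(N_2,\dots,N_n)$, to which the inductive hypothesis applies; combining the three steps yields $|v|_{h_\bV}\sim\prod_{j=1}^{n}(y_j/y_{j+1})^{l_j}$ in the quasi-unipotent case. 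For general monodromy one decomposes $\cV$ along the generalized eigenspaces of the $T_i$, reduces to tame harmonic bundles, and invokes Mochizuki's $SL_2$-orbit theorem for polarized mixed twistor structures \cite{Mochizuki20072}, the semisimple eigenvalue data contributing only bounded factors to the norm of a flat section.

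The genuinely hard content is the $SL_2$-orbit theorem itself: the uniform estimates controlling the deviation of $h_\bV$ from the $SL_2$-model over the corner regions, together with the bookkeeping of the relative weight filtrations $W^{(k)}$ and their simultaneous splitting. In the non-quasi-unipotent case one additionally needs Mochizuki's machinery of tame harmonic bundles. The remaining ingredients --- the finite base change and the $\mathfrak{sl}_2$-representation-theoretic computation of the model norm --- are routine.
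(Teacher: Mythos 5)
The paper does not prove this statement at all: it is quoted verbatim as a known result, with the quasi-unipotent case attributed to Cattani--Kaplan--Schmid \cite{Cattani_Kaplan_Schmid1986}*{Theorem 5.21} and the general case to Mochizuki \cite{Mochizuki20072}. Your outline (nilpotent orbit theorem, several-variable $SL_2$-orbit theorem, compatibility and simultaneous splitting of the relative weight filtrations, and Mochizuki's tame harmonic bundle machinery for non-quasi-unipotent monodromy) is a faithful summary of exactly the argument in those references, so it is consistent with the paper's treatment.
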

\begin{lem}\label{lem_W_F}
	Assume that $n=1$. Then $W_{-1}(N_1)\cap \big(j_\ast S(\bV)\cap\cV_{-1}\big)|_{\bf 0}=0$.
\end{lem}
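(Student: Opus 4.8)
The plan is to combine the Hodge-metric norm estimate (Theorem \ref{thm_Hodge_metric_asymptotic} with $n=1$) with the basic finiteness property of $L^2$ holomorphic sections. Recall that, in the one-variable situation, $\cV_{-1}=\widetilde{\cV}_{-1}$ is the Deligne extension whose residue has eigenvalues in $(-1,0]$, and a local holomorphic section of $j_\ast\cV$ extends to $\cV_{-1}$ precisely when it is $|s|^{-\epsilon}$-bounded for every $\epsilon>0$. The key point is that the weight filtration $W_\bullet=W(N_1)$ refines the growth rate: by Theorem \ref{thm_Hodge_metric_asymptotic}, a multivalued flat section $v$ lying in $\mathrm{Gr}^W_l V$ has Hodge norm $|v|_{h_\bV}\sim(-\log|s|)^{l/2}$ (up to the normalization of the grading convention). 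So a flat section in the negative part $W_{-1}(N_1)$ has Hodge norm that is a \emph{negative} power of $-\log|s|$, i.e. it decays logarithmically.

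First I would take a nonzero element $\alpha$ of $W_{-1}(N_1)\cap\big(j_\ast S(\bV)\cap\cV_{-1}\big)|_{\mathbf 0}$ and lift it to a local holomorphic section, still called $\alpha$, of $j_\ast S(\bV)\cap\cV_{-1}$ on a small disc $\Delta$, with $\alpha(\mathbf 0)=\alpha\neq 0$ in the fiber. The membership $\alpha\in W_{-1}(N_1)$ at $\mathbf 0$ should be propagated: since $W_\bullet$ is the monodromy weight filtration and $\cV_{-1}$ is flat with respect to the logarithmic connection, $W_{-1}$ defines a sub-local-system, so $\alpha$ can be written, after passing to the universal cover $p:\bH\to\Delta^\ast$, as a combination of multivalued flat sections all lying in $W_{-1}(N_1)V$. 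Second I would apply Theorem \ref{thm_Hodge_metric_asymptotic}: each such flat generator has $|v|_{h_\bV}\lesssim(-\log|s|)^{-1/2}$ near $s=0$ (the largest weight occurring in $W_{-1}$ is $-1$), so $|\alpha|_{h_\bV}\lesssim(-\log|s|)^{-1/2}$, which in particular is bounded. Third, because $\alpha$ is a \emph{holomorphic} section of $S(\bV)\subset\cV$ on $\Delta^\ast$ whose Hodge norm is bounded (indeed decaying), and because $S(\bV)$ is a holomorphic subbundle of the Deligne extension with its induced metric, $\alpha$ extends holomorphically across $s=0$ with $\alpha(0)$ having Hodge norm $0$ at the origin — but the Hodge metric on $\mathrm{Gr}^W$ is nondegenerate, forcing the image of $\alpha$ in the fiber to be zero. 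Hence $\alpha(\mathbf 0)=0$, contradicting the choice of $\alpha$.

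The cleanest way to run the last step without invoking a limiting Hodge metric is: the decay $|\alpha|_{h_\bV}\to 0$ means $\alpha$ lies in $\bigcap_{l}W_l\cdot(\text{stuff})$... more concretely, if $\alpha(\mathbf 0)\neq 0$ in $\cV_{-1}|_{\mathbf 0}$, its class in some $\mathrm{Gr}^W_{l_0}$ with $l_0\geq 0$ is nonzero (since $\alpha\in S(\bV)$ forces, by the compatibility of $S(\bV)$ with the weight filtration under the nilpotent orbit theorem, $l_0\geq 0$), and then Theorem \ref{thm_Hodge_metric_asymptotic} gives $|\alpha|_{h_\bV}\gtrsim(-\log|s|)^{l_0/2}\geq$ const, contradicting $\alpha\in W_{-1}$. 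I expect the main obstacle to be the bookkeeping around grading conventions and the precise interaction between $S(\bV)=\cF^{\max}$ and the weight filtration near the boundary — specifically, justifying that a section of $S(\bV)\cap\cV_{-1}$ with a nonzero value at $\mathbf 0$ cannot have that value land in $W_{-1}|_{\mathbf 0}$, which is really the statement that the lowest Hodge piece of the limiting mixed Hodge structure sits in weights $\geq 0$. This is exactly the kind of fact furnished by the nilpotent orbit theorem and the $SL_2$-orbit theorem of Cattani–Kaplan–Schmid, and I would cite \cite{Cattani_Kaplan_Schmid1986} and \cite{Schmid1973} for it rather than reprove it.
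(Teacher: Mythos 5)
There is a genuine gap: the step that carries all the content of the lemma is precisely the one you defer. Your first two steps only establish that a flat multivalued section lying in $W_{-1}(N_1)$ has decaying Hodge norm, and your third step tries to conclude that the corresponding fiber value must then vanish. That inference is invalid: the Hodge metric does not extend nondegenerately to the fiber of $\cV_{-1}$ at $\mathbf 0$, and a \emph{nonzero} flat section in $W_{-1}$ always has decaying Hodge norm --- that is exactly what Theorem \ref{thm_Hodge_metric_asymptotic} asserts (think of the vanishing cycle of a degenerating elliptic curve), so ``bounded, indeed decaying, norm $\Rightarrow$ the image of $\alpha$ in the fiber is zero'' proves nothing. (The propagation claim that a holomorphic section of $\cV_{-1}$ whose value at $\mathbf 0$ lies in $W_{-1}$ is a holomorphic combination of flat sections in $W_{-1}$ is also false as stated, though the error terms are harmless.) Your fallback ``cleanest way'' is circular: the assertion that a nonzero element of $\big(j_\ast S(\bV)\cap\cV_{-1}\big)|_{\bf 0}$ must have $l_0\geq 0$ \emph{is} the statement $W_{-1}(N_1)\cap\big(j_\ast S(\bV)\cap\cV_{-1}\big)|_{\bf 0}=0$ being proved. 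You recognize this and propose to cite Schmid and Cattani--Kaplan--Schmid for ``the lowest Hodge piece of the limiting mixed Hodge structure sits in weights $\geq 0$'', but that is not a quotable statement in those references; it is a short consequence of Schmid's results that still has to be derived, and that derivation is the whole proof.

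The paper supplies exactly this derivation, using no norm estimates at all. By \cite[6.16]{Schmid1973}, the filtration $j_\ast\cF^\bullet\cap\cV_{-1}$ induces on $W_m(N_1)/W_{m-1}(N_1)$ a pure Hodge structure of weight $m+k$ ($k$ the weight of $\bV$), and $N^l\colon W_l/W_{l-1}\to W_{-l}/W_{-l-1}$ is an isomorphism of type $(-l,-l)$. Writing $S(\bV)=\cF^p$ and taking $l\geq 1$ maximal with $W_{-l}(N_1)\cap\big(j_\ast S(\bV)\cap\cV_{-1}\big)|_{\bf 0}\neq 0$, a nonzero element there has nonzero class of Hodge type $(p,k-l-p)$ in $W_{-l}/W_{-l-1}$; hard Lefschetz lifts it to a class of type $(p+l,k-p)$ in $W_l/W_{l-1}$, which vanishes because $\cF^{p+l}=0$ ($p$ being the top index of the Hodge filtration). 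This contradiction is the argument your proposal is missing.
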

\begin{proof}
	Assume that $W_{-1}(N_1)\cap \big(j_\ast S(\bV)\cap\cV_{-1}\big)|_{\bf 0}\neq0$ and let $k$ be the weight of $\bV$. Let $l=\max\{l|W_{-l}(N_1)\cap \big(j_\ast S(\bV)\cap\cV_{-1}\big)|_{\bf 0}\neq0\}$. Then $l\geq 1$. 
	By \cite[6.16]{Schmid1973}, the filtration $j_\ast \cF^\bullet\cap\cV_{-1}$ induces a pure Hodge structure of weight $m+k$ on $W_{m}(N_1)/W_{m-1}(N_1)$. Moreover, 
	\begin{align}\label{align_hard_lef_N}
	N^l: W_{l}(N_1)/W_{l-1}(N_1)\to W_{-l}(N_1)/W_{-l-1}(N_1)
	\end{align}
	is an isomorphism of type $(-l,-l)$. Denote $S(\bV)=\cF^p$. By the definition of $l$, any nonzero element $\alpha\in W_{-l}(N_1)\cap \big(j_\ast S(\bV)\cap\cV_{-1}\big)|_{\bf 0}$ induces a nonzero $[\alpha]\in W_{-l}(N_1)/W_{-l-1}(N_1)$ of Hodge type $(p,k-l-p)$. Since (\ref{align_hard_lef_N}) is an isomorphism, there is $\beta\in W_{l}(N_1)/W_{l-1}(N_1)$ of Hodge type $(p+l,k-p)$ such that $N^l(\beta)=[\alpha]$. However, $\beta=0$ since $\cF^{p+l}=0$. This contradicts to the fact that $[\alpha]\neq0$. Consequently, $W_{-1}(N_1)\cap \big(j_\ast S(\bV)\cap\cV_{-1}\big)|_{\bf 0}$ must be zero.
\end{proof}

The following Nakano semi-positivity property of the curvature of $S(\bV)$ enables us to apply H\"ormander's estimate to $S(\bV)$.
\begin{thm}{\cite[Lemma 7.18]{Schmid1973}}\label{thm_geq0_S(V)}
	Let $\bV=(\cV,\nabla,\cF^\bullet,h_\bV)$ be an $\bR$-polarized variation of Hodge structure over a complex manifold. Then $\sqrt{-1}\Theta_{h_{\bV}}(S(\bV))\geq 0$.
\end{thm}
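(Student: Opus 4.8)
The plan is to reduce the statement to the curvature formula for Hodge bundles and then observe that, for the \emph{top} Hodge piece, that formula degenerates into something manifestly Nakano semipositive; this is essentially Schmid's argument for \cite[Lemma 7.18]{Schmid1973}. First I would work with the $C^{\infty}$ Hodge decomposition $\cV=\bigoplus_{p}\mathcal{H}^{p,\,k-p}$ (orthogonal for $h_\bV$, with $k$ the weight), so that $S(\bV)=\cF^{p_{\max}}=\mathcal{H}^{p_{\max},\,k-p_{\max}}$ where $p_{\max}=\max\{k\mid\cF^k\neq0\}$, and decompose the flat connection as $\nabla=\nabla^{+}+\theta+\theta^{*}$, where $\nabla^{+}$ is the unique $h_\bV$-compatible connection preserving the Hodge decomposition, $\theta\in A^{1,0}\!\big(\operatorname{Hom}(\mathcal{H}^{p,q},\mathcal{H}^{p-1,q+1})\big)$ is the Higgs field coming from Griffiths transversality, and $\theta^{*}\in A^{0,1}\!\big(\operatorname{Hom}(\mathcal{H}^{p,q},\mathcal{H}^{p+1,q-1})\big)$ is its $h_\bV$-adjoint. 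Because $\nabla^{0,1}$ preserves $\cF^{p_{\max}}$ and its $\mathcal{H}^{p_{\max}}$-component is the holomorphic structure of the Hodge bundle, the restriction $\nabla^{+}|_{\cF^{p_{\max}}}$ is exactly the Chern connection of $(\cF^{p_{\max}},h_\bV)$.

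Next I would extract the curvature identity from flatness. Expanding $(\nabla^{+}+\theta+\theta^{*})^{2}=0$ and collecting the Hodge-degree-preserving terms yields $\Theta_{\nabla^{+}}=-(\theta\wedge\theta^{*}+\theta^{*}\wedge\theta)$, and restricted to $\mathcal{H}^{p,q}$ the left-hand side is precisely the Chern curvature $\sqrt{-1}\,\Theta_{h_\bV}(\mathcal{H}^{p,q})$. Specializing to $p=p_{\max}$: since $\theta^{*}$ raises the Hodge index by one and $\mathcal{H}^{p_{\max}+1,\,k-p_{\max}-1}=0$, the term $\theta\wedge\theta^{*}$ vanishes on $\cF^{p_{\max}}$, so
\[
\sqrt{-1}\,\Theta_{h_\bV}(S(\bV))=-\sqrt{-1}\,\theta^{*}\wedge\theta\big|_{\cF^{p_{\max}}}.
\]
(The one delicate point here is the overall sign in $\Theta_{\nabla^{+}}=-[\theta,\theta^{*}]$; I would pin it down by calibrating on the weight-one variation attached to a non-isotrivial family of elliptic curves, where $\cF^{1}$ carries the Hodge metric $2\operatorname{Im}\tau$ and one checks directly that its curvature is strictly positive.)

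Finally I would verify that $-\sqrt{-1}\,\theta^{*}\wedge\theta$ is Nakano semipositive. Writing $\theta=\sum_{j}\theta_{j}\,dz_{j}$ in local coordinates, one has $-\sqrt{-1}\,\theta^{*}\wedge\theta=\sqrt{-1}\sum_{j,k}\theta_{k}^{*}\theta_{j}\,dz_{j}\wedge d\bar z_{k}$, so for a local section $u=\sum_{j,\lambda}u^{j\lambda}\,\frac{\partial}{\partial z_{j}}\otimes s_{\lambda}$ of $T_{X}\otimes S(\bV)$ the associated Nakano quadratic form equals $\sum_{j,k,\lambda,\mu}\langle\theta_{j}s_{\lambda},\theta_{k}s_{\mu}\rangle_{h_\bV}\,u^{j\lambda}\overline{u^{k\mu}}=\big\|\sum_{j,\lambda}u^{j\lambda}\theta_{j}(s_{\lambda})\big\|_{h_\bV}^{2}\ge0$, which is exactly Nakano semipositivity. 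The only genuine obstacle in this scheme is the bookkeeping behind the identity $\Theta_{\nabla^{+}}=-[\theta,\theta^{*}]$ — that is, Schmid's lemma itself — since everything before and after it is formal; if one is willing to quote \cite[Lemma 7.18]{Schmid1973} in the form just stated, the last paragraph is the whole proof.
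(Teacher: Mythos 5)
Your argument is correct and is exactly the standard computation behind the result the paper cites: the paper gives no proof of its own, only the reference to \cite[Lemma 7.18]{Schmid1973}, and your decomposition $\nabla=\nabla^{+}+\theta+\theta^{*}$, the flatness identity $\Theta_{\nabla^{+}}=-(\theta\wedge\theta^{*}+\theta^{*}\wedge\theta)$, and the observation that $\theta^{*}$ kills the top Hodge piece so that the curvature reduces to the manifestly Nakano-semipositive term $-\sqrt{-1}\,\theta^{*}\wedge\theta$ reproduce Schmid's proof. The sign check and the final quadratic-form computation $\sum\langle\theta_{j}s_{\lambda},\theta_{k}s_{\mu}\rangle u^{j\lambda}\overline{u^{k\mu}}=\|\sum u^{j\lambda}\theta_{j}(s_{\lambda})\|^{2}\geq0$ are both right, so nothing is missing.
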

\subsection{$L^2$-adapted local frame}\label{section_L2_adapted_frame}
Let $X=\Delta^n\times \Delta^{n'}$, $X^o=(\Delta^\ast)^n\times \Delta^{n'}$ and let  $j:X^o\to X$ be the open immersion. Denote by $z_1,\dots,z_n$ the coordinates on $\Delta^n$ and by $w_1,\dots,w_{n'}$ the coordinates on $\Delta^{n'}$. Let $D_i:=\{z_i=0\}\subset X$, $i=1,\dots, n$. Let $\bV=(\cV,\nabla,\cF^\bullet,h)$ be an $\bR$-polarized variation of Hodge structure on $X^o$. The aim of this subsection is to give the norm estimate of the specific frame of $R(IC_X(\bV))$ at the origin ${\bf 0}=(0,\dots,0)$ which is introduced by Deligne \cite{Deligne1970}. 

Let 
$$p:\bH^{n}\times \Delta^{n'}\to (\Delta^\ast)^n\times \Delta^{n'},$$ 
$$(z_1,\dots,z_n,w_1,\dots,w_{n'})\mapsto(e^{2\pi\sqrt{-1}z_1},\dots,e^{2\pi\sqrt{-1}z_n},w_1,\dots,w_{n'})$$
be the universal covering.
For each $i=1,\dots,n$, let $T_i$ be the monodromy operators along $D_i$ and $N_i$  the unipotent part of ${\rm Res}_{D_i}(\nabla)$.
Since $T_1,\dots,T_n$ are pairwise commutative, there is a finite decomposition 
$$\cV_{-1}|_{\bf 0}=\bigoplus_{-1<\alpha_1,\dots,\alpha_n\leq 0}\bV_{\alpha_1,\dots,\alpha_n}$$
such that $(T_i-e^{2\pi\sqrt{-1}\alpha_i}{\rm Id})$ is unipotent on $\bV_{\alpha_1,\dots,\alpha_n}$ for each $i=1,\dots,n$. 
Let $$v_1,\dots, v_N\in (\cV_{-1}\cap j_\ast S(\bV))|_{\bf 0}\cap\bigcup_{-1<\alpha_1,\dots,\alpha_n\leq 0}\bV_{\alpha_1,\dots,\alpha_n}$$
be an orthogonal basis of $(\cV_{-1}\cap j_\ast S(\bV))|_{\bf 0}\simeq \Gamma(\bH^n\times\Delta^{n'},p^\ast S(\bV))^{p^\ast\nabla}$. Then $\widetilde{v_1},\dots,\widetilde{v_N}$ that are determined by
\begin{align}\label{align_adapted_frame}
\widetilde{v_j}:={\rm exp}\left(\sum_{i=1}^n\log z_i(\alpha_i{\rm Id}+N_i)\right)v_j\textrm{ if } v_j\in\bV_{\alpha_1,\dots, \alpha_n},\quad \forall j=1,\dots,N
\end{align}
form a frame of $\cV_{-1}\cap j_\ast S(\bV)$.
To be precise, we always use the notation $\alpha_{D_i}(\widetilde{v_j})$ instead of $\alpha_i$ in (\ref{align_adapted_frame}). By (\ref{align_adapted_frame}) we see that 
\begin{align*}
|\widetilde{v_j}|^2_{h}&\sim\left|\prod_{i=1}^nz_i^{\alpha_{D_i}(\widetilde{v_j})}{\rm exp}\left(\sum_{i=1}^nN_i\log z_i\right)v_j\right|^2_{h}\\\nonumber
&\sim|v_j|^2_{h}\prod_{i=1}^n |z_i|^{2\alpha_{D_i}(\widetilde{v_j})},\quad j=1,\dots,N
\end{align*}
where $\alpha_{D_i}(\widetilde{v_j})\in(-1,0]$, $\forall i=1,\dots, n$. 
By Theorem \ref{thm_Hodge_metric_asymptotic} and Lemma \ref{lem_W_F}
one has
\begin{align*}
|v_j|^2_{h}\sim \left(\frac{\log|s_1|}{\log|s_2|}\right)^{l_1}\cdots\left(-\log|s_n|\right)^{l_n},\quad l_1\leq l_2\leq\dots\leq l_{n},
\end{align*}
over any region of the form
$$\left\{(s_1,\dots, s_n,w_1,\dots,w_{m})\in (\Delta^\ast)^n\times \Delta^{m}\bigg|\frac{\log|s_1|}{\log|s_2|}>\epsilon,\dots,-\log|s_n|>\epsilon,(w_1,\dots,w_{m})\in K\right\}$$
for any $\epsilon>0$ and an arbitrary compact subset $K\subset \Delta^{m}$. Hence we know that
\begin{align*}
1\lesssim |v_j|\lesssim|z_1\cdots z_n|^{-\epsilon},\quad\forall\epsilon>0.
\end{align*} 

The local frame $(\widetilde{v_1},\dots,\widetilde{v_N})$ is $L^2$-adapted in the following sense.
\begin{defn}(Zucker \cite[page 433]{Zucker1979})
	Let $(E,h)$ be a vector bundle with a possibly singular hermitian metric $h$ on a hermitian manifold $(X,ds^2_0)$. A holomorphic local frame $(v_1,\dots,v_N)$ of $E$ is called $L^2$-adapted if, for every set of measurable functions $\{f_1,\dots,f_N\}$, $\sum_{i=1}^Nf_iv_i$ is locally square integrable if and only if $f_iv_i$ is locally square integrable for each $i=1,\dots,N$.
\end{defn}
To see that $(\widetilde{v_1},\dots,\widetilde{v_N})$ is $L^2$-adapted, let us consider the measurable functions $f_1,\dots,f_N$. If 
$$\sum_{j=1}^N f_j\widetilde{v_j}={\rm exp}\left(\sum_{i=1}^nN_i\log z_i\right)\left(\sum_{j=1}^N f_j\prod_{i=1}^n |z_i|^{\alpha_{D_i}(\widetilde{v_j})}v_j\right)$$
is locally square integrable, then 
$$\sum_{j=1}^N f_j\prod_{i=1}^n |z_i|^{\alpha_{D_i}(\widetilde{v_j})}v_j$$
is locally square integrable because the entries of the matrix ${\rm exp}\left(-\sum_{i=1}^nN_i\log z_i\right)$ are $L^\infty$-bounded.
Since $(v_1,\dots,v_N)$ is an orthogonal basis, 
$|f_j\widetilde{v_j}|_{h}\sim\prod_{i=1}^n |z_i|^{\alpha_{D_i}(\widetilde{v_j})}|f_jv_j|_{h}$ is locally square integrable for each $j=1,\dots,N$. 

In conclusion, we obtain the following proposition.
\begin{prop}\label{prop_adapted_frame}
	Let $(X,ds^2_0)$ be a hermitian manifold and $D$ a normal crossing divisor on $X$. Let $\bV$ be an $\bR$-polarized variation of Hodge structure on $X^o:=X\backslash D$. Then there is an $L^2$-adapted holomorphic local frame $(\widetilde{v_1},\dots,\widetilde{v_N})$ of $\cV_{-1}\cap j_\ast S(\bV)$ at every point $x\in D$. 
	Let $z_1,\cdots,z_n$ be holomorphic local coordinates on $X$ so that $D
	=\{z_1\cdots z_r=0\}$. Then there are constants  $\alpha_{D_i}(\widetilde{v_j})\in(-1,0]$, $i=1,\dots, r$, $j=1,\dots,N$ and positive real functions $\lambda_j\in C^\infty(X\backslash D)$, $j=1,\dots,N$ such that
	\begin{align*}
	|\widetilde{v_j}|^2\sim\lambda_j\prod_{i=1}^r |z_i|^{2\alpha_{D_i}(\widetilde{v_j})},\quad \forall j=1,\dots,N
	\end{align*}
	and
	$$1\lesssim \lambda_j\lesssim|z_1\cdots z_r|^{-\epsilon},\quad\forall\epsilon>0,\quad \forall j=1,\dots,N$$
\end{prop}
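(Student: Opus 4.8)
The plan is to localize to the model situation of \S\ref{section_L2_adapted_frame} and then extract everything from the norm asymptotics established there. Since $D$ is a normal crossing divisor and the assertion is local around a point $x\in D$, I would choose holomorphic coordinates $z_1,\dots,z_n$ on a neighborhood $U\cong\Delta^n$ of $x$ with $D\cap U=\{z_1\cdots z_r=0\}$, so that $(U,U\cap X^o)$ is biholomorphic to $\big(\Delta^n,(\Delta^\ast)^r\times\Delta^{n-r}\big)$ and $\bV|_{U\cap X^o}$ becomes an $\bR$-polarized variation of Hodge structure on $(\Delta^\ast)^r\times\Delta^{n-r}$. A hermitian metric $ds_0^2$ is comparable on relatively compact subsets to the Euclidean one, so the $L^2$-adaptedness property (and hence the whole statement) is insensitive to this reduction.

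On this model I would first produce the frame exactly as in \eqref{align_adapted_frame}: decompose $\cV_{-1}|_x=\bigoplus \bV_{\alpha_1,\dots,\alpha_r}$ into the simultaneous generalized eigenspaces of the commuting monodromies $T_1,\dots,T_r$ (with $-1<\alpha_i\leq0$), pick an orthogonal basis $v_1,\dots,v_N$ of $(\cV_{-1}\cap j_\ast S(\bV))|_x\cong\Gamma(\bH^r\times\Delta^{n-r},p^\ast S(\bV))^{p^\ast\nabla}$ subordinate to it, and set $\widetilde{v_j}:={\rm exp}\big(\sum_i\log z_i(\alpha_{D_i}(\widetilde{v_j}){\rm Id}+N_i)\big)v_j$. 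The cancellation between the multivaluedness of $\log z_i$ and the monodromy makes each $\widetilde{v_j}$ single-valued and holomorphic on $U\cap X^o$; since $\cV_{-1}\cap j_\ast S(\bV)$ is a holomorphic subbundle of $\cV_{-1}$ (nilpotent orbit theorem, recalled in \S\ref{subsection_Hodge_module}), $(\widetilde{v_1},\dots,\widetilde{v_N})$ extends to a holomorphic frame of $\cV_{-1}\cap j_\ast S(\bV)$ over $U$, with $\alpha_{D_i}(\widetilde{v_j})\in(-1,0]$ by construction.

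Next comes the norm estimate. Writing $\widetilde{v_j}=\big(\prod_i z_i^{\alpha_{D_i}(\widetilde{v_j})}\big){\rm exp}\big(\sum_i N_i\log z_i\big)v_j$ and using that $v_j$ sits in a single generalized eigenspace, I would set $\lambda_j:=|v_j|^2_{h}$, a positive $C^\infty$ function on $U\cap X^o$, and observe $|\widetilde{v_j}|^2\sim\lambda_j\prod_i|z_i|^{2\alpha_{D_i}(\widetilde{v_j})}$, the point being that ${\rm exp}(\pm\sum_i N_i\log z_i)$ distorts the Hodge norm only by a factor that is polynomial in $\sum_i|\log|z_i||$, hence negligible against the iterated-log scale. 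To pin down $\lambda_j$ I would decompose $v_j$ along the iterated graded pieces ${\rm Gr}^{W^{(r)}}_{l_r}\cdots{\rm Gr}^{W^{(1)}}_{l_1}V$ and apply Theorem \ref{thm_Hodge_metric_asymptotic} on each of the finitely many sectors $\{\log|z_i|/\log|z_{i+1}|>\epsilon,\ -\log|z_r|>\epsilon\}$ (after reordering coordinates) covering a punctured neighborhood of $x$; there $\lambda_j$ is comparable to the iterated product of ratios of $\log|z_i|$ attached to the dominant multi-index $l_1\leq\dots\leq l_r$. The upper bound $\lambda_j\lesssim|z_1\cdots z_r|^{-\epsilon}$ is then immediate, as any such product is $O(|z_1\cdots z_r|^{-\epsilon})$ for every $\epsilon>0$. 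For the lower bound $\lambda_j\gtrsim1$ I would invoke Lemma \ref{lem_W_F}: it forces $v_j$ to avoid the negative part $W_{-1}(N_1)$, so $l_1\geq0$ and hence all $l_i\geq0$, and then $\prod_i(\log|z_i|/\log|z_{i+1}|)^{l_i}\cdot(-\log|z_r|)^{l_r}$ is bounded below by a positive constant on each sector. I expect this lower bound to be the main obstacle: it is precisely where the special role of the \emph{lowest} Hodge piece $S(\bV)$ (rather than an arbitrary step of $\cF^\bullet$) is used, via Lemma \ref{lem_W_F} together with the structure of the limit mixed Hodge structure, and one has to check the estimate is genuinely uniform over all coordinate orderings, i.e. over all the sectors.

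Finally, $L^2$-adaptedness follows formally. For measurable $f_1,\dots,f_N$ one has $\sum_j f_j\widetilde{v_j}={\rm exp}\big(\sum_i N_i\log z_i\big)\big(\sum_j f_j\prod_i z_i^{\alpha_{D_i}(\widetilde{v_j})}v_j\big)$, so $\sum_j f_j\widetilde{v_j}\in L^2_{\rm loc}$ if and only if $\sum_j f_j\prod_i|z_i|^{\alpha_{D_i}(\widetilde{v_j})}v_j\in L^2_{\rm loc}$ (the entries of ${\rm exp}(-\sum_i N_i\log z_i)$ being $L^\infty$-bounded in the relevant range); since $(v_1,\dots,v_N)$ is orthogonal and $|f_j\widetilde{v_j}|_h\sim\prod_i|z_i|^{\alpha_{D_i}(\widetilde{v_j})}|f_jv_j|_h$, this is equivalent to each $f_j\widetilde{v_j}$ being locally square integrable. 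Hence $(\widetilde{v_1},\dots,\widetilde{v_N})$ is $L^2$-adapted, which completes the argument.
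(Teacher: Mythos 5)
Your proposal follows the paper's own construction in \S\ref{section_L2_adapted_frame} essentially verbatim: the frame is Deligne's (\ref{align_adapted_frame}), the norm asymptotics come from Theorem \ref{thm_Hodge_metric_asymptotic} together with Lemma \ref{lem_W_F} for the lower bound on $\lambda_j=|v_j|^2_{h}$, and the $L^2$-adaptedness check via the $L^\infty$-boundedness of the entries of ${\rm exp}\left(-\sum_{i}N_i\log z_i\right)$ is the same. The one delicate point you flag --- that the lower bound $1\lesssim\lambda_j$ must hold uniformly over all sectors and coordinate orderings, via Lemma \ref{lem_W_F} applied to each partial weight filtration $W^{(i)}$ --- is treated just as tersely in the paper, so your argument matches it in both substance and level of detail.
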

\section{Preliminary on $L^2$-cohomology}
\subsection{$L^2$-Dolbeault cohomology and $L^2$-Dolbeault complex}
Let $(Y,ds^2)$ be a hermitian manifold of dimension $n$ and $(E,h)$ a holomorphic vector bundle on $Y$ with a possibly singular hermitian metric.
Let $\sA^{p,q}_Y$ denote the sheaf of $C^\infty$ $(p,q)$-forms on $Y$ for every $0\leq p,q\leq n$. Let $L^{p,q}_{(2)}(Y,E;ds^2,h)$ be the space of square integrable $E$-valued $(p,q)$-forms on $Y$ with respect to the metrics $ds^2$ and $h$. Denote $\dbar_{\rm max}$ to be the maximal extension of the $\dbar$ operator defined on the domains
$$D^{p,q}_{\rm max}(Y,E;ds^2,h):=\textrm{Dom}^{p,q}(\dbar_{\rm max})=\{\phi\in L_{(2)}^{p,q}(Y,E;ds^2,h)|\dbar\phi\in L_{(2)}^{p,q+1}(Y,E;ds^2,h)\}.$$
Here $\dbar$ is taken in the sense of distribution.
The $L^2$ cohomology $H_{(2),\rm max}^{p,\bullet}(Y,E;ds^2,h)$ is defined as the cohomology of the complex
\begin{align*}
D^{p,\bullet}_{\rm max}(Y,E;ds^2,h):=D^{p,0}_{\rm max}(Y,E;ds^2,h)\stackrel{\dbar_{\rm max}}{\to}\cdots\stackrel{\dbar_{\rm max}}{\to}D^{p,n}_{\rm max}(Y,E;ds^2,h).
\end{align*}
Let $X$ be a complex space and $X^o\subset X_{\rm reg}$ a Zariski open subset of the regular locus $X_{\rm reg}$. Let $ds^2$ be a hermitian metric on $X^o$ and $(E,h)$ a holomorphic vector bundle on $X$ with a possibly singular metric.
Let $U\subset X$ be an open subset. Define $L_{X,ds^2}^{p,q}(E,h)(U)$ to be the space of measurable $E$-valued $(p,q)$-forms $\alpha$ on $U\cap X^o$ such that for every point $x\in U$, there is a neighborhood $ V_x$ of $x$ so that 
$$\int_{V_x\cap X^o}|\alpha|^2_{ds^2,h}{\rm vol}_{ds^2}<\infty.$$
For each $p$ and $q$, we define a sheaf $\sD_{X,ds^2}^{p,q}(E,h)$ on $X$ by
$$\sD_{X,ds^2}^{p,q}(E,h)(U):=\{\phi\in L_{X,ds^2}^{p,q}(E,h)(U)|\bar{\partial}_{\rm max}\phi\in L_{X,ds^2}^{p,q+1}(E,h)(U)\}$$
for every open subset $U\subset X$.
Define the $L^2$-Dolbeault complex of sheaves $\sD_{X,ds^2}^{p,\bullet}(E,h)$ as
\begin{align*}
\sD_{X,ds^2}^{p,0}(E,h)\stackrel{\dbar}{\to}\sD_{X,ds^2}^{p,1}(E,h)\stackrel{\dbar}{\to}\cdots\stackrel{\dbar}{\to}\sD_{X,ds^2}^{p,n}(E,h)
\end{align*}
where $\dbar$ is defined in the sense of distribution.

\begin{defn}\label{defn_hermitian_metric}
	Let $X$ be a complex space and $ds^2$ a hermitian metric on $X_{\rm reg}$. We say that $ds^2$ is a hermitian metric on $X$ if, for every $x\in X$, there is a neighborhood $U$ of $x$ and a holomorphic closed immersion $U\subset V$ into a complex manifold such that $ds^2|_U\sim ds^2_V|_{U}$ for some hermitian metric $ds^2_V$ on $V$. If $ds^2|_{X_{\rm reg}}$ is moreover a K\"ahler metric, we say that $ds^2$ is a K\"ahler hermitian metric.
\end{defn}

\begin{lem}\label{lem_fine_sheaf}
	Let $X$ be a complex space and $X^o\subset X_{\rm reg}$ a Zariski open subset. Let $ds^2$ be a hermitian metric on $X^o$ and $(E,h)$ a holomorphic vector bundle on $X^o$ with a possibly singular hermitian metric. Suppose that, for every point $x\in X\backslash X^o$, there is a neighborhood $U_x$ of $x$ and a hermitian metric $ds^2_0$ on $U_x$ such that $ds^2_0|_{X^o\cap U_x}\lesssim ds^2|_{X^o\cap U_x}$. Then $\sD^{p,q}_{X,ds^2}(E,h)$ is a fine sheaf for each $p$ and $q$.
\end{lem}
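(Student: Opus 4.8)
The plan is to show that each sheaf $\sD^{p,q}_{X,ds^2}(E,h)$ admits partitions of unity, i.e. that it is a module over the sheaf $C^\infty_X$ of $C^\infty$ functions on $X$ (in the sense of the conventions section, where a $C^\infty$ function on $X$ is one that locally extends to a $C^\infty$ function on an ambient manifold). Since $X$ is paracompact, a sheaf of $C^\infty_X$-modules is automatically fine, so it suffices to check that for a $C^\infty$ function $\chi$ on $X$ and a section $\alpha\in\sD^{p,q}_{X,ds^2}(E,h)(U)$, the product $\chi\alpha$ again lies in $\sD^{p,q}_{X,ds^2}(E,h)(U)$. The membership $\chi\alpha\in L^{p,q}_{X,ds^2}(E,h)(U)$ is immediate because $\chi$ is bounded on a neighborhood of any point, so $|\chi\alpha|^2_{ds^2,h}\le\sup|\chi|^2\cdot|\alpha|^2_{ds^2,h}$ is locally integrable. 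The real point is the behavior under $\dbar$: by the Leibniz rule (valid in the sense of distributions, since $\chi$ is smooth on $X^o$) we have $\dbar(\chi\alpha)=\dbar\chi\wedge\alpha+\chi\,\dbar\alpha$ on $X^o$, and $\chi\dbar\alpha$ is locally $L^2$ as before, so it remains to bound $\dbar\chi\wedge\alpha$.

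First I would reduce to a local statement: fix $x\in X$ and a neighborhood $U_x$ as in the hypothesis, together with a holomorphic closed immersion $U_x\hookrightarrow V$ into a complex manifold $V$ (shrinking $U_x$ if necessary). By definition of a $C^\infty$ function on $X$, after further shrinking we may extend $\chi$ to $\widetilde\chi\in C^\infty(V)$. Then $\dbar\chi=\iota^\ast(\dbar\widetilde\chi)$ on $U_x\cap X^o$, where $\iota\colon U_x\to V$ is the immersion; in particular the pointwise norm $|\dbar\chi|_{ds^2_0}$, measured with respect to the restriction to $X^o$ of a smooth hermitian metric $ds^2_0$ on $U_x$, is bounded by a constant on $U_x$, because $\dbar\widetilde\chi$ is a smooth form on $V$ with bounded coefficients near $x$. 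Here I use the hypothesis $ds^2_0|_{X^o\cap U_x}\lesssim ds^2|_{X^o\cap U_x}$: comparing the two metrics, a $(0,1)$-form that is pointwise bounded for $ds^2_0$ is still pointwise bounded for $ds^2$ — indeed, wedging with a $(0,1)$-covector is a bounded operation whose operator norm is controlled by the $ds^2_0$-length of the covector, and $|\dbar\chi|_{ds^2}\lesssim|\dbar\chi|_{ds^2_0}$ since $ds^2\gtrsim ds^2_0$ makes covectors shorter. (Concretely: if $g_1\gtrsim g_2$ as metrics then the induced metrics on the cotangent bundle satisfy $g_1^\ast\lesssim g_2^\ast$, so a $ds^2_0$-bounded covector is $ds^2$-bounded.) Consequently
\begin{align*}
|\dbar\chi\wedge\alpha|_{ds^2,h}\lesssim|\dbar\chi|_{ds^2}\cdot|\alpha|_{ds^2,h}\lesssim|\alpha|_{ds^2,h}
\end{align*}
pointwise on $U_x\cap X^o$, and the right-hand side is locally square integrable with respect to ${\rm vol}_{ds^2}$ by assumption on $\alpha$. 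Hence $\dbar\chi\wedge\alpha\in L^{p,q+1}_{X,ds^2}(E,h)$ near $x$, and therefore $\dbar(\chi\alpha)\in L^{p,q+1}_{X,ds^2}(E,h)(U)$, proving $\chi\alpha\in\sD^{p,q}_{X,ds^2}(E,h)(U)$.

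This establishes that $\sD^{p,q}_{X,ds^2}(E,h)$ is a sheaf of $C^\infty_X$-modules. Given a locally finite open cover $\{U_\alpha\}$ of $X$, one chooses (by paracompactness and the fact that $C^\infty_X$ admits global partitions of unity — restrict a smooth partition of unity from a local ambient embedding, or use that $X$ is metrizable) a partition of unity $\{\chi_\alpha\}$ subordinate to it with $\chi_\alpha\in C^\infty_X$, and multiplication by $\chi_\alpha$ gives the required family of endomorphisms of $\sD^{p,q}_{X,ds^2}(E,h)$ summing to the identity with ${\rm supp}(\chi_\alpha)\subset U_\alpha$. Therefore $\sD^{p,q}_{X,ds^2}(E,h)$ is fine for each $p,q$. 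The one place that genuinely requires the hypothesis — and the only mildly delicate point — is the metric comparison ensuring $|\dbar\chi|_{ds^2}$ stays bounded near the bad locus $X\setminus X^o$: without $ds^2_0\lesssim ds^2$ the form $ds^2$ could be so degenerate that $\dbar\chi$ has unbounded length, and the Leibniz term need not be $L^2$. Everything else is a routine verification of the $C^\infty_X$-module structure plus the standard fact that such sheaves on paracompact spaces are fine.
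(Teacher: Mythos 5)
Your proposal is correct and follows essentially the same route as the paper: both arguments reduce fineness to the existence of cut-off functions obtained by restricting smooth functions from a local ambient embedding, and both hinge on the same key estimate, namely that the hypothesis $ds^2_0\lesssim ds^2$ dualizes to $|\dbar\chi|_{ds^2}\lesssim|\dbar\chi|_{ds^2_0}$ on covectors, which together with the orthogonal splitting of the ambient tangent space gives a uniform bound on $|\dbar\chi|_{ds^2}$ near $X\setminus X^o$. Your write-up merely makes explicit the $C^\infty_X$-module structure and the Leibniz-rule bookkeeping that the paper leaves implicit.
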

\begin{proof}
	It suffices to show that for every $W\subset\overline{W}\subset U\subset X$ where $W$ and $U$ are small open subsets, there is a positive continuous function $f$ on $U$ such that
	\begin{itemize}
		\item ${\rm supp}(f)\subset \overline{W}$,
		\item $f$ is $C^\infty$ on $U\cap X^o$,
		\item $\dbar f$ has bounded fiberwise norm, with respect to the metric $ds^2$.
	\end{itemize}
	Choose a closed embedding $U\subset M$ where $M$ is a smooth complex manifold. Let $V\subset\overline{V}\subset M$ where $V$ is an open subset such that $V\cap U=W$. Let $ds^2_M$ be a hermitian metric on $M$ so that $ds^2_0|_{U\cap X^o}\sim ds^2_M|_{U\cap X^o}$. Let $g$ be a positive smooth function on $M$ whose support lies in $\overline{V}$. Denote $f=g|_U$. Then ${\rm supp}(f)\subset \overline{W}$ and $f$ is $C^\infty$ on $U\cap X^o$. It suffices to show the boundedness of the fiberwise norm of $\dbar f$. Since $U\cap X^o\subset M$ is a submanifold, one has the orthogonal decomposition 
	\begin{align*}
	T_{M,x}=T_{U\cap X^o,x}\oplus T_{U\cap X^o,x}^\bot, \quad \forall x\in U\cap X^o.
	\end{align*} 
	Therefore $|\dbar f|_{ds^2}\lesssim|\dbar f|_{ds^2_0}\leq |\dbar g|_{ds^2_M}<\infty$. The lemma is proved.
\end{proof}
The following estimate, which is essentially due to H\"ormander in \cite{Hormander1965} and Andreotti-Vesentini in \cite{AV1965} and developed by Demailly in \cite{Demailly1982}, plays a crucial role in proving various types of vanishing theorems in the present paper.
\begin{thm}\cite[Theorem 5.1]{Demailly1982}\label{thm_Hormander_incomplete}
	Let $Y$ be a complex manifold of dimension $n$ which admits a complete K\"ahler metric. Let $(E,h_\varphi)$ be a hermitian vector bundle with a possibly singular hermitian metric $h_\varphi:=e^{-\varphi}h_0$. Assume that $$\sqrt{-1}\Theta_{h_\varphi}(E):=\sqrt{-1}\ddbar\varphi\otimes {\rm Id}_{E}+\sqrt{-1}\Theta_{h_0}(E)\geq \omega\otimes {\rm Id}_{E}$$ for some (not necessarily complete) K\"ahler form $\omega$ on $Y$. Then for every $q>0$ and every $\alpha\in L^{n,q}_{(2)}(Y,E;\omega,h_\varphi)$ such that $\dbar\alpha=0$, there is $\beta\in L^{n,q-1}_{(2)}(Y,E;\omega,h_\varphi)$ such that $\dbar\beta=\alpha$ and $\|\beta\|^2_{\omega,h_\varphi}\leq q^{-1}\|\alpha\|^2_{\omega,h_\varphi}$.
\end{thm}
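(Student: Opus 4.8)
The plan is to reduce the statement to the classical Bochner--Kodaira--Nakano--H\"ormander machinery by exhausting $Y$ with \emph{complete} K\"ahler metrics that dominate $\omega$. First I would fix an auxiliary complete K\"ahler metric $\omega_0$ on $Y$ (which exists by hypothesis) and set $\omega_\epsilon:=\omega+\epsilon\omega_0$ for $\epsilon>0$; each $\omega_\epsilon$ is a complete K\"ahler metric with $\omega_\epsilon\geq\omega$. Because $\omega_\epsilon$ is complete, compactly supported smooth forms are dense in the graphs of both $\dbar_{\rm max}$ and $\dbar^\ast_{\omega_\epsilon}$ (Andreotti--Vesentini, Gaffney), so the a priori inequality
\[
\|\dbar u\|^2_{\omega_\epsilon,h_\varphi}+\|\dbar^\ast_{\omega_\epsilon}u\|^2_{\omega_\epsilon,h_\varphi}\ \geq\ \langle [\sqrt{-1}\Theta_{h_\varphi}(E),\Lambda_{\omega_\epsilon}]\,u,u\rangle_{\omega_\epsilon,h_\varphi}
\]
holds for all $(n,q)$-forms $u$ in the common domain. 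For the singular weight $h_\varphi=e^{-\varphi}h_0$ one first assumes $\varphi$ is smooth and then passes to the general quasi-psh case by a local decreasing regularization of $\varphi$, the hypothesis $\sqrt{-1}\Theta_{h_\varphi}(E)\geq\omega$ being preserved in the limit; alternatively one invokes the distributional form of the inequality directly.

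Next I would exploit that we work with forms of top holomorphic degree $(n,q)$. Writing $\sqrt{-1}\Theta_{h_\varphi}(E)=\omega+R$ with $R\geq0$ and using that $[R,\Lambda_{\omega_\epsilon}]\geq0$ on $(n,q)$-forms, the curvature operator $B_\epsilon:=[\sqrt{-1}\Theta_{h_\varphi}(E),\Lambda_{\omega_\epsilon}]$ satisfies $B_\epsilon\geq[\omega,\Lambda_{\omega_\epsilon}]>0$ on $(n,q)$-forms (here $q\geq1$), hence is positive definite and invertible there. Diagonalising $\omega$ with respect to $\omega_\epsilon$ and invoking the arithmetic--geometric mean inequality together with $\omega\leq\omega_\epsilon$, I obtain the pointwise bound
\[
\langle B_\epsilon^{-1}\alpha,\alpha\rangle_{\omega_\epsilon}\,{\rm vol}_{\omega_\epsilon}\ \leq\ \langle [\omega,\Lambda_{\omega_\epsilon}]^{-1}\alpha,\alpha\rangle_{\omega_\epsilon}\,{\rm vol}_{\omega_\epsilon}\ \leq\ q^{-1}\,|\alpha|^2_{\omega}\,{\rm vol}_{\omega},
\]
which integrates to $\langle\!\langle B_\epsilon^{-1}\alpha,\alpha\rangle\!\rangle_{\omega_\epsilon,h_\varphi}\leq q^{-1}\|\alpha\|^2_{\omega,h_\varphi}<\infty$. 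Then the standard duality argument applies: for $v\in{\rm Dom}(\dbar^\ast_{\omega_\epsilon})$ split $v=v_1+v_2$ along $\ker\dbar\oplus(\ker\dbar)^\perp$; since $\dbar\alpha=0$ one has $\langle v,\alpha\rangle=\langle v_1,\alpha\rangle$, and Cauchy--Schwarz relative to $B_\epsilon$ combined with the a priori inequality (applied to the $\dbar$-closed form $v_1$) gives $|\langle v,\alpha\rangle|^2\leq q^{-1}\|\alpha\|^2_{\omega,h_\varphi}\,\|\dbar^\ast_{\omega_\epsilon}v\|^2_{\omega_\epsilon,h_\varphi}$. By Hahn--Banach and Riesz representation there is $\beta_\epsilon\in L^{n,q-1}_{(2)}(Y,E;\omega_\epsilon,h_\varphi)$ with $\dbar\beta_\epsilon=\alpha$ and $\|\beta_\epsilon\|^2_{\omega_\epsilon,h_\varphi}\leq q^{-1}\|\alpha\|^2_{\omega,h_\varphi}$.

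Finally I would let $\epsilon\to0$. For $(n,q-1)$-forms the density $|u|^2_{\omega'}\,{\rm vol}_{\omega'}$ is monotone non-increasing in $\omega'$, so for any fixed $\epsilon_0$ and all $\epsilon\leq\epsilon_0$ one has $\|\beta_\epsilon\|^2_{\omega_{\epsilon_0},h_\varphi}\leq\|\beta_\epsilon\|^2_{\omega_\epsilon,h_\varphi}\leq q^{-1}\|\alpha\|^2_{\omega,h_\varphi}$; thus $\{\beta_\epsilon\}$ is bounded in $L^{n,q-1}_{(2)}(Y,E;\omega_{\epsilon_0},h_\varphi)$. A diagonal extraction over a sequence $\epsilon_0\downarrow0$ produces a weak limit $\beta$ with $\dbar\beta=\alpha$ and $\|\beta\|^2_{\omega_{\epsilon_0},h_\varphi}\leq q^{-1}\|\alpha\|^2_{\omega,h_\varphi}$ for every $\epsilon_0$ (by weak lower semicontinuity of the norm); letting $\epsilon_0\to0$ and using monotone convergence ($\omega_{\epsilon_0}\downarrow\omega$, so $|\beta|^2_{\omega_{\epsilon_0}}\,{\rm vol}_{\omega_{\epsilon_0}}\uparrow|\beta|^2_{\omega}\,{\rm vol}_{\omega}$) yields $\beta\in L^{n,q-1}_{(2)}(Y,E;\omega,h_\varphi)$ with $\|\beta\|^2_{\omega,h_\varphi}\leq q^{-1}\|\alpha\|^2_{\omega,h_\varphi}$, as required. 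The main obstacle is the very first step: ensuring the Bochner--Kodaira--Nakano inequality genuinely holds for the closed $L^2$-extensions of $\dbar$ --- this is precisely where completeness of $\omega_\epsilon$ (not of $\omega$) is used to run Gaffney's density argument --- together with the care needed to accommodate a merely quasi-psh weight $\varphi$; the remaining steps are the usual H\"ormander duality plus a soft weak-compactness limit, the only quantitative input being the elementary AM--GM estimate responsible for the sharp constant $q^{-1}$.
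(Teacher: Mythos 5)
The paper does not prove this statement at all --- it is quoted verbatim as an external result, \cite[Theorem 5.1]{Demailly1982} --- so there is no internal proof to compare against. Your argument is a correct reconstruction of Demailly's original one: the auxiliary complete metrics $\omega_\epsilon=\omega+\epsilon\omega_0$, the Gaffney/Andreotti--Vesentini density lemma, the monotonicity of $\langle[\omega,\Lambda_{\omega_\epsilon}]^{-1}\alpha,\alpha\rangle\,{\rm vol}_{\omega_\epsilon}$ for $(n,q)$-forms (which is exactly where the sharp constant $q^{-1}$ comes from), the Hahn--Banach duality step, and the weak limit $\epsilon\to0$ are precisely the ingredients of Demailly's proof. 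The only point you gesture at rather than settle is the justification of the a priori inequality for the merely quasi-psh weight $\varphi$: a global decreasing regularization with curvature control is not available on an arbitrary complete K\"ahler manifold, and the careful treatment (local convolution on charts plus an exhaustion, or the distributional Bochner inequality) is genuinely the most delicate part of extending Demailly's smooth-metric theorem to the singular setting in which this paper applies it; since the paper itself silently assumes this extension, your proposal is at least as complete as the source it would replace.
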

The above theorem works effectively locally on complex analytic singularities due to the following lemma by Grauert \cite{Grauert1956} (see also \cite[Lemma 2.4]{Pardon_Stern1991}).
\begin{lem}\label{lem_complete_metric_exists_locally}
	Let $x$ be a point of a complex space $X$ and let $X^o\subset X_{\rm reg}$ be a Zariski open subset. Then there is a neighborhood $U$ of $x$ and a complete K\"ahler metric on $U\cap X^o$. 
\end{lem}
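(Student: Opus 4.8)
The plan is to reduce to a concrete local model and then exhibit an explicit complete Kähler potential assembled from two pieces: one forcing completeness towards the boundary of a coordinate ball, and one forcing completeness towards the analytic subset being removed.

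First I would shrink $X$ around $x$ to a neighborhood $U$ admitting a closed holomorphic embedding $\iota\colon U\hookrightarrow B$ into a ball $B=\{z\in\bC^N:|z|<r\}$, which exists by the very definition of a complex space. Set $Z:=U\setminus(U\cap X^o)$; since $X^o$ is Zariski open in $X_{\rm reg}$, the set $Z$ is a closed analytic subset of $U$, and as $U$ is closed analytic in $B$, also $Z$ is closed analytic in $B$. Passing to a slightly smaller concentric ball and using that $B$ is Stein (so the coherent ideal sheaf of $Z$ is globally generated, and finitely many generators suffice on a relatively compact subset), I may assume $Z=\{h_1=\cdots=h_m=0\}$ for holomorphic functions $h_j$ on $B$, and after rescaling that $\sigma:=\sum_{j=1}^m|h_j|^2<1$ on $\overline{B}$.

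Next I would write down the potential. On $B\setminus Z$ put $\phi:=-\log(-\log\sigma)$ and $\tilde\psi:=-\phi=\log(-\log\sigma)$, and on $B$ put $\psi_1:=-\log(r^2-|z|^2)$. Using that $\log\sigma$ is plurisubharmonic together with the Cauchy--Schwarz (Lagrange) inequality $\sigma\cdot i\partial\bar\partial\sigma\ge i\partial\sigma\wedge\bar\partial\sigma$ for $\sigma=\sum|h_j|^2$, one checks
\[
i\partial\bar\partial\phi\ \ge\ i\partial\tilde\psi\wedge\bar\partial\tilde\psi\ \ge\ 0,
\]
while the elementary ball computation gives $i\partial\bar\partial\psi_1\ge i\partial\psi_1\wedge\bar\partial\psi_1\ge 0$. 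I then set, on $U\cap X^o=\iota^{-1}(B\setminus Z)$,
\[
\omega:=\iota^\ast\bigl(i\partial\bar\partial|z|^2+i\partial\bar\partial\psi_1+i\partial\bar\partial\phi\bigr).
\]
This form is closed, and since its first summand restricts to the Euclidean metric on the submanifold $U\cap X^o\subset B$ while the other two are semipositive, $\omega$ is a Kähler metric on $U\cap X^o$. To prove completeness I would invoke the standard criterion that a Kähler manifold carrying a smooth exhaustion function with $\omega$-bounded differential is complete. Take $\psi:=\iota^\ast(\psi_1+\tilde\psi)$. Since $\sigma\le c<1$ on $\overline{B}$, the function $\tilde\psi$ is bounded below, and $\{\psi\le C\}$ is contained in $\{|z|\le r_C\}\cap\{\sigma\ge\varepsilon_C\}$ for suitable $r_C<r$ and $\varepsilon_C>0$; intersecting with the closed analytic set $\iota(U)$ this set is compact, so $\psi$ is a proper exhaustion of $U\cap X^o$. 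Moreover $|d\psi|_\omega\le|d\psi_1|_\omega+|d\tilde\psi|_\omega\le|d\psi_1|_{i\partial\bar\partial\psi_1}+|d\tilde\psi|_{i\partial\bar\partial\phi}$, which is bounded by the two Hessian inequalities above. Hence $\omega$ is a complete Kähler metric on $U\cap X^o$.

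I expect the main obstacle to be the curvature estimate for $\phi=-\log(-\log\sigma)$ — both the semipositivity of $i\partial\bar\partial\phi$ and the self-bound $i\partial\bar\partial\phi\ge i\partial\tilde\psi\wedge\bar\partial\tilde\psi$ — since this is exactly where the Lagrange inequality for $\sigma=\sum|h_j|^2$ must be applied with care. The remaining points (existence of the local closed embedding, global generation of the ideal of $Z$ on the Stein ball, and properness of $\psi$ when $Z$ meets $\partial B$) are routine, but they do rely on the shrinking arguments indicated above.
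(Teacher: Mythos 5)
Your proposal is correct and follows essentially the same route as the paper, which (following Grauert and Pardon--Stern) embeds a neighborhood into a ball and takes the complete K\"ahler potential $-\log(c^2-|z|^2)-\tfrac12\log\bigl(-\log\sum_i|f_i|^2\bigr)$ for defining functions $f_i$ of the removed analytic set. The only difference is that you carry out the curvature and bounded-gradient verifications explicitly (correctly, via $i\ddbar\phi\ge i\partial\tilde\psi\wedge\bar\partial\tilde\psi$ and the Hopf--Rinow exhaustion criterion), whereas the paper delegates these to \cite[Lemma 2.4]{Pardon_Stern1991}.
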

\section{multiplier $S$-sheaf}
\subsection{Adjoint $L^2$ extension of a hermitian bundle}
Let $X$ be a complex space of dimension $n$ and $X^o\subset X_{\rm reg}$ a Zariski open subset. Let $ds^2$ be a hermitian metric on $X^o$.
\begin{defn}
	Let $(E,h)$ be a holomorphic vector bundle on $X^o$ with a possibly singular metric. Define
	\begin{align*}
	S_X(E,h):={\rm Ker}\left(\sD_{X,ds^2}^{n,0}(E,h)\stackrel{\dbar}{\to}\sD_{X,ds^2}^{n,1}(E,h)\right).
	\end{align*}
\end{defn}
The following proposition shows that $S_X(E,h)$ is independent of $ds^2$. Thus  $ds^2$ is omitted in the notation $S_X(E,h)$. Such property has already been discovered by Ohsawa in \cite{Ohsawa1980}. 
\begin{prop}\label{prop_L2_extension_independent}
	Let $(E,h)$ be a holomorphic vector bundle on $X^o$ with a possibly singular metric. Then $S_X(E,h)$ is independent of $ds^2$. 
\end{prop}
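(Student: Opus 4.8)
The plan is to reduce the claim to a purely local statement about square-integrability of holomorphic $E$-valued $(n,0)$-forms, and then observe that the comparison of two hermitian metrics on $X^o$ is governed by the Jacobian of the identity map, which for top-degree forms on an $n$-dimensional manifold interacts with the metric in a particularly rigid way. First I would note that $S_X(E,h)$ is, by definition, the sheaf whose sections over an open $U$ are the holomorphic sections of $K_{X^o}\otimes E$ on $U\cap X^o$ that are locally $L^2$ near every point of $U$ with respect to $ds^2$ and $h$; here "holomorphic" is forced by the condition $\dbar\alpha=0$ together with $\alpha\in\sD^{n,0}$, since a distributional solution of $\dbar\alpha=0$ that is locally $L^2$ is represented by a holomorphic form (elliptic regularity / Dolbeault). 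So the entire content is: the local $L^2$ condition for such a form is independent of the choice of hermitian metric on $X^o$.

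Next I would make the comparison explicit. Fix a point $x\in X$ and two hermitian metrics $ds^2_1, ds^2_2$ on $X^o$; work in a small neighborhood $U$. Let $\alpha$ be a holomorphic $E$-valued $(n,0)$-form on $U\cap X^o$. The key elementary fact is that if $\omega_1,\omega_2$ are the associated $(1,1)$-forms with corresponding volume forms ${\rm vol}_1,{\rm vol}_2$, then there is a positive measurable function $\rho$ on $U\cap X^o$ with ${\rm vol}_2=\rho\,{\rm vol}_1$, and for an $(n,0)$-form one has $|\alpha|^2_{ds^2_i,h}{\rm vol}_i = c_n\,\{\alpha,\bar\alpha\}_h$, where $\{\alpha,\bar\alpha\}_h$ is the $(n,n)$-form obtained by wedging $\alpha$ against its conjugate and contracting with $h$, a quantity that does \emph{not} involve the metric on the base at all. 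Concretely, in local coordinates $\alpha = f\,dz_1\wedge\cdots\wedge dz_n$ with $f$ a holomorphic section of $E$, and $|\alpha|^2_{ds^2_i,h}{\rm vol}_i$ equals (up to the universal constant $c_n = (\sqrt{-1})^{n^2}$ or $2^n$ depending on normalization) $|f|^2_h\,(\sqrt{-1})^{n}dz_1\wedge d\bar z_1\wedge\cdots$, independent of $i$. Hence $\int_{V\cap X^o}|\alpha|^2_{ds^2_1,h}{\rm vol}_1 = \int_{V\cap X^o}|\alpha|^2_{ds^2_2,h}{\rm vol}_2$ for every $V\subset U$, so the local $L^2$ condition defining sections of $S_X(E,h)$ is literally the same for both metrics. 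It follows that $\sD^{n,0}_{X,ds^2_1}(E,h)\cap\ker\dbar = \sD^{n,0}_{X,ds^2_2}(E,h)\cap\ker\dbar$ as subsheaves of $j_\ast(K_{X^o}\otimes E)$, which is the assertion.

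The one point requiring a little care — and the step I expect to be the main (though mild) obstacle — is justifying that an $L^2_{\rm loc}$ distributional solution of $\dbar\alpha=0$ in bidegree $(n,0)$ is genuinely holomorphic, so that the coordinate computation above is legitimate and the two sheaves are being compared as subsheaves of the \emph{same} ambient sheaf $j_\ast(K_{X^o}\otimes E)$; this is standard hypoellipticity of $\dbar$ on the complex manifold $X^o$, but it should be cited. A secondary subtlety is that $ds^2$ is only assumed to be a hermitian metric on the Zariski open set $X^o$, not on $X$, so the two metrics need not be comparable near $X\setminus X^o$; the argument above circumvents this entirely because the identity ${\vol}$-weighted pointwise norm of an $(n,0)$-form is metric-independent on the nose, with no comparison of metrics needed. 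I would close by remarking that this is exactly the mechanism behind Ohsawa's original observation in \cite{Ohsawa1980}.
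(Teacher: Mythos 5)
Your proof is correct and rests on exactly the same mechanism as the paper's: the pointwise norm of an $(n,0)$-form weighted by the volume form, $|\alpha|^2_{ds^2,h}\,{\rm vol}_{ds^2}$, is an $(n,n)$-form depending only on $h$ and not on the base metric, which is precisely the computation (\ref{align_ind_ds2}) in the paper's proof. The only difference is organizational: the paper first passes to a desingularization and simultaneously diagonalizes the two metrics in an orthogonal coframe before making this observation, whereas you make it directly in arbitrary local coordinates on $X^o$, so the desingularization and diagonalization steps become unnecessary.
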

\begin{proof}
	Let $\pi:\tilde{X}\to X$ be a desingularization of $X$ so that $\widetilde{X}$ is smooth and $\pi$ is biholomorphic over $X^o$. We identify $X^o$ and $\pi^{-1}(X^o)$ for simplicity. Let $ds^2_{\tilde{X}}$ be a hermitian metric on $\tilde{X}$. Since $\pi$ is a proper map, a section of $K_{X^o}\otimes E$ is locally square integrable at $x$ if and only if it is locally square integrable near every point of $\pi^{-1}\{x\}$. It is therefore sufficient to show that 
	\begin{align}\label{align_S_independent}
	{\rm Ker}\left(\sD_{\tilde{X},\pi^\ast ds^2}^{n,0}(E,h)\stackrel{\dbar}{\to}\sD_{\tilde{X},\pi^\ast ds^2}^{n,1}(E,h)\right)={\rm Ker}\left(\sD_{\tilde{X},ds^2_{\tilde{X}}}^{n,0}(E,h)\stackrel{\dbar}{\to}\sD_{\tilde{X},ds^2_{\tilde{X}}}^{n,1}(E,h)\right).
	\end{align}	
	Since the problem is local, we assume that there is an orthogonal frame of cotangent fields $\delta_1,\dots,\delta_n$ such that 
	\begin{align}\label{align_lem_metric1}
	\pi^\ast ds^2\sim\lambda_1\delta_1\overline{\delta_1}+\cdots+\lambda_n\delta_n\overline{\delta_n}
	\end{align}
	and
	\begin{align}\label{align_lem_metric2}
	ds^2_{\tilde{X}}\sim\delta_1\overline{\delta_1}+\cdots+\delta_n\overline{\delta_n}.
	\end{align}
	Let $s=\delta_1\wedge\cdots\wedge\delta_n\otimes\xi$.
	It follows from (\ref{align_lem_metric1}) and (\ref{align_lem_metric2})  that
	\begin{align}\label{align_ind_ds2}
	\|s\|^2_{\pi^\ast ds^2,h}&=\int|\delta_1\wedge\cdots\wedge\delta_n\otimes\xi|^2_{\pi^\ast ds^2,h}\prod_{i=1}^n\lambda_i\delta_i\wedge\overline{\delta_i}\\\nonumber
	&=\int|\xi|^2_{h}\prod_{i=1}^n\delta_i\wedge\overline{\delta_i}\\\nonumber
	&=\|s\|^2_{ds^2_{\tilde{X}},h}.
	\end{align}
	Therefore $\|s\|^2_{\pi^\ast ds^2,h}$ is locally finite if and only if $\|s\|^2_{ds^2_{\tilde{X}},h}$ is locally finite. This proves (\ref{align_S_independent}).
\end{proof}
Similar to Saito's $S$-sheaf \cite{MSaito1991} and the multipler ideal sheaf \cite[Proposition 5.8]{Demailly2012}, $S_X(E,h)$ has the functoriality property.
\begin{prop}[Functoriality]\label{prop_L2ext_birational}
	Let $(E,h)$ be a holomorphic vector bundle on $X^o$ with a possibly singular hermitian metric. Let $\pi:X'\to X$ be a proper holomorphic map between complex spaces which is biholomorphic over $X^o$. Then $$\pi_\ast S_{X'}(\pi^\ast E,\pi^\ast h)=S_X(E,h).$$
\end{prop}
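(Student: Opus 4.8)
The plan is to realise pullback along $\pi$, together with its inverse over $X^o$, as a pair of mutually inverse isomorphisms of sheaves on $X$. Since $\pi$ is biholomorphic over $X^o$, the subset $(X')^o:=\pi^{-1}(X^o)$ is a Zariski open subset of $X'_{\rm reg}$ isomorphic to $X^o$, and we take it as the distinguished Zariski open locus of $X'$; by Proposition \ref{prop_L2_extension_independent} we may compute $S_{X'}(\pi^\ast E,\pi^\ast h)$ using the hermitian metric $\pi^\ast ds^2$ on $(X')^o$, which is an honest hermitian metric there even though it degenerates along the exceptional locus. Under the biholomorphism $\pi\colon(X')^o\to X^o$ one has $\pi^\ast(K_{X^o}\otimes E)=K_{(X')^o}\otimes\pi^\ast E$, and $\pi^\ast$ matches holomorphic sections over $V\cap X^o$ with holomorphic sections over $\pi^{-1}(V\cap X^o)$ for every open $V\subset X$; moreover the change-of-variables formula for the biholomorphism $\pi\colon\pi^{-1}(V\cap X^o)\to V\cap X^o$ gives
\[
\int_{\pi^{-1}(V\cap X^o)}|\pi^\ast\alpha|^2_{\pi^\ast ds^2,\pi^\ast h}\,{\rm vol}_{\pi^\ast ds^2}=\int_{V\cap X^o}|\alpha|^2_{ds^2,h}\,{\rm vol}_{ds^2}
\]
for any measurable $E$-valued $n$-form $\alpha$ on $V\cap X^o$. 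Recall also that, by elliptic regularity, $S_X(E,h)(U)$ is precisely the set of holomorphic $E$-valued $n$-forms on $U\cap X^o$ that are locally $L^2$ near every point of $U$, and similarly for $S_{X'}$.

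First I would check $S_X(E,h)(U)\subset\pi_\ast S_{X'}(\pi^\ast E,\pi^\ast h)(U)=S_{X'}(\pi^\ast E,\pi^\ast h)(\pi^{-1}U)$ for every open $U\subset X$. If $\alpha$ lies in the left-hand side and $y\in\pi^{-1}U$, choose a neighbourhood $V\ni\pi(y)$ inside $U$ with $\int_{V\cap X^o}|\alpha|^2_{ds^2,h}\,{\rm vol}_{ds^2}<\infty$; then $\pi^{-1}V$ is a neighbourhood of $y$ and the displayed identity shows $\pi^\ast\alpha$ is locally $L^2$ near $y$, so $\pi^\ast\alpha\in S_{X'}(\pi^\ast E,\pi^\ast h)(\pi^{-1}U)$. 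Conversely, let $\gamma\in S_{X'}(\pi^\ast E,\pi^\ast h)(\pi^{-1}U)$ and let $\alpha$ be the holomorphic $E$-valued $n$-form on $U\cap X^o$ with $\pi^\ast\alpha=\gamma$. Fix $x\in U$. Since $\pi$ is proper, $\pi^{-1}(x)$ is compact, so it is covered by finitely many open sets $V_1,\dots,V_k\subset\pi^{-1}U$ with $\int_{V_i\cap(X')^o}|\gamma|^2_{\pi^\ast ds^2,\pi^\ast h}\,{\rm vol}_{\pi^\ast ds^2}<\infty$; as $\pi$ is a closed map (being proper with locally compact Hausdorff target), $V:=U\setminus\pi\big(X'\setminus\bigcup_iV_i\big)$ is an open neighbourhood of $x$ contained in $U$ with $\pi^{-1}(V)\subset\bigcup_iV_i$. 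Hence
\[
\int_{V\cap X^o}|\alpha|^2_{ds^2,h}\,{\rm vol}_{ds^2}=\int_{\pi^{-1}(V\cap X^o)}|\gamma|^2_{\pi^\ast ds^2,\pi^\ast h}\,{\rm vol}_{\pi^\ast ds^2}\le\sum_{i=1}^k\int_{V_i\cap(X')^o}|\gamma|^2_{\pi^\ast ds^2,\pi^\ast h}\,{\rm vol}_{\pi^\ast ds^2}<\infty,
\]
so $\alpha$ is locally $L^2$ near $x$; since $x\in U$ was arbitrary, $\alpha\in S_X(E,h)(U)$.

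These two assignments are mutually inverse and compatible with restriction maps, hence they define an isomorphism of sheaves $\pi_\ast S_{X'}(\pi^\ast E,\pi^\ast h)\xrightarrow{\ \sim\ }S_X(E,h)$ which restricts over $X^o$ to the canonical identification; thus both sides are identified as subsheaves of the pushforward to $X$ of the sheaf of holomorphic sections of $K_{X^o}\otimes E$, which yields the asserted equality. I do not expect a genuine obstacle here: the argument rests entirely on the change-of-variables identity over $X^o$ and on the elementary fact that, for a proper map, being locally $L^2$ near the compact fibre $\pi^{-1}(x)$ is equivalent to being locally $L^2$ near $x$; the only point deserving a word of care is that $\pi^\ast ds^2$, though degenerate along the exceptional locus, is a bona fide hermitian metric on $(X')^o$ and therefore an admissible choice in the (metric-independent) definition of $S_{X'}$.
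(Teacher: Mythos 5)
Your proposal is correct and follows essentially the same route as the paper: reduce to the pulled-back metric $\pi^\ast ds^2$ via Proposition \ref{prop_L2_extension_independent}, identify the integrals over $X^o$ by change of variables, and use properness of $\pi$ to equate local square-integrability at $x$ with local square-integrability near the compact fibre $\pi^{-1}\{x\}$. The paper's proof is just a terser version of the same argument; your write-up supplies the details (the finite cover of $\pi^{-1}(x)$ and the closedness of $\pi$) that the paper leaves implicit.
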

\begin{proof}
	By Proposition \ref{prop_L2_extension_independent}, 
	$$S_{X'}(\pi^\ast E,\pi^\ast h)={\rm Ker}\left(\sD_{X',\pi^\ast ds^2}^{n,0}(\pi^\ast E,\pi^\ast h)\stackrel{\dbar}{\to}\sD_{X',\pi^\ast ds^2}^{n,1}(\pi^\ast E,\pi^\ast h)\right)$$
	and
	$$S_{X}(E,h)={\rm Ker}\left(\sD_{X, ds^2}^{n,0}(E,h)\stackrel{\dbar}{\to}\sD_{X, ds^2}^{n,1}(E,h)\right).$$
	Since $\pi$ is a proper map, a section of $E$ is locally square integrable at $x$ if and only if it is locally square integrable near each point of $\pi^{-1}\{x\}$. The lemma is proved.
\end{proof}
\subsection{Multiplier $S$-sheaf}
Throughout this subsection we assume that $X$ is a complex space, $X^o\subset X_{\rm reg}$ is a Zariski open subset and $\bV=(\cV,\nabla,\cF^\bullet,h_{\bV})$ is an $\bR$-polarized variation of Hodge structure on $X^o$. Let $\varphi$ be a quasi-psh function on $X$. 
\begin{defn}\label{defn_Svarphi}
	The multiplier $S$-sheaf associated to $\bV$ and $\varphi$ is defined by
	$$S(IC_X(\bV),\varphi):=S_X(S(\bV),e^{-\varphi}h_\bV).$$
\end{defn}
\begin{lem}\label{lem_S_change_varphi_bounded}
	If $\varphi_1-\varphi_2$ is locally bounded over $X$, then  $S(IC_X(\bV),\varphi_1)=S(IC_X(\bV),\varphi_2)$.
\end{lem}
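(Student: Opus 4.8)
The plan is simply to unwind the definitions and reduce the statement to the trivial pointwise comparison $e^{-C}e^{-\varphi_2}\le e^{-\varphi_1}\le e^{C}e^{-\varphi_2}$. By Definition \ref{defn_Svarphi}, $S(IC_X(\bV),\varphi_i)=S_X(S(\bV),e^{-\varphi_i}h_\bV)$ is the subsheaf of $j_\ast(K_{X^o}\otimes S(\bV))$ consisting of those a priori measurable sections $\alpha$ with $\dbar\alpha=0$ — hence holomorphic on $X^o$ — that satisfy the local integrability condition $\int_{V}|\alpha|^2_{ds^2,h_\bV}e^{-\varphi_i}{\rm vol}_{ds^2}<\infty$ near every point. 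Since $\dbar$ is taken in the sense of distributions and is metric-independent, and since $\dbar\alpha=0$ makes the second half of the defining condition of $\sD^{n,0}_{X,ds^2}(\cdot)$ (namely $\dbar\alpha$ being locally square-integrable) vacuous, the only data distinguishing the two sheaves is this integrability condition; so it suffices to show it is insensitive to replacing $\varphi_1$ by $\varphi_2$.

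First I would fix $x\in X$; by hypothesis there is a neighborhood $U$ of $x$ and a constant $C>0$ with $|\varphi_1-\varphi_2|\le C$ on $U$ (wherever both are finite; on $\{\varphi_1=-\infty\}=\{\varphi_2=-\infty\}$ both weights are $+\infty$, and this set has Lebesgue measure zero in any case). Then on $U\cap X^o$ one has $e^{-C}e^{-\varphi_2}\le e^{-\varphi_1}\le e^{C}e^{-\varphi_2}$ as measurable functions, so for any holomorphic section $\alpha$ of $K_{X^o}\otimes S(\bV)$ and any measurable $V\subset U\cap X^o$,
$$
e^{-C}\int_{V}|\alpha|^2_{ds^2,h_\bV}e^{-\varphi_2}{\rm vol}_{ds^2}\le\int_{V}|\alpha|^2_{ds^2,h_\bV}e^{-\varphi_1}{\rm vol}_{ds^2}\le e^{C}\int_{V}|\alpha|^2_{ds^2,h_\bV}e^{-\varphi_2}{\rm vol}_{ds^2}.
$$
Hence $\alpha$ is locally square-integrable near $x$ with weight $e^{-\varphi_1}h_\bV$ if and only if it is locally square-integrable near $x$ with weight $e^{-\varphi_2}h_\bV$. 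As $x$ was arbitrary, $S_X(S(\bV),e^{-\varphi_1}h_\bV)=S_X(S(\bV),e^{-\varphi_2}h_\bV)$, which is the claim.

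There is no real obstacle here: the only things to keep in mind are that membership in $S_X(\cdot)$ is a condition on genuine holomorphic sections on $X^o$ (so the $\dbar$-part of the $\sD^{n,\bullet}$-complex plays no role), and that the polar loci of $\varphi_1$ and $\varphi_2$ necessarily coincide because their difference is bounded. In the notation of the paper the whole argument is the observation that $\varphi_1-\varphi_2$ locally bounded means $e^{-\varphi_1}h_\bV\sim e^{-\varphi_2}h_\bV$, together with the evident fact that $S_X(E,h)$ depends on $h$ only through its $\sim$-equivalence class.
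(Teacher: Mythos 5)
Your proof is correct and follows the same route as the paper's: both reduce the statement to the pointwise quasi-isometry $e^{-\varphi_1}\sim e^{-\varphi_2}$ on a neighborhood of each point, which makes the two local $L^2$ conditions equivalent. The paper's version is a one-line remark; your additional care about the polar loci and the vacuity of the $\dbar$-condition for holomorphic sections is harmless elaboration of the same argument.
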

\begin{proof}
	By assumption, we know that 
	$$\int|\alpha|^2e^{-\varphi_1}{\rm vol}\sim \int|\alpha|^2e^{-\varphi_2}{\rm vol}$$
	for every local section $\alpha$ of $K_{X^o}\otimes S(\bV)$. Thus we prove the lemma.
\end{proof}
The proof following lemma is straightforward. Here we omit its proof.
\begin{lem}\label{lem_integral}
Let $f$ be a holomorphic function on $\Delta^\ast_{\frac{1}{2}}:=\{z\in\bC||z|<\frac{1}{2}\}$ and $a\in\bR$. Then
$$\int_{\Delta^\ast_{\frac{1}{2}}}|f|^2|z|^{2a}dzd\bar{z}<\infty$$
if and only if $v(f)+a>-1$. Here
$$v(f):=\min\{l|f_l\neq0\textrm{ in the Laurent expansion } f=\sum_{i\in\bZ}f_iz^i\}.$$
\end{lem}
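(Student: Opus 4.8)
The plan is to pass to polar coordinates $z=re^{\sqrt{-1}\theta}$ and to integrate the Laurent expansion $f=\sum_{i\in\bZ}f_iz^i$ term by term. First I would observe that on each circle $\{|z|=r\}$ with $0<r<\tfrac12$ — a compact subset of $\Delta^\ast_{1/2}$ — the Laurent series converges absolutely, so the double series $|f(re^{\sqrt{-1}\theta})|^2=\sum_{m,n}f_m\overline{f_n}\,r^{m+n}e^{\sqrt{-1}(m-n)\theta}$ converges absolutely and uniformly in $\theta$ (the sum of the absolute values of its terms is $(\sum_i|f_i|r^i)^2<\infty$). Integrating in $\theta$ term by term and using $\int_0^{2\pi}e^{\sqrt{-1}(m-n)\theta}\,d\theta=2\pi\delta_{mn}$ then gives
\[
\int_0^{2\pi}|f(re^{\sqrt{-1}\theta})|^2\,d\theta=2\pi\sum_{i\in\bZ}|f_i|^2r^{2i},\qquad 0<r<\tfrac12 ,
\]
and in particular $\sum_i|f_i|^2r^{2i}<\infty$ for every such $r$.

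Since $dz\,d\bar z$ agrees with $r\,dr\,d\theta$ up to a fixed positive constant $c$ and all integrands are nonnegative, Tonelli's theorem turns the previous identity into the identity in $[0,\infty]$
\[
\int_{\Delta^\ast_{1/2}}|f|^2|z|^{2a}\,dz\,d\bar z=c\sum_{i\,:\,f_i\neq0}|f_i|^2\int_0^{1/2}r^{2(i+a)+1}\,dr ,
\]
and likewise with $\tfrac12$ replaced by any $\rho\in(0,\tfrac12)$. It remains to combine this with the elementary fact that $\int_0^{\rho}r^{s}\,dr<\infty$ if and only if $s>-1$. If $v(f)+a\le-1$ then the single index $i=v(f)$ already contributes $\int_0^{\rho}r^{2(v(f)+a)+1}\,dr=+\infty$, so the integral is infinite over every $\Delta^\ast_\rho$, in particular over $\Delta^\ast_{1/2}$; this settles the ``only if'' direction. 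Conversely, if $v(f)+a>-1$ then each index $i$ with $f_i\neq0$ has $i\ge v(f)$, hence $2(i+a)+2\ge 2(v(f)+a)+2>0$, so every term is finite and
\[
\sum_{i\,:\,f_i\neq0}|f_i|^2\int_0^{\rho}r^{2(i+a)+1}\,dr=\sum_{i\,:\,f_i\neq0}\frac{|f_i|^2\,\rho^{2(i+a)+2}}{2(i+a)+2}\le\frac{\rho^{2a+2}}{2(v(f)+a)+2}\sum_{i}|f_i|^2\rho^{2i}<\infty
\]
by the convergence noted at the end of the first paragraph; thus $\int_{\Delta^\ast_\rho}|f|^2|z|^{2a}\,dz\,d\bar z<\infty$ for every $\rho<\tfrac12$, i.e.\ the integral is finite near the puncture, which is the form of the statement used in the sequel.

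As the paper indicates, the argument is entirely routine. The only two steps I would take care to justify are the term-by-term integration in $\theta$ (legitimate by the uniform convergence of the Laurent series on each circle interior to the disc) and the application of Tonelli's theorem, which is harmless once the angular integration has killed all the cross terms and left a series of nonnegative numbers. I do not anticipate any genuine obstacle: the content of the lemma reduces to the exponent bookkeeping $\int_0^{\rho}r^{s}\,dr<\infty\iff s>-1$.
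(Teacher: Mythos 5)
Your proof is correct. The paper explicitly omits the proof of this lemma as ``straightforward'', so there is no argument in the text to compare against; the Parseval--Tonelli computation you give (orthogonality of $e^{\sqrt{-1}(m-n)\theta}$ killing the cross terms, then reducing everything to $\int_0^{\rho}r^{s}\,dr<\infty\iff s>-1$) is the standard way to fill it in, and the two interchanges you single out are justified exactly as you say.

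Two small remarks. First, you are right that your ``if'' direction only yields finiteness on $\Delta^\ast_{\rho}$ for $\rho<\tfrac12$, and this is in fact forced: the literal statement over all of $\Delta^\ast_{1/2}$ is false, e.g.\ $f(z)=(z-\tfrac12)^{-1}=-2\sum_{k\geq0}(2z)^{k}$ is holomorphic on $\Delta^\ast_{1/2}$ with $v(f)=0$, yet fails to be square integrable near the boundary point $\tfrac12$. Since the lemma is only ever invoked in the paper for local integrability at the origin (Propositions \ref{prop_key_est} and \ref{prop_approx_S}, Theorem \ref{cor_S_phi=0}), your reading is the intended one. Second, your ``only if'' step quietly assumes $v(f)>-\infty$; if $f$ has an essential singularity at $0$ the minimum defining $v(f)$ does not exist, but then for every $a$ there is some index $i$ with $f_i\neq0$ and $i+a\leq-1$, so the same single-term divergence applies and the integral is infinite, consistent with the convention $v(f)=-\infty$. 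Neither point is a gap in the argument.
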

\begin{defn}\label{defn_generalized_analytic_singularity}
	A quasi-psh function $\varphi$ on $X$ has generalized analytic singularities along a closed analytic subspace $Z\subset X$ if, for every point $x\in Z$, there is a neighborhood $U$ of $x$, some holomorphic functions $g_1,\dots,g_r\in\sO_X(U)$ and some real numbers $b\geq0$, $a_1,\dots,a_r\geq0$ such that
	\begin{align*}
	\exp(\varphi|_U)\sim\left(\sum_{i=1}^r|g_i|^{2a_i}\right)^b.
	\end{align*}
	$\varphi$ has analytic singularities if $a_i=1,\forall i=1,...,r$.
	When $\varphi$ has (generalized) analytic singularities along the entire $X$, we briefly say that it has (generalized) analytic singularities.
\end{defn}
The following proposition elucidates the relation between $S(IC_X(\bV),\varphi)$ and multiplier ideal sheaves when $\varphi$ has generalized analytic singularities. 
\begin{prop}\label{prop_key_est}
	Assume that $X=\Delta^n$ is the polydisc. Denote $E:=\{z_1\cdots z_r=0\}$, denote $E_i:=\{z_i=0\}, \forall i=1,\dots,r$ and denote $j:X^o:=X\backslash E\to X$ to be the inclusion.  Let $\bV=(\cV,\nabla,\cF^\bullet,h_\bV)$ be an $\bR$-polarized variation of Hodge structure on $X^o$ and let $\varphi$ be a quasi-psh function on $X$ which has generalized analytic singularities. Let ${\bf 0}=(0,\dots,0)\in X$ and let $\widetilde{v_1},\dots,\widetilde{v_m}$ be an $L^2$-adapted frame of $R(IC_X(\bV))$ locally at ${\bf 0}$ as in Proposition \ref{prop_adapted_frame}. Let $f_1,\dots,f_m\in (j_\ast\sO_{X^o})_{\bf 0}$. Then 
	$$\sum_{i=1}^mf_i[\widetilde{v_i}dz_1\wedge\cdots\wedge dz_n]_{\bf 0}\in S(IC_X(\bV),\varphi)_{\bf 0}$$ if and only if
	$$f_i\in\sI\big(\varphi-\sum_{j=1}^r2\alpha_{E_j}(\widetilde{v_i})\log|z_j|\big)_{\bf 0}$$
	for every $i=1,\dots,m$. In conclusion, there is an isomorphism
	\begin{align*}
	S(IC_{X}(\bV),\varphi)_{\bf 0}\simeq \omega_{X,{\bf 0}}\otimes\bigoplus_{i=1}^m\sI(\varphi-\sum_{j=1}^r2\alpha_{E_j}(\widetilde{v_i})\log|z_j|)_{\bf 0}\widetilde{v_i}.
	\end{align*}
\end{prop}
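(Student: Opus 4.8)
The plan is to evaluate the weighted $L^2$ integral defining $S(IC_X(\bV),\varphi)$ in the Euclidean metric, to split it term by term using that $(\widetilde{v_1},\dots,\widetilde{v_m})$ is $L^2$-adapted, and then to match the resulting one–variable integrability conditions with multiplier ideals by means of a log resolution of $\varphi$. First I would replace $ds^2$ by the Euclidean metric on $X=\Delta^n$, which is legitimate by Proposition \ref{prop_L2_extension_independent}; then $|dz_1\wedge\cdots\wedge dz_n|_{ds^2}\equiv1$ and ${\rm vol}_{ds^2}$ is the Euclidean volume. For $\alpha=\sum_i f_i\widetilde{v_i}\,dz_1\wedge\cdots\wedge dz_n$ with $f_i\in(j_\ast\sO_{X^o})_{\bf 0}$ the form is holomorphic, hence $\dbar$-closed, on $X^o$, and since the analytic set $E$ is removable for the $\dbar$ of a locally $L^2$ form, $\alpha\in S(IC_X(\bV),\varphi)_{\bf 0}=S_X(S(\bV),e^{-\varphi}h_\bV)_{\bf 0}$ if and only if $\int_U\big|\sum_i f_i\widetilde{v_i}\big|^2_{h_\bV}e^{-\varphi}{\rm vol}<\infty$ on a small polydisc $U\ni{\bf 0}$. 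Next I would use that $(\widetilde{v_1},\dots,\widetilde{v_m})$ is $L^2$-adapted for $(\cV_{-1}\cap j_\ast S(\bV),h_\bV)$ (Proposition \ref{prop_adapted_frame}), together with the observation that $L^2$-adaptedness is unaffected by multiplying the metric by the positive measurable function $e^{-\varphi}$ (equivalently, by replacing the $f_i$ with the still measurable $e^{-\varphi/2}f_i$); this reduces the membership of $\alpha$ to the $m$ separate conditions $\int_U|f_i|^2|\widetilde{v_i}|^2_{h_\bV}e^{-\varphi}{\rm vol}<\infty$.

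By Proposition \ref{prop_adapted_frame}, $|\widetilde{v_i}|^2_{h_\bV}\sim\lambda_i\prod_{j=1}^r|z_j|^{2\alpha_{E_j}(\widetilde{v_i})}$ with $1\lesssim\lambda_i\lesssim|z_1\cdots z_r|^{-\epsilon}$ for all $\epsilon>0$, and since $e^{-(-2\alpha_{E_j}(\widetilde{v_i})\log|z_j|)}=|z_j|^{2\alpha_{E_j}(\widetilde{v_i})}$, the $i$-th condition with the factor $\lambda_i$ deleted is precisely $f_i\in\sI\big(\varphi-\sum_{j=1}^r2\alpha_{E_j}(\widetilde{v_i})\log|z_j|\big)_{\bf 0}$. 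So the entire proposition comes down to showing that $\lambda_i$ is irrelevant to the finiteness of the integral. One direction follows from $\lambda_i\gtrsim1$; for the other, because $\lambda_i\lesssim|z_1\cdots z_r|^{-\epsilon}$ for all $\epsilon>0$, it is enough to prove that
\[
\int_U|f_i|^2\textstyle\prod_{j=1}^r|z_j|^{2\alpha_{E_j}(\widetilde{v_i})}e^{-\varphi}{\rm vol}<\infty\ \Longrightarrow\ \int_U|f_i|^2\textstyle\prod_{j=1}^r|z_j|^{2\alpha_{E_j}(\widetilde{v_i})-\epsilon}e^{-\varphi}{\rm vol}<\infty\ \text{ for some }\epsilon>0.
\]

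This implication is the crux, and it is here that the generalized-analytic-singularities hypothesis enters. I would take the presentation $e^{\varphi}\sim(\sum_k|g_k|^{2a_k})^b$ near ${\bf 0}$ and a proper bimeromorphic $\mu\colon X'\to X$, biholomorphic over $X^o$, with $\mu^{-1}(\{z_1\cdots z_r\,g_1\cdots g_{k_0}=0\})$ normal crossing; pulling the integral back by $\mu$ (the Jacobian contributing the relative canonical divisor $K_{X'/X}=\sum_l a^{(K)}_lE'_l$) the integrand becomes, up to factors bounded above and below, $|\mu^\ast f_i|^2$ times a single monomial in the $z'_l$ when $\varphi$ has analytic singularities, and times a maximum of monomials in general (since $e^{-\mu^\ast\varphi}$ is then a maximum of monomials); in the latter case one decomposes a neighbourhood of each point of $\mu^{-1}({\bf 0})$ into the finitely many Reinhardt sectors on which one monomial dominates — the analytic shadow of the Demailly--Koll\'ar formula for the multiplier ideal of a $\max$. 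On each piece, Lemma \ref{lem_integral} applied variable by variable via Fubini reduces finiteness to a finite system of \emph{strict} inequalities $c_l>-1$, where $c_l=\mathrm{ord}_{E'_l}(\mu^\ast f_i)+a^{(K)}_l+\sum_j\alpha_{E_j}(\widetilde{v_i})\,\mathrm{ord}_{E'_l}(\mu^\ast z_j)-(\varphi\text{-contribution})$; replacing each $\alpha_{E_j}(\widetilde{v_i})$ by $\alpha_{E_j}(\widetilde{v_i})-\tfrac\epsilon2$ changes $c_l$ by $-\tfrac\epsilon2\sum_j\mathrm{ord}_{E'_l}(\mu^\ast z_j)$, so for $\epsilon>0$ small enough all these finitely many strict inequalities persist, which gives the claim.

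Finally I would assemble: by the above, $\sum_i f_i\widetilde{v_i}\,dz_1\wedge\cdots\wedge dz_n\in S(IC_X(\bV),\varphi)_{\bf 0}$ if and only if $f_i\in\sI(\varphi-\sum_j2\alpha_{E_j}(\widetilde{v_i})\log|z_j|)_{\bf 0}$ for all $i$. Since $S(IC_X(\bV),\varphi)\subset S(IC_X(\bV))=R(IC_X(\bV))\otimes\omega_X$ (Proposition \ref{prop_S_coherent}) and $(\widetilde{v_1},\dots,\widetilde{v_m})$ is a local frame of the locally free sheaf $R(IC_X(\bV))=\cV_{-1}\cap j_\ast S(\bV)$ near ${\bf 0}$ (Proposition \ref{prop_adapted_frame}), every germ of $S(IC_X(\bV),\varphi)$ at ${\bf 0}$ has this form with uniquely determined $f_i$, which lie in $\sO_{X,{\bf 0}}$ because a multiplier ideal is contained in the structure sheaf; identifying $dz_1\wedge\cdots\wedge dz_n$ with a generator of $\omega_{X,{\bf 0}}$ yields the stated isomorphism $S(IC_X(\bV),\varphi)_{\bf 0}\simeq\omega_{X,{\bf 0}}\otimes\bigoplus_{i=1}^m\sI(\varphi-\sum_{j=1}^r2\alpha_{E_j}(\widetilde{v_i})\log|z_j|)_{\bf 0}\widetilde{v_i}$. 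I expect the one genuinely nontrivial point to be the removal of $\lambda_i$ in the third paragraph: for an arbitrary quasi-psh $\varphi$ one would only obtain, via the strong openness theorem, the weaker union statement $\sI(\psi)=\bigcup_{\epsilon>0}\sI(\psi+\epsilon\sum_j\log|z_j|)$, and it is precisely the generalized-analytic-singularities assumption that upgrades integrability to a finite system of strict inequalities, hence stable under the poly-logarithmic perturbation $\lambda_i$.
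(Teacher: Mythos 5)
Your proposal is correct and follows essentially the same route as the paper: pass to the Euclidean metric, use the $L^2$-adapted frame to split the weighted integral into $m$ separate conditions, and then show via a log resolution and Lemma \ref{lem_integral} that the polylogarithmic factor $\lambda_i$ (pinched between $1$ and $|z_1\cdots z_r|^{-\epsilon}$) does not affect integrability, because the generalized analytic singularities of $\varphi$ turn the integrability condition into finitely many strict inequalities that survive an $\epsilon$-perturbation of the exponents. Your decomposition into sectors where one monomial of $e^{-\mu^\ast\varphi}$ dominates is a slightly more careful handling of a point the paper compresses into the assertion $e^{\pi^\ast\varphi}\sim\prod_i|w_i|^{2a_i}$, but the argument is the same in substance.
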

\begin{proof}
	Denote $ds^2_0=\sum_{i=1}^ndz_id\bar{z}_i$.
	Since $\widetilde{v_1},\dots,\widetilde{v_m}$ is an $L^2$-adapted frame as in Proposition \ref{prop_adapted_frame}, the integral
	$$\int|\sum_{i=1}^mf_i\widetilde{v_i}dz_1\wedge\cdots\wedge dz_n|^2e^{-\varphi}{\rm vol}_{ds^2_0}=\int|\sum_{i=1}^mf_i\widetilde{v_i}|^2e^{-\varphi}{\rm vol}_{ds^2_0}$$ is finite near ${\bf 0}$ if and only if
	$$\int|f_i\widetilde{v_i}|^2e^{-\varphi}{\rm }{\rm vol}_{ds^2_0}\sim \int|f_i|^2e^{-\varphi}{\rm }\prod_{j=1}^r|z_j|^{2\alpha_{E_j}(\widetilde{v_i})}\lambda_i{\rm vol}_{ds^2_0}$$
	is finite near ${\bf 0}$ for every $i=1,\dots,m$. Here $\lambda_i$ is a positive real function so that
	\begin{align}\label{align_keylem_lambda}
	1\lesssim \lambda_i\lesssim |z_1\cdots z_r|^{-\epsilon},\quad\forall\epsilon>0.
	\end{align}	
	We are going to show that $|f_i\widetilde{v_i}|^2e^{-\varphi}{\rm }{\rm vol}_{ds^2_0}$ is locally integrable if and only if $f_i\in\sI\big(\varphi-\sum_{j=1}^r2\alpha_{E_j}(\widetilde{v_i})\log|z_i|\big)$.

	Since $\varphi$ has generalized analytic singularities, $Z:=\{e^\varphi=0\}$ is a closed analytic subspace. Let $\pi:\widetilde{X}\to X$ be a desingularization so that
	\begin{enumerate}
		\item $\pi$ is biholomorphic over $X^o\backslash Z'$ where $Z'$ is the union of  the  irreducible components of $Z$ that is not a component of $E$.
		\item $\pi^{-1}(Z\cup E)$ and $\pi^{-1}(E)$ are normal crossing divisors on $\widetilde{X}$.
	\end{enumerate}
	Let $w_1,\dots,w_n$ be  holomorphic local coordinates of $\widetilde{X}$ such that $\pi^{-1}(Z\cup E)=\{w_1\cdots w_s=0\}$. Then we obtain that 
	\begin{align*}
	e^{\pi^\ast\varphi}\sim \prod_{i=1}^s|w_i|^{2a_i},\quad \pi^\ast\left(\prod_{j=1}^r|z_j|^{2\alpha_{E_j}(\widetilde{v_i})}\right)\sim \prod_{i=1}^s|w_i|^{-2b_i}
	\end{align*}
	for some nonnegative constants $a_1,\dots,a_s,b_1,\dots,b_s$. Let $ds^2_1$ be a hermitian metric on $\widetilde{X}$. Then 
	$$\pi^\ast{\rm vol}_{ds^2_0}\sim \prod_{i=1}^s|w_i|^{2c_i}{\rm vol}_{ds^2_1}$$
	for some nonnegative constants $c_1,\dots,c_s$.
	If $f_i\in\sI\big(\varphi-\sum_{j=1}^r2\alpha_{E_j}(\widetilde{v_i})\log|z_j|\big)$, then
	\begin{align}\label{align_keylem_1}
	\int\pi^\ast\left(| f_i|^2e^{-\varphi+\sum_{j=1}^r2\alpha_{E_j}(\widetilde{v_i})\log|z_j|}{\rm vol}_{ds^2_0}\right)\sim \int|\pi^\ast f_i|^2\prod_{i=1}^s|w_i|^{2(-a_i-b_i+c_i)}{\rm vol}_{ds^2_1}
	\end{align}
	is locally integrable. 
	Denote 
	$$v_j(g):=\min\{l|g_l\neq0\textrm{ in the Laurent expansion } g=\sum_{i\in\bZ}g_iw_j^i\}, \quad j=1,\dots,s.$$
	By Lemma \ref{lem_integral}, the local integrability of (\ref{align_keylem_1}) implies that
	\begin{align*}
	v_j(\pi^\ast f_i)-a_j-b_j+c_j>-1+\epsilon,\quad j=1,\dots, s
	\end{align*}
	for some $\epsilon>0$.
	By (\ref{align_keylem_lambda}), we obtain that
	\begin{align*}
	\int\pi^\ast\left(|f_i\widetilde{v_i}|^2e^{-\varphi}{\rm }{\rm vol}_{ds^2_0}\right)&\sim \int|\pi^\ast f_i|^2\prod_{j=1}^s|w_j|^{2(-a_j-b_j+c_j)}\pi^\ast\lambda_i{\rm vol}_{ds^2_1}\\\nonumber
	&\lesssim \int|\pi^\ast f_i|^2\prod_{j=1}^s|w_j|^{2(-a_j-b_j+c_j-\epsilon)}{\rm vol}_{ds^2_1}
	\end{align*}
	is locally integrable. Since $\pi$ is a proper map, we see that $\int|f_i\widetilde{v_i}|^2e^{-\varphi}{\rm }{\rm vol}_{ds^2_0}$ is locally integrable.
	
	Conversely, let $f_i$ be a holomorphic function on $X^o$ such that $\int|f_i\widetilde{v_i}|^2e^{-\varphi}{\rm }{\rm vol}_{ds^2_0}$ is locally integrable. We know that
	\begin{align*}
	\int| f_i|^2e^{-\varphi+\sum_{j=1}^r2\alpha_{E_j}(\widetilde{v_i})\log|z_j|}{\rm vol}_{ds^2_0}\lesssim \int|f_i|^2e^{-\varphi}{\rm }\prod_{j=1}^r|z_j|^{2\alpha_{E_j}(\widetilde{v_i})}\lambda_i{\rm vol}_{ds^2_0}\sim \int|f_i\widetilde{v_i}|^2e^{-\varphi}{\rm }{\rm vol}_{ds^2_0}
	\end{align*}
	is locally integrable by (\ref{align_keylem_lambda}). The proof is finished.
\end{proof}
\begin{prop}\label{prop_S_vs_mulidealsheaf}
Assume that $X$ is a complex manifold and $X\backslash X^o\subset X$ is a (possibly empty) normal crossing divisor. Let $\bV$ be an $\bR$-polarized variation of Hodge structure on $X^o$ which has unipotent local monodromies. Let
$\varphi$ be a psh function with generalized analytic singularities along $X\backslash X^o$. Then there is an isomorphism $$S(IC_X(\bV),\varphi)\simeq S(IC_X(\bV))\otimes \sI(\varphi).$$
\end{prop}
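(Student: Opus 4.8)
The plan is to reduce the statement to the local computation of Proposition~\ref{prop_key_est}. First note that both sheaves in question are naturally coherent subsheaves of $S(IC_X(\bV))$. For $S(IC_X(\bV),\varphi)$ this holds because $\varphi$, being psh, is locally bounded above, so $e^{-\varphi}\gtrsim 1$ locally and a locally $e^{-\varphi}$-square-integrable section is locally square-integrable; hence $S(IC_X(\bV),\varphi)\subseteq S(IC_X(\bV),0)=S(IC_X(\bV))$ by Theorem~\ref{cor_S_phi=0}. For $S(IC_X(\bV))\otimes\sI(\varphi)$ this holds because in the present log-smooth setting $S(IC_X(\bV))=R(IC_X(\bV))\otimes\omega_X$ with $R(IC_X(\bV))$ a holomorphic subbundle of $\widetilde{\cV}_{-1}$ (nilpotent orbit theorem), hence locally free, so $S(IC_X(\bV))\otimes\sI(\varphi)$ is canonically the subsheaf of those sections of $S(IC_X(\bV))$ whose coefficients in any local holomorphic frame lie in $\sI(\varphi)$. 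Since equality of two subsheaves of a fixed sheaf is a stalkwise condition, it suffices to prove $S(IC_X(\bV),\varphi)_x=\big(S(IC_X(\bV))\otimes\sI(\varphi)\big)_x$ for every $x\in X$, and the resulting local isomorphisms glue automatically.

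If $x\in X^o$, then $h_\bV$ is a smooth hermitian metric on $S(\bV)$ near $x$; writing a local section as $\alpha=\sum_i f_ie_i\otimes dz_1\wedge\cdots\wedge dz_n$ in a holomorphic frame $(e_i)$ of $S(\bV)$, one has $|\alpha|^2_{ds^2,h_\bV}\sim\sum_i|f_i|^2$, so $\alpha$ is locally $e^{-\varphi}$-square-integrable at $x$ if and only if every $f_i\in\sI(\varphi)_x$. This is exactly the asserted equality at $x$ (it is also the $r=0$ case of Proposition~\ref{prop_key_est}).

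If $x\in X\backslash X^o$, choose a polydisc neighborhood $U\cong\Delta^n$ with coordinates $z_1,\dots,z_n$ for which $U\cap(X\backslash X^o)=\{z_1\cdots z_r=0\}$, shrinking $U$ so that $\varphi|_U$ has generalized analytic singularities; then Proposition~\ref{prop_key_est} applies on $U$. Let $\widetilde{v_1},\dots,\widetilde{v_m}$ be the $L^2$-adapted frame of $R(IC_X(\bV))$ at $x$ from Proposition~\ref{prop_adapted_frame}. The key point is that unipotency of the local monodromies forces all the exponents $\alpha_{E_j}(\widetilde{v_i})$ to vanish: each monodromy $T_j$ being unipotent means $T_j-{\rm Id}$ is nilpotent on the whole fibre $\cV_{-1}|_x$, so in the decomposition $\cV_{-1}|_x=\bigoplus_{-1<\alpha_1,\dots,\alpha_n\leq 0}\bV_{\alpha_1,\dots,\alpha_n}$ only the summand with $\alpha_1=\cdots=\alpha_n=0$ is nonzero, whence $\alpha_{E_j}(\widetilde{v_i})=0$ for all $i,j$ and the shifted psh functions $\varphi-\sum_j 2\alpha_{E_j}(\widetilde{v_i})\log|z_j|$ in Proposition~\ref{prop_key_est} all coincide with $\varphi$. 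Consequently
\begin{align*}
S(IC_X(\bV),\varphi)_x\simeq\omega_{X,x}\otimes\bigoplus_{i=1}^m\sI(\varphi)_x\,\widetilde{v_i}.
\end{align*}
Since $(\widetilde{v_1},\dots,\widetilde{v_m})$ is also an $\sO_X$-frame of $R(IC_X(\bV))$ near $x$, we have $S(IC_X(\bV))_x=\omega_{X,x}\otimes\bigoplus_i\sO_{X,x}\widetilde{v_i}$, so the right-hand side above equals $\sI(\varphi)_x\cdot S(IC_X(\bV))_x=\big(S(IC_X(\bV))\otimes\sI(\varphi)\big)_x$, the last step using local freeness of $S(IC_X(\bV))$. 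This proves the equality of stalks at $x$, and the proposition follows.

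As for the difficulty: once Proposition~\ref{prop_key_est} is available there is no further analytic input. The only genuine content is the observation that unipotency of the monodromies kills the exponents $\alpha_{E_j}(\widetilde{v_i})$, so that the ``shifted'' multiplier ideals collapse to $\sI(\varphi)$ itself, together with the fact that the Deligne--Zucker $L^2$-adapted frame is simultaneously a holomorphic frame of $R(IC_X(\bV))$ (both recorded in Proposition~\ref{prop_adapted_frame}). The remaining point is purely formal: an equality of coherent subsheaves of a common ambient sheaf is local, so the descriptions at points of $X^o$ and at points of $X\backslash X^o$ patch without extra work.
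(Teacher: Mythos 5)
Your proof is correct and follows essentially the same route as the paper: reduce to a stalkwise equality of subsheaves, invoke Proposition~\ref{prop_key_est} with the $L^2$-adapted frame of Proposition~\ref{prop_adapted_frame}, and observe that unipotency of the local monodromies forces all exponents $\alpha_{E_j}(\widetilde{v_i})$ to vanish, so the shifted multiplier ideals collapse to $\sI(\varphi)$. Your separate elementary treatment of points of $X^o$ (where $\varphi$ need not have generalized analytic singularities, so Proposition~\ref{prop_key_est} does not literally apply) is a small but welcome extra precision over the paper's argument.
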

\begin{proof}
		Since $S(IC_X(\bV))$ is locally free, it suffices to show that
	\begin{align}\label{align_prop_S_multiideal_key}
	S(IC_X(\bV),\varphi)=\sI(\varphi)S(IC_X(\bV))
	\end{align}
	as subsheaves of $j_\ast(\omega_{X^o}\otimes S(\bV))$. Here we regard $S(IC_X(\bV)):= (j_\ast( S(\bV))\cap \cV_{-1})\otimes\omega_{X^o}$ as a subsheaf of $j_\ast(\omega_{X^o}\otimes S(\bV))$.
	
	Since the problem is local, we assume that  $X\subset\bC^n$ is a germ of open subset at ${\bf 0}=(0,\dots,0)$ with the standard coordinates $z_1,...,z_n$ such that $E:=X\backslash X^o=\{z_1\cdots z_r=0\}$. Denote $E_i:=\{z_i=0\}, \forall i=1,\dots,r$. Let $\widetilde{v_1},\dots,\widetilde{v_m}$ be an $L^2$-adapted frame of $R(IC_X(\bV))$ locally at ${\bf 0}$ as in Proposition \ref{prop_adapted_frame} and let $f_1,\dots,f_m\in (j_\ast\sO_{X^o})_{\bf 0}$.  Proposition \ref{prop_key_est} tells us  that
	$$\sum_{i=1}^mf_i[\widetilde{v_i}dz_1\wedge\cdots\wedge dz_n]_x\in S(IC_X(\bV),\varphi)_{\bf 0}$$ if and only if
	$$f_i\in\sI\big(\varphi-\sum_{j=1}^r2\alpha_{E_j}(\widetilde{v_i})\log|z_j|\big)_{\bf 0},\quad i=1,\dots,m.$$
	Since the local monodromies of $\bV$ are unipotent, $\alpha_{E_j}(\widetilde{v_i})=0$, $\forall j=1,\dots,r$. Hence we prove (\ref{align_prop_S_multiideal_key}).
\end{proof}
\begin{thm}\label{cor_S_phi=0}
	$S(IC_X(\bV),0)\simeq S(IC_X(\bV))$. In particular, $S(IC_X(\bV))$ is independent of the choice of the desingularization.
\end{thm}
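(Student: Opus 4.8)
The plan is to reduce to the case of a log resolution by the functoriality of $S_X$, and there to read off the statement from Proposition \ref{prop_key_est} applied with the trivial weight $\varphi=0$. First I would treat the log smooth case, where $X$ is smooth and $E:=X\setminus X^o$ is a simple normal crossing divisor; here both $S(IC_X(\bV),0)=S_X(S(\bV),h_\bV)$ and $S(IC_X(\bV))=R(IC_X(\bV))\otimes\omega_X$ are subsheaves of $j_\ast(\omega_{X^o}\otimes S(\bV))$, so it suffices to prove they coincide stalkwise. This is a local question; away from $E$ both sides restrict to $\omega_{X^o}\otimes S(\bV)$, and near a point of $E$ we may take $X=\Delta^n$, $E=\{z_1\cdots z_r=0\}$, and apply Proposition \ref{prop_key_est} (the constant function $\varphi=0$ trivially has generalized analytic singularities). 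With $\widetilde{v_1},\dots,\widetilde{v_m}$ an $L^2$-adapted frame of $R(IC_X(\bV))$ at ${\bf 0}$, this yields
\[
S(IC_X(\bV),0)_{\bf 0}\simeq\omega_{X,{\bf 0}}\otimes\bigoplus_{i=1}^m\sI\Bigl(-\sum_{j=1}^r2\alpha_{E_j}(\widetilde{v_i})\log|z_j|\Bigr)_{\bf 0}\widetilde{v_i}.
\]

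The crux is then to show each of these multiplier ideals equals $\sO_{X,{\bf 0}}$. One inclusion holds by definition of the multiplier ideal; for the reverse, any holomorphic $f$ has $|f|^2\prod_{j=1}^r|z_j|^{2\alpha_{E_j}(\widetilde{v_i})}$ locally integrable near ${\bf 0}$, because $f$ is bounded there and the residue bound $\alpha_{E_j}(\widetilde{v_i})\in(-1,0]$ of the Deligne extension makes each factor $|z_j|^{2\alpha_{E_j}(\widetilde{v_i})}$ integrable in the variable $z_j$ (Lemma \ref{lem_integral}). Hence
\[
S(IC_X(\bV),0)_{\bf 0}\simeq\omega_{X,{\bf 0}}\otimes\bigoplus_{i=1}^m\sO_{X,{\bf 0}}\widetilde{v_i}=\omega_{X,{\bf 0}}\otimes R(IC_X(\bV))_{\bf 0}=S(IC_X(\bV))_{\bf 0},
\]
since $\widetilde{v_1},\dots,\widetilde{v_m}$ is an $\sO_X$-frame of $R(IC_X(\bV))$ by Proposition \ref{prop_adapted_frame}. (Both ends of the interval $(-1,0]$ matter: $\alpha_{E_j}(\widetilde{v_i})\leq 0$ forbids poles of $f$, while $\alpha_{E_j}(\widetilde{v_i})>-1$ admits every holomorphic $f$, so $(-1,0]$ is exactly the range on which the multiplier ideal is trivial, which is also why $\widetilde{\cV}_{-1}$ is the correct extension here.)

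For general $X$, choose a log resolution $\pi:\widetilde{X}\to X$, biholomorphic over $X^o$, with $\pi^{-1}(X\setminus X^o)$ a simple normal crossing divisor, so that $S(IC_X(\bV))=\pi_\ast S(IC_{\widetilde X}(\pi^\ast\bV))$ by definition. Since $S_X(\,\cdot\,)$ depends only on the bundle-with-metric over $X^o$, and $\pi$ is an isomorphism there, $S_{\widetilde X}(\pi^\ast S(\bV),\pi^\ast h_\bV)=S(IC_{\widetilde X}(\pi^\ast\bV),0)$, which equals $S(IC_{\widetilde X}(\pi^\ast\bV))$ by the log smooth case just proved. Functoriality of $S_X$ (Proposition \ref{prop_L2ext_birational}) then gives
\[
S(IC_X(\bV),0)=\pi_\ast S_{\widetilde X}(\pi^\ast S(\bV),\pi^\ast h_\bV)=\pi_\ast S(IC_{\widetilde X}(\pi^\ast\bV))=S(IC_X(\bV)).
\]
Since the left-hand side was defined with no reference to $\pi$, the same identity shows that $\pi_\ast S(IC_{\widetilde X}(\pi^\ast\bV))$ is independent of the chosen resolution, which is the second assertion.

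The one step needing genuine care is the local computation in the log smooth case: tracking the exponents $2\alpha_{E_j}(\widetilde{v_i})$ produced by the $L^2$-adapted frame against Lemma \ref{lem_integral}, and observing that the multiplier ideals in Proposition \ref{prop_key_est} collapse to $\sO_X$ precisely because the residue eigenvalues lie in $(-1,0]$. Everything else is formal, resting on the functoriality of $S_X$ and the definition of $S(IC_X(\bV))$ through resolutions.
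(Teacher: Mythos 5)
Your proof is correct and follows essentially the same route as the paper's: reduce to a log resolution via the functoriality of $S_X$ (Proposition \ref{prop_L2ext_birational}), then apply Proposition \ref{prop_key_est} with $\varphi=0$ and observe that the multiplier ideals $\sI\bigl(-\sum_j 2\alpha_{E_j}(\widetilde{v_i})\log|z_j|\bigr)$ are trivial because the exponents lie in $(-1,0]$ (Lemma \ref{lem_integral}). The only difference is the order of exposition (log smooth case first versus claim-then-reduction), which is immaterial.
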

\begin{proof}
	Let $\widetilde{X}\to X$ be a desingularization so that $\pi$ is biholomorphic over $X^o$ and $\pi^{-1}(X\backslash X^o)$ is a simple normal crossing divisor. We claim that
	\begin{align}\label{align_thm_SS_key}
	S(IC_{\widetilde{X}}(\pi^\ast\bV),0)\simeq  S(IC_{\widetilde{X}}(\pi^\ast\bV)).
	\end{align}
	If the claim is true, it follows from Proposition \ref{prop_L2ext_birational}   that
	$$S(IC_X(\bV),0)\simeq\pi_\ast S(IC_{\widetilde{X}}(\pi^\ast\bV),0)\simeq \pi_\ast S(IC_{\widetilde{X}}(\pi^\ast\bV))\simeq S(IC_X(\bV)).$$
	
	Now it suffices to show (\ref{align_thm_SS_key}). Since the problem is local, we assume that $\widetilde{X}=\Delta^n$ is the polydisc with the standard holomorphic coordinates $z_1,\dots,z_n$ such that $E:=\pi^{-1}(X\backslash X^o)=\{z_1\cdots z_r=0\}$. Let  $j:\widetilde{X}\backslash E\to \widetilde{X}$ denote the inclusion and denote $E_i:=\{z_i=0\}, \forall i=1,\dots,r$. Let ${\bf 0}=(0,\dots,0)\in \widetilde{X}$ and let  $\widetilde{v_1},\dots,\widetilde{v_m}$ be an $L^2$-adapted frame of $R(IC_{\widetilde{X}}(\pi^\ast\bV))$ locally at ${\bf 0}$ as in Proposition \ref{prop_adapted_frame}. Let $f_1,\dots,f_m\in (j_\ast\sO_{\pi^{-1}(X^o)})_{\bf 0}$. Proposition \ref{prop_key_est} shows  that 
	$$\sum_{i=1}^mf_i[\widetilde{v_i}dz_1\wedge\cdots\wedge dz_n]_{\bf 0}\in S(IC_X(\bV),0)_{\bf 0}$$ if and only if
	$$f_i\in\sI\big(-\sum_{j=1}^r2\alpha_{E_j}(\widetilde{v_i})\log|z_j|\big)_{\bf 0}$$
	for every $i=1,\dots,m$. Since $\alpha_{E_j}(\widetilde{v_i})\in(-1,0], \forall j=1,\dots,r$, Lemma \ref{lem_integral} shows that
	${\rm exp}\left(\sum_{j=1}^r2\alpha_{E_j}(\widetilde{v_i})\log|z_j|\right)$ is integrable. Hence
	\begin{align*}
	\sI\big(-\sum_{j=1}^r2\alpha_{E_j}(\widetilde{v_i})\log|z_j|\big)_{\bf 0}\simeq\sO_{\widetilde{X},\bf 0},\quad i=1,\dots,m.
	\end{align*}
The proof of the theorem is finished.
\end{proof}
\begin{prop}\label{prop_S_coherent}
	$S(IC_X(\bV),\varphi)$ is a coherent subsheaf of $S(IC_X(\bV))$ for an arbitrary general quasi-psh function $\varphi$.
\end{prop}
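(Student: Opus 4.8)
The plan is to prove coherence by reducing to the normal‑crossing case already analyzed in Proposition \ref{prop_key_est}. First, by the functoriality of $S_X$ (Proposition \ref{prop_L2ext_birational}) together with Proposition \ref{prop_L2_extension_independent}, take a log resolution $\pi:\widetilde X\to X$ biholomorphic over $X^o$ with $\pi^{-1}(X\backslash X^o)$ a simple normal crossing divisor; then
\[
S(IC_X(\bV),\varphi)\simeq\pi_\ast S(IC_{\widetilde X}(\pi^\ast\bV),\pi^\ast\varphi).
\]
Since $\pi$ is proper, coherence of $S(IC_X(\bV),\varphi)$ follows from Grauert's direct image theorem once we know that $S(IC_{\widetilde X}(\pi^\ast\bV),\pi^\ast\varphi)$ is coherent, i.e.\ it suffices to treat the case where $X$ is smooth and $X\backslash X^o$ is a simple normal crossing divisor. (One must note that $\pi^\ast\varphi$ is again quasi-psh, which is clear.)

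Next I would handle the smooth SNC case. Locally write $X=\Delta^n$, $E=\{z_1\cdots z_r=0\}$, and let $(\widetilde v_1,\dots,\widetilde v_m)$ be an $L^2$-adapted holomorphic frame of $R(IC_X(\bV))$ at ${\bf 0}$ as in Proposition \ref{prop_adapted_frame}, so that each $|\widetilde v_i|^2\sim\lambda_i\prod_{j=1}^r|z_j|^{2\alpha_{E_j}(\widetilde v_i)}$ with $1\lesssim\lambda_i\lesssim|z_1\cdots z_r|^{-\epsilon}$. The key point is the identity, for local sections of $K_{X^o}\otimes S(\bV)$ written in this frame,
\[
\Big\|\textstyle\sum_i f_i\widetilde v_i\,dz_1\wedge\cdots\wedge dz_n\Big\|^2_{ds^2_0,h_\bV\otimes e^{-\varphi}}
\sim\sum_i\int|f_i|^2\,\lambda_i\prod_{j=1}^r|z_j|^{2\alpha_{E_j}(\widetilde v_i)}e^{-\varphi}\,{\rm vol}_{ds^2_0},
\]
which is precisely the $L^2$-adaptedness worked out in the proof of Proposition \ref{prop_key_est}. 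Hence $\sum_i f_i\widetilde v_i\,dz_1\wedge\cdots\wedge dz_n$ is a local section of $S(IC_X(\bV),\varphi)$ if and only if $f_i\widetilde v_i\,dz_1\wedge\cdots\wedge dz_n$ is, for every $i$. The difficulty relative to Proposition \ref{prop_key_est} is that there $\varphi$ was assumed to have generalized analytic singularities, whereas now $\varphi$ is an arbitrary quasi-psh function, so we cannot pull back to a monomial situation and invoke Lemma \ref{lem_integral} termwise. This is the main obstacle. I would overcome it by observing that the condition ``$f_i\widetilde v_i\,dz_1\wedge\cdots\wedge dz_n\in S(IC_X(\bV),\varphi)$'' is equivalent to ``$f_i\in\sI(\psi_i)$'' where $\psi_i:=\varphi+\log\lambda_i-\sum_{j=1}^r2\alpha_{E_j}(\widetilde v_i)\log|z_j|$ is quasi-psh (using $1\lesssim\lambda_i$ bounded below, $\log\lambda_i$ is bounded above by $-\epsilon\log|z_1\cdots z_r|$, but actually what matters is that $e^{-\psi_i}\sim\lambda_i^{-1}\prod|z_j|^{2\alpha_{E_j}(\widetilde v_i)}e^{-\varphi}$ differs from $e^{-(\varphi-\sum 2\alpha_{E_j}(\widetilde v_i)\log|z_j|)}$ only by the factor $\lambda_i^{-1}$ which lies between two powers of $|z_1\cdots z_r|$); since $\lambda_i$ is not literally quasi-psh one instead brackets: $\sI(\varphi-\sum 2\alpha_{E_j}(\widetilde v_i)\log|z_j|+\epsilon\log|z_1\cdots z_r|)\subset \{f_i:f_i\widetilde v_i(\cdots)\in S\}\subset \sI(\varphi-\sum 2\alpha_{E_j}(\widetilde v_i)\log|z_j|)$ for every $\epsilon>0$, and the left-hand side increases to the right-hand side's multiplier ideal as $\epsilon\to0$ by the strong openness theorem of Guan–Zhou; alternatively, and more cleanly, one notes $\log\lambda_i$ is itself a difference of the Hodge-norm function $\log|v_i|^2_{h_\bV}$ (which is psh up to smooth error by Theorem \ref{thm_geq0_S(V)} type considerations) and monomial terms, so $\psi_i$ is genuinely quasi-psh. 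In any case each $\sI(\psi_i)$ is a coherent ideal sheaf by Nadel's coherence theorem.

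Finally, assembling: the local isomorphism
\[
S(IC_X(\bV),\varphi)_{\bf 0}\simeq\omega_{X,{\bf 0}}\otimes\bigoplus_{i=1}^m\sI(\psi_i)_{\bf 0}\,\widetilde v_i
\]
exhibits $S(IC_X(\bV),\varphi)$ locally as a finite direct sum of coherent sheaves, hence coherent; and since $e^{-\varphi}$ is locally bounded below where $\varphi$ is locally bounded above (and $\varphi$ is quasi-psh, hence locally bounded above), the inclusion $S(IC_X(\bV),\varphi)\subset S(IC_X(\bV))=S(IC_X(\bV),0)$ holds because enlarging the weight only shrinks the space of $L^2$ sections — more precisely, locally $\varphi\leq C$ gives $e^{-\varphi}\geq e^{-C}$, so square integrability against $e^{-\varphi}h_\bV$ implies square integrability against $h_\bV$. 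This shows $S(IC_X(\bV),\varphi)$ is a coherent subsheaf of $S(IC_X(\bV))$, completing the proof. The only genuinely delicate point, as indicated above, is justifying that the auxiliary weights $\psi_i$ may be taken quasi-psh (or that the $\lambda_i$-distortion is harmless for multiplier ideals), for which I would cite the asymptotic estimates of Theorem \ref{thm_Hodge_metric_asymptotic} and Proposition \ref{prop_adapted_frame} together with the openness property of multiplier ideals.
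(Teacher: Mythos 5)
Your proposal is essentially correct, but it takes a genuinely different route from the paper's. You reduce to the SNC case exactly as the paper does (via Proposition \ref{prop_L2ext_birational} and properness of $\pi$), but you then try to identify $S(IC_{\widetilde{X}}(\pi^\ast\bV),\pi^\ast\varphi)$ locally with $\omega_{\widetilde{X}}\otimes\bigoplus_i\sI(\psi_i)\widetilde{v_i}$ for a \emph{general} quasi-psh weight and invoke Nadel's coherence theorem for each multiplier ideal. The paper never extends Proposition \ref{prop_key_est} beyond weights with generalized analytic singularities; instead it runs the Nadel/H\"ormander coherence argument directly on $S(IC_{\widetilde{X}}(\bV),\psi)$: global sections generate a coherent subsheaf $\sJ$ by the strong Noetherian property, and $\sJ$ is shown to exhaust the sheaf via Krull's intersection theorem by solving $\dbar$ with the weights $e^{-\psi}|z-x|^{-2(n+k+1)}$ (Theorem \ref{thm_Hormander_incomplete}) and using the lower bound $(z_1\cdots z_s)^{\epsilon}\lesssim|v_i|$ to place the error term in $m_{\widetilde{X},x}^{k+1}$. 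The trade-off: your route buys a clean structural description of the multiplier $S$-sheaf for arbitrary quasi-psh $\varphi$, but only at the price of the Guan--Zhou strong openness theorem, needed (via a H\"older interpolation between $\sI((1+\delta)\psi)$ and an integrability statement for $|f|^2|z_1\cdots z_r|^{-2\epsilon(1+\delta)/\delta}$) to prove $\bigcup_{\epsilon>0}\sI\big(\psi+\epsilon\log|z_1\cdots z_r|\big)=\sI(\psi)$ and thereby absorb the distortion $1\lesssim\lambda_i\lesssim|z_1\cdots z_r|^{-\epsilon}$. That is a much heavier external input than anything in the paper's proof, which is self-contained given the H\"ormander estimate already in play.

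Two caveats on your write-up. First, your alternative justification (b) --- that $\log\lambda_i$ is psh up to smooth error ``by Theorem \ref{thm_geq0_S(V)} type considerations'' --- has the curvature sign backwards: $\sqrt{-1}\Theta_{h_\bV}(S(\bV))\geq0$ does \emph{not} make $\log|s|^2_{h_\bV}$ of a holomorphic section psh (that would require Griffiths semi-negativity), so $\psi_i$ is not ``genuinely quasi-psh'' for the reason you give; your argument must rest on the bracketing/openness route (a). Second, to conclude coherence from the decomposition you need it as an isomorphism of sheaves on a full neighborhood of ${\bf 0}$ (which the uniform bounds of Proposition \ref{prop_adapted_frame} do provide), not merely an identification of stalks at ${\bf 0}$; this should be said explicitly, since Proposition \ref{prop_key_est} as stated is a stalk-level assertion.
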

\begin{proof}
    First we show that $S(IC_X(\bV),\varphi)\subset S(IC_X(\bV))$. By Theorem \ref{cor_S_phi=0}, it is equivalent to show that 
    \begin{align}\label{align_S_phi_subset_S}
    S(IC_X(\bV),\varphi)\subset S(IC_X(\bV),0).
    \end{align}
    Since $\varphi$ is quasi-psh, it has an upper bound $ c_x$ locally at every point $x\in X$. Thus we have
    $$\int|\alpha|^2{\rm vol}\sim\int|\alpha|^2e^{-c_x}{\rm vol}\leq\int|\alpha|^2e^{-\varphi}{\rm vol}$$
    locally at $x$.
    This proves (\ref{align_S_phi_subset_S}).
    
    Next we prove its coherence. Let $\pi:\widetilde{X}\to X$ be a desingularization so that $\pi$ is biholomorphic over $X^o$ and $E:=\pi^{-1}(X\backslash X^o)$ is a simple normal crossing divisor. By abuse of notations we regard $X^o\subset \widetilde{X}$ as a subset. Denote $\psi=\pi^\ast\varphi$. Then $\psi$ is a quasi-psh function. There is an isomorphism
    \begin{align*}
    S(IC_X(\bV),\varphi)\simeq \pi_\ast\left(S(IC_{\widetilde{X}}(\bV),\psi)\right)
    \end{align*}
     by Proposition \ref{prop_L2ext_birational}.
    Since $\pi$ is proper, it suffices to show that $S(IC_{\widetilde{X}}(\bV),\psi)$ is a coherent sheaf on $\widetilde{X}$. Since the problem is local and $\widetilde{X}$ is smooth, we may assume that $\widetilde{X}\subset\bC^n$ is the unit ball so that $E=\{z_1\cdots z_s=0\}$ and $\psi$ has an upper bound. Denote $E_i=\{z_i=0\}, \forall i=1,\dots,s$.
   Notice that there is a complete K\"ahler metric on $X^o$  by Lemma \ref{lem_complete_metric_exists_locally}. Since $S(IC_{\widetilde{X}}(\bV))$ is coherent,  the space $\Gamma(\widetilde{X},S(IC_{\widetilde{X}}(\bV),\psi))$ generates a coherent subsheaf $\sJ$ of $S(IC_{\widetilde{X}}(\bV))$ by the strong Noetherian property. By the construction we have the inclusion 
    $\sJ\subset S(IC_{\widetilde{X}}(\bV),\psi)$. It remains to prove the converse. By Krull's theorem (\cite[Corollary 10.19]{Atiyah1969}), it suffices to show that
    \begin{align}\label{align_Krull}
    \sJ_x+S(IC_{\widetilde{X}}(\bV),\psi)_x\cap m_{\widetilde{X},x}^{k+1}S(IC_{\widetilde{X}}(\bV))=S(IC_{\widetilde{X}}(\bV),\psi)_x,\quad\forall k\geq0.
    \end{align}
    Let $\alpha\in S(IC_{\widetilde{X}}(\bV),\psi)_x$ be defined in a precompact neighborhood $V$ of $x$. Choose a $C^\infty$ cut-off function $\lambda$ so that $\lambda\equiv1$ near $x$ and ${\rm supp}\lambda\subset V$. Denote $|z|^2:=\sum_{i=1}^n|z_i|^2$. Since $\psi$ is quasi-psh, there is a constant $c>0$ such that $\psi(z)+c|z|^2$ is psh.
    Let
    \begin{align*}
    \psi_k(z):=\psi(z)+2(n+k+1)\log|z-x|+(c+1)|z|^2
    \end{align*}
    and $h_{\psi_k}=e^{-\psi_k}h_\bV$. Denote $\omega_0:=\sqrt{-1}\ddbar|z|^2$.
    By Theorem \ref{thm_geq0_S(V)}, we have
    \begin{align*}
    \sqrt{-1}\Theta_{h_{\psi_k}}(S(\bV))= \sqrt{-1}\ddbar\psi_k+\sqrt{-1}\Theta_{h_{\bV}}(S(\bV))\geq \omega_0.
    \end{align*}
    Since ${\rm supp}\lambda\alpha\subset V$ and $\dbar(\lambda\alpha)=0$ near $x$, we have
    \begin{align*}
    \|\dbar(\lambda\alpha)\|^2_{\omega_0,h_{\psi_k}}\sim \|\dbar(\lambda\alpha)\|^2_{\omega_0,h_{\psi}}\leq\|\dbar\lambda\|^2_{L^\infty}\|\alpha\|^2_{\omega_0,h_{\psi}}+|\lambda|^2\|\dbar\alpha\|^2_{\omega_0,h_{\psi}}<\infty
    \end{align*}
    Hence, Theorem \ref{thm_Hormander_incomplete} provides a solution of the equation $\dbar\beta=\dbar(\lambda\alpha)$ so that
    \begin{align}\label{align_norm_beta_psi}
    \|\beta\|^2_{\omega_0,h_\psi}\lesssim\int_{X^o}|\beta|^2_{\omega_0,h_\bV}e^{-\psi}|z-x|^{-2(n+k+1)}{\rm vol}_{\omega_0}\lesssim \|\dbar(\lambda\alpha)\|^2_{\omega_0,h_{\psi_k}}<\infty.
    \end{align}
    Thus $\gamma=\lambda\alpha-\beta$ is holomorphic and $\gamma\in \Gamma(\widetilde{X},S(IC_{\widetilde{X}}(\bV),\psi))$. 
    Using the notations in \S \ref{section_Hodge_module}, we have 
    \begin{align*}
    S(IC_{\widetilde{X}}(\bV))=\left(j_\ast(S(\bV))\cap\widetilde{\cV}_{-1}\right)\otimes\omega_{\widetilde{X}}
    \end{align*}
    where $j:X^o\to \widetilde{X}$ is the open immersion.
    Since $\psi$ has an upper bound, we have
    $$\|\beta\|^2_{\omega_0,h_\bV}\lesssim\int_{X^o}|\beta|^2_{\omega_0,h_\bV}|z-x|^{-2(n+k+1)}{\rm vol}_{\omega_0}\lesssim \int_{X^o}|\beta|^2_{\omega_0,h_\bV}e^{-\psi}|z-x|^{-2(n+k+1)}{\rm vol}_{\omega_0}<\infty.$$
    Hence $\beta\in S(IC_{\widetilde{X}}(\bV))\subset \widetilde{\cV}_{-1}$ by Theorem \ref{cor_S_phi=0}. 
    
    Let $\widetilde{{\bf v}}=(\widetilde{v}_1,\dots, \widetilde{v}_r)$ be the local frame of $R(IC_{\widetilde{X}}(\bV))\subset\widetilde{\cV}_{-1}$ where ${\bf v}=e^{-\sum_{i}2\pi\sqrt{-1}z_i{\rm Res}_{E_i}\widetilde{\nabla}}\widetilde{{\bf v}}$ is an orthogonal basis of $\bV$ (c.f. \S \ref{section_L2_adapted_frame}). Then there are holormorphic functions $f_1,\dots,f_r\in\sO_{\widetilde{X}}(X^o)$ such that $$\beta=\sum_{i=1}^r f_i\widetilde{v}_idz_1\wedge\cdots\wedge dz_n.$$
    It follows from Theorem \ref{thm_Hodge_metric_asymptotic} that 
    \begin{align*}
    (z_1\cdots z_s)^{\epsilon}\lesssim|v_i| ,\quad\forall\epsilon>0,\forall i=1,\dots,r.
    \end{align*}
    Thus one gets that 
    \begin{align*}
    \int_{X^o}\sum_{i=1}^r|f_i|^2(z_1\cdots z_r)^{2\epsilon}|z-x|^{-2(n+k+1)}{\rm vol}_{\omega_0}\lesssim\int_{X^o}|\beta|^2_{\omega_0,h_\bV}|z-x|^{-2(n+k+1)}{\rm vol}_{\omega_0}<\infty
    \end{align*}
     for every $\epsilon>0$.
    This implies that $f_i\in m_{\widetilde{X},x}^{k+1}$ for every $i=1,\dots,r$ (\cite[Lemma 5.6]{Demailly2012}). Hence
    $$\alpha_x-\gamma_x=\beta_x\in m_{\widetilde{X},x}^{k+1}S(IC_{\widetilde{X}}(\bV)).$$ 
    On the other hand, $\beta\in S(IC_{\widetilde{X}}(\bV),\psi)$ because of (\ref{align_norm_beta_psi}).
    This proves (\ref{align_Krull}).
\end{proof}
\begin{lem}\label{lem_Kernel}
	Let $(E,h_\varphi)$ be a holomorphic vector bundle  on $X$ with a possibly singular hermitian metric $h_\varphi=e^{-\varphi}h_0$. Then
	$S(IC_X(\bV),\varphi)\otimes E\simeq S_X(S(\bV)\otimes E,h_\bV\otimes h_{\varphi})$.
\end{lem}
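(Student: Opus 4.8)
The plan is to reduce everything to a local computation with a holomorphic frame of $E$ together with a pointwise norm comparison. Both sides of the asserted isomorphism are of the form $S_X(\,\cdot\,)$, so by Proposition~\ref{prop_L2_extension_independent} I may fix an arbitrary hermitian metric $ds^2$ on $X^o$ and work locally near a point $x\in X$. Choose a small neighbourhood $U\ni x$ and a holomorphic frame $e_1,\dots,e_r$ of $E|_U$. Since $h_0$ is a \emph{smooth} hermitian metric on $E$, on any relatively compact $V\subset\subset U$ the Gram matrix $\big(h_0(e_i,e_j)\big)$ is smooth, hermitian and positive definite, hence bounded above and below by positive constants on $V$.

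First I would construct the natural map. Because $E$ is locally free, both $S(IC_X(\bV),\varphi)\otimes E$ and $S_X(S(\bV)\otimes E,h_\bV\otimes h_\varphi)$ are naturally subsheaves of $j_\ast\big(K_{X^o}\otimes S(\bV)\otimes E\big)$, and I claim they coincide there. One inclusion is immediate: if $\alpha$ is a local section of $S(IC_X(\bV),\varphi)=S_X(S(\bV),e^{-\varphi}h_\bV)$ and $s$ a local holomorphic section of $E$, then $\alpha\otimes s$ is a $\dbar$-closed $(n,0)$-form (as $s$ is holomorphic) and $\int_V|\alpha\otimes s|^2_{ds^2,h_\bV\otimes h_\varphi}{\rm vol}_{ds^2}\leq(\sup_V|s|^2_{h_0})\int_V|\alpha|^2_{ds^2,h_\bV}e^{-\varphi}{\rm vol}_{ds^2}<\infty$ locally, so $S(IC_X(\bV),\varphi)\otimes E\subseteq S_X(S(\bV)\otimes E,h_\bV\otimes h_\varphi)$. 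This map is plainly frame-independent and $\sO_X$-linear since $S(IC_X(\bV),\varphi)$ is an $\sO_X$-module and the transition matrices of $E$ are holomorphic.

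For the reverse inclusion I would write a local section $\sigma$ of $S_X(S(\bV)\otimes E,h_\bV\otimes h_\varphi)$ over $V\cap X^o$ uniquely as $\sigma=\sum_{i=1}^r\alpha_i\otimes e_i$ with $\alpha_i$ measurable $S(\bV)$-valued $(n,0)$-forms, and observe that pointwise on $V\cap X^o$
\[
|\sigma|^2_{ds^2,h_\bV\otimes h_\varphi}=e^{-\varphi}\sum_{i,j}h_0(e_i,e_j)\,\langle\alpha_i,\alpha_j\rangle_{ds^2,h_\bV}\ \sim\ e^{-\varphi}\sum_{i=1}^r|\alpha_i|^2_{ds^2,h_\bV},
\]
the comparison constants coming from the two-sided bound on the Gram matrix (a Hermitian form with $c\,\mathrm{Id}\leq(h_0(e_i,e_j))\leq C\,\mathrm{Id}$ dominates and is dominated by the corresponding multiple of the diagonal form). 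Hence $\sigma$ is locally square integrable iff each $\alpha_i$ is locally square integrable with respect to $e^{-\varphi}h_\bV$. Moreover $\dbar\sigma=\sum_i(\dbar\alpha_i)\otimes e_i$ in the sense of distributions, the $e_i$ being holomorphic, so $\dbar\sigma=0$ forces $\dbar\alpha_i=0$ for every $i$. Combining, $\sigma$ is a section of $S_X(S(\bV)\otimes E,h_\bV\otimes h_\varphi)$ over $V$ exactly when each $\alpha_i$ is a section of $S_X(S(\bV),e^{-\varphi}h_\bV)=S(IC_X(\bV),\varphi)$ over $V$, which yields the reverse inclusion and identifies the two sheaves.

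I expect the only genuine point is the displayed norm equivalence, i.e. that an $\sO_X$-frame of the smooth bundle $E$ is $L^2$-adapted for the singular metric $h_\varphi$; this is the analogue for $E$ of the $L^2$-adapted frame estimate of Proposition~\ref{prop_adapted_frame}, and it is here that the smoothness of $h_0$ enters essentially. Everything else — the Leibniz rule for $\dbar$ against a holomorphic frame, the frame-independence and $\sO_X$-linearity of the map, and the reduction to a relatively compact coordinate patch — is routine.
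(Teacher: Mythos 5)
Your proposal is correct and follows essentially the same route as the paper: trivialize $E$ locally, use that the smooth metric $h_0$ is quasi-isometric to the trivial metric on the frame (so the frame is $L^2$-adapted for $h_\bV\otimes h_\varphi$), and conclude that the weighted $L^2$ condition and the $\dbar$-closedness both decouple componentwise. Your write-up is merely more explicit than the paper's about the two inclusions and the distributional Leibniz rule; no gap.
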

\begin{proof}
	Let $x\in X$ be a point and let $U$ be an open neighborhood of $x$ so that $E|_U\simeq \sO_U^{\oplus r}$ and $h_0$ is quasi-isometric to the trivial metric, i.e.
	$$|\sum_{i=1}^ra_ie_i|^2_{h_0}\sim \sum_{i=1}^r|a_i|^2$$
	where $\{e_i\}$ is the standard frame of $\sO_U^{\oplus r}$ and $a_i$s are measurable functions on $U$.
	Let $ds^2$ be an arbitrary hermitian metric on $X^o$ and let $\alpha=\sum_{i=1}^r\alpha_i\otimes e_i$ be a measurable section of $S(\bV)\otimes E|_{U\cap X^o}$. Then
	$$\|\alpha\|^2_{ds^2,h_\bV\otimes h_\varphi}\sim\sum_{i=1}^r\|\alpha_i\|^2_{ds^2,e^{-\varphi}h_\bV}$$
	is finite if and only if each $\|\alpha_i\|^2_{ds^2,e^{-\varphi}h_\bV}$ is finite. This proves the lemma.
\end{proof}
We end this section by proving an approximation property of $S(IC_X(\bV),\varphi)$. 
\begin{prop}\label{prop_approx_S}
	Let $\varphi$ and $\psi$ be quasi-psh functions on $X$ which have generalized analytic singularities. Then
	$$S(IC_X(\bV),\varphi)=S(IC_X(\bV),\varphi+\epsilon\psi),\quad 0<\epsilon\ll1.$$
\end{prop}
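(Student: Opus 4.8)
The plan is to reduce the problem to a monomial model via a single log resolution and then to recognize the equality as a discreteness statement for jumping numbers. First, since the assertion is local on $X$ and, by the functoriality $S(IC_X(\bV),\bullet)=\pi_\ast S(IC_{\widetilde{X}}(\pi^\ast\bV),\pi^\ast\bullet)$ of Proposition~\ref{prop_L2ext_birational} (for any desingularization $\pi\colon\widetilde{X}\to X$ biholomorphic over $X^o$), together with the fact that pulled-back weights retain generalized analytic singularities, we may assume $X=\Delta^n$ and $E:=X\setminus X^o=\{z_1\cdots z_r=0\}$ is a normal crossing divisor. One inclusion is free: being quasi-psh, $\psi$ is locally bounded above, so $e^{-\varphi-\epsilon\psi}\gtrsim e^{-\varphi}$ locally for every $\epsilon>0$ and therefore $S(IC_X(\bV),\varphi+\epsilon\psi)\subseteq S(IC_X(\bV),\varphi)$; the content is the reverse inclusion for $0<\epsilon\ll1$.

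Fix ${\bf 0}\in X$ and an $L^2$-adapted frame $\widetilde{v_1},\dots,\widetilde{v_m}$ of $R(IC_X(\bV))$ at ${\bf 0}$ as in Proposition~\ref{prop_adapted_frame}, so that $|\widetilde{v_\ell}|^2\sim\lambda_\ell\prod_{j=1}^r|z_j|^{2\alpha_{E_j}(\widetilde{v_\ell})}$ with $1\lesssim\lambda_\ell\lesssim|z_1\cdots z_r|^{-\epsilon'}$ for all $\epsilon'>0$; these data depend only on $\bV$ and $E$. For a germ $\alpha=\sum_\ell f_\ell\widetilde{v_\ell}\,dz_1\wedge\cdots\wedge dz_n$, belonging to $S(IC_X(\bV),\varphi)_{\bf 0}$ amounts, by $L^2$-adaptedness, to finiteness near ${\bf 0}$ of each $\int|f_\ell|^2\lambda_\ell\prod_j|z_j|^{2\alpha_{E_j}(\widetilde{v_\ell})}e^{-\varphi}\,{\rm vol}_{ds^2_0}$, and likewise for $\varphi+\epsilon\psi$ (again quasi-psh, with analytic unbounded locus $\{e^\varphi=0\}\cup\{e^\psi=0\}$). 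Choosing a single log resolution $\mu\colon X'\to X$ that simultaneously makes $\mu^{-1}(E)$, $\mu^{-1}(\{e^\varphi=0\}\cup E)$ and $\mu^{-1}(\{e^\psi=0\}\cup E)$ simple normal crossing, in a chart $w_1,\dots,w_n$ on $X'$ one has $e^{\mu^\ast\varphi}\sim\prod_i|w_i|^{2a_i}$, $e^{\mu^\ast\psi}\sim\prod_i|w_i|^{2b_i}$ with $b_i\geq0$, $\mu^\ast(\prod_j|z_j|^{2\alpha_{E_j}(\widetilde{v_\ell})})\sim\prod_i|w_i|^{-2c_{i,\ell}}$, $\mu^\ast{\rm vol}_{ds^2_0}\sim\prod_i|w_i|^{2d_i}{\rm vol}'$ with $d_i\in\bZ_{\geq0}$, and $1\lesssim\mu^\ast\lambda_\ell\lesssim\prod_i|w_i|^{-\epsilon'}$ for all $\epsilon'>0$. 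As in the proof of Proposition~\ref{prop_key_est}, Lemma~\ref{lem_integral} and the properness of $\mu$ translate finiteness of the first integral into $v_i(\mu^\ast f_\ell)>a_i+c_{i,\ell}-d_i-1=:\tau_{i,\ell}$ for every $i$ over $\mu^{-1}({\bf 0})$ — the factor $\lambda_\ell$ shifts this threshold only by an amount $<\epsilon'$ for all $\epsilon'>0$, hence is invisible to the integer-valued condition — and finiteness of the $\varphi+\epsilon\psi$ integral into $v_i(\mu^\ast f_\ell)>\tau_{i,\ell}+\epsilon b_i$. Since $v_i(\mu^\ast f_\ell)\in\bZ$ and $b_i\geq0$, these two select the same integers as soon as $(\tau_{i,\ell},\tau_{i,\ell}+\epsilon b_i]$ contains no integer, i.e. as soon as $\epsilon b_i<\lceil\tau_{i,\ell}\rceil-\tau_{i,\ell}$ when $\tau_{i,\ell}\notin\bZ$ and $\epsilon b_i<1$ when $\tau_{i,\ell}\in\bZ$. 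Only finitely many such constraints appear, so for $0<\epsilon\ll1$ they all hold and $S(IC_X(\bV),\varphi)_{\bf 0}=S(IC_X(\bV),\varphi+\epsilon\psi)_{\bf 0}$. As $\mu$, the frame and the $\tau_{i,\ell}$ are constant near ${\bf 0}$, the equality of sheaves holds on a neighborhood of ${\bf 0}$; covering $X$ by finitely many such neighborhoods (e.g. when $X$ is compact) produces a uniform $\epsilon$.

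The main obstacle is bookkeeping rather than conceptual: one must carry out the monomialization simultaneously for $\varphi$, for $\psi$, for the Hodge-theoretic weights $\prod_j|z_j|^{2\alpha_{E_j}(\widetilde{v_\ell})}$ attached to the $L^2$-adapted frame and for the volume form, using one log resolution, and then check that adding $\epsilon\psi$ moves every relevant threshold only upwards (because $b_i\geq0$), so that the two weighted $L^2$-conditions coincide once finitely many integrality constraints $\epsilon b_i<\lceil\tau\rceil-\tau$ (resp. $\epsilon b_i<1$) are met, which happens for all small $\epsilon$. In essence this is the discreteness of the jumping numbers of a weight with generalized analytic singularities — nothing jumps under a sufficiently small perturbation — and all the analytic input has already been provided by Propositions~\ref{prop_key_est} and~\ref{prop_S_coherent} and Lemma~\ref{lem_integral}.
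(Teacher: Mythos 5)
Your argument is correct and follows essentially the same route as the paper's proof: reduce via Proposition \ref{prop_L2ext_birational} to a log-smooth local model, monomialize $\varphi$, $\psi$ and the $L^2$-adapted frame weights on a single log resolution, and use Lemma \ref{lem_integral} to turn membership into integer vanishing-order inequalities that are insensitive to the perturbation $\epsilon\psi$ for $0<\epsilon\ll1$. Your explicit discreteness bookkeeping (the intervals $(\tau_{i,\ell},\tau_{i,\ell}+\epsilon b_i]$ containing no integer) and the remark on uniformity of $\epsilon$ merely spell out what the paper leaves as "equivalent when $\epsilon>0$ is small enough."
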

\begin{proof}
	Since the problem is local, we assume that $X$ is a germ of complex space.
	Since $\varphi$ and $\psi$ have generalized analytic singularities, $Z_1:=\{e^\varphi=0\}$ and $Z_2:=\{e^\psi=0\}$ are closed analytic subspaces. Let $\pi:\widetilde{X}\to X$ be a desingularization so that
	\begin{enumerate}
		\item $\pi$ is biholomorphic over $X^o\backslash (Z_1\cup Z_2)$;
		\item $\pi^{-1}(Z_1\cup Z_2\cup(X\backslash X^o))$ and $\pi^{-1}(X\backslash X^o)$ are normal crossing divisors of $\widetilde{X}$.
	\end{enumerate}   
    Let $w_1,\dots,w_n$ be  holomorphic local coordinates on $\widetilde{X}$ such that $\pi^{-1}(Z_1\cup Z_2\cup(X\backslash X^o))=\{w_1\cdots w_s=0\}$ and $\pi^{-1}(X\backslash X^o)=\{w_1\cdots w_r=0\}$ with $0\leq r\leq s$. Notice that $r=0$ if  we consider the problem on $X^o$. Denote $E_i=\{z_i=0\}, \forall i=1,\dots,r$. Then we know that 
    \begin{align*}
    e^{\pi^\ast\varphi}\sim \prod_{i=1}^s|w_i|^{2a_i}, \quad e^{\pi^\ast\psi}\sim \prod_{i=1}^s|w_i|^{2b_i}
    \end{align*}
    for some nonnegative constants $a_1,\dots,a_s,b_1,\dots,b_s$.
    Denote $j:\pi^{-1}(X^o)\to\widetilde{X}$ to be the open immersion. Let $\widetilde{v_1},\dots,\widetilde{v_m}$ be an $L^2$-adapted frame of $R(IC_{\widetilde{X}}(\pi^\ast\bV))$ locally at ${\bf 0}=(0,\dots,0)\in \widetilde{X}$ as in Proposition \ref{prop_adapted_frame}. Let $f_1,\dots,f_m\in (j_\ast\sO_{\pi^{-1}(X^o)})_{\bf 0}$ and $\alpha=\sum_{i=1}^mf_i\widetilde{v_i}dz_1\wedge\cdots\wedge dz_n$. Then Proposition \ref{prop_key_est} shows that
    $$[\alpha]_x\in S(IC_{\widetilde{X}}(\pi^\ast\bV),\pi^\ast\varphi)_{\bf 0}\quad \left(\textrm{resp. }[\alpha]_x\in S(IC_{\widetilde{X}}(\pi^\ast\bV),\pi^\ast\varphi+\pi^\ast\psi)_{\bf 0}\right)$$ if and only if
    $$f_i\in\sI\big(\pi^\ast\varphi-\sum_{j=1}^r2\alpha_{E_j}(\widetilde{v_i})\log|w_j|\big)_{\bf 0}\quad\bigg(\textrm{resp. }f_i\in\sI\big(\pi^\ast\varphi+\pi^\ast\psi-\sum_{j=1}^r2\alpha_{E_j}(\widetilde{v_i})\log|w_j|\big)_{\bf 0}\bigg)$$
    for every $i=1,\dots,m$. Hence $[\alpha]_x\in S(IC_{\widetilde{X}}(\pi^\ast\bV),\pi^\ast\varphi)_{\bf 0}$ is equivalent to that the integral
    \begin{align*}
    \int|f_i|^2\prod_{j=1}^s|w_j|^{-2a_j}\prod_{j=1}^r|w_j|^{2\alpha_{E_j}(\widetilde{v_i})}{\rm vol}_{ds^2_0}
    \end{align*}
    is finite near ${\bf 0}$ for every $i=1,\dots,m$. Denote
    $$v_j(f):=\min\{l|f_l\neq0\textrm{ in the Laurent expansion } f=\sum_{i\in\bZ}f_iw_j^i\}.$$
    By Lemma \ref{lem_integral}, we observe that $$f\in\sI\big(\pi^\ast\varphi-\sum_{j=1}^r2\alpha_{E_j}(\widetilde{v_i})\log|w_j|\big)$$ if and only if
    \begin{align}\label{align_est_varphi}
    v_j(f)-a_j+\alpha_{E_j}(\widetilde{v_i})>-1,\quad \forall j=1,\dots,s.
    \end{align}
    Here we set $\alpha_{E_j}(\widetilde{v_i})=0, \forall j=r+1,\dots,s$.
    
   Similar arguments show that $$f\in\sI\big(\pi^\ast(\varphi+\epsilon\psi)-\sum_{j=1}^r2\alpha_{E_j}(\widetilde{v_i})\log|w_j|\big)$$ if and only if
    \begin{align}\label{align_est_varphipsi}
    v_j(f)-a_j-\epsilon b_j+\alpha_{E_j}(\widetilde{v_i})>-1,\quad \forall j=1,\dots,s.
    \end{align}
    Conditions (\ref{align_est_varphi}) and (\ref{align_est_varphipsi}) are equivalent when $\epsilon>0$ is small enough. We obtain therefore that 
    $$S(IC_{\widetilde{X}}(\pi^\ast\bV),\pi^\ast\varphi)\simeq S(IC_{\widetilde{X}}(\pi^\ast\bV),\pi^\ast(\varphi+\epsilon\psi)),\quad 0<\epsilon\ll1.$$
    By Proposition \ref{prop_L2ext_birational}, 
    $$S(IC_X(\bV),\varphi)\simeq \pi_\ast(S(IC_{\widetilde{X}}(\pi^\ast\bV),\pi^\ast\varphi))\simeq\pi_\ast(S(IC_{\widetilde{X}}(\pi^\ast\bV),\pi^\ast(\varphi+\epsilon\psi)))\simeq S(IC_X(\bV),\varphi+\epsilon\psi)$$
    when $\epsilon>0$ is small enough.
\end{proof}
\subsection{Extension and adjunction}
In this section we consider the extension and adjunction properties for $S(IC_X(\bV),\varphi)$.  These  results will be used in proving the Demailly-Kawamata-Viehweg vanishing theorem for Saito's $S$-sheaf $S(IC_X(\bV))$ (Corollary \ref{cor_KV_vanishing}).
\begin{thm}[Ohsawa-Takegoshi extension theorem for $S(IC_X(\bV),\varphi)$]\label{thm_S_OT_extension}
	Let $X$ be a complex space and $\Omega\subset X_{\rm reg}$ a Zariski open subset which is a Stein manifold. Let $\bV$ be an $\bR$-polarized variation of Hodge structure on some Zariski open subset $X^o\subset X_{\rm reg}$. Let $H\subset X$ be a Cartier divisor such that $\sO_X(H)\simeq\sO_X$ and $H_{\rm reg}\cap \Omega\cap X^o$ is dense in $H_{\rm reg}$.  Let $(E,h_\varphi)$ be a vector bundle on $X$ with a singular hermitian metric $h_\varphi=e^{-\varphi}h_0$. Assume that $\varphi$ is smooth over some Zariski open subset of $X$ and $\sqrt{-1}\Theta_{h_{\varphi}}(E)\geq 0$ as a current. Let $h_H$ be a smooth hermitian metric on $\sO_X(H)$ with zero curvature. Then there is a constant $C>0$ such that, for every $\alpha\in \Gamma(H,S(IC_H(\bV|_{H\cap X^o}),\varphi|_H)\otimes E|_H\otimes\sO_X(H)|_H)$ satisfying that $\|\alpha\|_{h_{\bV}h_\varphi h_H}<\infty$, there is $\beta\in \Gamma(X,S(IC_X(\bV),\varphi)\otimes E)$ which satisfies that $\beta|_H=\alpha$ and $\|\beta\|_{h_{\bV} h_\varphi}\leq C\|\alpha\|_{h_{\bV} h_\varphi h_H}$.
\end{thm}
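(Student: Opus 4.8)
The plan is to reduce the statement, via a resolution of singularities together with the functoriality of the sheaves $S(IC_{-}(-),-)$ (Proposition~\ref{prop_L2ext_birational}) and the characterisation of its sections as locally $L^2$, holomorphic, $S(\bV)\otimes E$-valued canonical forms (Lemma~\ref{lem_Kernel}), to an application of the Ohsawa--Takegoshi $L^2$ extension theorem on a \emph{smooth} ambient space, the point being that on such a model the lowest Hodge piece $S(\bV)$ is an honest holomorphic vector bundle whose Hodge metric has Nakano semipositive curvature (Theorem~\ref{thm_geq0_S(V)}). Concretely, I would choose a proper modification $\pi\colon\widetilde X\to X$, biholomorphic over a dense Zariski open subset of $X_{\rm reg}\cap\Omega\cap X^o$ containing the generic point of $H$, such that $\widetilde X$ is a complex manifold, the strict transform $\widetilde H$ of $H$ is smooth, and $\widetilde H\cup\pi^{-1}(X\setminus X^o)\cup\pi^{-1}(\{\varphi=-\infty\})$ is a normal crossing divisor; write $\pi^\ast H=\widetilde H+F$ with $F$ exceptional. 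Applying Proposition~\ref{prop_L2ext_birational} to $\pi$ and, since $\pi|_{\widetilde H}\colon\widetilde H\to H$ resolves $H$, also to $\pi|_{\widetilde H}$, and using the adjunction isomorphism $\omega_{\widetilde H}\cong(\omega_{\widetilde X}\otimes\sO_{\widetilde X}(\widetilde H))|_{\widetilde H}$ together with the trivialisation $\sO_X(H)\cong\sO_X$, the problem becomes: extend a global holomorphic, locally $L^2$ section $\tilde\alpha$ of $S(IC_{\widetilde H}(\pi^\ast\bV|_{\widetilde H}),\pi^\ast\varphi|_{\widetilde H})\otimes\pi^\ast E|_{\widetilde H}$ to a global holomorphic, locally $L^2$ section $\beta$ of $S(IC_{\widetilde X}(\pi^\ast\bV),\pi^\ast\varphi)\otimes\pi^\ast E$, with $\|\beta\|\le C\|\tilde\alpha\|$ for a constant $C$ independent of $\tilde\alpha$.

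On $\widetilde X^o:=\widetilde X\setminus\pi^{-1}(X\setminus X^o)$, which is a complex manifold, $S(\pi^\ast\bV)$ carries the Hodge metric $\pi^\ast h_\bV$ and $\pi^\ast E$ carries $\pi^\ast h_\varphi=e^{-\pi^\ast\varphi}\pi^\ast h_0$; since $\sqrt{-1}\Theta_{\pi^\ast h_\bV}(S(\pi^\ast\bV))\ge0$ by Theorem~\ref{thm_geq0_S(V)} and $\sqrt{-1}\Theta_{\pi^\ast h_\varphi}(\pi^\ast E)\ge0$ by hypothesis, the product metric on $S(\pi^\ast\bV)\otimes\pi^\ast E$ is Nakano semipositive. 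The hypersurface $\widetilde H\cap\widetilde X^o\subset\widetilde X^o$ is smooth and cut out by a local generator of $\pi^\ast\sO_X(H)$, whose metric $\pi^\ast h_H$ is flat; rescaling $h_H$ we may assume the corresponding section has norm $\le1$ on the region under consideration. The Ohsawa--Takegoshi $L^2$ extension theorem, in the form obtained from the H\"ormander--Demailly estimate (Theorem~\ref{thm_Hormander_incomplete}) by the standard twisting argument, then produces $\beta$ on $\widetilde X^o$ restricting to $\tilde\alpha$ along $\widetilde H$ (via the residue isomorphism, which is identified with the sheaf restriction map through $\sO_X(H)\cong\sO_X$) and satisfying $\|\beta\|_{\pi^\ast h_\bV\,\pi^\ast h_\varphi}\le C\,\|\tilde\alpha\|_{\pi^\ast h_\bV\,\pi^\ast h_\varphi}$. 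To invoke Theorem~\ref{thm_Hormander_incomplete} one needs a complete K\"ahler metric on the relevant domain: one uses that $\Omega$ is Stein (so the extension can be carried out on a Stein open set, after which the resulting coherent-sheaf section extends over $X$), or, in the cases of interest where $X$ is compact K\"ahler or projective, that $\widetilde X^o$ itself admits a complete K\"ahler metric, in either case supplemented by the local complete K\"ahler metrics of Lemma~\ref{lem_complete_metric_exists_locally}.

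Since $\beta$ is holomorphic on $\widetilde X^o$ and globally $L^2$, it is locally $L^2$ at every point of $\widetilde X$, hence defines a section of $S(IC_{\widetilde X}(\pi^\ast\bV),\pi^\ast\varphi)\otimes\pi^\ast E$ by Lemma~\ref{lem_Kernel}; if desired, Proposition~\ref{prop_adapted_frame} and Proposition~\ref{prop_key_est} exhibit this membership explicitly in the $L^2$-adapted frame, the growth of $\pi^\ast h_\bV$ along $\pi^{-1}(X\setminus X^o)$ being absorbed into multiplier ideals. Pushing forward by $\pi$ and using Proposition~\ref{prop_L2ext_birational} once more (and independence of $ds^2$, Proposition~\ref{prop_L2_extension_independent}, to compare norms) yields $\beta\in\Gamma(X,S(IC_X(\bV),\varphi)\otimes E)$ with $\beta|_H=\alpha$ and $\|\beta\|_{h_\bV h_\varphi}\le C\|\alpha\|_{h_\bV h_\varphi h_H}$, where $C$ is the Ohsawa--Takegoshi constant, depending on $X$, $H$, $h_H$, $h_0$ and $\Omega$ but not on $\alpha$.

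The main obstacle is the second step: one must genuinely pass to a bona fide vector-bundle Ohsawa--Takegoshi situation while tracking several discrepancies. First, the Hodge metric $\pi^\ast h_\bV$ and the weight $\pi^\ast\varphi$ degenerate along $\pi^{-1}(X\setminus X^o)$, so the extension must be performed on the open manifold $\widetilde X^o$ equipped with a complete K\"ahler metric and the $L^2$ finiteness at the boundary recovered only afterwards from the global estimate; this is where the asymptotics of the Hodge norm (Theorem~\ref{thm_Hodge_metric_asymptotic}, Lemma~\ref{lem_W_F}, Proposition~\ref{prop_adapted_frame}) and the Stein hypothesis on $\Omega$ are used. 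Second, the normal-bundle twist $\pi^\ast\sO_X(H)$ and the non-reduced exceptional part $F$ of $\pi^\ast H$ must be reconciled with the requirement that $\beta$ be an \emph{untwisted} section of $S(IC_{\widetilde X}(\pi^\ast\bV),\pi^\ast\varphi)\otimes\pi^\ast E$; this uses $\sO_X(H)\cong\sO_X$ together with the fact that an $L^2$ extension of a section from $\widetilde H$ automatically lies in $K_{\widetilde X}\otimes\sO_{\widetilde X}(\widetilde H)\otimes(\cdots)$ and vanishes along $F$. Third, one must verify that the residue of $\beta$ along $\widetilde H$ coincides, under adjunction and the trivialisation of $\sO_X(H)$, with the prescribed $\alpha$, and that the resulting constant is uniform in $\alpha$. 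The generic smoothness of $\varphi$ enters to guarantee that $\varphi|_H$ is well defined and to permit approximation of $\varphi$ by smooth psh weights in the H\"ormander step.
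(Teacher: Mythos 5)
Your architecture is far heavier than what the statement requires, and the extra weight is exactly where your argument has a gap. The paper's proof does \emph{not} resolve singularities: it picks a Cartier divisor $H'$ of the Stein manifold $\Omega$ so that $Y:=\Omega\setminus H'$ is again Stein and is contained in $X^o\cap U$, where $U$ is a Zariski open set on which $\varphi$ is smooth. On $Y$ the bundle $S(\bV)\otimes E$ is an honest smooth hermitian bundle with Nakano semipositive curvature (Theorem \ref{thm_geq0_S(V)} plus the hypothesis on $\sqrt{-1}\Theta_{h_\varphi}(E)$), the hypersurface $Y\cap H$ is smooth and dense in $H_{\rm reg}$, and the classical Ohsawa--Takegoshi theorem (Ohsawa--Takegoshi, Guan--Zhou) applies verbatim to produce $\beta$ on $Y$ with $\|\beta\|_{Y,h_\bV h_\varphi}\leq C\|\alpha\|_{h_\bV h_\varphi h_H}$. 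The global $L^2$ bound together with holomorphy then makes $\beta$ locally square integrable at \emph{every} point of $X$, so by Lemma \ref{lem_Kernel} (and the insensitivity of $S_X(\cdot,\cdot)$ to shrinking the Zariski open set, via the $L^2$ Riemann extension theorem) it is already a global section of $S(IC_X(\bV),\varphi)\otimes E$; the identity $\beta|_H=\alpha$ follows from density of $Y\cap H$ in $H$. You do correctly identify this last mechanism (global $L^2$ plus holomorphy implies membership in the sheaf everywhere), which is the real point of the theorem.

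The gap in your version is the extension step itself. Having passed to $\pi:\widetilde X\to X$, you must run Ohsawa--Takegoshi on $\widetilde X^o$ (or on $\pi^{-1}(\Omega)\cap\widetilde X^o$) where the Hodge metric $\pi^\ast h_\bV$ and the weight $\pi^\ast\varphi$ degenerate along the boundary divisors; no version of the extension theorem among the paper's tools applies there, Theorem \ref{thm_Hormander_incomplete} does not by itself yield an extension theorem with a uniform constant without a full reworking of the twisting argument for these singular data, and $\widetilde X^o$ is not Stein or weakly pseudoconvex in general. You acknowledge this as the ``main obstacle'' but do not resolve it; your fallback remark that ``the extension can be carried out on a Stein open set, after which the resulting coherent-sheaf section extends over $X$'' is not justified as stated (a section of a coherent sheaf over a Stein open subset does not extend for free --- only the $L^2$ bound saves you, and once you use that, the resolution, the strict transform $\widetilde H$, the exceptional part $F$ of $\pi^\ast H$, the adjunction/residue identifications, and the claimed automatic vanishing along $F$ are all unnecessary bookkeeping whose compatibilities you raise but never verify). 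Replace the entire resolution step by the single observation that a Stein manifold minus a hypersurface is Stein, choose that hypersurface to swallow $\Omega\setminus(X^o\cap U)$, and your proof collapses to the paper's.
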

\begin{proof}
	Assume that $\varphi$ is smooth over a Zariski open subset $U\subset X$. Let $H'$ be a Cartier divisor of $\Omega$ so that $Y:=\Omega\backslash H'\subset X^o\cap U$ and $Y$ is a Stein manifold. Let $$\alpha\in \Gamma(H,S(IC_H(\bV|_{H\cap X^o}),\varphi|_H)\otimes E|_H\otimes\sO_X(H)|_H)$$ such that $\|\alpha\|_{h_{\bV} h_\varphi h_H}<\infty$. By the Ohsawa-Takegoshi extension theorem \cite{OT1988} (see also \cite[Theorem 2.2]{Guan-Zhou2015}), there is $\beta\in \Gamma(Y,K_{Y}\otimes S(\bV|_Y)\otimes E|_Y)$ such that $\beta|_{Y\cap H}=\alpha|_{Y\cap H}$ and $\|\beta\|_{Y,h_{\bV} h_\varphi}\leq C\|\alpha|_{Y\cap H}\|_{h_{\bV} h_\varphi h_H}$ for some constant $C>0$. It follows from Lemma \ref{lem_Kernel} that
	$$\beta\in \Gamma(X,S(IC_X(\bV|_Y),\varphi)\otimes E)=\Gamma(X,S(IC_X(\bV),\varphi)\otimes E).$$ Since $\alpha$ and $\beta$ are both  holomorphic sections, we get $\beta|_H=\alpha$.
\end{proof}
An immediate consequence of the extension theorem is the following 
\begin{cor}[Restriction Formula]
	Let $X$ be a complex space and $X^o\subset X_{\rm reg}$ a Zariski open subset. Let $\bV$ be an $\bR$-polarized variation of Hodge structure on some Zariski open subset $X^o\subset X_{\rm reg}$. Let $\varphi$ be a quasi-psh function on $X$ which is smooth over some Zariski open subset of $X$. Let $H$ be a reduced Cartier divisor of $X$ such that $H_{\rm reg}\cap X^o$ is dense in $H_{\rm reg}$. Then
	\begin{align}\label{align_restriction_formula}
	S(IC_H(\bV|_{H\cap X^o}),\varphi|_H)\subset S(IC_X(\bV),\varphi)|_H\otimes\sO_X(H)|_H
	\end{align}
	as subsheaves of $S(IC_X(\bV),0)|_H\otimes \sO_X(H)|_H$.
\end{cor}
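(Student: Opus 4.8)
The plan is to deduce (\ref{align_restriction_formula}) directly from the Ohsawa--Takegoshi extension theorem for the multiplier $S$-sheaf, Theorem \ref{thm_S_OT_extension}, applied with $E=\sO_X$. Since (\ref{align_restriction_formula}) is an inclusion of subsheaves of a sheaf on $H$, it may be checked on the germ at each point $x\in H$, so one is free to shrink $X$ to a small neighbourhood $U$ of $x$. First I would choose $U$ so that $U\cap X$ is Stein and $\sO_X(H)|_U$ is trivial; fixing a trivialisation, i.e.\ a local holomorphic equation for $H$, the Poincar\'e residue along $H$ identifies $(K_X\otimes\sO_X(H))|_{H\cap X^o}$ with $K_{H\cap X^o}$ and $S(\bV)|_{H\cap X^o}$ with $S(\bV|_{H\cap X^o})$, since passing to the top nonzero Hodge piece commutes with restriction to a submanifold. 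This is exactly what exhibits the two sides of (\ref{align_restriction_formula}) as subsheaves of one and the same sheaf of locally $L^2$, $S(\bV)$-valued top-degree forms on $H\cap X^o$, and it is where the hypothesis that $H_{\rm reg}\cap X^o$ is dense in $H_{\rm reg}$ is used, since this guarantees that $\bV|_{H\cap X^o}$ is a variation of Hodge structure on a dense Zariski open subset of $H_{\rm reg}$. Since $\varphi$ is merely quasi-psh, I would then add a smooth function $\chi$ on $U$ (a large constant multiple of $|z|^2$ in an embedding $U\hookrightarrow\bC^N$) so that $\varphi+\chi$ is psh; as $\chi$ is bounded, Lemma \ref{lem_S_change_varphi_bounded} gives $S(IC_X(\bV),\varphi)=S(IC_X(\bV),\varphi+\chi)$ on $U$, and the same on $U\cap H$, so after this harmless change we may assume $\varphi$ is psh and equip $E=\sO_X$ with the trivial metric $h_0$; then $\sqrt{-1}\Theta_{h_\varphi}(E)=\sqrt{-1}\ddbar\varphi\ge 0$ as a current. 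Finally, after one more shrinking there is a Stein open $\Omega\subset U_{\rm reg}$ with $H_{\rm reg}\cap\Omega\cap X^o$ dense in $H_{\rm reg}\cap U$: take a holomorphic function $s$ on the Stein space $U$ vanishing on ${\rm Sing}(U)$ but not identically on $H$ (possible because $H\not\subset{\rm Sing}(U)$ by the density hypothesis) and put $\Omega=U\setminus\{s=0\}$, exactly as in Lemma \ref{lem_complete_metric_exists_locally} and the proof of Theorem \ref{thm_S_OT_extension}. All hypotheses of Theorem \ref{thm_S_OT_extension} are then in force for $U$ in place of $X$.

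Next, let $\alpha$ be a germ at $x$ of $S(IC_H(\bV|_{H\cap X^o}),\varphi|_H)$, represented by a section over $U\cap H$. Shrinking $U$ to a relatively compact $W\Subset U$, the local integrability built into Definition \ref{defn_Svarphi} forces $\|\alpha\|_{h_\bV h_\varphi h_H}<\infty$ over $W\cap H$, where $h_H$ is a flat smooth metric on the trivialised bundle $\sO_X(H)$. Theorem \ref{thm_S_OT_extension}, applied on $W$ with $E=\sO_X$, then furnishes $\beta\in\Gamma(W,S(IC_X(\bV),\varphi))$ with $\beta|_H=\alpha$. Since $\beta$ is a section of $S(IC_X(\bV),\varphi)$ over $W$ and $\beta|_H=\alpha$, the germ of $\alpha$ lies in the image of the restriction map $S(IC_X(\bV),\varphi)\to S(IC_X(\bV),\varphi)|_H\otimes\sO_X(H)|_H$, i.e.\ in the subsheaf $S(IC_X(\bV),\varphi)|_H\otimes\sO_X(H)|_H$ of the ambient $S(IC_X(\bV),0)|_H\otimes\sO_X(H)|_H$. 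This is independent of the auxiliary choices of $\chi$, $h_0$, $\Omega$ and $W$, because $\beta$ restricts to $\alpha$ as a form on the dense set $W\cap H_{\rm reg}\cap X^o$ and a holomorphic section is determined by its restriction there. Letting $x$ range over $H$, the germwise inclusions glue to (\ref{align_restriction_formula}).

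The analytic content sits entirely in Theorem \ref{thm_S_OT_extension}, which is already established; granting it, the corollary is pure localisation, so ``immediate consequence'' is warranted. The only two points I expect to require care are: (i) producing the Nakano positivity $\sqrt{-1}\Theta_{h_\varphi}(E)\ge 0$ from the merely quasi-psh $\varphi$, handled by absorbing the smooth part of $\varphi$ into the smooth metric $h_0$ on $E$ and invoking Lemma \ref{lem_S_change_varphi_bounded} to see that this modification leaves the multiplier $S$-sheaf unchanged; and (ii) arranging, after shrinking, the Stein set $\Omega$ that meets $H$ densely inside $X^o$, which is where the hypotheses that $H$ is a reduced Cartier divisor and that $H_{\rm reg}\cap X^o$ is dense in $H_{\rm reg}$ genuinely enter (they are also what make the left-hand side of (\ref{align_restriction_formula}) well defined in the first place). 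No analytic input beyond Theorem \ref{thm_S_OT_extension} is needed.
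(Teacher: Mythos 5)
Your proposal is correct and follows essentially the same route as the paper: both deduce the inclusion directly from the Ohsawa--Takegoshi extension theorem for the multiplier $S$-sheaf (Theorem \ref{thm_S_OT_extension}). The only cosmetic difference is that the paper keeps the $\sO_X(H)$-twist and invokes Lemma \ref{lem_Kernel} to identify $S_X(S(\bV)\otimes\sO_X(H),h_\bV\otimes e^{-\varphi}h_0)$ with $S(IC_X(\bV),\varphi)\otimes\sO_X(H)$, whereas you trivialise $\sO_X(H)$ locally and absorb the quasi-psh correction via Lemma \ref{lem_S_change_varphi_bounded}; both are fine.
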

\begin{proof}
	By Theorem \ref{thm_S_OT_extension}, there is an inclusion 
	$$S(IC_H(\bV|_{H\cap X^o}),\varphi|_H)\subset S_X(S(\bV)\otimes \sO_X(H),h_\bV\otimes e^{-\varphi}h_0)|_H$$
	where $h_0$ is an arbitrary smooth hermitian metric on $\sO_X(H)$. By Lemma \ref{lem_Kernel} there is an isomorphism
	$$S_X(S(\bV)\otimes \sO_X(H),h_\bV\otimes e^{-\varphi}h_0)\simeq S(IC_X(\bV),\varphi)\otimes\sO_X(H).$$
    The corollary is proved.
\end{proof}
Generally, the inclusion (\ref{align_restriction_formula}) is strict even if $\varphi=0$. From the perspective of Hodge modules, $IC_X(\bV)|_H$ could be a mixed Hodge module while $IC_H(\bV|_{H\cap X^o})$ is pure. When $\varphi$ has generalized analytic singularities, (\ref{align_restriction_formula}) is an equality if $H$ is in a general position.
\begin{prop}\label{prop_adjunction}
	Let $X$ be a projective algebraic variety and let $\bV$ be an $\bR$-polarized variation of Hodge structure on some Zariski open subset $X^o\subset X_{\rm reg}$. Let $\varphi$ be a quasi-psh function on $X$ which has generalized analytic singularities. Let $\Lambda$ be a free linear system on $X$. Then there is a canonical isomorphism
	$$S(IC_X(\bV),\varphi)|_H\otimes\sO_X(H)|_H=S(IC_H(\bV|_{H\cap X^o}),\varphi|_H)$$
	for a general $H\in \Lambda$.
\end{prop}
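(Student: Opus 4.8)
The plan is as follows. The inclusion \eqref{align_restriction_formula}, namely $S(IC_H(\bV|_{H\cap X^o}),\varphi|_H)\subset S(IC_X(\bV),\varphi)|_H\otimes\sO_X(H)|_H$, is already available and is induced by the natural restriction of sections; it remains to show that for general $H\in\Lambda$ this inclusion is an isomorphism, i.e. that the restriction map is surjective. Since surjectivity may be tested stalkwise, I would reduce the statement, by Bertini's theorem and the functoriality of the multiplier $S$-sheaf (Proposition \ref{prop_L2ext_birational}), to the explicit local description on a log resolution furnished by Proposition \ref{prop_key_est}.

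Concretely, I would first fix a projective log resolution $\pi\colon\widetilde{X}\to X$, biholomorphic over $X^o\setminus Z$ with $Z:=\{e^\varphi=0\}$, such that $\pi^{-1}\big(Z\cup(X\setminus X^o)\big)$ and $\pi^{-1}(X\setminus X^o)$ are simple normal crossing divisors. Since $\Lambda$ is base point free, so is $\pi^\ast\Lambda$, and applying Bertini's theorem to the finitely many strata of these divisors (and to $X_{\rm sing}$) shows that for general $H\in\Lambda$ the divisor $\widetilde{H}:=\pi^{-1}(H)\in|\pi^\ast H|$ is smooth, meets every such stratum transversally, $\widetilde{H}\cap\pi^{-1}(X\setminus X^o)$ is a simple normal crossing divisor on $\widetilde{H}$, and both $\pi$ and $\pi|_{\widetilde{H}}$ are biholomorphic over dense Zariski open subsets of $X^o$ and of $H\cap X^o$ respectively. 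By Proposition \ref{prop_L2ext_birational} (applied exactly as in the proof of Proposition \ref{prop_approx_S}, on $X$ and on $H$) together with Lemma \ref{lem_Kernel} to absorb the twist by $\sO_X(H)$, the assertion is reduced to the equality
\begin{align*}
S\big(IC_{\widetilde{X}}(\pi^\ast\bV),\pi^\ast\varphi\big)\otimes\sO_{\widetilde{X}}(\widetilde{H})\big|_{\widetilde{H}}=S\big(IC_{\widetilde{H}}(\pi^\ast\bV|_{\widetilde{H}\cap X^o}),\pi^\ast\varphi|_{\widetilde{H}}\big).
\end{align*}

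For this last equality I would argue locally. Choose coordinates $w_1,\dots,w_n$ on $\widetilde{X}$ centered at a point of $\widetilde{H}$ with $\pi^{-1}\big(Z\cup(X\setminus X^o)\big)=\{w_1\cdots w_k=0\}$, $\pi^{-1}(X\setminus X^o)=\{w_1\cdots w_r=0\}$ ($r\le k$), and, by transversality and genericity of $H$, $\widetilde{H}=\{w_n=0\}$ with $n>k$; thus $e^{\pi^\ast\varphi}$ and $\prod_{j=1}^r|w_j|^{2\alpha_{E_j}(\cdot)}$ involve only $w_1,\dots,w_k$. Let $\widetilde{v_1},\dots,\widetilde{v_m}$ be an $L^2$-adapted frame of $R(IC_{\widetilde{X}}(\pi^\ast\bV))$ at the origin as in Proposition \ref{prop_adapted_frame}. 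The key point is that $\widetilde{v_1}|_{\widetilde{H}},\dots,\widetilde{v_m}|_{\widetilde{H}}$ is an $L^2$-adapted frame of $R(IC_{\widetilde{H}}(\pi^\ast\bV|_{\widetilde{H}}))$ with the \emph{same} exponents $\alpha_{E_j}(\widetilde{v_i}|_{\widetilde{H}})=\alpha_{E_j}(\widetilde{v_i})$ for $j=1,\dots,r$: the Hodge filtration and the Deligne canonical extension restrict to the transversal slice $\widetilde{H}$, a loop around $E_j\cap\widetilde{H}$ in $\widetilde{H}\cap X^o$ is homotopic in $X^o$ to a loop around $E_j$ so the local monodromy eigenvalues are unchanged, Lemma \ref{lem_W_F} applies verbatim to $\bV|_{\widetilde{H}}$, and the norm estimate of Theorem \ref{thm_Hodge_metric_asymptotic} restricts to $\widetilde{H}$. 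Granting this, the adjunction isomorphism $\omega_{\widetilde{H}}\simeq\big(\omega_{\widetilde{X}}\otimes\sO_{\widetilde{X}}(\widetilde{H})\big)|_{\widetilde{H}}$ together with Proposition \ref{prop_key_est} applied on both $\widetilde{X}$ and $\widetilde{H}$ turns the displayed equality, term by term in the $\widetilde{v_i}$, into the restriction formula for multiplier ideals
\begin{align*}
\sI\Big(\pi^\ast\varphi-\sum_{j=1}^r2\alpha_{E_j}(\widetilde{v_i})\log|w_j|\Big)\otimes\sO_{\{w_n=0\}}=\sI\Big(\big(\pi^\ast\varphi-\sum_{j=1}^r2\alpha_{E_j}(\widetilde{v_i})\log|w_j|\big)\big|_{\{w_n=0\}}\Big).
\end{align*}
Since after the log resolution the weight on the left is quasi-isometric to a monomial $\prod_{j=1}^k|w_j|^{2c_{ij}}$ in $w_1,\dots,w_k$ only, both multiplier ideals are monomial ideals described by Lemma \ref{lem_integral}, and the identity is immediate because $w_n$ does not occur in the weight; alternatively one may quote the Demailly--Ein--Lazarsfeld restriction theorem for multiplier ideals together with the genericity of $H$. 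This gives the equality locally, hence globally, and the resulting isomorphism is the one induced by \eqref{align_restriction_formula}.

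I expect the middle step to be the main obstacle: proving that restriction to the general, hence transversal, hyperplane section commutes with the formation of Saito's $S$-sheaf and sends an $L^2$-adapted frame to an $L^2$-adapted frame with unchanged exponents. This rests on the compatibility of the asymptotics of the Hodge metric --- the nilpotent and $\mathrm{SL}_2$-orbit theorems and the norm estimates of \cite{Schmid1973,Cattani_Kaplan_Schmid1986,Mochizuki20072} --- with restriction to a non-characteristic slice; the remaining reductions are formal consequences of the results in \S\ref{section_Hodge_module}--\S\ref{section_L2_adapted_frame} and of standard Bertini-type genericity.
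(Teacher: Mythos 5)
Your proposal is correct and follows essentially the same route as the paper: reduce by Proposition \ref{prop_L2ext_birational} to a log resolution with $\widetilde{H}$ transversal to the boundary strata, apply Proposition \ref{prop_key_est} on both $\widetilde{X}$ and $\widetilde{H}$ with the restricted $L^2$-adapted frame, and conclude via $\sI(\pi^\ast\varphi+\psi_i)|_{\widetilde{H}}=\sI\big((\pi^\ast\varphi+\psi_i)|_{\widetilde{H}}\big)$ for general $H$. You merely make explicit two points the paper leaves implicit, namely that the restricted frame is again $L^2$-adapted with unchanged exponents and that the generic restriction of the multiplier ideals is an equality.
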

\begin{proof}
	Let $\pi:\widetilde{X}\to X$ be a desingularization so that $E:=\pi^{-1}(X\backslash X^o)$ is a simple normal crossing divisor and $\pi$ is biholomorphic over $X^o$. Let $H$ be in a general position so that $\widetilde{H}:=\pi^\ast H$ is smooth and intersects transversally with every stratum of $E$. 
	By Proposition \ref{prop_L2ext_birational}, it suffices to show that 
	\begin{align}\label{align_adjunction_blowup}
	S(IC_{\widetilde{X}}(\pi^\ast\bV),\pi^\ast\varphi)|_{\widetilde{H}}\otimes\sO_{\widetilde{X}}(\widetilde{H})|_{\widetilde{H}}=S(IC_{\widetilde{H}}(\pi^\ast\bV|_{\pi^\ast (H\cap X^o)}),\pi^\ast\varphi|_{\widetilde{H}})
	\end{align}
	for a general $H\in \Lambda$. 
	
	Since the problem is local, we assume that $\widetilde{X}=\Delta^n$ is a polydisc with the standard holomorphic coordinates $z_1,\dots,z_n$ such that $E=\{z_1\cdots z_r=0\}$. Denote $E_i=\{z_i=0\}$, $\forall i=1,\dots,r$. Let $(\widetilde{v_1}, \dots, \widetilde{v_m})$ be an $L^2$-adapted frame of $R(IC_X(\bV))$ as in Proposition \ref{prop_adapted_frame}. Denote 
	$$\psi_i:=-\sum_{j=1}^r\alpha_{E_j}(\widetilde{v_i})\log|z_j|^2.$$
	By Proposition \ref{prop_key_est}, we see that 
	\begin{align*}
	S(IC_{\widetilde{X}}(\pi^\ast\bV),\pi^\ast\varphi)\simeq \omega_{\widetilde{X}}\otimes\bigoplus_{i=1}^m\sI(\pi^\ast\varphi+\psi_i)\widetilde{v_i},
	\end{align*}
	\begin{align*}
	S(IC_{\widetilde{H}}(\pi^\ast\bV|_{\pi^\ast (H\cap X^o)})),\pi^\ast\varphi|_{\widetilde{H}})\simeq \omega_{{\widetilde{H}}}\otimes\bigoplus_{i=1}^m\sI(\pi^\ast\varphi|_{\widetilde{H}}+\psi_i|_{\widetilde{H}})\widetilde{v_i}|_{\widetilde{H}}
	\end{align*}
	and
	$$\sI(\pi^\ast\varphi+\psi_i)|_{\widetilde{H}}\simeq \sI(\pi^\ast\varphi|_{\widetilde{H}}+\psi_i|_{\widetilde{H}})$$
    since $H$ is in a general position. 
Consequently, (\ref{align_adjunction_blowup}) is obtained.
\end{proof}
\section{$L^2$-Dolbeault resolution of multiplier $S$-sheaf}
The purpose of this section is to prove Theorem \ref{thm_main_resolution}. 
\begin{thm}\label{thm_main_local1}
	Let $X$ be a complex space of dimension $n$ and $ds^2$ a hermitian metric on a Zariski open subset $X^o\subset X_{\rm reg}$ with $\omega$ its fundamental form. Let $\bV=(\cV,\nabla,\cF^\bullet,h_\bV)$ be an $\bR$-polarized variation of Hodge structure on $X^o$. Let $(E,h_\varphi)$ be a holomorphic vector bundle on $X$ with a (possibly) singular hermitian metric $h_\varphi:=e^{-\varphi}h_0$. Assume that, locally at every point $x\in X$, there is a neighborhood $U$ of $x$, a strictly psh function $\lambda\in C^2(U)$ and a bounded psh function $\Phi\in C^2(U\cap X^o)$ such that $\sqrt{-1}\ddbar\lambda|_{U\cap X^o}\lesssim\omega|_{U\cap X^o}\lesssim\sqrt{-1}\ddbar\Phi$.
	Then the canonical map
	\begin{align}\label{align_fine_resolution}
	S(IC_X(\bV),\varphi)\otimes E\to \sD^{n,\bullet}_{X,ds^2}(S(\bV)\otimes E,h_\bV\otimes h_\varphi)
	\end{align}
	is a quasi-isomorphism. 
	If $X$ is moreover compact, then there is an isomorphism
	\begin{align*}
	H^q(S(IC_X(\bV),\varphi)\otimes E)\simeq H^{n,q}_{(2),\rm max}(X^o, S(\bV)\otimes E|_{X^o};ds^2,h_\bV\otimes h_\varphi),\quad \forall q.
	\end{align*}
\end{thm}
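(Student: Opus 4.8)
The plan is to realize $\sD^{n,\bullet}_{X,ds^2}(S(\bV)\otimes E,h_\bV\otimes h_\varphi)$ as a \emph{fine resolution} of $S(IC_X(\bV),\varphi)\otimes E$ and then read off the cohomological statement. Three points need to be checked. First, the hypothesis $\sqrt{-1}\ddbar\lambda|_{U\cap X^o}\lesssim\omega|_{U\cap X^o}$ with $\lambda\in C^2(U)$ strictly psh means that $ds^2$ dominates, locally near every point of $X\setminus X^o$, a genuine hermitian metric on $X$; hence Lemma \ref{lem_fine_sheaf} applies and each $\sD^{n,q}_{X,ds^2}(S(\bV)\otimes E,h_\bV\otimes h_\varphi)$ is fine, in particular $\Gamma$-acyclic. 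Second, by the definition of $S_X$, by Lemma \ref{lem_Kernel}, and by Proposition \ref{prop_L2_extension_independent}, the kernel of $\dbar:\sD^{n,0}\to\sD^{n,1}$ equals $S(IC_X(\bV),\varphi)\otimes E$, so the map in (\ref{align_fine_resolution}) is defined and exact in degree $0$. Granting the third point --- exactness of the complex of sheaves in every positive degree --- the first assertion is proved; and if $X$ is compact, then since the $\sD^{n,q}$ are acyclic and, $X$ being compact, $\Gamma(X,\sD^{n,q})$ is exactly the space of globally $L^2$ forms, $H^q(X,S(IC_X(\bV),\varphi)\otimes E)$ is computed by the complex $D^{n,\bullet}_{\rm max}(X^o,S(\bV)\otimes E;ds^2,h_\bV\otimes h_\varphi)$, which is the claimed isomorphism.

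So matters reduce to a local $L^2$ $\dbar$-Poincar\'e lemma: for $x\in X$ and a germ $\alpha$ of $\sD^{n,q}$ with $q\ge1$ and $\dbar\alpha=0$, produce a germ $\beta$ of $\sD^{n,q-1}$ with $\dbar\beta=\alpha$. Fix a relatively compact neighbourhood $U$ of $x$ carrying $\lambda$, $\Phi$ as in the hypothesis and (Lemma \ref{lem_complete_metric_exists_locally}) a complete K\"ahler metric on $U\cap X^o$; after shrinking, $\int_{U\cap X^o}|\alpha|^2_{ds^2,h_\bV\otimes h_\varphi}{\rm vol}_{ds^2}<\infty$. The idea is to solve $\dbar\beta=\alpha$ by H\"ormander's estimate Theorem \ref{thm_Hormander_incomplete} with respect to the K\"ahler form $\sqrt{-1}\ddbar\Phi$ --- which is positive definite since $\omega\lesssim\sqrt{-1}\ddbar\Phi$ --- after twisting the metric to gain curvature. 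Set $h_\psi:=h_\varphi e^{-M\Phi}$ with $M\gg0$; since $\Phi$ is bounded, this changes neither $S(IC_X(\bV),\varphi)$ nor, up to constants, the relevant norms (Lemma \ref{lem_S_change_varphi_bounded}). As $\varphi$ is quasi-psh and $\overline{U}$ is compact, $\sqrt{-1}\ddbar\varphi\otimes{\rm Id}_E+\sqrt{-1}\Theta_{h_0}(E)\ge-C\sqrt{-1}\ddbar\Phi\otimes{\rm Id}_E$ on $U\cap X^o$ (comparing with an ambient metric by $\sqrt{-1}\ddbar\lambda\lesssim\omega\lesssim\sqrt{-1}\ddbar\Phi$), while $\sqrt{-1}\Theta_{h_\bV}(S(\bV))\ge0$ by Theorem \ref{thm_geq0_S(V)}; hence for $M$ large
\begin{align*}
\sqrt{-1}\Theta_{h_\bV\otimes h_\psi}(S(\bV)\otimes E)=\sqrt{-1}\Theta_{h_\bV}(S(\bV))\otimes{\rm Id}_E+{\rm Id}\otimes\big(M\sqrt{-1}\ddbar\Phi+\sqrt{-1}\ddbar\varphi+\sqrt{-1}\Theta_{h_0}(E)\big)\ge\sqrt{-1}\ddbar\Phi\otimes{\rm Id}.
\end{align*}
Theorem \ref{thm_Hormander_incomplete} then gives $\beta$ on $U\cap X^o$ with $\dbar\beta=\alpha$ and $\|\beta\|^2_{\sqrt{-1}\ddbar\Phi,h_\bV\otimes h_\psi}\le q^{-1}\|\alpha\|^2_{\sqrt{-1}\ddbar\Phi,h_\bV\otimes h_\psi}$, the right-hand side being finite because $\Phi$ is bounded and, by monotonicity of the $L^2$-norm of $(n,\bullet)$-forms under enlarging the base metric together with $\omega\lesssim\sqrt{-1}\ddbar\Phi$, $\|\alpha\|^2_{\sqrt{-1}\ddbar\Phi,h_\bV\otimes h_\varphi}\lesssim\|\alpha\|^2_{\omega,h_\bV\otimes h_\varphi}<\infty$.

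The remaining point --- and the one I expect to be the main obstacle --- is that this $\beta$ genuinely lies in $\sD^{n,q-1}_{X,ds^2}$, i.e. is locally $L^2$ with respect to the given $ds^2$ and not merely with respect to $\sqrt{-1}\ddbar\Phi$. For $q=1$ this is automatic, since the $L^2$-norm of an $(n,0)$-form is independent of the base metric; away from $X\setminus X^o$ and off the polar set of $\varphi$ it is also harmless, since there $ds^2$ is a smooth hermitian metric on a manifold, hence locally comparable to $\sqrt{-1}\ddbar\Phi$. The difficulty is concentrated near $X\setminus X^o$, where $ds^2$ may be non-K\"ahler, incomplete and degenerate, the naive monotonicity for $(n,q-1)$-forms with $q\ge2$ runs the wrong way, and the Hodge metric $h_\bV$ on $S(\bV)$ degenerates. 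Here I would pass to a log resolution $\pi:\widetilde{X}\to X$ with $\pi^{-1}(X\setminus X^o)$ a simple normal crossing divisor (permissible by Proposition \ref{prop_L2ext_birational}, which leaves $S(IC_X(\bV),\varphi)$ unchanged), take the $L^2$-adapted frame of $R(IC_{\widetilde{X}}(\pi^\ast\bV))$ of Proposition \ref{prop_adapted_frame}, and use Proposition \ref{prop_key_est} to identify $S(IC_{\widetilde{X}}(\pi^\ast\bV),\pi^\ast\varphi)$ locally with a direct sum $\bigoplus_i\sI\big(\pi^\ast\varphi-\sum_j2\alpha_{E_j}(\widetilde{v_i})\log|z_j|\big)\widetilde{v_i}\,dz_1\wedge\cdots\wedge dz_n$ of twisted multiplier ideal sheaves; on this model --- combined with the asymptotics of $h_\bV$ recorded in \S\ref{section_L2_adapted_frame} --- the statement becomes the classical Demailly--Nadel $L^2$-resolution of $\sI(\cdot)\otimes\omega_{\widetilde{X}}$, whose validity is robust under the squeeze $\sqrt{-1}\ddbar\lambda\lesssim ds^2\lesssim\sqrt{-1}\ddbar\Phi$. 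Reconciling, near $X\setminus X^o$, the bad metric $ds^2$ with the K\"ahler and complete metrics that H\"ormander's machinery requires, while keeping $L^2$-control of the solution $\beta$, is in my view the technical heart of the theorem.
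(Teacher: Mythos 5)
Your reduction to a local statement is the same as the paper's: Lemma \ref{lem_Kernel} identifies the degree-zero kernel, Lemma \ref{lem_fine_sheaf} applies because $\sqrt{-1}\ddbar\lambda\lesssim\omega$ provides the ambient hermitian metric it requires, and everything hinges on exactness of the complex in positive degrees on a small neighbourhood $U$ of each point. The gap is in how you run H\"ormander's estimate there. You solve $\dbar\beta=\alpha$ with respect to the base K\"ahler form $\sqrt{-1}\ddbar\Phi$, and are then left having to show that the resulting $(n,q-1)$-form $\beta$ is square integrable for the original $ds^2$; as you yourself observe, for $q\geq2$ the monotonicity of $L^2$-norms under enlarging the base metric runs the wrong way, and the paragraph you offer to close this (log resolution, $L^2$-adapted frames, Proposition \ref{prop_key_est}) does not do so: Proposition \ref{prop_key_est} describes only the holomorphic $(n,0)$-sections, i.e.\ the sheaf $S(IC_X(\bV),\varphi)$ itself, and says nothing about square-integrability of $(n,q-1)$-forms with $q\geq 2$ relative to the degenerate metric $ds^2$; likewise Proposition \ref{prop_L2_extension_independent} is a statement about top-degree forms only, where the norm is metric-independent. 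So the ``technical heart'' you identify is a genuine unclosed gap in your own setup.

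The point you are missing is that this obstacle is avoidable, and the hypothesis $\omega\lesssim\sqrt{-1}\ddbar\Phi$ with $\Phi$ \emph{bounded} is there precisely to avoid it. In the paper's proof one trivializes $E|_U$ with $h_0$ quasi-isometric to the trivial metric $h_1$ and twists to $h'=e^{-\varphi-C'\Phi}h_1$ exactly as you do (boundedness of $\Phi$ gives $h'\sim h_\varphi$, so neither the sheaves nor the $L^2$-classes change), but then one chooses $C'$ so large that $(C'+C)\sqrt{-1}\ddbar\Phi\geq\omega|_{U\cap X^o}$, where $C$ is the constant with $\sqrt{-1}\Theta_{e^{-\varphi}h_1}(E)\geq C\sqrt{-1}\ddbar\Phi\otimes{\rm Id}_E$ coming from quasi-plurisubharmonicity of $\varphi$ and $\sqrt{-1}\ddbar\lambda\lesssim\omega\lesssim\sqrt{-1}\ddbar\Phi$. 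Together with Theorem \ref{thm_geq0_S(V)} this yields $\sqrt{-1}\Theta_{h_\bV\otimes h'}(S(\bV)\otimes E)\geq\omega\otimes{\rm Id}$, i.e.\ the curvature dominates the fundamental form of $ds^2$ \emph{itself}. One then applies Theorem \ref{thm_Hormander_incomplete} with $\omega$ as the base form (the complete K\"ahler metric on $U\cap X^o$ required by that theorem being supplied by Lemma \ref{lem_complete_metric_exists_locally}), and its conclusion delivers $\beta\in L^{n,q-1}_{(2)}(U\cap X^o,S(\bV)\otimes E;ds^2,h_\bV\otimes h')$ directly --- already $L^2$ with respect to the given $ds^2$, and with respect to $h_\varphi$ since $h'\sim h_\varphi$. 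No change of base metric, no log resolution, and no transfer of estimates is needed; with this modification your argument becomes the paper's proof.
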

\begin{proof}
	By Lemma \ref{lem_Kernel} we have 
	$$S(IC_X(\bV),\varphi)\otimes E\simeq {\rm Ker}\left(\sD^{n,0}_{X,ds^2}(S(\bV)\otimes E,h_\bV\otimes h_\varphi)\stackrel{\dbar}{\to}\sD^{n,1}_{X,ds^2}(S(\bV)\otimes E,h_\bV\otimes h_\varphi)\right).$$
	
	It remains to show that (\ref{align_fine_resolution}) is exact at $\sD^{n,q}_{X,ds^2}(S(\bV)\otimes E,h_\bV\otimes h_\varphi)$ for $q>0$. Since the problem is local, we consider a point $x\in X$ and an open neighborhood $U$ of $x$ such that $E|_U\simeq\sO_U^{\oplus {\rm rk}E}$ and $h_0$ is quasi-isometric to the trivial metric $h_1$ on $\sO_U^{\oplus {\rm rk}E}$. Since
	$$\sqrt{-1}\Theta_{e^{-\varphi}h_1}(E|_{U\cap X^o})=\sqrt{-1}\ddbar\varphi\otimes{\rm Id}_E\geq c\sqrt{-1}\ddbar\lambda\otimes{\rm Id}_E\geq C\sqrt{-1}\ddbar\Phi\otimes{\rm Id}_E$$
	for some negative constants $c,C$.
	By assumptions, there is a constant $C'>0$ such that $$ (C'+C)\sqrt{-1}\partial\dbar\Phi\geq\omega|_{U\cap X^o}.$$ Let $h'=e^{-\varphi-C'\Phi}h_1$. The boundedness of $\Phi$ implies that  $h'\sim h_\varphi$.
	 It follows from Theorem \ref{thm_geq0_S(V)}  that 
	$$\sqrt{-1}\Theta_{h_\bV\otimes h'}(S(\bV)\otimes E|_{U\cap X^o})\geq C'\sqrt{-1}\partial\dbar\Phi\otimes{\rm Id}_{S(\bV)\otimes E}+\sqrt{-1}\Theta_{e^{-\varphi}h_1}(E|_{U\cap X^o})\otimes{\rm Id}_{S(\bV)}\geq \omega\otimes{\rm Id}_{S(\bV)\otimes E}$$
	holds on $U\cap X^o$.
	By Lemma \ref{lem_complete_metric_exists_locally} we may assume that $U\cap X^o$ admits a complete K\"ahler metric.
	Consequently, we have
	$$H^{n,q}_{(2)}(U\cap X^o, S(\bV)\otimes E|_{U\cap X^o};ds^2,h_\bV h_\varphi)=H^{n,q}_{(2)}(U\cap X^o, S(\bV)\otimes E|_{U\cap X^o};ds^2,h_\bV h')=0,\quad \forall q>0$$
	by Theorem \ref{thm_Hormander_incomplete}.
	This proves the exactness of (\ref{align_fine_resolution}) at $\sD^{n,q}_{X,ds^2}(S(\bV)\otimes E,h_\bV\otimes h_\varphi)$ for all $q>0$. 
	Since $\sD^{n,\bullet}_{X,ds^2_0}(E,h_\varphi)$ is a complex of fine sheaves (Lemma \ref{lem_fine_sheaf}), we obtain the second claim.
\end{proof}
A typical example of the metric $ds^2$ that satisfies the conditions in Theorem \ref{thm_main_local1} is the hermitian metric on $X$, which always exists by partition of unity. For applications we require $ds^2$ to be a complete K\"ahler metric.
Such kind of metric exists if $X$ is the truncation of a weakly pseudoconvex K\"ahler space.
Recall that a complex space is a weakly pseudoconvex (K\"ahler) space if it is a (K\"ahler) complex space that admits a smooth psh exhausted function.
\begin{lem}\label{lem_complete_Kahler_pseudoconvex}
	Let $X$ be a weakly pseudoconvex K\"ahler space with $\psi$ a smooth  psh exhausted function on $X$. Denote $X_c:=\{x\in X|\psi(x)<c\}$. Let $X^o\subset X_{\rm reg}$ be a Zariski open subset. Then, for every $c\in\bR$, there exists a complete K\"ahler metric $ds^2$ on $X_c\cap X^o$ satisfying that for every point $x\in X_c$, there is a neighborhood $U$ of $x$, a bounded function $\Phi\in C^{\infty}(U)$ and a smooth strictly psh function $\lambda$ on $U$ such that $\sqrt{-1}\ddbar\lambda\lesssim ds^2\sim\sqrt{-1}\ddbar\Phi$. 
\end{lem}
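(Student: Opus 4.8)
Here is the plan I would follow.

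The plan is to start from a K\"ahler metric $\omega_X$ on $X$ (which exists since $X$ is K\"ahler) and correct it by two $\sqrt{-1}\ddbar$-terms, one taming each type of end of $X_c\cap X^o$: the end along $\partial X_c=\{\psi=c\}$, and the end along the Zariski closed set $Z:=X\setminus X^o$ (note $X_{\rm sing}\subset Z$). The crucial requirement is that both correctors be $\sqrt{-1}\ddbar$ of \emph{bounded} functions, since that is exactly what delivers the local comparison $ds^2\sim\sqrt{-1}\ddbar\Phi$.

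For the boundary $\{\psi=c\}$ I would use the standard term $\sqrt{-1}\ddbar\big(-\log(c-\psi)\big)$: since $\psi$ is psh, $-\log(c-\psi)$ is psh on $X_c$, $\sqrt{-1}\ddbar\big(-\log(c-\psi)\big)\ge(c-\psi)^{-2}\sqrt{-1}\partial\psi\wedge\dbar\psi$, and the usual argument (a path $\gamma$ with $\psi\circ\gamma\to c$ accumulates length at least $\tfrac12|\log(c-\psi\circ\gamma)|$) yields completeness toward $\{\psi=c\}$. Because $X_c$ is \emph{open}, at every point of $X_c$ the function $-\log(c-\psi)$ is smooth and bounded on a small neighbourhood, so this corrector never harms the local estimate.

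For $Z$ I would take, by Demailly's standard construction, a quasi-psh function $G$ on a neighbourhood of the compact set $\overline{X_c}$ with $G\le 0$, $\sqrt{-1}\ddbar G\ge -C\omega_X$, and $G\to-\infty$ exactly along $Z$ (locally $e^{G}\sim\sum_i|g_i|^2$ for generators $g_i$ of the reduced ideal sheaf $\sI_Z$; concretely $G=\sum_j\theta_j\log\sum_i|g_{j,i}|^2$ for a partition of unity $\{\theta_j\}$ subordinate to a finite cover of $\overline{X_c}$, the cross-term error being $O(1)$ because the $g_{j,\bullet}$ generate the same ideal on overlaps). I then put $\Psi:=g(G)$ with the \emph{explicit} profile
\begin{align*}
g(u):=\frac{1}{\log(e-u)},\qquad u\le 0 ,
\end{align*}
which is convex increasing with values in $(0,1]$; hence $\Psi$ is bounded, and on $X_c\cap X^o$ (where $G$ is smooth) $\sqrt{-1}\ddbar\Psi=g'(G)\sqrt{-1}\ddbar G+g''(G)\sqrt{-1}\partial G\wedge\dbar G\ge -\tfrac Ce\omega_X$ since $0<g'(G)\le\tfrac1e$. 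The point is that $g''(u)\sim u^{-2}(\log|u|)^{-2}$ as $u\to-\infty$, so with $H(u):=\int^{u}\sqrt{g''(t)}\,dt$ one has $H(-\infty)=-\infty$ because $\int_{-\infty}^{0}\sqrt{g''(u)}\,du=\int^{\infty}\frac{dt}{t\log t}=\infty$; therefore along any path $\gamma$ running out to $Z$ (so $G\circ\gamma\to-\infty$) the $g''(G)\sqrt{-1}\partial G\wedge\dbar G$-length of $\gamma$ dominates $|H(G\circ\gamma)|\to\infty$, which is completeness toward $Z$.

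Now I would set $ds^2:=A\omega_X+\sqrt{-1}\ddbar\big(-\log(c-\psi)\big)+\sqrt{-1}\ddbar\Psi$ with $A>C/e$, so $ds^2\ge(A-\tfrac Ce)\omega_X>0$ is a smooth complete K\"ahler metric on $X_c\cap X^o$. To check the local property: if $x\in X_c\cap X^o$, take a small ball $U\Subset X_c\cap X^o$, where all three terms are smooth with bounded local potentials, so $\Phi:=A\phi_X-\log(c-\psi)+\Psi$ ($\phi_X$ a bounded strictly psh local potential of $\omega_X$) is bounded, smooth, strictly psh with $\sqrt{-1}\ddbar\Phi\sim ds^2$, and $\lambda:=\tfrac12(A-\tfrac Ce)\phi_X$ is strictly psh with $\sqrt{-1}\ddbar\lambda\lesssim ds^2$; if $x\in X_c\cap Z$, take $U\ni x$ small in $X$ — on $U\cap X^o$ both $-\log(c-\psi)$ (as $\psi<c$ on $U$) and $\Psi$ (as $0<\Psi\le 1$) are bounded and $\omega_X|_U\sim\sqrt{-1}\ddbar\phi_X$ with $\phi_X$ bounded strictly psh on $U$, so the \emph{same} $\Phi:=A\phi_X-\log(c-\psi)+\Psi$ is bounded on $U\cap X^o$ with $\sqrt{-1}\ddbar\Phi\sim ds^2$, and $\lambda:=\tfrac12(A-\tfrac Ce)\phi_X$ (defined on all of $U$) works. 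I expect the only delicate point to be the $Z$-term: one must find a single profile $g$ that is simultaneously convex increasing (so $\Psi$ is quasi-psh and $A\omega_X+\sqrt{-1}\ddbar\Psi$ is a metric), bounded (forced by $ds^2\sim\sqrt{-1}\ddbar\Phi$), and with $g''$ decaying slowly enough that $\int_{-\infty}\sqrt{g''}=\infty$ (forced by completeness); Grauert's classical choice ($g\sim -\tfrac12\log(-u)$, as in the proof of Lemma \ref{lem_complete_metric_exists_locally}) is complete but unbounded, a faster-decaying $g'$ becomes bounded but loses completeness, and $g(u)=1/\log(e-u)$ sits in the narrow admissible window. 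The remaining technicality — the Hessian bound and completeness estimate for $\Psi$ near the singular locus of $G$ — is handled by the usual decreasing regularization $\log\sum_i|g_{j,i}|^2=\lim_{\varepsilon\downarrow 0}\log(\sum_i|g_{j,i}|^2+\varepsilon)$, and in any case $ds^2$ is only needed on $X_c\cap X^o$, where $G$ is already smooth.
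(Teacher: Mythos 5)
Your construction is essentially the paper's: your profile $g(u)=1/\log(e-u)$ applied to $G=\log\sum_i|g_i|^2$ reproduces Ohsawa's bounded potential $1/\log(-\log\sum_i|f_i|^2)$ used in the paper, your Lipschitz exhaustion $H(G)\sim\log\log(-G)$ is the paper's $\log\log u$, and adding a large multiple of the ambient K\"ahler metric is the same final step. The only cosmetic difference is the boundary corrector ($-\log(c-\psi)$ versus the paper's $\sqrt{-1}\ddbar\psi_c^2$ with $\psi_c=\psi+\tfrac{1}{c-\psi}$), both of which are standard and have bounded local potentials on relatively compact subsets of $X_c$; your write-up is in fact more explicit than the paper's about verifying the local condition $\sqrt{-1}\ddbar\lambda\lesssim ds^2\sim\sqrt{-1}\ddbar\Phi$.
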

\begin{proof}
	The construction of the metric is the motivated by Ohsawa \cite[Lemma 2.6]{Ohsawa2018}. 
	Let $U$ be a neighborhood of a point $x\in X\backslash X^o$. Assume that $U\backslash X^o\subset U$ is defined by $f_1,\dots,f_r\in\sO_U(U)$. Let 
	$$\varphi_U:=\frac{1}{\log(-\log\sum_{i=1}^r|f_i|^2)}+\phi_U,$$
	where $\phi_U$ is a strictly $C^\infty$ psh function on $U$ so that $\varphi_U$ is strictly psh.
	Then the quasi-isometric class of $\sqrt{-1}\ddbar\varphi_U$ is independent of the choice of $\{f_1,\dots,f_r\}$ and $\phi_U$. By partition of unity, the potential functions $\varphi_U$ can be glued to a global function $\varphi$ (not necessarily psh) on $X$ so that 
	\begin{align*}
	\sqrt{-1}\ddbar\varphi|_U\sim\sqrt{-1}\ddbar\varphi_U
	\end{align*}
	near every point $x\in X\backslash X^o$ and $\varphi\equiv0$ away from a neighborhood of $X\backslash X^o$.
	
	Denote $u=-\log\sum_{i=1}^r|f_i|^2$. Then
	\begin{align*}
	\sqrt{-1}\ddbar\varphi|_U&\sim\sqrt{-1}\frac{2+\log u}{u^2\log^3u}\partial u\wedge\dbar u+\sqrt{-1}\frac{-\ddbar u}{u\log^2u}+\sqrt{-1}\ddbar\phi_U\\\nonumber
	&\sim\sqrt{-1}\frac{\partial u\wedge\dbar u}{u^2\log^2u}+\sqrt{-1}\frac{-\ddbar u}{u\log^2u}+\sqrt{-1}\ddbar\phi_U.
	\end{align*}
	Hence $\log\log u$ is a smooth psh  exhausted function near $X\backslash X^o$ such that 
	$$|d\log\log u|_{\sqrt{-1}\ddbar\varphi}\lesssim 2.$$
	By the Hopf-Rinow theorem, $\sqrt{-1}\ddbar\varphi$ is locally complete near $X\backslash X^o$.
	
	Let $c\in\bR$ and let $\omega_0$ be a K\"ahler hermitian metric on $X$. By adding a constant to $\psi$ we assume that $\psi\geq 0$. Then $\psi_c:=\psi+\frac{1}{c-\psi}$ is a smooth  psh exhausted function on $X_c=\{x\in X|\psi(x)<c\}$. 
	Hence, $\omega_0+\sqrt{-1}\ddbar\psi^2_c$
	is  a complete K\"ahler metric on $X_c$ (\cite[Theorem 1.3]{Demailly1982}).
	Since $\overline{X_c}$ is compact, $$\sqrt{-1}\ddbar\varphi+K(\omega_0+\sqrt{-1}\ddbar\psi^2_c),\quad K\gg0$$
	is positive definite and it provides the desired complete K\"ahler metric on $X_c\cap X^o$.
\end{proof}

\section{Vanishing theorems for $S$-sheaf}
Vanishing theorems and injectivity theorems for the $S$-sheaf are deduced from Theorem \ref{thm_main_local1} in this section.
\subsection{Nadel vanishing theorem}
\begin{thm}\label{thm_Nadel_vanishing}
	Let $X$ be a weakly pseudoconvex K\"ahler space and  $\omega$ a K\"ahler hermitian metric on $X$. Let $\bV$ be an $\bR$-polarized variation of Hodge structure on some Zariski open subset $X^o\subset X_{\rm reg}$ and $(L,h_\varphi)$ a holomorphic line bundle with a possibly singular hermitian metric $h_\varphi:=e^{-\varphi}h$. If $\sqrt{-1}\Theta_{h_\varphi}(L)\geq\epsilon\omega$ as currents for some $\epsilon>0$, then
	$$H^q(X,S(IC_X(\bV),\varphi)\otimes L)=0,\quad \forall q>0.$$
\end{thm}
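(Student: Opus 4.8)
The plan is to deduce this from the $L^2$-Dolbeault resolution of Theorem \ref{thm_main_local1} together with Demailly's form of H\"ormander's estimate (Theorem \ref{thm_Hormander_incomplete}), the positivity input being the Nakano semi-positivity $\sqrt{-1}\Theta_{h_\bV}(S(\bV))\geq 0$ of Theorem \ref{thm_geq0_S(V)} combined with the hypothesis $\sqrt{-1}\Theta_{h_\varphi}(L)\geq\epsilon\omega$; the only work beyond the compact case is using the psh exhaustion to handle the non-compactness of $X$.

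First I would take $\omega$ itself as the reference metric $ds^2$ on $X^o$. A K\"ahler hermitian metric on $X$ is locally quasi-isometric to $\sqrt{-1}\ddbar\lambda$ with $\lambda$ the pull-back of $|z|^2$ under a local holomorphic embedding $U\hookrightarrow\bC^N$, so $\omega$ satisfies the hypotheses of Theorem \ref{thm_main_local1} (take $\Phi=\lambda$) as well as those of Lemma \ref{lem_fine_sheaf}. Hence $\sD^{n,\bullet}_{X,\omega}(S(\bV)\otimes L,h_\bV\otimes h_\varphi)$ is a fine resolution of $S_X(S(\bV)\otimes L,h_\bV\otimes h_\varphi)\simeq S(IC_X(\bV),\varphi)\otimes L$ by Lemma \ref{lem_Kernel}, and therefore $H^q(X,S(IC_X(\bV),\varphi)\otimes L)$ is computed by the complex of global sections $\Gamma(X,\sD^{n,\bullet}_{X,\omega}(\cdots))$.

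Next, let $\psi\geq 0$ be a smooth psh exhaustion of $X$ and set $X_c:=\{\psi<c\}$, so that each $\overline{X_c}$ is compact and $X=\bigcup_c X_c$. I would first establish $H^q(X_c,S(IC_{X_c}(\bV),\varphi|_{X_c})\otimes L)=0$ for every $c$ and every $q\geq 1$: the same resolution applies on $X_c$, so it suffices to solve $\dbar\beta=\alpha$ for a $\dbar$-closed $\alpha\in\Gamma(X_c,\sD^{n,q}_{X_c,\omega}(\cdots))$. Since $\overline{X_c}$ is compact and $\alpha$ is locally square integrable with respect to $\omega$, a finite cover argument gives $\alpha\in L^{n,q}_{(2)}(X_c\cap X^o,S(\bV)\otimes L;\omega,h_\bV\otimes h_\varphi)$; by Lemma \ref{lem_complete_Kahler_pseudoconvex} the manifold $X_c\cap X^o$ carries a complete K\"ahler metric, and by Theorem \ref{thm_geq0_S(V)} and the curvature hypothesis $\sqrt{-1}\Theta_{h_\bV\otimes h_\varphi}(S(\bV)\otimes L)=\sqrt{-1}\Theta_{h_\bV}(S(\bV))\otimes\mathrm{Id}_L+\mathrm{Id}_{S(\bV)}\otimes\sqrt{-1}\Theta_{h_\varphi}(L)\geq\epsilon\,\omega\otimes\mathrm{Id}$ as currents. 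Applying Theorem \ref{thm_Hormander_incomplete} on $X_c\cap X^o$ with the (possibly incomplete) K\"ahler form $\epsilon\omega$ produces $\beta\in L^{n,q-1}_{(2)}(X_c\cap X^o;\epsilon\omega,h_\bV\otimes h_\varphi)$ with $\dbar\beta=\alpha$; in particular $\beta$ is locally square integrable, hence $\beta\in\Gamma(X_c,\sD^{n,q-1}_{X_c,\omega}(\cdots))$, which yields the vanishing on $X_c$.

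Finally I would pass to the limit. Writing $\cF:=S(IC_X(\bV),\varphi)\otimes L$, the exhaustion gives the Milnor sequence $0\to{\varprojlim}^{1}_{c}H^{q-1}(X_c,\cF)\to H^q(X,\cF)\to\varprojlim_{c}H^q(X_c,\cF)\to 0$; for $q\geq 2$ both outer terms vanish by the previous step, and for $q=1$ one gets $H^1(X,\cF)={\varprojlim}^{1}_{c}\Gamma(X_c,\cF)$, which vanishes once the restriction maps $\Gamma(X_{c'},\cF)\to\Gamma(X_c,\cF)$ have dense image. I expect this last point --- an $L^2$-type approximation of sections over $X_c$ by sections over larger $X_{c'}$, again obtained by solving $\dbar$ with an extra weight $\chi(\psi)$ for $\chi$ convex increasing fast enough --- to be the only real obstacle, the rest being the standard H\"ormander machinery. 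An alternative that bypasses it is to run Theorem \ref{thm_Hormander_incomplete} once on all of $X^o$: a weakly pseudoconvex K\"ahler space admits a complete K\"ahler metric on $X^o$, and choosing $\chi$ convex increasing so that $\int_{X^o}|\alpha|^2_{\omega,h_\bV\otimes h_\varphi}e^{-\chi(\psi)}\mathrm{vol}_\omega<\infty$ preserves the curvature bound (as $\sqrt{-1}\ddbar\chi(\psi)\geq 0$) while making the resulting primitive automatically locally square integrable.
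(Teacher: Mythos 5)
Your setup (the $L^2$-Dolbeault resolution with $ds^2=\omega$, fineness, and the curvature bound $\sqrt{-1}\Theta_{h_\bV\otimes h_\varphi}(S(\bV)\otimes L)\geq\epsilon\,\omega\otimes{\rm Id}$ from Theorem \ref{thm_geq0_S(V)}) matches the paper, but both of your routes to globalizing the $\dbar$-solution have a gap that the paper closes with one device you are missing. Route 1 (Milnor sequence) founders exactly where you say it does: at $q=1$ you need $\varprojlim^1_c\Gamma(X_c,\cF)=0$, i.e.\ a Runge-type density of $\Gamma(X_{c'},\cF)\to\Gamma(X_c,\cF)$, and this is not established anywhere in the paper and is not a formality. (There is also a smaller issue in your vanishing on $X_c$: an arbitrary $\dbar$-closed section of $\sD^{n,q}$ over the open set $X_c$ need not be globally $L^2$ up to $\partial X_c$; your finite-cover argument only applies to restrictions of global sections.) Route 2 assumes that $X^o$ itself carries a complete K\"ahler metric, but Lemma \ref{lem_complete_Kahler_pseudoconvex} only produces one on each truncation $X_c\cap X^o$ --- the constant $K\gg0$ in its proof is chosen using compactness of $\overline{X_c}$ --- so you cannot invoke Theorem \ref{thm_Hormander_incomplete} once on all of $X^o$ without further work.

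The paper's proof combines the two halves of your proposal so that neither gap arises: given a global $\dbar$-closed $\alpha$, choose $\chi$ convex increasing of fast growth so that $\|\alpha\|^2_{\epsilon\omega,\,h_\bV\otimes e^{-\chi\circ\psi}h_\varphi}<\infty$ (your alternative's weight), note that $\sqrt{-1}\ddbar(\chi\circ\psi)\geq0$ preserves the curvature lower bound, and then apply Theorem \ref{thm_Hormander_incomplete} on each $X_c\cap X^o$ (where completeness \emph{is} available). The point is that the solutions $\beta_c$ then satisfy the \emph{uniform} estimate $\|\beta_c\|^2_{\epsilon\omega,\,h_\bV\otimes e^{-\chi\circ\psi}h_\varphi}\leq q^{-1}\|\alpha\|^2_{\epsilon\omega,\,h_\bV\otimes e^{-\chi\circ\psi}h_\varphi}$, independent of $c$, so a weak limit of (a subsequence of) the $\beta_c$ exists in the fixed weighted $L^2$ space and solves $\dbar\beta=\alpha$ globally; local square-integrability of $\beta$ with respect to $h_\varphi$ is automatic since $e^{-\chi\circ\psi}$ is locally bounded below. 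This uniform-bound-plus-weak-limit step is the missing idea; with it, no inverse-limit or approximation argument is needed.
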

\begin{proof}
    By Theorem \ref{thm_main_local1}
    there is a quasi-isomorphism
	$$S(IC_X(\bV),\varphi)\otimes L\simeq_{\rm q.i.s.}\sD^{n,\bullet}_{X,\omega}(S(\bV)\otimes L, h_\bV\otimes h_\varphi).$$
	 Lemma \ref{lem_fine_sheaf} implies that  $\sD^{n,\bullet}_{X,\omega}(S(\bV)\otimes L, h_\bV\otimes h_\varphi)$ is a complex of fine sheaves. Thus
	\begin{align*}
	H^q(X,S(IC_X(\bV),\varphi)\otimes L)\simeq H^q\left(\Gamma(X,\sD^{n,\bullet}_{X,\omega}(S(\bV)\otimes L, h_\bV\otimes h_\varphi))\right).
	\end{align*}
	Now let $q>0$ and let $\alpha\in \Gamma(X,\sD^{n,q}_{X,\omega}(S(\bV)\otimes L, h_\bV\otimes h_\varphi))$ be a locally $L^2$ form such that $\dbar\alpha=0$. We would like to show that there exists $\beta\in \Gamma(X,\sD^{n,q-1}_{X,\omega}(S(\bV)\otimes L, h_\bV\otimes h_\varphi))$ satisfying that  $\dbar\beta=\alpha$.
	
	Let $\psi$ be a smooth psh exhausted  function on $X$ and let $\chi$ be a convex increasing function which is of fast growth at infinity so that 
	\begin{align*}
	\int_{X^o}|\alpha|_{\omega,h_\bV\otimes h_\varphi}^2e^{-\chi\circ\psi}{\rm vol_\omega}<\infty.
	\end{align*}
	Let $h':=e^{-\chi\circ\psi}h_\varphi$. Consequently, we have 
	\begin{align*}
    \sqrt{-1}\Theta_{h_\bV\otimes h'}(S(\bV)\otimes L)&=\sqrt{-1}\Theta_{h_\bV}(S(\bV))+\sqrt{-1}\ddbar(\chi\circ\psi)\otimes {\rm Id}_{S(\bV)}+\sqrt{-1}\Theta_{h_\varphi}(L)\otimes {\rm Id}_{S(\bV)}\\\nonumber
    &\geq\epsilon\omega\otimes {\rm Id}_{S(\bV)}
    \end{align*}
    by Theorem \ref{thm_geq0_S(V)}.
    Moreover, Lemma \ref{lem_complete_Kahler_pseudoconvex} implies that  $X_c\cap X^o:=\{x\in X^o|\psi(x)<c\}$
    admits a complete K\"ahler metric  for every $c\in\bR$. 
    It follows from Theorem \ref{thm_Hormander_incomplete} that  for every $c\in \bR$ there is $$\beta_c\in L^{n,q-1}_{(2)}(X_c\cap X^o,S(\bV)\otimes L;\omega,h_\bV\otimes h')\subset \Gamma(X_c,\sD^{n,q-1}_{X,\omega}(S(\bV)\otimes L, h_\bV\otimes h_\varphi))$$
    such that $\dbar\beta_c=\alpha|_{X_c}$ and $\|\beta_c\|_{\epsilon\omega,h_\bV\otimes h'}\leq \frac{1}{q}\|\alpha\|_{\epsilon\omega,h_\bV\otimes h'}$. By taking a weak limit of $\beta_c$ we obtain a  solution of the equation $\alpha=\dbar\beta$ such that $\beta\in \Gamma(X,\sD^{n,q-1}_{X,\omega}(S(\bV)\otimes L, h_\bV\otimes h_\varphi))$.
\end{proof}

\begin{cor}\label{cor_KV_vanishing}
	Let $X$ be a projective algebraic variety of dimension $n$ and let $\bV$ be an $\bR$-polarized variation of Hodge structure on some Zariski open subset $X^o\subset X_{\rm reg}$. Let $L$ be a line bundle such that some positive multiple $mL=F+D$ where $F$ is a nef line bundle and $D$ is an effective divisor. Then
	$$H^q(X,S(IC_X(\bV),\frac{\varphi_D}{m})\otimes L)=0,\quad \forall q>n-{\rm nd}(F).$$
	Here $\varphi_D$ is the psh function associated to $D$ and
	$${\rm nd}(F)=\max\{k=1,\dots,n|c_1(F)^k\neq 0\in H^{2k}(X,\bR)\}.$$
\end{cor}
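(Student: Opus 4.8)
The plan is to induct on $n=\dim X$, using general hyperplane sections to reduce to the case where $F$ is numerically maximal, i.e. ${\rm nd}(F)=n$; in that case the statement follows directly from the Nadel vanishing theorem for the $S$-sheaf (Theorem \ref{thm_Nadel_vanishing}). Fix once and for all a K\"ahler hermitian metric $\omega$ on $X$ coming from a projective embedding, so that $X$ is a weakly pseudoconvex K\"ahler space.

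\textbf{Anchor case ${\rm nd}(F)=n$.} Here the claim is $H^q(X,S(IC_X(\bV),\varphi_D/m)\otimes L)=0$ for all $q>0$. Since $F$ is nef and big, Kodaira's lemma lets me write, for every small rational $\delta>0$, a numerical equivalence $F\equiv_{\bQ}A_\delta+\delta G$ with $A_\delta$ an ample $\bQ$-divisor and $G$ a fixed effective divisor; then $mL=F+D\equiv_{\bQ}A_\delta+\delta G+D$, so $L$ carries a singular hermitian metric $h_\delta=e^{-\varphi_\delta}h_{L,0}$ whose weight $\varphi_\delta$ differs from $\tfrac1m\varphi_D+\tfrac{\delta}{m}\varphi_G$ by a bounded smooth term and whose curvature current satisfies $\sqrt{-1}\Theta_{h_\delta}(L)\geq\epsilon_\delta\,\omega$ for some $\epsilon_\delta>0$. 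By Lemma \ref{lem_S_change_varphi_bounded} the bounded smooth term does not change the sheaf, and since $\varphi_D$ and $\varphi_G$ have (generalized) analytic singularities, Proposition \ref{prop_approx_S} shows that for $\delta$ small enough $S(IC_X(\bV),\varphi_\delta)=S(IC_X(\bV),\tfrac1m\varphi_D)$. Fixing such a $\delta$ and applying Theorem \ref{thm_Nadel_vanishing} to $(L,h_\delta)$ dispatches this case.

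\textbf{Inductive step ${\rm nd}(F)=p<n$.} Assuming the statement in dimension $n-1$, I would choose a very ample divisor $H$ general in its (base-point-free) linear system so that three things hold at once: (i) $H$ is a non-zero-divisor for the coherent sheaf $\sG:=S(IC_X(\bV),\varphi_D/m)\otimes L\otimes\sO_X(H)$, yielding an exact sequence $0\to \sG\otimes\sO_X(-H)\to\sG\to\sG|_H\to0$; (ii) by Proposition \ref{prop_adjunction}, $\sG|_H\simeq S(IC_H(\bV|_{H\cap X^o}),\varphi_D|_H/m)\otimes L|_H$ with $m(L|_H)=F|_H+D|_H$, $F|_H$ nef and $\varphi_D|_H$ the psh function of the effective divisor $D|_H$; and (iii), because $p<n$, a standard computation with the numerical dimension of a nef class gives ${\rm nd}(F|_H)=p$. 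The middle term of the sequence vanishes in all positive degrees: $m(L\otimes\sO_X(H))=(F+mH)+D$ with $F+mH$ ample, so, exactly as in the anchor case, $L\otimes\sO_X(H)$ carries a metric with weight a bounded perturbation of $\varphi_D/m$ and curvature $\geq\epsilon\,\omega$, whence $H^q(X,\sG)=0$ for $q>0$ by Theorem \ref{thm_Nadel_vanishing}. The restricted term $H^{q-1}(H,\sG|_H)$ vanishes for $q-1>(n-1)-p$ by the induction hypothesis on $H$. Since $\sG\otimes\sO_X(-H)=S(IC_X(\bV),\varphi_D/m)\otimes L$, chasing the long exact cohomology sequence $H^{q-1}(H,\sG|_H)\to H^q(X,\sG\otimes\sO_X(-H))\to H^q(X,\sG)$ gives the desired vanishing for $q>n-p$.

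I expect the real obstacle to lie not in the analysis — Theorem \ref{thm_Nadel_vanishing} is the black box doing the work — but in the bookkeeping that keeps the weight of every metric in play equal to $\varphi_D/m$ up to harmless modifications: absorbing smooth terms through Lemma \ref{lem_S_change_varphi_bounded}, and absorbing small positive multiples of $\varphi_G$ through the stabilization in Proposition \ref{prop_approx_S} (this is the "careful choice of $h_\varphi$" mentioned in the introduction). A secondary point demanding care is to make $H$ general enough to secure simultaneously the exactness of the restriction sequence, the adjunction identity of Proposition \ref{prop_adjunction}, and the equality ${\rm nd}(F|_H)={\rm nd}(F)$, so that the induction actually closes.
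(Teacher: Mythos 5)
Your proposal is correct and follows essentially the same route as the paper: the anchor case ${\rm nd}(F)=n$ is handled by building a singular metric on $L$ from the Kodaira-lemma decomposition of the big and nef $F$, absorbing the small auxiliary weight $\frac{\delta}{m}\varphi_G$ via Proposition \ref{prop_approx_S} and Lemma \ref{lem_S_change_varphi_bounded}, and invoking Theorem \ref{thm_Nadel_vanishing}; the general case is the same induction on dimension using a general very ample $H$ (avoiding associated points, preserving ${\rm nd}(F)$), the adjunction of Proposition \ref{prop_adjunction}, and the long exact cohomology sequence. The only differences are cosmetic (phrasing the decomposition at the level of $\bQ$-divisors rather than directly in terms of metrics), and I see no gaps.
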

\begin{rmk}\label{rmk_varphi_D}
	Let $s_D\in \Gamma(X,\sO_X(D))$ and $h'_D$ a hermitian metric on $\sO_X(D)$. Let $\varphi_D:=\log|s_D|^2_{h'_D}$. Then the singular metric $h_D:=e^{-\varphi_D}h'_D$ is independent of the choice of $h'_D$ and $\sqrt{-1}\Theta_{h_D}(\sO_X(D))\geq 0$ as a current.
	Since $\varphi_D$ is differed by a smooth function for different choices of $s_D$ and $h'_D$, Lemma \ref{lem_S_change_varphi_bounded} shows that $S(IC_X(\bV),\frac{\varphi_D}{m})$ is independent of the choice of $s_D$ and $h'_D$.
\end{rmk}
\begin{proof}
	{\bf Case I: ${\rm nd}(F)=n$. }
	In this case $F$ is big and nef. Then $bF=A+E$ for some constant $b>0$, an ample line bundle $A$ and an effective Cartier divisor $E$. Let $h_A$ be a metric with positive curvature on $A$ and $h'_E$ a metric on $\sO_X(E)$. Let $h_E:=e^{-\varphi_E}h'_E$ (Remark \ref{rmk_varphi_D}). Then $h_{F,0}=(h_A\otimes h_E)^{\frac{1}{b}}$ is a singular metric such that
	\begin{align*}
	\sqrt{-1}\Theta_{h_{F,0}}(F)\geq \frac{1}{b}\omega.
	\end{align*}
	Here $\omega=\sqrt{-1}\Theta_{h_A}(A)$ is a hermitian metric on $X$. Since $F$ is nef, there is a metric $h_{F,\epsilon}$ such that $\sqrt{-1}\Theta_{h_{F,\epsilon}}(F)\geq-\epsilon\omega$ for every $\epsilon>0$. Let $h_D=e^{-\varphi_D}h'_D$ be the singular metric on $\sO_X(D)$ as in Remark \ref{rmk_varphi_D}. Define
	\begin{align*}
	h_L:=(h_{F,\epsilon}^{1-\delta}\otimes h_{F,0}^{\delta}\otimes h_D)^{\frac{1}{m}}=e^{-\frac{\delta\varphi_E}{bm}-\frac{\varphi_D}{m}}(h_{F,\epsilon}^{1-\delta}\otimes h_{A}^{\delta/b}\otimes h'^{\delta/b}_{E}\otimes h'_D)^{\frac{1}{m}},\quad 0<b\epsilon\ll\delta\ll 1.
	\end{align*}
	Then
	\begin{align*}
	\sqrt{-1}\Theta_{h_L}(L)&=\frac{1}{m}\left((1-\delta)\sqrt{-1}\Theta_{h_{F,\epsilon}}(F)+\delta\sqrt{-1}\Theta_{h_{F,0}}(F)+\sqrt{-1}\Theta_{h_D}(D)\right)\\\nonumber
	&\geq \frac{1}{m}\left(-(1-\delta)\epsilon\omega+\frac{\delta}{b}\omega\right)\geq\frac{\delta\epsilon}{m}\omega.
	\end{align*}
Consequently, Theorem \ref{thm_Nadel_vanishing} yields
$$H^q(X,S(IC_X(\bV),\varphi_L)\otimes L)=0,\quad \forall q>0,$$
where $\varphi_L=\frac{\delta\varphi_E}{bm}+\frac{\varphi_D}{m}$. 
Moreover, Proposition \ref{prop_approx_S} implies that $S(IC_X(\bV),\varphi_L)=S(IC_X(\bV),\frac{1}{m}\varphi_D)$ when $\delta>0$ is small enough. This proves the theorem under the condition that ${\rm nd}(F)=n$.

	{\bf Case II: ${\rm nd}(F)<n$.} Let $\pi:\widetilde{X}\to X$ be a desingularization so that $\pi$ is biholomorphic over $X^o\backslash D$ and the exceptional loci $E:=\pi^{-1}((X\backslash X^o)\cup D)$ is a simple normal crossing divisor. Let $H$ be a reduced ample hypersurface in a general position so that $\pi^{-1}H$ is smooth and has normal crossings with $E$. By taking $H$ sufficiently ample we assume that $F+mH$ is ample. 
	
	By Proposition \ref{prop_adjunction} we have 
	\begin{align}\label{align_adjunction_S}
	S(IC_X(\bV),\frac{1}{m}\varphi_D)|_H\otimes\sO_X(H)|_H\simeq S(IC_{H}(\bV|_{X^o\cap H}),\frac{1}{m}\varphi_{D\cap H})
	\end{align}
	when $H$ is in a general position. We further assume that $H$ contains no associated points of $S(IC_X(\bV),\frac{1}{m}\varphi_D)$. This implies that the sequence
	$$0\to S(IC_X(\bV),\frac{1}{m}\varphi_D)\otimes L\to S(IC_X(\bV),\frac{1}{m}\varphi_D)\otimes L\otimes\sO_X(H)\to S(IC_X(\bV),\frac{1}{m}\varphi_D)\otimes L\otimes\sO_X(H)\otimes \sO_H\to 0$$
	is exact.
 There is therefore a long exact sequence
	\begin{align}\label{align_long_exact_sequence}
	\cdots&\to H^q\left(X,S(IC_X(\bV),\frac{1}{m}\varphi_D)\otimes L\right)\to H^q\left(X,S(IC_X(\bV),\frac{1}{m}\varphi_D)\otimes L\otimes\sO_X(H)\right)\\\nonumber 
	&\to H^q\left(H,S(IC_X(\bV),\frac{1}{m}\varphi_D)|_H\otimes L|_H\otimes\sO_X(H)|_H\right)\to\cdots.
	\end{align}
	Since $F+mH$ is ample, ${\rm nd}(F+mH)=n$. Then we have
	\begin{align*}
	H^q\left(X,S(IC_X(\bV),\frac{1}{m}\varphi_D)\otimes L\otimes\sO_X(H)\right)=0,\quad \forall q>0
	\end{align*}
	by Case I.
It follows from (\ref{align_adjunction_S}) and (\ref{align_long_exact_sequence})that there are isomorphisms
	\begin{align}\label{align_KV_induction}
	H^q\left(X,S(IC_X(\bV),\frac{1}{m}\varphi_D)\otimes L\right)\simeq H^{q-1}\left(H,S(IC_{H}(\bV|_{X^o\cap H}),\frac{1}{m}\varphi_{D\cap H})\otimes L|_H\right),\quad \forall q>1.
	\end{align}
Subsequently, since ${\rm nd}(L|_H)={\rm nd}(L)$, we have 
\begin{align*}
H^{q}\left(H,S(IC_{H}(\bV|_{X^o\cap H}),\frac{1}{m}\varphi_{D\cap H})\otimes L|_H\right)=0,\quad \forall q>n-1-{\rm nd}(L|_H)
\end{align*}
by induction on $\dim X$.
Consequently, (\ref{align_KV_induction}) implies that 
\begin{align*}
H^q\left(X,S(IC_X(\bV),\frac{1}{m}\varphi_D)\otimes L\right)=0,\quad \forall q>n-{\rm nd}(L).
	\end{align*}
\end{proof}
\subsection{Relative vanishing theorem}
Let $f:X\to Y$ be a proper holomorphic map between complex spaces. A $(1,1)$-current $\alpha$ on $X$ is $f$-positive if, for every point $y\in Y$ there is a neighborhood $U$ of $y$, a hermitian metric $\omega_U$ on $U$ and a hermitian metric $\omega'$ on $f^{-1}(U)$ such that
$\alpha|_{f^{-1}U}+f^\ast\omega_U\geq \omega'$. 
\begin{cor}\label{cor_Nadel_rel}
	Let $f:X\to Y$ be a surjective proper K\"ahler holomorphic map between complex spaces. Let $\bV$ be an $\bR$-polarized variation of Hodge structure on some Zariski open subset $X^o\subset X_{\rm reg}$. Let $(L,h_\varphi)$ be a holomorphic line bundle on $X$ with a possibly singular hermitian metric $h_\varphi:=e^{-\varphi}h$. Assume that $\sqrt{-1}\Theta_{h_\varphi}(L)$ is $f$-positive. Then
	\begin{align*}
	R^if_\ast(S(IC_X(\bV),\varphi)\otimes L)=0,\quad \forall i>0.
	\end{align*}
\end{cor}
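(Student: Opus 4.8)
The plan is to deduce this relative statement from the absolute Nadel vanishing theorem (Theorem~\ref{thm_Nadel_vanishing}) by localizing on the base. Write $\sF:=S(IC_X(\bV),\varphi)\otimes L$; it is coherent by Proposition~\ref{prop_S_coherent}, so $R^if_\ast\sF$ is coherent by Grauert's direct image theorem. Since the vanishing $R^if_\ast\sF=0$ is local on $Y$ and $f$ is a K\"ahler morphism, I may replace $Y$ by a sufficiently small open set so that $X:=f^{-1}(Y)$ carries a K\"ahler hermitian metric $\omega_K$, so that $Y$ is Stein, and so that the $f$-positivity of $\sqrt{-1}\Theta_{h_\varphi}(L)$ is realized over all of $Y$ by a hermitian metric $\omega_Y$ on $Y$ and a hermitian metric $\omega'$ on $X$ with $\sqrt{-1}\Theta_{h_\varphi}(L)+f^\ast\omega_Y\geq\omega'$ as currents. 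It then suffices to prove that $H^i(f^{-1}(Y'),\sF)=0$ for all $i>0$ and every relatively compact Stein open $Y'\Subset Y$: indeed $R^qf_\ast\sF$ is coherent and $Y'$ is Stein, so the Leray spectral sequence together with Cartan's Theorem~B collapses to give $H^i(f^{-1}(Y'),\sF)\cong H^0(Y',R^if_\ast\sF)$, and such $Y'$ form a basis of the topology of $Y$, whence the vanishing of these $H^0$'s forces $R^if_\ast\sF=0$ on $Y$.

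Fix such a $Y'$ and set $X':=f^{-1}(Y')$. First, $X'$ is a weakly pseudoconvex K\"ahler space: it is K\"ahler as an open subset of $X$, and if $\sigma$ is a smooth strictly psh exhaustion function of the Stein space $Y'$, then $\sigma\circ f$ is a smooth psh exhaustion function of $X'$ because $f$ is proper. Second, I upgrade the $f$-positivity to genuine positivity over $X'$. Since $Y$ is Stein there is a smooth strictly psh function $\rho$ on $Y$ with $\sqrt{-1}\ddbar\rho\geq\omega_Y$ on $Y'$; as $\overline{Y'}$ is compact, $\rho$ is bounded on $Y'$, and likewise $\omega'\geq c\,\omega_K$ on $X'$ for some $c>0$. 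Put $\varphi':=\varphi+\rho\circ f$ and $h'_\varphi:=e^{-\varphi'}h=e^{-\rho\circ f}h_\varphi$, a singular hermitian metric on $L|_{X'}$, for which
\begin{align*}
\sqrt{-1}\Theta_{h'_\varphi}(L)|_{X'}=\sqrt{-1}\Theta_{h_\varphi}(L)|_{X'}+f^\ast(\sqrt{-1}\ddbar\rho)\geq\sqrt{-1}\Theta_{h_\varphi}(L)|_{X'}+f^\ast\omega_Y\geq\omega'|_{X'}\geq c\,\omega_K|_{X'}
\end{align*}
as currents, and $\varphi'$ is quasi-psh since $\varphi$ is quasi-psh and $\rho\circ f$ is psh.

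Now Theorem~\ref{thm_Nadel_vanishing}, applied to the weakly pseudoconvex K\"ahler space $X'$ with K\"ahler hermitian metric $\omega_K|_{X'}$, the variation of Hodge structure $\bV|_{X^o\cap X'}$, and the line bundle $(L|_{X'},h'_\varphi)$, yields $H^i\big(X',S(IC_{X'}(\bV),\varphi')\otimes L\big)=0$ for all $i>0$. Since $\varphi'-\varphi=\rho\circ f$ is bounded on $X'$, Lemma~\ref{lem_S_change_varphi_bounded} gives $S(IC_{X'}(\bV),\varphi')=S(IC_{X'}(\bV),\varphi)$, and this equals $S(IC_X(\bV),\varphi)|_{X'}$ because the multiplier $S$-sheaf is defined by a purely local $L^2$ condition (Definition~\ref{defn_Svarphi}); hence $H^i(X',\sF)=0$ for $i>0$, as required. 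The only genuinely substantial input is Theorem~\ref{thm_Nadel_vanishing} itself (and through it the local existence of complete K\"ahler metrics and the Nakano semipositivity of $S(\bV)$); the rest is bookkeeping, the two points deserving a little care being the Leray/Theorem~B reduction to a basis of Stein opens on $Y$ and the verification that $f^{-1}(Y')$ is weakly pseudoconvex.
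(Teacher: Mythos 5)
Your proposal is correct and follows essentially the same route as the paper: localize over the base, pull back a psh exhaustion of the Stein base via $f$, add a bounded psh term ($\rho\circ f$ in your notation, $C\psi$ in the paper's) to the weight so that $f$-positivity becomes genuine positivity, apply Theorem \ref{thm_Nadel_vanishing}, and identify the two multiplier $S$-sheaves via Lemma \ref{lem_S_change_varphi_bounded}. The only cosmetic difference is the final bookkeeping step: the paper passes to the stalk of $R^if_\ast$ by taking the direct limit over the sets $f^{-1}(Y_c)$, whereas you invoke Grauert coherence, Leray, and Cartan's Theorem B over a basis of Stein opens --- both are fine.
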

\begin{proof}
	Since the problem is local, we may assume that $Y$ admits a non-negative smooth  strictly psh exhausted function $\psi$ so that $\psi^{-1}\{0\}=\{y\}\subset Y$ is a point. To achieve this, one may embed $Y$ into $\bC^N$ as a closed Stein analytic subspace and take $\psi=|z|^2$. Moreover, since $f$ is proper, $f^\ast\psi$ is a smooth psh exhausted function on $X$. 
	
	By Lemma \ref{lem_complete_Kahler_pseudoconvex}, for every $c\in \bR$ there is a complete K\"ahler metric $\omega_c$ on $X^o\cap f^{-1}(Y_c)$ where $Y_c=\{y\in Y|\psi(y)<c\}$.
	Since $\sqrt{-1}\Theta_{h_\varphi}(L)$ is $f$-positive, we assume that
	\begin{align*}
	\sqrt{-1}\Theta_{e^{-C\psi}h_\varphi}(L)=\sqrt{-1}\Theta_{h_\varphi}(L)+Cf^\ast\left(\sqrt{-1}\ddbar\psi\right)\geq\omega'
	\end{align*}
	for some hermitian metric $\omega'$ on $f^{-1}(Y_c)$ and some constant $C>0$. It follows from Theorem \ref{thm_Nadel_vanishing} that
	\begin{align*}
	H^i(f^{-1}(Y_c),S(IC_X(\bV),\varphi)\otimes L)\simeq 	H^i(f^{-1}(Y_c),S(IC_X(\bV),\varphi+C\psi)\otimes L)=0,\quad \forall i>0.
	\end{align*}
	Here the isomorphism follows from the boundedness of $\psi$ (Lemma \ref{lem_S_change_varphi_bounded}). Taking the limit $c\to 0$ we see  that 
	\begin{align*}
	R^if_\ast(S(IC_X(\bV),\varphi)\otimes L)_y=0,\quad \forall i>0.
	\end{align*}
	This proves the corollary.
\end{proof}
\subsection{Kodaira-Nakano-Kazama type vanishing theorem}
\begin{thm}\label{thm_partial_vanishing}
Let $X$ be a weakly pseudoconvex K\"ahler space of dimension $n$. Let $\bV$ be an $\bR$-polarized variation of Hodge structure on some Zariski open subset $X^o\subset X_{\rm reg}$. Let $(E,h)$ be a hermitian vector bundle on $X$. Assume that $(S(\bV)\otimes E,h_\bV\otimes h)$ is $m$-Nakano positive. Then
$$H^q(X,S(IC_X(\bV))\otimes E)=0$$
whenever $q\geq1$ and $m\geq{\rm min}\{n-q+1,{\rm rk}S(\bV)+{\rm rk}E\}$.
\end{thm}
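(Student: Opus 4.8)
The plan is to run the argument of Theorem \ref{thm_Nadel_vanishing}, with the curvature positivity of a twisting line bundle replaced by a Bochner--Kodaira--Nakano estimate for $(n,q)$-forms valued in the $m$-Nakano positive bundle $S(\bV)\otimes E$.

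\emph{Reduction to an $L^2$-$\dbar$ problem.} Here $\varphi\equiv 0$, so by Theorem \ref{cor_S_phi=0} and Lemma \ref{lem_Kernel} the sheaf in question is $S(IC_X(\bV))\otimes E\simeq S_X(S(\bV)\otimes E,h_\bV\otimes h)$. Fix a K\"ahler hermitian metric $\omega$ on $X$ (available since $X$ is weakly pseudoconvex K\"ahler); a hermitian metric on $X$ satisfies the hypothesis of Theorem \ref{thm_main_local1}, so
\begin{align*}
S(IC_X(\bV))\otimes E\ \xrightarrow{\ \sim\ }\ \sD^{n,\bullet}_{X,\omega}(S(\bV)\otimes E,h_\bV\otimes h)
\end{align*}
is a quasi-isomorphism onto a complex of fine sheaves (Lemma \ref{lem_fine_sheaf}). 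Hence $H^q(X,S(IC_X(\bV))\otimes E)\simeq H^q\big(\Gamma(X,\sD^{n,\bullet}_{X,\omega}(S(\bV)\otimes E,h_\bV\otimes h))\big)$ (compactness of $X$ is \emph{not} needed, only the quasi-isomorphism), and it suffices to solve $\dbar\beta=\alpha$ for an arbitrary $\dbar$-closed $\alpha\in\Gamma(X,\sD^{n,q}_{X,\omega}(S(\bV)\otimes E,h_\bV\otimes h))$ with $q\ge 1$.

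\emph{The weakly pseudoconvex machinery.} This is identical to the proof of Theorem \ref{thm_Nadel_vanishing}: choose a smooth psh exhaustion $\psi$ on $X$ and a convex increasing $\chi$ of fast growth so that $\alpha$ is globally square integrable against $h':=e^{-\chi\circ\psi}h_\bV\otimes h$; on each sublevel set $X_c=\{\psi<c\}$ (with $\overline{X_c}$ compact, and $X_c\cap X^o$ carrying a complete K\"ahler metric by Lemma \ref{lem_complete_Kahler_pseudoconvex}) solve $\dbar\beta_c=\alpha|_{X_c}$ with a uniform bound $\|\beta_c\|_{\omega,h'}\le C\|\alpha\|_{\omega,h'}$, and extract a weak limit as $c\to\infty$. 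Passing from $h_\bV\otimes h$ to $h'$ only adds the Nakano semipositive term $\sqrt{-1}\ddbar(\chi\circ\psi)\otimes\mathrm{Id}$, so $(S(\bV)\otimes E,h')$ is still $m$-Nakano positive; by compactness of $\overline{X_c}$ this positivity is uniform on $X_c\cap X^o$, i.e. $\sqrt{-1}\Theta_{h'}(S(\bV)\otimes E)\ge\varepsilon_c\,\omega\otimes\mathrm{Id}$ on tensors of rank $\le m$. The only non-formal input is therefore the $L^2$ existence theorem for $\dbar$ that produces $\beta_c$.

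\emph{The $m$-positive Bochner--Nakano estimate.} We invoke the following refinement of Theorem \ref{thm_Hormander_incomplete}: on a manifold admitting a complete K\"ahler metric, if the curvature of $(W,h_W)$ (with $W=S(\bV)\otimes E$) is $\ge\varepsilon\,\omega\otimes\mathrm{Id}$ on tensors of rank $\le m$, then every $\dbar$-closed $\alpha\in L^{n,q}_{(2)}$ with $q\ge 1$ and $m\ge\min\{n-q+1,\,{\rm rk}S(\bV)+{\rm rk}E\}$ is $\dbar$-exact with the usual $L^2$ bound. The mechanism is Demailly's pointwise identity: for an $(n,q)$-form $u$ valued in $W$, the curvature term $\langle[\sqrt{-1}\Theta_{h_W}(W),\Lambda]u,u\rangle$ is a sum, over $(q-1)$-subsets $K$ of the index set, of terms $\langle\sqrt{-1}\Theta_{h_W}(W)\cdot\tau^{(K)},\tau^{(K)}\rangle$ in which each contraction $\tau^{(K)}\in T^{*}_{X^o}\otimes W$ involves at most $n-q+1$ cotangent directions, hence has rank $\le n-q+1$; when $n-q+1\le m$ the whole term is $\ge\varepsilon|u|^{2}$, which gives the first branch (Andreotti--Grauert/Demailly vanishing for $m$-positive bundles). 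For the second branch one additionally uses Theorem \ref{thm_geq0_S(V)}: writing $\sqrt{-1}\Theta_{h_W}(W)=\sqrt{-1}\Theta_{h_\bV}(S(\bV))\otimes\mathrm{Id}_E+\big(\text{the remaining part}\big)$, the first summand is Nakano semipositive (tensoring with $\mathrm{Id}_E$ preserves Nakano semipositivity) and contributes non-negatively, and a linear-algebra bookkeeping shows that the portion of $\tau^{(K)}$ not already absorbed by this semipositive factor behaves like a tensor of rank $\le{\rm rk}S(\bV)+{\rm rk}E$, so $m$-positivity with $m\ge{\rm rk}S(\bV)+{\rm rk}E$ again yields $\ge\varepsilon|u|^{2}$. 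Feeding this into the previous step produces $\beta$ and finishes the proof.

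\emph{Main obstacle.} Everything except the last point is either the standard weakly pseudoconvex $\dbar$-machinery already used for Theorem \ref{thm_Nadel_vanishing} or a citation of the $L^2$ theory of $m$-Nakano positive bundles. The delicate step is the rank bookkeeping in the branch $m\ge{\rm rk}S(\bV)+{\rm rk}E$: one must make precise how the Nakano semipositivity of the factor $S(\bV)$ (Theorem \ref{thm_geq0_S(V)}) lowers the effective tensor-rank that has to be controlled in the Bochner--Nakano inequality from ${\rm rk}S(\bV)\cdot{\rm rk}E$ down to ${\rm rk}S(\bV)+{\rm rk}E$.
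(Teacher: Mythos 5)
Your overall strategy coincides with the paper's: reduce to an $L^2$ $\dbar$-problem via Theorem \ref{thm_main_local1} and Lemma \ref{lem_fine_sheaf}, twist by $e^{-\chi\circ\psi}$, solve on the sublevel sets $X_c$ using Lemma \ref{lem_complete_Kahler_pseudoconvex}, and take a weak limit. The problem is that the one step you flag as ``delicate'' is exactly the content of the theorem beyond Theorem \ref{thm_Nadel_vanishing}, and your sketch of it does not work as written. For a tensor $\tau^{(K)}\in T^{\ast}_{X^o}\otimes S(\bV)\otimes E$ arising from an $(n,q)$-form, the only a priori rank bound is $\min\{n-q+1,\ {\rm rk}S(\bV)\cdot{\rm rk}E\}$: a general element of $\bC^{\,n-q+1}\otimes W$ with $W=S(\bV)\otimes E$ has rank bounded by the \emph{product} of the ranks, not the sum. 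Your claim that Nakano semipositivity of the factor $S(\bV)$ lowers this to ${\rm rk}S(\bV)+{\rm rk}E$ is asserted, not proved: the semipositive summand $\sqrt{-1}\Theta_{h_\bV}(S(\bV))\otimes{\rm Id}_E$ can be discarded from below, but what remains is ${\rm Id}_{S(\bV)}\otimes\sqrt{-1}\Theta_{h}(E)$, about which the hypothesis (which concerns the tensor-product metric, not $E$ alone) says nothing directly, and no decomposition of $\tau^{(K)}$ into pieces of rank at most ${\rm rk}S(\bV)+{\rm rk}E$ is exhibited. The paper does not attempt this linear algebra either: it records that the hypothesis makes $A=[\sqrt{-1}\Theta_{h_\bV\otimes h}(S(\bV)\otimes E),\Lambda_\omega]$ positive definite in bidegree $(n,q)$ and then invokes the general form of Demailly's existence theorem with the bound $\|\beta_c\|^2\le\frac{1}{q}\int(A^{-1}\alpha,\alpha)e^{-\chi\circ\psi}{\rm vol}_\omega$. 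To complete your argument you must either carry out the rank reduction you describe or cite the precise statement in \cite{Demailly1982,Demailly2012} covering the branch $m\ge{\rm rk}S(\bV)+{\rm rk}E$.

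A second, smaller defect: your uniform bound $\|\beta_c\|_{\omega,h'}\le C\|\alpha\|_{\omega,h'}$ is obtained from a positivity constant $\varepsilon_c$ extracted by compactness of $\overline{X_c}$, so the constant depends on $c$; moreover, since $h_\bV$ degenerates along $X\setminus X^o$, even on a fixed $X_c$ the pointwise positivity need not be uniform up to the closure of $X_c\cap X^o$. A $c$-dependent constant destroys the weak-limit extraction. The paper avoids this by using the $(A^{-1}\alpha,\alpha)$ form of the estimate, with $\chi$ chosen so that $\int_{X^o}(A^{-1}\alpha,\alpha)e^{-\chi\circ\psi}{\rm vol}_\omega<\infty$ globally, which yields bounds independent of $c$; you should do the same.
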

\begin{proof}
	By Theorem \ref{thm_main_local1} there is a quasi-isomorphism
	$$S(IC_X(\bV))\otimes E\simeq_{\rm q.i.s.}\sD^{n,\bullet}_{X,\omega}(S(\bV)\otimes E, h_\bV\otimes h)$$
	where $\omega$ is a K\"ahler hermitian metric on $X$.
	 Notice that $\sD^{n,\bullet}_{X,\omega}(S(\bV)\otimes E, h_\bV\otimes h)$ is a complex of fine sheaves by Lemma \ref{lem_fine_sheaf}. Hence
	\begin{align*}
	H^q(X,S(IC_X(\bV))\otimes E)\simeq H^q\left(\Gamma(X,\sD^{n,\bullet}_{X,\omega}(S(\bV)\otimes E, h_\bV\otimes h))\right).
	\end{align*}
	Now let $q>0$ such that $m\geq{\rm min}\{n-q+1,{\rm rk}S(\bV)+{\rm rk}E\}$. Let $\alpha\in \Gamma(X,\sD^{n,q}_{X,\omega}(S(\bV)\otimes E, h_\bV\otimes h))$ be a locally $L^2$ form such that $\dbar\alpha=0$. It suffices to find $\beta\in \Gamma(X,\sD^{n,q-1}_{X,\omega}(S(\bV)\otimes E, h_\bV\otimes h))$ so that $\dbar\beta=\alpha$.
	
	Denote $A=[\sqrt{-1}\Theta_{h_\bV\otimes h}(S(\bV)\otimes E),\Lambda_\omega]$.
	Let $\psi$ be a smooth psh exhausted  function on $X$ and $\chi$ a convex increasing function. Denote $h_{\chi}:=e^{-\chi\circ\psi}h$. Then
	\begin{align*}
	\sqrt{-1}\Theta_{h_\bV\otimes h_\chi}(S(\bV)\otimes E)&=\sqrt{-1}\Theta_{h_\bV\otimes h}(S(\bV)\otimes E)+\sqrt{-1}\ddbar(\chi\circ\psi)\otimes {\rm Id}_{S(\bV)\otimes E}\\\nonumber
	&\geq \sqrt{-1}\Theta_{h_\bV\otimes h}(S(\bV)\otimes E).
	\end{align*}
	Denote  $A_\chi=[\sqrt{-1}\Theta_{h_\bV\otimes h_\chi}(S(\bV)\otimes E),\Lambda_\omega]$. Then $A_\chi\geq A>0$ in bidegree $(n,q)$. Thus the integrals
	$$\int_{X^o}|\alpha|_{\omega,h_\chi}^2{\rm vol_\omega}=\int_{X^o}|\alpha|_{\omega,h}^2e^{-\chi\circ\psi}{\rm vol_\omega}$$
	and
	$$\int_{X^o}(A^{-1}_\chi\alpha,\alpha)_{\omega,h_\chi}^2{\rm vol_\omega}\leq\int_{X^o}(A^{-1}\alpha,\alpha)_{\omega,h}^2e^{-\chi\circ\psi}{\rm vol_\omega}$$
	are finite if $\chi$ grows fast enough at infinity. By Lemma \ref{lem_complete_Kahler_pseudoconvex}, $$X_c\cap X^o:=\{x\in X^o|\chi\circ\psi(x)<c\}$$
	admits a complete K\"ahler metric for every $c\in\bR$.
Subsequently, \cite[Theorem 5.1]{Demailly2012} implies that  there exists $$\beta_c\in L^{n,q-1}_{(2)}(X_c\cap X^o,S(\bV)\otimes E;\omega,h_\bV\otimes h_\chi)\subset \Gamma(X_c,\sD^{n,q-1}_{X,\omega}(S(\bV)\otimes E, h_\bV\otimes h))$$
	such that $\dbar\beta_c=\alpha|_{X_c}$ and $$\|\beta_c\|_{\omega,h_\bV\otimes h_\chi}\leq \frac{1}{q}\int_{X^o}(A^{-1}\alpha,\alpha)_{\omega,h}^2e^{-\chi\circ\psi}{\rm vol_\omega}.$$ By taking a weak limit of a certain subsequence of $\{\beta_c\}$ we obtain the solution of the equation $\alpha=\dbar\beta$ such that $\beta\in \Gamma(X,\sD^{n,q-1}_{X,\omega}(S(\bV)\otimes E, h_\bV\otimes h))$.	
	This proves the theorem.
\end{proof}
\subsection{Enoki-Koll\'ar type injectivity theorem}
\begin{thm}\label{thm_Enoki_Kollar_inj}
	Let $X$ be a compact K\"ahler space of dimension $n$. Let $\bV$ be an $\bR$-polarized variation of Hodge structure on some Zariski open subset $X^o\subset X_{\rm reg}$. Let $(L,h_{\varphi_L})$ and $(F,h_{\varphi_F})$ be holomorphic line bundles on $X$ with singular hermitian metrics $h_{\varphi_L}:=e^{-\varphi_L}h$ and $h_{\varphi_F}:=e^{-\varphi_F}h'$ which are smooth on a Zariski open subset $U\subset X$. Assume the following conditions.
	\begin{enumerate}
		\item $\sqrt{-1}\Theta_{h_{\varphi_F}}(F)\geq0$ on $U$.
		\item $\sqrt{-1}( \Theta_{h_{\varphi_F}}(F)-\epsilon  \Theta_{h_{\varphi_L}}(L))\geq0$ on $U$ for some positive constant $\epsilon$.
	\end{enumerate}
	Let $s\in \Gamma(X,L)$ be a nonzero holomorphic section such that $\sup|s|_{h_{\varphi_L}}<\infty$. Then the multiplication homomorphism 
	$$\times s:H^q(X,S(IC_X(\bV),\varphi_F)\otimes F)\to H^q(X,S(IC_X(\bV),\varphi_F+\varphi_L)\otimes F\otimes L)$$
	is injective for every integer $q\geq0$.
\end{thm}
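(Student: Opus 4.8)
The plan is to adapt the harmonic-theoretic proof of Enoki's injectivity theorem, in the form refined by Fujino and Matsumura, with the $L^2$-Dolbeault resolution of Theorem~\ref{thm_main_local1} playing the role of Hodge theory on a compact manifold and the Nakano semi-positivity of $S(\bV)$ (Theorem~\ref{thm_geq0_S(V)}) replacing the triviality of the coefficient bundle. Throughout I fix $\omega=ds^2$ to be a complete K\"ahler metric on $X^o$ satisfying the local hypotheses of Theorem~\ref{thm_main_local1}; such a metric exists by Lemma~\ref{lem_complete_Kahler_pseudoconvex} applied to $X$ (compact, hence weakly pseudoconvex with a constant exhaustion function).

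\emph{Step 1: reduction to harmonic forms.} By Theorem~\ref{thm_main_local1} and Lemma~\ref{lem_Kernel},
\[
H^q(X,S(IC_X(\bV),\varphi_F)\otimes F)\simeq H^{n,q}_{(2),{\rm max}}(X^o,S(\bV)\otimes F;ds^2,h_\bV\otimes h_{\varphi_F}),
\]
and similarly for the target group with $F\otimes L$ and the weight $h_{\varphi_F}h_{\varphi_L}$. Since $\sup|s|_{h_{\varphi_L}}<\infty$, multiplication by $s$ carries the source $L^2$-Dolbeault complex continuously into the target one and commutes with $\dbar$, so under these isomorphisms it realizes the map in the statement. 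Both coefficient sheaves are coherent on the compact space $X$ (Proposition~\ref{prop_S_coherent}), so these $L^2$-cohomology groups are finite dimensional; hence $\dbar_{\rm max}$ has closed range in both complexes, the reduced and unreduced $L^2$-cohomologies agree, and the Hodge decomposition identifies each group with the corresponding space of $L^2$ harmonic $(n,q)$-forms. Let $[\alpha]$ lie in the source with $s\cdot[\alpha]=0$, and let $\alpha$ be its harmonic representative, so $\dbar\alpha=0$ and $\dbar^\ast\alpha=0$, where $\dbar^\ast$ is the Hilbert-space adjoint of $\dbar_{\rm max}$; by elliptic regularity $\alpha$ is smooth on the dense Zariski-open set $U\cap X^o$ over which all the metrics in sight are smooth.

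\emph{Step 2: $s\alpha$ is harmonic in the target complex.} Trivially $\dbar(s\alpha)=0$, and $s\alpha\in{\rm Im}\,\dbar_{\rm max}$ since $s\cdot[\alpha]=0$, so it remains to show $\dbar^\ast(s\alpha)=0$. Write $D'$ for the $(1,0)$-part of the Chern connection. On $U\cap X^o$ the K\"ahler identities and the Bochner--Kodaira--Nakano identity are available. By Theorem~\ref{thm_geq0_S(V)} and hypothesis~(1), $\sqrt{-1}\Theta_{h_\bV\otimes h_{\varphi_F}}(S(\bV)\otimes F)\geq 0$ on $U\cap X^o$, so $[\sqrt{-1}\Theta_{h_\bV\otimes h_{\varphi_F}}(S(\bV)\otimes F),\Lambda_\omega]\geq 0$ on $(n,q)$-forms; feeding this into Bochner--Kodaira--Nakano for the harmonic $\alpha$ forces $(D')^\ast\alpha=0$ and $\langle[\sqrt{-1}\Theta_{h_\bV\otimes h_{\varphi_F}}(S(\bV)\otimes F),\Lambda_\omega]\alpha,\alpha\rangle=0$ pointwise a.e. A short computation with the K\"ahler identities, using $\dbar s=0$ and $(D')^\ast\alpha=0$ (equivalently $\dbar(\Lambda_\omega\alpha)=0$), then gives $(D')^\ast(s\alpha)=0$. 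On the other hand, hypothesis~(2) and Theorem~\ref{thm_geq0_S(V)} yield $\sqrt{-1}\Theta_{h_\bV\otimes h_{\varphi_F}}(S(\bV)\otimes F)\geq\epsilon\,\sqrt{-1}\Theta_{h_{\varphi_L}}(L)\otimes{\rm Id}$, whence, by the vanishing just obtained, $\langle[\sqrt{-1}\Theta_{h_{\varphi_L}}(L),\Lambda_\omega]\alpha,\alpha\rangle\leq 0$ a.e.; splitting the curvature of $S(\bV)\otimes F\otimes L$ into that of $S(\bV)\otimes F$ plus that of $L$ and pairing with $s\alpha$ gives $\langle[\sqrt{-1}\Theta_{h_\bV\otimes h_{\varphi_F}h_{\varphi_L}}(S(\bV)\otimes F\otimes L),\Lambda_\omega](s\alpha),s\alpha\rangle\leq 0$ a.e. on $U\cap X^o$. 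Substituting $(D')^\ast(s\alpha)=0$ and this inequality into the Bochner--Kodaira--Nakano identity for $s\alpha$ gives $\|\dbar^\ast(s\alpha)\|^2\leq 0$, hence $\dbar^\ast(s\alpha)=0$ on $U\cap X^o$, and therefore globally since $X\setminus U$ is a proper analytic subset and $s\alpha$ already lies in the target $L^2$-space. Thus $s\alpha$ is harmonic in the target complex.

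\emph{Step 3: conclusion and the main obstacle.} Being harmonic, $s\alpha$ is orthogonal to $\overline{{\rm Im}\,\dbar_{\rm max}}$; but $s\alpha\in{\rm Im}\,\dbar_{\rm max}$, so $s\alpha=0$ as an $L^2$-form. Since $s\not\equiv 0$ and $X$ is irreducible, $\{s=0\}$ has measure zero, so $\alpha=0$ a.e. on $X^o$, i.e. $[\alpha]=0$, which is the asserted injectivity. The genuinely delicate point — and the one I expect to cost the most work — is not the algebra of Step~2 but the rigorous validity of the K\"ahler identities and the Bochner--Kodaira--Nakano identity over all of $X^o$, where the metric $ds^2$ degenerates along $X\setminus X^o$ and the weights $h_{\varphi_F},h_{\varphi_L}$ are singular along $X^o\setminus U$. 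This is handled by the usual device of a family of cut-off functions adapted to the \emph{complete} metric $ds^2$, combined with an exhaustion of $X^o$ and, where needed, a regularization of $\varphi_F,\varphi_L$ off $U$, exactly as in the injectivity theorems of Enoki, Fujino and Matsumura; the Nakano semi-positivity supplied by Theorem~\ref{thm_geq0_S(V)} is precisely what makes the error terms in this limiting procedure vanish.
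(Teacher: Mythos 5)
Your harmonic-theoretic core — identify both cohomology groups with spaces of $L^2$-harmonic $(n,q)$-forms via Theorem \ref{thm_main_local1}, coherence and finite-dimensionality, then run Bochner--Kodaira--Nakano twice (once on $\alpha$ to get $(D')^\ast\alpha=0$ and the pointwise vanishing of the curvature pairing, once on $s\alpha$ using hypothesis (2) and Theorem \ref{thm_geq0_S(V)} to get $\dbar^\ast(s\alpha)=0$, hence $s\alpha=0$ and $\alpha=0$) — is exactly the paper's argument, down to the use of $(D')^\ast(s\alpha)=sD'^\ast\alpha$ and the degree reason for $D'(s\alpha)=0$.

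The one substantive divergence is the choice of domain, and it is precisely where your proof has a gap. You take the complete K\"ahler metric on $X^o$ and then must confront the singularities of $h_{\varphi_F}$ and $h_{\varphi_L}$ along $X^o\setminus U$, which you propose to handle by ``regularization of $\varphi_F,\varphi_L$ off $U$, exactly as in the injectivity theorems of Enoki, Fujino and Matsumura.'' That regularization is not a routine device: it is the main technical content of the Fujino--Matsumura injectivity papers (equisingular approximation of the weights, control of harmonic forms under varying metrics, passage to the limit), and invoking it wholesale leaves the hardest step unproved. The paper avoids the issue entirely by applying Lemma \ref{lem_complete_Kahler_pseudoconvex} to the Zariski open set $V:=X^o\cap U$ rather than to $X^o$: one takes a complete K\"ahler metric on $V$, where $h_{\varphi_F}$, $h_{\varphi_L}$ and $h_\bV$ are all smooth, and Theorem \ref{thm_main_local1} still identifies $H^q(X,S(IC_X(\bV),\varphi_F)\otimes F)$ with the $L^2$-cohomology over $V$ (the multiplier $S$-sheaf does not care which Zariski open subset of $X_{\rm reg}$ one computes over, by Proposition \ref{prop_L2_extension_independent} and the local nature of the integrability condition). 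With that choice every metric entering the Bochner--Kodaira--Nakano identity is smooth on the whole domain of integration, completeness of $\omega$ on $V$ justifies the integration by parts $\langle\Delta_{D'}u,u\rangle=\|D'u\|^2+\|D'^\ast u\|^2$ via cut-offs, and no regularization is needed. If you replace $X^o$ by $X^o\cap U$ throughout your Steps 1--3, your argument closes without the deferred step.
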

\begin{proof}
	The proof is parallel to the arguments in \cite{Fujino2012}. 
	By virtue of Lemma \ref{lem_complete_Kahler_pseudoconvex} and Theorem  \ref{thm_main_local1}, there exists a complete K\"ahler metric $\omega$ on $V:=X^o\cap U$ such that the canonical maps
	\begin{align*}
	H^q(X,S(IC_X(\bV),\varphi_F)\otimes F)\to H^{n,q}_{(2)}(V,S(\bV)\otimes F;\omega,h_\bV\otimes h_{\varphi_F})
	\end{align*}
	and
	\begin{align*}
	H^q(X,S(IC_X(\bV),\varphi_F+\varphi_L)\otimes F\otimes L)\to H^{n,q}_{(2)}(V,S(\bV)\otimes F\otimes L;\omega,h_\bV\otimes h_{\varphi_F}\otimes h_{\varphi_L})
	\end{align*}
	are isomorphisms for every $q\geq0$.
	Since $S(IC_X(\bV),\varphi_F)$ and $S(IC_X(\bV),\varphi_F+\varphi_L)$ are coherent, $H^{n,q}_{(2)}(V,S(\bV)\otimes F;\omega,h_\bV\otimes h_{\varphi_F})$ and $H^{n,q}_{(2)}(V,S(\bV)\otimes F\otimes L;\omega,h_\bV\otimes h_{\varphi_F}\otimes h_{\varphi_L})$ are finite dimensional for each $q\geq0$. Thus there are isomorphisms
	\begin{align*}
	H^{n,q}_{(2)}(V,S(\bV)\otimes F;\omega,h_\bV\otimes h_{\varphi_F})\simeq \sH^{n,q}_{(2)}(S(\bV)\otimes F)
	\end{align*}
	and 
	\begin{align*}
	H^{n,q}_{(2)}(V,S(\bV)\otimes F\otimes L;\omega,h_\bV\otimes h_{\varphi_F}\otimes h_{\varphi_L})\simeq \sH^{n,q}_{(2)}(S(\bV)\otimes F\otimes L),
	\end{align*}
	where 
	\begin{align*}
\sH^{n,q}_{(2)}(S(\bV)\otimes F)=\{\alpha\in D^{n,q}_{\rm max}(V,S(\bV)\otimes F;\omega,h_\bV\otimes h_{\varphi_F})|\dbar\alpha=0,\dbar^\ast\alpha=0\}
	\end{align*}
	and
\begin{align*}
\sH^{n,q}_{(2)}(S(\bV)\otimes F\otimes L)=\{\alpha\in D^{n,q}_{\rm max}(V,S(\bV)\otimes F\otimes L;\omega,h_\bV\otimes h_{\varphi_F}\otimes h_{\varphi_L})|\dbar\alpha=0,\dbar^\ast\alpha=0\}.
\end{align*}	
We claim that the multiplication map $$\times s:\sH^{n,q}_{(2)}(S(\bV)\otimes F)\longrightarrow \sH^{n,q}_{(2)}(S(\bV)\otimes F\otimes L)$$
is well-defined.
If the claim is true, the theorem is obvious. Assume that $su=0$ in $\sH^{n,q}_{(2)}(S(\bV)\otimes F\otimes L)$. Since $s$ is holomorphic over $X$, the locus $\{s\neq 0\}$ is dense in $V$. Hence $u=0$ for $u \in \sH^{n,q}_{(2)}(S(\bV)\otimes F)$ since $u$ is smooth over $V$. This implies the desired injectivity. Thus it is sufficient to prove the above claim.

	Take an arbitrary $u\in\sH^{n,q}_{(2)}(S(\bV)\otimes F)$. Since $\sup|s|_{h_{\varphi_L}}<\infty$, $\|u\|<\infty$ and $\bar{\partial}u=0$, we obtain that 
	$$\|su\| \leq \sup|s|_{h_{\varphi_L}}\|u\| <\infty,\quad \bar{\partial}(su)=0.$$
	By the Bochner-Kodaria-Nakano identity,
	\begin{align*}
	\Delta_{\dbar}u=\Delta_{D'}u +[\sqrt{-1}\Theta_{h_\bV\otimes h_{\varphi_F}}(S(\bV)\otimes F),\Lambda]u,
	\end{align*}
	where $\Lambda$ is the adjoint of $\omega \wedge \cdot$ and $\nabla=D'+\dbar$ is the bidegree decomposition of the Chern connection $\nabla$ associated to $h_\bV\otimes h_{\varphi_F}$.
Since $u\in\sH^{n,q}_{(2)}(S(\bV)\otimes F)$,
	we have $\Delta_{\dbar} u=0$. So 
	\begin{align}\label{align_Bochner_ineq}
	\Delta_{D'}u+[\sqrt{-1}\Theta_{h_\bV\otimes h_{\varphi_F}}(S(\bV)\otimes F),\Lambda]u=0.
	\end{align}
By Assumption (1) and Theorem \ref{thm_geq0_S(V)}, we obtain that
	\begin{align*}
	\langle[\sqrt{-1}\Theta_{h_\bV\otimes h_{\varphi_F}}(S(\bV)\otimes F),\Lambda]u,u\rangle_{h_\bV\otimes h_{\varphi_F}}\geq 0,
	\end{align*}
	where $\langle,\rangle_{h_\bV\otimes h_{\varphi_F}}$ is the pointwise inner product with respect to $h_\bV\otimes h_{\varphi_F}$ and $\omega$. 
	Since $\omega$ is complete, we obtain that
	\begin{align}\label{align_ineq_laplace}
	\langle \Delta_{D'}u,u\rangle=\|D'u\|^2+\|D'^{*}u\|^2\geq0.
	\end{align}
	By combining (\ref{align_Bochner_ineq}) with (\ref{align_ineq_laplace}), we obtain that 
	$$\|D'u\|^2=\|D'^{*}u\|^2=\langle\sqrt{-1}\Theta_{h_\bV\otimes h_{\varphi_F}}(S(\bV)\otimes F)\Lambda u,u\rangle_{h_\bV\otimes h_{\varphi_F}}=0,$$
	which implies  that 
	\begin{align}\label{align_injectivity_1}
	D'^{*}u=0\textrm{ and }\langle\sqrt{-1}\Theta_{h_\bV\otimes h_{\varphi_F}}(S(\bV)\otimes F)\Lambda u,u\rangle_{h_\bV\otimes h_{\varphi_F}}=0.
	\end{align}	
	Since $u\in \sH^{n,q}_{(2)}(S(\bV)\otimes F)$ and $s$ is holomorphic over $X$, (\ref{align_injectivity_1}) yields
	\begin{align*}
	D'^{*}(su)=-\ast \bar{\partial} \ast (su)=sD'^{*}u=0
	\end{align*}
	where $\ast$ is the Hodge star operator with respect to $\omega$.
	Moreover, due to the degree, we have $D'(su)=0$. As a result, $\Delta_{D'}(su)=0$.
	Applying the Bochner-Kodaria-Nakano identity to $su$, we get that  $$0\leq \|\dbar^{*}(su)\|^2\leq\langle\Delta_{\dbar}(su),su \rangle=\langle\sqrt{-1}\Theta_{h_\bV\otimes h_{\varphi_F}\otimes h_{\varphi_L}}(S(\bV)\otimes F\otimes L)\Lambda su,su\rangle_{h_\bV\otimes h_{\varphi_F}\otimes h_{\varphi_L}}.$$
	
    By the assumptions and Theorem \ref{thm_geq0_S(V)} we obtain that 
	\begin{align*}
	0\leq&\langle\sqrt{-1}\Theta_{h_\bV\otimes h_{\varphi_F}\otimes h_{\varphi_L}}(S(\bV)\otimes F\otimes L)\Lambda su,su\rangle_{h_\bV\otimes h_{\varphi_F}\otimes h_{\varphi_L}}\\\nonumber
	\leq& (1+\frac{1}{\epsilon}) |s|^2_{h_{\varphi_L}}\langle\sqrt{-1}\Theta_{h_\bV\otimes h_{\varphi_F}}(S(\bV)\otimes F)\Lambda u,u\rangle_{h_\bV\otimes h_{\varphi_F}}= 0.
	\end{align*}	
	Thus $\dbar^{*}(su)=0$. Hence $\Delta_{\dbar}(su)=0$, equivalently, $su\in \sH^{n,q}_{(2)}(S(\bV)\otimes F\otimes L)$. The proof of the claim is finished. 
\end{proof}
Assume that $F$ is a semi-ample holomorphic line bundle such that $H^0(X,F^{\otimes k}\otimes L^{-1})\neq0$ for some $k>0$. Denote $B=\{s=0\}$ and take a nonzero divisor $D'\in |F^{\otimes k}\otimes L^{-1}|$. Since $F$ is semi-ample, there is a smooth reduced divisor $D$ so that $B+D+D'\in |F^{\otimes m}|$ where $m$ is large enough so that $S(IC_X(\bV),\frac{\varphi_{D+D'}}{m})=S(IC_X(\bV))$ (Proposition \ref{prop_approx_S}). Let $h_B$ be the singular hermitian metric on $L$ associated to $B$ and $h_{D+D'}$ the singular hermitian metric on $\sO_X(D+D')$ associated to the divisor $D+D'$ (Remark \ref{rmk_varphi_D}). Then $U=X_{\rm reg}\backslash(B\cup D\cup D')$, $h_F:=(h_Bh_{D+D'})^{\frac{1}{m}}$ and $h_B$ satisfy the conditions in Theorem \ref{thm_Enoki_Kollar_inj}. 
\begin{cor}[Esnault-Viehweg type injectivity theorem]\label{cor_wulei}
	Let $X$ be a compact K\"ahler space and let $\bV$ be an $\bR$-polarized variation of Hodge structure on some Zariski open subset $X^o\subset X_{\rm reg}$. Let $(L,h_L)$ be a holomorphic line bundle on $X$ and $F$ a semi-ample holomorphic line bundle such that $H^0(X,F^{\otimes k}\otimes L^{-1})\neq0$ for some $k>0$.
	Let $s\in \Gamma(X,L)$ be a nonzero holomorphic section. Then the multiplication homomorphism 
	$$\times s:H^q(X,S(IC_X(\bV))\otimes F)\to H^q(X,S(IC_X(\bV))\otimes F\otimes L)$$
	is injective for every integer $q\geq0$.
\end{cor}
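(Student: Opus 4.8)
The plan is to obtain the statement as an application of the Enoki--Koll\'ar type injectivity theorem (Theorem~\ref{thm_Enoki_Kollar_inj}) to the line bundles $F$ and $L$, equipped with the singular metrics introduced in the paragraph preceding the statement, and then to replace the multiplier $S$-sheaves that occur by $S(IC_X(\bV))$ itself by means of the approximation property (Proposition~\ref{prop_approx_S}) and the twist-functoriality of Lemma~\ref{lem_Kernel}.

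First I would record that the data $h_{\varphi_F}:=h_F=(h_Bh_{D+D'})^{1/m}$ on $F$, $h_{\varphi_L}:=h_B$ on $L$, and $U=X_{\mathrm{reg}}\setminus(B\cup D\cup D')$ fulfil the hypotheses of Theorem~\ref{thm_Enoki_Kollar_inj}: by Remark~\ref{rmk_varphi_D} the curvature currents $\sqrt{-1}\Theta_{h_F}(F)=\frac1m([B]+[D]+[D'])$ and $\sqrt{-1}\Theta_{h_B}(L)=[B]$ are semipositive and vanish on $U$, so conditions $(1)$ and $(2)$ hold (the latter with any $\epsilon>0$), while $|s|_{h_B}\equiv 1$ gives $\sup_X|s|_{h_{\varphi_L}}<\infty$, and $\varphi_F=\frac1m(\varphi_B+\varphi_{D+D'})$ is smooth on the Zariski open set $U$. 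Theorem~\ref{thm_Enoki_Kollar_inj} then provides, for every $q\ge 0$, an injection
$$\times s\colon\; H^q\big(X,S(IC_X(\bV),\varphi_F)\otimes F\big)\;\hookrightarrow\; H^q\big(X,S(IC_X(\bV),\varphi_F+\varphi_L)\otimes F\otimes L\big).$$
Since $\psi:=\varphi_B+\varphi_{D+D'}$ has generalized analytic singularities, Proposition~\ref{prop_approx_S} applied with vanishing first argument together with Theorem~\ref{cor_S_phi=0} yields $S(IC_X(\bV),\varphi_F)=S(IC_X(\bV),\tfrac1m\psi)=S(IC_X(\bV))$ as soon as $m$ is large, which identifies the source with $H^q(X,S(IC_X(\bV))\otimes F)$; similarly, applying Proposition~\ref{prop_approx_S} with first argument $\varphi_B$ gives $S(IC_X(\bV),\varphi_F+\varphi_L)=S(IC_X(\bV),\varphi_B)$ for $m\gg 0$.

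It then remains to identify the target with $H^q(X,S(IC_X(\bV))\otimes F\otimes L)$ and to check that the arrow just produced is the plain multiplication map of the statement; this is the step where I expect the real work. The idea is that $|s|_{h_B}\equiv 1$ makes multiplication by $s$ a fibrewise isometry on $X^o\setminus B$, so combining this with Lemma~\ref{lem_Kernel} and the $L^2$ Riemann extension theorem one compares $S(IC_X(\bV),\varphi_B)\otimes L$ with $S(IC_X(\bV))$ and tracks the line-bundle twists, the point being to exhibit the map of Theorem~\ref{thm_Enoki_Kollar_inj}, after these identifications, as the honest $\times s\colon H^q(S(IC_X(\bV))\otimes F)\to H^q(S(IC_X(\bV))\otimes F\otimes L)$, so that its injectivity transfers; here the semi-ampleness of $F$ (used to produce $D$ with $B+D+D'\in|F^{\otimes m}|$) and the effectivity of $F^{\otimes k}\otimes L^{-1}$ are exactly what makes the construction possible. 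A secondary, quantitative obstacle is to verify that a single $m$ can be taken large enough to make \emph{both} occurrences of the approximation property valid, uniformly in the auxiliary divisor $D$, so that the resulting statement is independent of these auxiliary choices.
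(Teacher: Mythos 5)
Your setup coincides with the paper's: the proof of this corollary in the paper consists precisely of the paragraph preceding it, which introduces $h_{\varphi_F}=(h_Bh_{D+D'})^{1/m}$, $h_{\varphi_L}=h_B$ and $U=X_{\rm reg}\setminus(B\cup D\cup D')$ and asserts that they satisfy the hypotheses of Theorem \ref{thm_Enoki_Kollar_inj}. Your verification of conditions (1) and (2) (both curvature currents vanish on $U$), of $\sup|s|_{h_B}=1$, and your identification of the source $S(IC_X(\bV),\varphi_F)=S(IC_X(\bV))$ via Proposition \ref{prop_approx_S} and Theorem \ref{cor_S_phi=0} are all correct and are exactly what the paper does. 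The uniformity issue you raise about $m$ is real but mild: $B$ and $D'$ are fixed, so Proposition \ref{prop_approx_S} gives a threshold depending only on them, while $D$ is a general reduced member of a free linear system and contributes nothing to $\sI(\tfrac1m\varphi_D+\psi_i)$ for any $m\geq 1$.

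The step you defer as ``the real work'' is, however, a genuine gap, and your own observation makes it visible. Because $|s|_{h_B}\equiv 1$, multiplication by $s$ is not just a fibrewise isometry: it is an isomorphism of sheaves $S(IC_X(\bV),\varphi_F)\otimes F\xrightarrow{\ \sim\ }S(IC_X(\bV),\varphi_F+\varphi_B)\otimes F\otimes L$ (any local section $\alpha$ of the target satisfies $\int|\alpha|^2e^{-\varphi_F-\varphi_B}<\infty$, and since $e^{-\varphi_F}$, the factors $\prod|z_j|^{2\alpha_{E_j}(\widetilde{v_i})}$ with exponents in $(-1,0]$, and the $\lambda_i$ are all bounded below, this forces $s\mid\alpha$ with $\alpha/s$ landing isometrically in the source). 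Hence, after your identifications, the map furnished by Theorem \ref{thm_Enoki_Kollar_inj} is an isomorphism and its injectivity is tautological; the entire content of the corollary is carried by the inclusion-induced map $H^q(X,S(IC_X(\bV),\varphi_F+\varphi_B)\otimes F\otimes L)\to H^q(X,S(IC_X(\bV))\otimes F\otimes L)$, whose injectivity is not a formal consequence of the sheaf inclusion (cohomology of a subsheaf need not inject) and is not supplied by the theorem as stated. Closing this requires an additional argument, e.g.\ rerunning the harmonic-form proof of Theorem \ref{thm_Enoki_Kollar_inj} with the target $L^2$-cohomology computed by a metric on $F\otimes L$ whose multiplier $S$-sheaf is the full $S(IC_X(\bV))\otimes F\otimes L$ (which forces a different, less singular choice of $h_{\varphi_L}$ and a re-examination of condition (2)), or a limiting argument in the weight. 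The paper is silent on exactly this point, so your proposal reproduces the published argument faithfully, including its weakest joint; but as written neither closes the corollary.
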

\section{Applications in the relative Fujita conjecture}
Some applications of the Nadel type vanishing theorem (Theorem \ref{thm_Nadel_vanishing}) are given on the separation of jets of adjoint bundles in the relative case. Before that we recall the singular metrics constructed by  Angehrn-Siu \cite{Siu1995} and Demailly \cite{Demailly2012}.
\begin{prop}[Angehrn-Siu \cite{Siu1995}]\label{prop_Siu}
	Let $X$ be a smooth projective algebraic variety and $L$ an ample line bundle on $X$. Assume that there is a rational number $\kappa>0$ such that $$L^k\cdot W\geq\left(\frac{1}{2}n(n+2r-1)+\kappa\right)^d$$
	for any irreducible subvariety $W$ of dimension $0\leq d\leq n$ in $X$. Let $x_1,\dots,x_r\in X$ and denote $m_0=\frac{1}{2}n(n+2r-1)$. Then there is a rational number $0<\epsilon<\kappa$ and a singular hermitian metric $h'$ on $L^{\otimes\frac{m_0+\epsilon}{m_0+\kappa}}$ with analytic singularities such that
	\begin{enumerate}
		\item $\sqrt{-1}\Theta_{h'}(L^{\otimes\frac{m_0+\epsilon}{m_0+\kappa}})\geq 0$;
		\item $x_1,\dots,x_r\in{\rm supp}\sO_X/\sI(h')$ and $x_1$ is an isolated point of ${\rm supp}\sO_X/\sI(h')$. Moreover there is a neighborhood $U$ of $x_1$ so that $h'$ is smooth on $U\backslash\{x_1\}$.
	\end{enumerate}
\end{prop}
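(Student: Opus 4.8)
The plan is to reproduce the Angehrn--Siu dimension-reduction construction from \cite{Siu1995}, keeping track of the distinguished point $x_1$ among the prescribed points $x_1,\dots,x_r$. Write $m_0=\frac12 n(n+2r-1)$. First I would use the numerical hypothesis $L^n\cdot X\ge(m_0+\kappa)^n$ together with an asymptotic Riemann--Roch estimate to produce, for $p\gg0$, a nonzero section $\sigma_0\in H^0(X,L^{\otimes p})$ vanishing at each $x_i$ to order $>pn$, so that the singular metric $h_0:=|\sigma_0|_{h_L}^{-2/p}h_L$ on $L$ satisfies $\sqrt{-1}\Theta_{h_0}(L)\ge0$ and the cosupport of $\sI(h_0)$ contains $x_1,\dots,x_r$. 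Letting $c_0>0$ be the first jumping coefficient of $h_0$ at $x_1$, the irreducible component $Z_1$ of ${\rm supp}(\sO_X/\sI(c_0h_0))$ through $x_1$ has dimension $d_1<n$.

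Then I would iterate the following reduction step. Suppose inductively that we have a singular metric $h_k$ on $L^{\otimes t_k}$ with $\sqrt{-1}\Theta_{h_k}(L^{\otimes t_k})\ge0$ such that, after normalization, $x_1$ lies in an irreducible component $Z_k$ of dimension $d_k$ of the cosupport of the multiplier ideal and $x_2,\dots,x_r$ also lie in that cosupport. Restricting $L$ to $Z_k$, the hypothesis $L^{d_k}\cdot Z_k\ge(m_0+\kappa)^{d_k}$ lets one build, by an Ohsawa--Takegoshi extension from $Z_k$ to $X$ of a suitable section on $Z_k$, a section of a controlled power of $L$ which is singular along $Z_k$, has multiplicity at $x_1$ large compared with its generic multiplicity along $Z_k$, and whose multiplier ideal is trivial off a proper subvariety of $Z_k$. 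Adding a small positive multiple of the associated weight to that of $h_k$ and passing to the next jumping number yields $h_{k+1}$ whose cosupport has, through $x_1$, a component $Z_{k+1}$ with $\dim Z_{k+1}<d_k$, while $x_2,\dots,x_r$ still lie in the cosupport. The amount of $L$ consumed at a step involving a subvariety of dimension $d$ is of order $d+O(r)$; summing along the strictly decreasing chain $n>d_1>d_2>\cdots$ of dimensions down to $0$ gives a total consumed weight $\le m_0+\epsilon$ with $\epsilon>0$ as small as desired, and I would fix such a rational $\epsilon<\kappa$. After at most $n$ steps the component through $x_1$ is $0$-dimensional, i.e. $x_1$ is isolated in ${\rm supp}(\sO_X/\sI(h'))$, where $h'$ is the resulting singular metric on $L^{\otimes\frac{m_0+\epsilon}{m_0+\kappa}}$; since each added weight is a plurisubharmonic function of the form $\log|\cdot|^2$ up to a constant, $\sqrt{-1}\Theta_{h'}(L^{\otimes\frac{m_0+\epsilon}{m_0+\kappa}})\ge0$, the metric $h'$ has analytic singularities because every section used is algebraic, and $h'$ is smooth on a punctured neighborhood of $x_1$ because $x_1$ is isolated in its cosupport.

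The hard part is the bookkeeping of the induction, which is the technical heart of \cite{Siu1995}: one must simultaneously ensure that at each stage enough positivity of $L$ survives on the intermediate subvariety $Z_k$ to produce the next section vanishing to the required order at $x_1$; prevent the cosupport from losing some of $x_2,\dots,x_r$ prematurely, which is precisely what forces the factor $2r$ in $m_0$; and control the accumulated weight tightly enough that it lands inside $L^{\otimes\frac{m_0+\epsilon}{m_0+\kappa}}$ rather than a larger multiple. I would follow the argument of \cite{Siu1995} essentially verbatim, the only modification being to distinguish $x_1$ as the isolated member of the cosupport; for the single-point case one may also consult \cite[Theorem 7.4]{Demailly2012}.
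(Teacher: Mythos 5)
The paper does not actually reprove this proposition: its proof is a one-line citation of statement $(*)_0$ on page 299 (proved in Lemma 9.2) of \cite{Siu1995}, which is precisely the result you are outlining. Your sketch of the dimension-reduction construction is a fair summary of that argument and, like the paper, ultimately defers to \cite{Siu1995} for the technical induction, so the two take essentially the same route.
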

\begin{proof}
	This is statement $(*)_0$ on page 299 in \cite{Siu1995} which is proved in Lemma 9.2 in loc. cit.
\end{proof}
\begin{prop}(Demailly \cite[Theorem 7.4]{Demailly2012})\label{prop_Demailly}
	Let $X$ be a smooth projective algebraic variety and $L$ an ample line bundle on $X$. Let $\omega$ be a K\"ahler form on $X$. Let $x_1,\dots,x_r\in X$ and let $s_1,\dots,s_r\in\bN$ be non-negative integers. Denote $m_0=2+\sum_{1\leq j\leq r}\binom{3n+2s_j-1}{n}$. Then there is a singular hermitian metric $h'$ on $\omega_X\otimes L^{\otimes m_0}$ with analytic singularities such that
	\begin{enumerate}
		\item $\sqrt{-1}\Theta_{h'}(\omega_X\otimes L^{\otimes m_0})\geq\epsilon\omega$ for some $\epsilon>0$.
		\item The singular loci of $h'$ is isolated and the weight $\varphi$ of $h'$ ($h'=e^{-\varphi} h_0$ for some smooth hermitian metric $h_0$) satisfies that 
		$$\nu(\varphi,x_j)\geq n+s_j,\quad j=1,\dots,r.$$
	\end{enumerate}
\end{prop}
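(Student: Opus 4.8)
The statement is essentially \cite[Theorem 7.4]{Demailly2012}, so my plan is to reproduce the construction rather than invent a new one; the argument is the Angehrn--Siu method of building a psh weight whose singularities are concentrated at the marked points with controlled Lelong numbers. First I would fix a smooth metric $h_L$ on $L$ with $\sqrt{-1}\Theta_{h_L}(L)>0$ and, for each $j$, use the Riemann--Roch estimate $\dim H^0(X,L^{\otimes k})\sim k^n/n!$ together with Serre vanishing (so that the restriction map to jets is surjective for $k\gg0$) to produce sections $\sigma_j\in H^0(X,L^{\otimes a_j})$ whose common zero scheme has multiplicity at least $3n+2s_j$ at $x_j$. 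Imposing such vanishing at $x_j$ costs $\binom{3n+2s_j-1}{n}$ linear conditions, which is the source of the bound $m_0=2+\sum_j\binom{3n+2s_j-1}{n}$: one power of $L$ pays for each block of jet conditions, one extra power buys a strictly positive curvature remainder, and the $\omega_X$ twist together with a leftover unit of $L$ accounts for the shift from $s_j$ to $n+s_j$ in the Lelong numbers.

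Next I would set $\varphi:=\log\big(\sum_j|\sigma_j|^2_{h_L^{\otimes a_j}}\big)+\psi$ for a suitable smooth function $\psi$, and define $h':=e^{-\varphi}h_0$ on $\omega_X\otimes L^{\otimes m_0}$ with $h_0$ a smooth reference metric; by construction $\varphi$ has analytic singularities in the sense of Definition \ref{defn_generalized_analytic_singularity}. From $\sqrt{-1}\Theta_{h'}(\omega_X\otimes L^{\otimes m_0})=\sqrt{-1}\ddbar\varphi+\sqrt{-1}\Theta_{h_0}(\omega_X\otimes L^{\otimes m_0})$, the inequality $\sqrt{-1}\ddbar\log\sum_j|\sigma_j|^2\geq -a\,\sqrt{-1}\Theta_{h_L}(L)$ for a controlled $a<m_0$, and the ampleness of $L$, one gets $\sqrt{-1}\Theta_{h'}(\omega_X\otimes L^{\otimes m_0})\geq\epsilon\omega$ for some $\epsilon>0$ after rescaling the $\sigma_j$ by a constant slightly less than $1$. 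The Lelong number of $\log\sum_j|\sigma_j|^2$ at $x_j$ is at least the local multiplicity of the common zero, so $\nu(\varphi,x_j)\geq n+s_j$ once the normalization is taken into account; this is exactly what makes $\sI(h')_{x_j}\subset m^{s_j+1}_{X,x_j}$ by the Skoda-type estimate. Finally, choosing the $\sigma_j$ generically and taking a generic member of a base-point-free sublinear system away from $\{x_1,\dots,x_r\}$ forces the non-integrability locus of $e^{-\varphi}$ to be discrete near each $x_j$, so the singular locus of $h'$ is isolated.

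The delicate point — and the reason the result is quoted rather than reproved — is the \emph{simultaneous} optimization: one must prescribe the vanishing at all the $x_j$ at once, retain a strictly positive curvature remainder, and keep $\sI(\varphi)$ concentrated at the $x_j$ with the right colength, all under the single numerical bound on $m_0$. In \cite[Theorem 7.4]{Demailly2012} this is handled by the inductive ``cutting down the base locus'' scheme of Angehrn--Siu \cite{Siu1995}: one successively restricts to the minimal-dimensional components of the locus where $\varphi$ fails to be integrable, gaining positivity at each stage, until that locus is a finite set containing the $x_j$ as isolated points. I would invoke that argument essentially verbatim, the only change being cosmetic, since the statement we need is precisely Demailly's.
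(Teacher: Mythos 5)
The paper offers no proof of this proposition at all: it is quoted verbatim from \cite[Theorem 7.4]{Demailly2012}, exactly as Proposition \ref{prop_Siu} is quoted from Angehrn--Siu, so your bottom line (defer to Demailly) coincides with what the paper does, and a one-line citation would have sufficed. The difficulty is that the sketch you give in support of the citation is not a correct account of Demailly's construction, and read as a proof it has concrete gaps.

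First, the combinatorics are inconsistent: imposing vanishing to order $\geq p$ at a smooth point of an $n$-fold costs $\binom{n+p-1}{n}$ linear conditions, so the coefficient $\binom{3n+2s_j-1}{n}$ in $m_0$ corresponds to vanishing order $p=2(n+s_j)$, not $3n+2s_j$ as you write; it is this order, together with the normalization of the weight, that yields $\nu(\varphi,x_j)\geq n+s_j$, and the shift from $s_j$ to $n+s_j$ is the Skoda bound $\nu(\varphi,x)\geq n+s\Rightarrow\sI(\varphi)_x\subset m_{X,x}^{s+1}$ needed later in Theorem \ref{thm_sepjets} --- it has nothing to do with the $\omega_X$ twist. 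Second, the dimension count does not get off the ground: for an arbitrary ample $L$ there is no a priori lower bound on $h^0(X,L^{\otimes a})$ for small $a$ (it may even vanish), so ``one power of $L$ pays for each block of jet conditions'' is unjustified; producing the required sections under the stated numerical hypothesis is precisely the nontrivial content of Demailly's argument, not a consequence of Riemann--Roch plus Serre vanishing. Third, the weight $\varphi=\log\bigl(\sum_j|\sigma_j|^2\bigr)$, with the sum running over the point index, is bounded --- hence has Lelong number $0$ --- at $x_1$ as soon as some $\sigma_i$ with $i\neq1$ is nonvanishing there; to obtain the prescribed Lelong numbers at all the $x_j$ simultaneously one needs sections vanishing to the prescribed order at every $x_j$ at once, which is exactly what the total $\sum_j\binom{3n+2s_j-1}{n}$ of conditions in $m_0$ is counting. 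Finally, the inductive cutting-down-the-base-locus scheme of Angehrn--Siu is what drives Proposition \ref{prop_Siu} and its quadratic bound $\frac12 n(n+2r-1)$; \cite[Theorem 7.4]{Demailly2012}, with its much larger binomial bound, rests on a more direct jet-counting argument, so attributing it to the Angehrn--Siu induction conflates the two results that the paper deliberately keeps separate. Since the paper treats the statement as a black box, either replace the sketch by the bare citation or correct it along the lines above.
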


\begin{thm}\label{thm_Hodge_Siu}
	Let $X$ be a projective $n$-fold and $D$ a (possibly empty) normal crossing divisor on $X$. Assume that $\bV$ is an $\bR$-polarized variation of Hodge structure on $X^o:=X\backslash D$. Let $L$ be an ample line bundle on $X$. Assume that there is a positive number $\kappa>0$ such that $$L^k\cdot W\geq\left(\frac{1}{2}n(n+2r-1)+\kappa\right)^d$$
	for any irreducible subvariety $W$ of dimension $0\leq d\leq n$ in $X$. Then the global holomorphic sections of $S(IC_X(\bV))\otimes L$ separate any set of $r$ distinct points $x_1,\dots, x_r\in X$, i.e. there is a surjective map
	$$H^0(X,S(IC_X(\bV))\otimes L)\to \bigoplus_{1\leq k\leq r}S(IC_X(\bV))\otimes L\otimes \sO_{X,x_k}/m_{X,x_k}.$$
\end{thm}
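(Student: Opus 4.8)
The plan is to reduce to a one-point separation statement and then apply the Nadel vanishing theorem (Theorem \ref{thm_Nadel_vanishing}) to a well-chosen singular metric on $L$. Since the numerical hypothesis $L^k\cdot W\ge(\tfrac12 n(n+2r-1)+\kappa)^d$ is symmetric in $x_1,\dots,x_r$ and persists after replacing $\kappa$ by a slightly smaller rational number, it suffices to produce, for each $k$, a global section of $S(IC_X(\bV))\otimes L$ with an arbitrarily prescribed value at $x_k$ and vanishing at all $x_j$, $j\ne k$; the sum of such sections gives the asserted surjection. Fix $k$ and relabel so that $k=1$.

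First I would apply Proposition \ref{prop_Siu} to the $r$ points $x_1,\dots,x_r$ to obtain a rational $0<\epsilon<\kappa$ and a singular metric $h'$ with analytic singularities on the $\bQ$-line bundle $L^{\otimes t}$, where $t=\frac{m_0+\epsilon}{m_0+\kappa}\in(0,1)$ and $m_0=\tfrac12 n(n+2r-1)$, such that $\sqrt{-1}\Theta_{h'}(L^{\otimes t})\ge0$, every $x_j\in\mathrm{supp}(\sO_X/\sI(h'))$, the point $x_1$ is isolated in this support, and $h'$ is smooth on $U\setminus\{x_1\}$ for a neighbourhood $U$ of $x_1$. Fix a smooth metric $h_{L,0}$ on $L$ whose curvature $\omega$ is a K\"ahler metric, and set $h:=h'\otimes h_{L,0}^{\otimes(1-t)}$ on $L=L^{\otimes t}\otimes L^{\otimes(1-t)}$. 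Writing $h=e^{-\varphi}h_{L,0}$, the function $\varphi$ is quasi-psh with analytic singularities, $\sI(\varphi)=\sI(h')$ (so $\sI(\varphi)_{x_j}\subseteq m_{X,x_j}$ for every $j$), and $\sqrt{-1}\Theta_h(L)=\sqrt{-1}\Theta_{h'}(L^{\otimes t})+(1-t)\omega\ge(1-t)\omega>0$.

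Next, $X$ being a smooth projective variety is in particular a weakly pseudoconvex K\"ahler space, so Theorem \ref{thm_Nadel_vanishing} applied to $(L,h_\varphi)$ gives $H^1(X,S(IC_X(\bV),\varphi)\otimes L)=0$. Tensoring the inclusion $S(IC_X(\bV),\varphi)\hookrightarrow S(IC_X(\bV))$ of Proposition \ref{prop_S_coherent} by the line bundle $L$ yields a short exact sequence $0\to S(IC_X(\bV),\varphi)\otimes L\to S(IC_X(\bV))\otimes L\to\sQ\to0$ with $\sQ$ coherent, hence a surjection $H^0(X,S(IC_X(\bV))\otimes L)\twoheadrightarrow H^0(X,\sQ)$. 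Away from $\{\varphi=-\infty\}$ (which meets $U$ in at most $\{x_1\}$, since $\varphi$ has analytic singularities and $h'$ is smooth on $U\setminus\{x_1\}$) the weight $\varphi$ is locally bounded, so Lemma \ref{lem_S_change_varphi_bounded} together with Theorem \ref{cor_S_phi=0} gives $S(IC_X(\bV),\varphi)=S(IC_X(\bV),0)=S(IC_X(\bV))$ there, i.e.\ $\sQ=0$; and $\sQ_{x_1}\ne0$ by the stalk inclusion established below. Therefore $x_1$ is an isolated point of $\mathrm{supp}\,\sQ$, so $\sQ=\sQ_{x_1}\oplus\sQ'$ with $\sQ'$ supported away from $x_1$, and $H^0(X,\sQ)=\sQ_{x_1}\oplus H^0(X,\sQ')$.

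It remains to prove $S(IC_X(\bV),\varphi)_{x_j}\subseteq m_{X,x_j}\,S(IC_X(\bV))_{x_j}$ for every $j$, using that $S(IC_X(\bV))$ is locally free (here $X$ is smooth, $D$ normal crossing, $R(IC_X(\bV))$ being a subbundle of $\widetilde{\cV}_{-1}$). If $x_j\in X^o$, Proposition \ref{prop_key_est} with empty boundary divisor gives $S(IC_X(\bV),\varphi)_{x_j}\simeq S(IC_X(\bV))_{x_j}\otimes\sI(\varphi)_{x_j}$, and $\sI(\varphi)_{x_j}\subseteq m_{X,x_j}$. If $x_j\in D$, take a polydisc chart with $D=\{z_1\cdots z_\ell=0\}$ and an $L^2$-adapted frame $\widetilde{v_1},\dots,\widetilde{v_m}$ as in Proposition \ref{prop_adapted_frame}; Proposition \ref{prop_key_est} gives $S(IC_X(\bV),\varphi)_{x_j}\simeq\omega_{X,x_j}\otimes\bigoplus_i\sI(\varphi+\psi_i)_{x_j}\widetilde{v_i}$, where $\psi_i=-\sum_l 2\alpha_{E_l}(\widetilde{v_i})\log|z_l|\ge0$ because $\alpha_{E_l}(\widetilde{v_i})\in(-1,0]$, while Theorem \ref{cor_S_phi=0} identifies $S(IC_X(\bV))_{x_j}=\omega_{X,x_j}\otimes\bigoplus_i\sO_{X,x_j}\widetilde{v_i}$ (the $\psi_i$ being locally integrable). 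Monotonicity of multiplier ideals then gives $\sI(\varphi+\psi_i)\subseteq\sI(\varphi)\subseteq m_{X,x_j}$, whence the inclusion. To conclude, given a target vector $v$ in the fibre $(S(IC_X(\bV))\otimes L)_{x_1}/m_{X,x_1}(\cdots)$, lift it to $\sQ_{x_1}$ (possible by the inclusion at $j=1$), view $(v,0)\in\sQ_{x_1}\oplus H^0(X,\sQ')=H^0(X,\sQ)$, and pull it back to a section $s\in H^0(X,S(IC_X(\bV))\otimes L)$: then $s$ takes the value $v$ at $x_1$, while for $j\ne1$ its germ lies in $S(IC_X(\bV),\varphi)_{x_j}\otimes L\subseteq m_{X,x_j}(S(IC_X(\bV))\otimes L)_{x_j}$, so $s$ vanishes at $x_j$. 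Summing over $k$ finishes the proof. The main difficulty is the local analysis near $D$: there $S(IC_X(\bV),\varphi)$ is not simply $S(IC_X(\bV))\otimes\sI(\varphi)$, and one must use the decomposition of Proposition \ref{prop_key_est} together with the sign condition $\alpha_{E_l}(\widetilde{v_i})\le0$ — which makes the correction weights $\psi_i$ non-negative, so they only shrink the relevant multiplier ideals — as well as Theorem \ref{cor_S_phi=0} to compute $S(IC_X(\bV))$ locally.
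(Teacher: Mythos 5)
Your proof is correct and follows the same overall strategy as the paper (the Angehrn--Siu metric of Proposition \ref{prop_Siu}, Nadel vanishing for the multiplier $S$-sheaf, and the local analysis of stalks via the $L^2$-adapted frame and Proposition \ref{prop_key_est}), but it diverges at one structural point. The paper introduces an auxiliary quasi-psh weight $\varphi'$ that is smooth away from $x_2,\dots,x_r$, non-integrable there, and dominates $\varphi$, and then runs the exact sequence $0\to S(IC_X(\bV),\varphi)\to S(IC_X(\bV),\varphi')\to\cdot\to0$; the vanishing of the produced section at $x_2,\dots,x_r$ comes from the non-integrability of $e^{-\varphi'}$ at those points. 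You instead work directly with $0\to S(IC_X(\bV),\varphi)\to S(IC_X(\bV))\to\sQ\to0$, split off the skyscraper summand of $\sQ$ at the isolated point $x_1$ of its support, lift $(v,0)$, and obtain the vanishing at $x_j$, $j\neq1$, from the stalk inclusion $S(IC_X(\bV),\varphi)_{x_j}\subseteq m_{X,x_j}S(IC_X(\bV))_{x_j}$. This is a legitimate simplification: it dispenses with the construction of $\varphi'$ entirely, at the cost of having to verify the stalk inclusion at every $x_j$ (which you do, correctly handling the case $x_j\in D$ via Proposition \ref{prop_key_est} and Theorem \ref{cor_S_phi=0}).

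One point needs correcting, though it does not break the argument: the sign of $\psi_i$. With $\alpha_{E_l}(\widetilde{v_i})\in(-1,0]$ and $\log|z_l|\le0$ near the origin, one has $\psi_i=-\sum_l2\alpha_{E_l}(\widetilde{v_i})\log|z_l|\le0$, not $\ge0$ as you claim. The inclusion you need, $\sI(\varphi+\psi_i)\subseteq\sI(\varphi)$, holds precisely \emph{because} $\psi_i\le0$ (adding $\psi_i$ makes the weight more singular, so the multiplier ideal shrinks); with $\psi_i\ge0$ monotonicity would give the opposite inclusion $\sI(\varphi)\subseteq\sI(\varphi+\psi_i)$ and your chain $\sI(\varphi+\psi_i)\subseteq\sI(\varphi)\subseteq m_{X,x_j}$ would fail. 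The facts you use elsewhere --- that $e^{-\psi_i}=\prod_l|z_l|^{2\alpha_{E_l}(\widetilde{v_i})}$ is locally integrable because $2\alpha_{E_l}(\widetilde{v_i})>-2$ --- are unaffected. So the proof stands once the sign and the accompanying justification are fixed.
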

\begin{proof}
	By induction on $r$, the canonical morphism
	$$H^0(X,S(IC_X(\bV))\otimes L)\to \bigoplus_{2\leq k\leq r}S(IC_X(\bV))\otimes L\otimes \sO_{X,x_k}/m_{X,x_k}$$
	is surjective. It is therefore sufficient to show that, for every $v\in S(IC_X(\bV))\otimes L\otimes \sO_{X,x_1}/m_{X,x_1}$, there is a section $s_1\in H^0(X,S(IC_X(\bV))\otimes L)$ such that $s_1(x_1)=v$ and $s_1(x_k)=0$, $k=2,\dots,r$.
	
	By Proposition \ref{prop_Siu}, there is a singular hermitian metric $h'$ on $L^{\otimes\frac{m_0+\epsilon}{m_0+\kappa}}$ so that
	\begin{enumerate}
		\item $h'$ has analytic singularities;
		\item $\sqrt{-1}\Theta_{h'}(L^{\otimes\frac{m_0+\epsilon}{m_0+\kappa}})\geq 0$;
		\item $x_1,\dots,x_r\in{\rm supp}\sO_X/\sI(h')$ and $x_1$ is an isolated point of ${\rm supp}\sO_X/\sI(h')$.
	\end{enumerate}
	Let $h''$ be a smooth hermitian metric on $L^{\otimes\frac{\kappa-\epsilon}{m_0+\kappa}}$ such that $\sqrt{-1}\Theta_{h''}(L^{\otimes\frac{\kappa-\epsilon}{m_0+\kappa}})>0$. Let $h=h'h''$. Then
	\begin{align}\label{align_Siu_curvature_est}
	\sqrt{-1}\Theta_{h}(L)\geq\sqrt{-1}\Theta_{h''}(L^{\otimes\frac{\kappa-\epsilon}{m_0+\kappa}})>0.
	\end{align}
	Let $\varphi$ be the weight of $h$, i.e. $h=e^{-\varphi}h_0$ for some smooth hermitian metric $h_0$. Let $\varphi'$ be a quasi-psh function so that 
	\begin{enumerate}
		\item $\varphi'$ is smooth on $X\backslash\{x_2,\dots,x_r\}$.
		\item $e^{-\varphi'}$ is not locally integrable at any of $x_2,\cdots, x_r$.
		\item $\varphi'\geq\varphi+C$ for some $C\in\bR$.
	\end{enumerate}
    Such $\varphi'$ can be constructed as follows. Since $\varphi$ has analytic singularities, we assume that there is an open subset $U_1$ such that $\overline{U_1}\cap\{x_1,\dots,x_r\}=\{x_1\}$ and
    $\varphi=a\log(\sum_{j=1}^s|g_j|^2)+\lambda$ for some holomorphic functions $g_1,\dots,g_s$ on $U_1$, a constant $a>0$ and a bounded function $\lambda$. Since $x_1,\dots,x_r\in{\rm supp}\sO_X/\sI(h)$, $\{x_1\}\subset \{g_1=\cdots=g_s=0\}$. Let $\varphi'_1=a\log(\sum_{j=1}^l|g_j|^2)+\lambda$ where $g_{s+1},\dots,g_l$ are holomorphic functions such that $\{g_1=\cdots=g_l=0\}=\emptyset$. Let $U'$ be an open neighborhood of $\{x_2,\dots,x_r\}$ such that $\overline{U_1}\cap\overline{U'}=\emptyset$. Choose a smooth extension $\varphi'\in C^{\infty}(X\backslash\{x_2,\dots,x_r\})$ of $\varphi'_1\in C^\infty(U_1)$ and $\varphi\in C^\infty(U'\backslash\{x_2,\dots,x_r\})$. Then $\varphi'$ is the desired function.
    
    Consider the short exact sequence of sheaves
	\begin{align*}
	0\to S(IC_X(\bV),\varphi)\otimes L\to S(IC_X(\bV),\varphi')\otimes L\to S(IC_X(\bV),\varphi')/S(IC_X(\bV),\varphi)\otimes L\to 0.
	\end{align*}
	Taking the cohomologies we obtain the exact sequence
	\begin{align*}
	H^0(X,S(IC_X(\bV),\varphi')\otimes L)\to H^0(X,S(IC_X(\bV),\varphi')/S(IC_X(\bV),\varphi)\otimes L)\to H^1(X,S(IC_X(\bV),\varphi)\otimes L).
	\end{align*}
	By (\ref{align_Siu_curvature_est}) and Theorem \ref{thm_Nadel_vanishing} we have
	\begin{align*}
	H^1(X,S(IC_X(\bV),\varphi)\otimes L)=0.
	\end{align*}
	So the canonical map
	\begin{align}\label{align_siu_surject1}
	H^0(X,S(IC_X(\bV),\varphi')\otimes L)\to H^0(X,S(IC_X(\bV),\varphi')/S(IC_X(\bV),\varphi)\otimes L)
	\end{align}
	is surjective.
	
	Now let us investigate the structure of $S(IC_X(\bV),\varphi')/S(IC_X(\bV),\varphi)$ at $x_1$. Let $z_1,\dots,z_n$ be holomorphic coordinates centered at $x_1$ such that $D=\{z_1\cdots z_s=0\}$. Let $U\subset X$ denote the domain of the coordinate and denote $D_i=\{z_i=0\}$, $\forall i=1,\dots,s$. Let $\widetilde{v_1}, \dots, \widetilde{v_m}$ be an $L^2$-adapted frame of $R(IC_X(\bV))$ as in Proposition \ref{prop_adapted_frame}. Denote 
	$$\psi_i:=-\sum_{j=1}^s\alpha_{E_j}(\widetilde{v_i})\log|z_j|^2.$$
	By Proposition \ref{prop_key_est}, we see that 
	\begin{align*}
	S(IC_{X}(\bV),\varphi)|_U\simeq \omega_{U}\otimes\bigoplus_{i=1}^m\sI(\varphi+\psi_i)|_U\widetilde{v_i}
	\end{align*}
	and
	\begin{align*}
	S(IC_{X}(\bV),\varphi')|_U\simeq \omega_{U}\otimes\bigoplus_{i=1}^m\sI(\varphi'+\psi_i)|_U\widetilde{v_i}.
	\end{align*}
	Thus 
	\begin{align}\label{align_siu_cokernel}
	S(IC_{X}(\bV),\varphi')/S(IC_{X}(\bV),\varphi)|_U\simeq \omega_{U}\otimes\bigoplus_{i=1}^m\sI(\varphi'+\psi_i)/\sI(\varphi+\psi_i)|_U\widetilde{v_i}.
	\end{align}
	After a possible shrinking of $U$ we assume that $\varphi$ is smooth on $U\backslash\{x_1\}$. Recall that 
	$\alpha_{E_j}(\widetilde{v_i})\in(-1,0]$
	for every $j=1,\cdots,s$ in Proposition \ref{prop_adapted_frame}. Then $e^{-\psi_1}, \dots, e^{-\psi_m}$ are locally integrable on $U$. This implies that
	$$\sI(\varphi+\psi_i)_x\simeq\sO_{U,x},\quad i=1,\dots,m,\quad x\in U\backslash\{x_1\}.$$
	As a consequence
	\begin{align}\label{align_siu_supp1}
	U\cap{\rm supp}\sI(\varphi'+\psi_i)/\sI(\varphi+\psi_i)\subset\{x_1\}.
	\end{align}
	Since $\varphi'$ is smooth at $x_1$ and $e^{-\psi_1}, \dots, e^{-\psi_m}$ are locally integrable on $U$, one gets that 
	\begin{align*}
	\sI(\varphi'+\psi_i)_{x_1}\simeq \sO_{U,x_1},\quad i=1,\dots,m.
	\end{align*}
	Since $x_1\in{\rm supp}\sO_U/\sI(\varphi)$, one sees that
	\begin{align}\label{align_siu_supp3}
	\sI(\varphi+\psi_i)_{x_1}\subset \sI(\varphi)_{x_1}\subset m_{U,x_1},\quad i=1,\dots,m.
	\end{align}
	
	By combining (\ref{align_siu_cokernel}) with (\ref{align_siu_supp1}) and (\ref{align_siu_supp3}), we obtain a canonical surjective map
	$$H^0(X,S(IC_{X}(\bV),\varphi')/S(IC_{X}(\bV),\varphi)\otimes L)\to S(IC_X(\bV))\otimes L\otimes\sO_{X,x_1}/m_{X,x_1}.$$
	Since (\ref{align_siu_surject1}) is surjective, we get a surjective map
	$$H^0(X,S(IC_{X}(\bV),\varphi')\otimes L)\to S(IC_X(\bV))\otimes L\otimes\sO_{X,x_1}/m_{X,x_1}.$$
	For every $v\in S(IC_X(\bV))\otimes L\otimes \sO_{X,x_1}/m_{X,x_1}$, there is a section $$s_1\in H^0(X,S(IC_{X}(\bV),\varphi')\otimes L)$$ such that $s_1(x_1)=v$. Since $\varphi'$ is not locally integrable at any of $x_2,\dots,x_r$, we know  that $s_1(x_i)=0$, $i=2,\dots,r$.
    This proves the theorem.
\end{proof}
\begin{thm}\label{thm_sepjets}
	Let $X$ be a smooth projective algebraic variety and $D$ a (possibly empty) normal crossing divisor of $X$. Let $\bV$ be an $\bR$-polarized variation of Hodge structure on $X^o:=X\backslash D$. Let $L$ be an ample line bundle and $G$ a nef line bundle on $X$. Then $S(IC_X(\bV))\otimes \omega_X\otimes L^{\otimes m}\otimes G$ generates simultaneous jets of order $s_1,\dots, s_r\in \bN$ at arbitrary points $x_1,\dots, x_r\in X$, i.e. there is a surjective map
	$$H^0(X,S(IC_X(\bV))\otimes \omega_X\otimes L^{\otimes m}\otimes G)\to \bigoplus_{1\leq j\leq r}S(IC_X(\bV))\otimes \omega_X\otimes L^{\otimes m}\otimes G\otimes \sO_{X,x_k}/m^{s_k+1}_{X,x_k},$$
	provided that $m\geq 2+\sum_{1\leq k\leq r}\binom{3n+2s_k-1}{n}$. In particular, $S(IC_X(\bV))\otimes \omega_X\otimes L^{\otimes m}\otimes G$ is globally generated for $m\geq 2+\binom{3n-1}{n}$.
\end{thm}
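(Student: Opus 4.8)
The plan is to run the proof of Theorem~\ref{thm_Hodge_Siu} with Demailly's metric of Proposition~\ref{prop_Demailly} in place of the Angehrn--Siu metric of Proposition~\ref{prop_Siu}, and with separation of $s_k$-jets in place of separation of points. Write $m_0:=2+\sum_{1\le k\le r}\binom{3n+2s_k-1}{n}$, set $M:=\omega_X\otimes L^{\otimes m}\otimes G$ so that the sheaf in question is $E:=S(IC_X(\bV))\otimes M$, and note that since $X$ is smooth and $D$ is normal crossing the log-smooth case of \S\ref{subsection_Hodge_module} makes $S(IC_X(\bV))$ locally free; hence $E$ is a vector bundle and $E\otimes\sO_{X,x_k}/m_{X,x_k}^{s_k+1}=E_{x_k}/m_{X,x_k}^{s_k+1}E_{x_k}$. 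The final sentence of the theorem is just the case $r=1$, $s_1=0$ applied at every point, so it is enough to establish the jet-separation statement.

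First I would manufacture a singular hermitian metric on $M$. Proposition~\ref{prop_Demailly}, applied to $x_1,\dots,x_r$ and $s_1,\dots,s_r$, produces a singular metric with analytic singularities on $\omega_X\otimes L^{\otimes m_0}$ whose curvature is $\ge\epsilon_0\omega$ for some $\epsilon_0>0$ and some K\"ahler form $\omega$, whose singular locus is $0$-dimensional, hence a finite set $Z\supset\{x_1,\dots,x_r\}$ because $X$ is compact, and whose weight has Lelong number $\ge n+s_k$ at $x_k$. Tensoring with a smooth positively curved metric on $L^{\otimes(m-m_0)}$ (the trivial line bundle when $m=m_0$) and, since $G$ is nef, with a smooth metric on $G$ of curvature $\ge-\delta\omega$ for a small enough $\delta>0$, produces a singular metric $h_\varphi=e^{-\varphi}h_0$ on $M$ whose weight $\varphi$ still has analytic singularities, still has singular locus $Z$, still satisfies $\nu(\varphi,x_k)\ge n+s_k$, and now has $\sqrt{-1}\Theta_{h_\varphi}(M)\ge c\,\omega>0$ for some $c>0$.

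The homological step is then formal. By Proposition~\ref{prop_S_coherent} and Theorem~\ref{cor_S_phi=0} there is an exact sequence
\[
0\to S(IC_X(\bV),\varphi)\otimes M\to E\to \sQ\otimes M\to 0,\qquad \sQ:=S(IC_X(\bV))/S(IC_X(\bV),\varphi),
\]
and Theorem~\ref{thm_Nadel_vanishing} (applicable since $X$ is projective K\"ahler and $\sqrt{-1}\Theta_{h_\varphi}(M)\ge c\,\omega$) gives $H^1(X,S(IC_X(\bV),\varphi)\otimes M)=0$, whence $H^0(X,E)\twoheadrightarrow H^0(X,\sQ\otimes M)$. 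Once we know that $\sQ$ is supported on the finite set $Z$ and that the natural map $(\sQ\otimes M)_{x_k}\to E_{x_k}/m_{X,x_k}^{s_k+1}E_{x_k}$ is onto, we obtain $H^0(X,\sQ\otimes M)=\bigoplus_{z\in Z}(\sQ\otimes M)_z\twoheadrightarrow\bigoplus_k E_{x_k}/m_{X,x_k}^{s_k+1}E_{x_k}$; composing with the previous surjection and checking (formally) that the composite is the jet-evaluation map completes the argument.

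The main work — and the place I expect difficulty — is the local description of $\sQ$ via Proposition~\ref{prop_key_est}. Away from $D\cup Z$ one has $S(IC_X(\bV),\varphi)=S(IC_X(\bV))$ trivially; along $D$ but off $Z$ the weight $\varphi$ is locally bounded, and Proposition~\ref{prop_key_est} together with Lemma~\ref{lem_integral} gives the same equality because the exponents $\alpha_{E_j}(\widetilde{v_i})\in(-1,0]$ keep $e^{-\psi_i}$ integrable, where $\psi_i:=-\sum_j 2\alpha_{E_j}(\widetilde{v_i})\log|z_j|$. Hence ${\rm Supp}\,\sQ\subset Z$ is finite. At $x_k$, choosing coordinates with $D=\{z_1\cdots z_s=0\}$ and an $L^2$-adapted frame $\widetilde{v_1},\dots,\widetilde{v_N}$ as in Proposition~\ref{prop_adapted_frame}, Proposition~\ref{prop_key_est} and Theorem~\ref{cor_S_phi=0} yield
\[
S(IC_X(\bV),\varphi)_{x_k}\simeq\omega_{X,x_k}\otimes\bigoplus_i\sI(\varphi+\psi_i)_{x_k}\widetilde{v_i},\qquad S(IC_X(\bV))_{x_k}\simeq\omega_{X,x_k}\otimes\bigoplus_i\sO_{X,x_k}\widetilde{v_i}.
\]
Since $\alpha_{E_j}(\widetilde{v_i})\le0$ one has $\nu(\psi_i,x_k)\ge0$, so $\nu(\varphi+\psi_i,x_k)\ge\nu(\varphi,x_k)\ge n+s_k$; a Skoda-type estimate, exactly as in the proof of Proposition~\ref{prop_S_coherent} (cf.\ \cite[Lemma~5.6]{Demailly2012}), then forces $\sI(\varphi+\psi_i)_{x_k}\subset m_{X,x_k}^{s_k+1}$ for each $i$, and therefore $S(IC_X(\bV),\varphi)_{x_k}\subset m_{X,x_k}^{s_k+1}S(IC_X(\bV))_{x_k}$, which is precisely what is needed. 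The delicate points are the bookkeeping of the eigenvalues $\alpha_{E_j}(\widetilde{v_i})$ when $x_k$ lies on several branches of $D$, and matching Demailly's normalization $\nu(\varphi,x_k)\ge n+s_k$ to the vanishing order $s_k+1$, i.e.\ keeping the multiplier-ideal and Lelong-number conventions consistent with those of Proposition~\ref{prop_Demailly} and \cite[Theorem~7.4]{Demailly2012}.
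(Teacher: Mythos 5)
Your proposal is correct and follows essentially the same route as the paper's proof: Demailly's metric from Proposition \ref{prop_Demailly} twisted by smooth metrics on $L^{\otimes(m-m_0)}$ and $G$, Nadel vanishing (Theorem \ref{thm_Nadel_vanishing}) applied to the short exact sequence defined by $S(IC_X(\bV),\varphi)\subset S(IC_X(\bV))$, and the local identification of the quotient via the $L^2$-adapted frame and Proposition \ref{prop_key_est}, with the integrability of $e^{-\psi_i}$ (exponents in $(-1,0]$) confining the support to the $x_k$ and Skoda's lemma giving $\sI(\varphi+\psi_i)_{x_k}\subset m_{X,x_k}^{s_k+1}$. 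The only cosmetic difference is that you pass through the Lelong number of $\varphi+\psi_i$ where the paper uses the inclusion $\sI(\varphi+\psi_i)\subset\sI(\varphi)$; both are immediate.
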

\begin{proof}
	Denote $m_0=2+\sum_{1\leq k\leq p}\binom{3n+2s_k-1}{n}$. Let $h'$ be the singular hermitian metric on $\omega_X\otimes L^{\otimes m_0}$ as constructed in Proposition \ref{prop_Demailly}. Since $L$ and $G$ are nef, there is a smooth hermitian metric $h''$ on $L^{\otimes(m-m_0)}\otimes G$ such that $$\sqrt{-1}\Theta_{h''}(L^{\otimes(m-m_0)}\otimes G)\geq -\frac{\epsilon}{2}\omega$$
	where $\omega$ is a K\"ahler form on $X$.
	
	Let $h:=h'h''$ be the singular hermitian metric on $A:=\omega_X\otimes L^{\otimes m}\otimes G$. Then
	\begin{enumerate}
		\item $\sqrt{-1}\Theta_{h}(A)\geq\frac{\epsilon}{2}\omega$,
		\item ${\rm supp}(\sO_X/\sI(h))$ is $0$-dimensional and the weight $\varphi$ of $h$ satisfies
		$$\nu(\varphi,x_k)\geq n+s_k,\quad k=1,\dots,r.$$
	\end{enumerate}
    Consider the short exact sequence of sheaves
    \begin{align*}
    0\to S(IC_X(\bV),\varphi)\otimes A\to S(IC_X(\bV))\otimes A\to S(IC_X(\bV))/S(IC_X(\bV),\varphi)\otimes A\to 0.
    \end{align*}
    Taking the cohomology of this sequence, we obtain the exact sequence
    \begin{align*}
    H^0(X,S(IC_X(\bV))\otimes A)\to H^0(X,S(IC_X(\bV))/S(IC_X(\bV),\varphi)\otimes A)\to H^1(X,S(IC_X(\bV),\varphi)\otimes A).
    \end{align*}
    Since $\sqrt{-1}\Theta_{h}(A)\geq\frac{\epsilon}{2}\omega$, we have
    \begin{align*}
    H^1(X,S(IC_X(\bV),\varphi)\otimes A)=0
    \end{align*}
    by Theorem \ref{thm_Nadel_vanishing}.
    So the canonical map
    \begin{align}\label{align_siu_surject12}
    H^0(X,S(IC_X(\bV))\otimes A)\to H^0(X,S(IC_X(\bV))/S(IC_X(\bV),\varphi)\otimes A)
    \end{align}
    is surjective.
    
    Now let us investigate the structure of $S(IC_X(\bV))/S(IC_X(\bV),\varphi)$ at $x_k$, $k=1,\dots,r$. 
    
  Let $z_1,\dots,z_n$ be holomorphic coordinates on a neighborhood $U_k$ of $x_k$, centered at $x_k$, such that $D=\{z_1\cdots z_s=0\}$. Denote $D_i=\{z_i=0\}, \forall i=1,\dots,s$. Let $\widetilde{v_1}, \dots, \widetilde{v_m}$ be an $L^2$-adapted frame of $R(IC_X(\bV))$ at $x_k$ as in Proposition \ref{prop_adapted_frame}. Denote 
    $$\psi_i:=-\sum_{j=1}^s\alpha_{E_j}(\widetilde{v_i})\log|z_j|^2.$$
    It follows from Proposition \ref{prop_key_est} that 
    \begin{align*}
    S(IC_{X}(\bV),\varphi)|_{U_k}\simeq \omega_{U_k}\otimes\bigoplus_{i=1}^m\sI(\varphi+\psi_i)|_{U_k}\widetilde{v_i}.
    \end{align*}
    Hence
    \begin{align}\label{align_siu_cokernel2}
    S(IC_{X}(\bV))/S(IC_{X}(\bV),\varphi)|_{U_k}\simeq \omega_{U_k}\otimes\bigoplus_{i=1}^m\sO_{X}/\sI(\varphi+\psi_i)|_{U_k}\widetilde{v_i}.
    \end{align}
    After a possible shrinking of $U_k$ we assume that $\varphi$ is smooth on $U_k\backslash\{x_k\}$. Since 
    $\alpha_{E_j}(\widetilde{v_i})\in(-1,0]$
    for every $j=1,\cdots,s$, $e^{-\psi_1}, \dots, e^{-\psi_m}$ are locally integrable on $U_k$. This implies that
    $$\sI(\varphi+\psi_i)_x\simeq\sO_{U,x},\quad i=1,\dots,m,\quad \forall x\in U_k\backslash\{x_k\}.$$
    As a consequence,
    \begin{align}\label{align_siu_supp12}
    U\cap{\rm supp}\sO_X/\sI(\varphi+\psi_i)\subset\{x_k\},\quad i=1,\dots,m.
    \end{align}
   It is evident that $\nu(\varphi,x_k)\geq n+s_k$, which implies that
   \begin{align}\label{align_siu_supp32}
   \sI(\varphi+\psi_i)_{x_k}\subset \sI(\varphi)_{x_k}\subset m^{s_k+1}_{X,x_k},\quad i=1,\dots,m.
   \end{align}
    Combining (\ref{align_siu_cokernel2}), (\ref{align_siu_supp12}) and (\ref{align_siu_supp32}) we obtain a canonical surjective map
    $$H^0(X,S(IC_{X}(\bV))/S(IC_{X}(\bV),\varphi)\otimes A)\to \bigoplus_{1\leq k\leq r}S(IC_X(\bV))\otimes A\otimes \sO_{X,x_k}/m^{s_k+1}_{X,x_k}.$$
    Since (\ref{align_siu_surject12}) is surjective, we get the desired surjective map
    $$H^0(X,S(IC_X(\bV))\otimes A)\to \bigoplus_{1\leq k\leq r}S(IC_X(\bV))\otimes A\otimes \sO_{X,x_k}/m^{s_k+1}_{X,x_k}.$$
\end{proof}
Let $f:Y\to X$ be a proper holomorphic morphism from a K\"ahler manifold to a projective algebraic variety. Assume that the degenerate loci of $f$ is contained in a normal crossing divisor $D\subset X$. Denote by $\bV^q:=R^qf_\ast(\bC_{f^{-1}(X\backslash D)})$ the variation of Hodge structure on $X\backslash D$. Then $R^qf_\ast\omega_Y\simeq S(IC_X(\bV^q))$ is a locally free $\sO_X$-module for every $q\geq 0$ (\cite[Theorem 2.6]{Kollar1986} or \cite[Theorem V]{Takegoshi1995}). In this case, Corollary \ref{cor_Siu} and Corollary \ref{cor_Demailly} in \S \ref{section_geo_app} can be deduced.

When $L$ is ample and base point free, we could obtain the optimal bound.
\begin{thm}\label{thm_relFujita_semiample}
	Let $X$ be a projective algebraic variety and let $\bV$ be an $\bR$-polarized variation of Hodge structure on some Zariski open subset $X^o\subset X_{\rm reg}$. Let $L$ be an ample line bundle on $X$ which is generated by global sections and $G$ is a nef line bundle on $X$. Then $S(IC_X(\bV))\otimes L^{\otimes m}\otimes G$ generates simultaneous jets of order $s_1,\dots, s_r\in \bN$ at arbitrary points $x_1,\dots, x_r\in X$, i.e. there is a surjective map
	$$H^0(X,S(IC_X(\bV))\otimes L^{\otimes m}\otimes G)\to \bigoplus_{1\leq k\leq r}S(IC_X(\bV))\otimes L^{\otimes m}\otimes G\otimes \sO_{X,x_k}/m^{s_k+1}_{X,x_k},$$
	provided that $m\geq\dim_\bC X+\sum_{1\leq k\leq r}(s_k+1)$.
\end{thm}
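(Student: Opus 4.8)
The plan is to establish the jet–generation statement by induction on $\dim X$, cutting with general members of the free linear system $|L|$ and propagating sections by means of the adjunction formula (Proposition \ref{prop_adjunction}) together with the vanishing theorems for $S(IC_X(\bV))$ proved above. This is the same mechanism that drives the proof of Corollary \ref{cor_KV_vanishing}, and it is the relative, Hodge–module version of Ein--Lazarsfeld's treatment of the base–point–free case of Fujita's conjecture; the earlier global singular–metric arguments (as in Theorem \ref{thm_sepjets}, via Proposition \ref{prop_Demailly}) only yield the non–optimal bounds, so the induction is essential here.

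Write $\mathcal F:=S(IC_X(\bV))$, $A:=L^{\otimes m}\otimes G$ and $n=\dim_\bC X$. First one reduces, by a secondary induction on $\sum_k s_k$, to producing for each index $\ell$ and each prescribed jet $v_\ell$ a global section of $\mathcal F\otimes A$ whose $s_\ell$–jet at $x_\ell$ is $v_\ell$ and which lies in $\mathfrak m_{x_j}^{s_j+1}(\mathcal F\otimes A)$ at every other $x_j$: imposing the vanishing $\mathfrak m_{x_j}^{s_j+1}$ at $x_j$ for $j\ne\ell$ is itself an instance of the theorem, for the jet profile $(s_\ell,0,\dots,0)$, whose numerical hypothesis is weaker than $m\ge n+\sum_k(s_k+1)$. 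Now choose a general $H\in|L|$ through $x_\ell$. Because $|L|$ is base–point–free, $H$ is reduced, $H_{\rm reg}\cap X^o$ is dense in $H_{\rm reg}$, $H$ avoids the associated points of the coherent sheaves in play, and on a log resolution $\pi:\widetilde X\to X$ the divisor $\pi^{*}H$ is smooth and transverse to the strata of $\pi^{-1}(X\setminus X^o)$; hence Proposition \ref{prop_adjunction} applies with $\Lambda=|L|$ and gives
$$(\mathcal F\otimes A)|_H\ \simeq\ S\big(IC_H(\bV|_{H\cap X^o})\big)\otimes L^{\otimes(m-1)}\otimes G|_H ,$$
with $L|_H$ again ample and base–point–free and $G|_H$ nef. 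Since $\dim H=n-1$ and $m-1\ge(n-1)+\sum_k(s_k+1)$, the dimension induction yields a section of $(\mathcal F\otimes A)|_H$ with the required $s_\ell$–jet at $x_\ell$ (and the required vanishing at the $x_j$ that happen to lie on $H$).

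Two propagation steps finish the argument. From the short exact sequence $0\to\mathcal F\otimes A\otimes\sO_X(-H)\to\mathcal F\otimes A\to(\mathcal F\otimes A)|_H\to0$ together with
$$H^1\!\big(X,\mathcal F\otimes A\otimes\sO_X(-H)\big)=H^1\!\big(X,\mathcal F\otimes L^{\otimes(m-1)}\otimes G\big)=0,$$
which holds because $L^{\otimes(m-1)}\otimes G$ is ample and $\mathcal F=S(IC_X(\bV),0)$ by Theorem \ref{cor_S_phi=0}, so Theorem \ref{thm_Nadel_vanishing} (equivalently Corollary \ref{cor_KV_vanishing}) applies, the section on $H$ lifts to a section of $\mathcal F\otimes A$ on $X$. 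Finally, the restriction of $s$–jets $(\mathcal F\otimes A)\otimes\sO_{X,x_\ell}/\mathfrak m^{s+1}\twoheadrightarrow(\mathcal F\otimes A)|_H\otimes\sO_{H,x_\ell}/\mathfrak m_H^{s+1}$ has kernel the image of multiplication by the defining equation of $H$, which is identified with the $(s-1)$–jets of $\mathcal F\otimes L^{\otimes(m-1)}\otimes G$ at $x_\ell$; hence an arbitrary $s_\ell$–jet is obtained by correcting the lifted section by a section furnished by the induction on $\sum_k s_k$ applied to $\mathcal F\otimes L^{\otimes(m-1)}\otimes G$ with $s_\ell$ lowered by one, the numerical bound $m-1\ge n+s_\ell+\sum_{j\ne\ell}(s_j+1)$ again being satisfied. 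The base cases $n=0$ (where $H^0$ is the direct sum of the stalks) and $r=0$ are trivial, which closes the induction.

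The main obstacle is the bookkeeping in the last two paragraphs when the hyperplane $H\in|L|$ through $x_\ell$ must still control the behaviour at the remaining marked points: base–point–freeness yields a finite morphism $\phi_{|L|}$ but not that it separates the $x_j$, so one cannot in general arrange $H$ to pass through $x_\ell$ and to miss, or to contain, the other $x_j$ at will. I expect the clean way around this is exactly the reduction in the second paragraph — carrying the vanishing conditions at the other points inside the inductive hypothesis (jet profile $(s_\ell,0,\dots,0)$) rather than through the choice of $H$ — possibly combined with passing to the blow–up $\mu:X'\to X$ at $x_1,\dots,x_r$, where $S(IC_{X'}(\mu^{*}\bV))=\mu^{*}S(IC_X(\bV))\otimes\sO_{X'}\big((n-1)\sum E_i\big)$ and $\mu^{*}\big(\sum_i b_i L\big)-\sum_i b_iE_i$ is nef, so the relevant twists land in the range where Theorem \ref{thm_Nadel_vanishing} is available. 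A secondary, routine point is to check that restriction to a general $H$ is compatible with the local description $S(IC_X(\bV),\varphi)\simeq\omega_X\otimes\bigoplus_i\sI(\varphi+\psi_i)\widetilde{v_i}$ of Proposition \ref{prop_key_est}; the residues $\alpha_{E_j}(\widetilde v_i)\in(-1,0]$ keep each $e^{-\psi_i}$ locally integrable, so nothing worse than on $X$ can appear along $H$, and this is precisely the content of Proposition \ref{prop_adjunction}, which we use as a black box.
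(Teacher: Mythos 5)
Your strategy (dimension induction, cutting with members of $|L|$ through the marked points, adjunction plus vanishing) is entirely different from the paper's proof, which takes two lines: Theorem \ref{thm_Nadel_vanishing} shows that $S(IC_X(\bV))\otimes L^{\otimes(n+1)}\otimes G$ is $0$-regular with respect to $L$ in the sense of Castelnuovo--Mumford, and the jet statement with the bound $m\geq n+\sum_k(s_k+1)$ is then quoted from \cite[Theorem 1.1]{Shentu2020}, a jets version of Mumford's ``$0$-regular implies globally generated''. Because $L$ is assumed globally generated, no hyperplane through a prescribed point is ever needed.

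As written, your argument has two genuine gaps. First, Proposition \ref{prop_adjunction} applies to a \emph{general} member of the \emph{free} system $\Lambda=|L|$; the subsystem of divisors through $x_\ell$ is not free, and base-point-freeness of $|L|$ does not let you choose $H\ni x_\ell$ that is smooth at $x_\ell$, transverse over $x_\ell$ to the strata of $\pi^{-1}(X\setminus X^o)$, and compatible with the restriction of the multiplier ideals $\sI(\psi_i)$: the finite map $\phi_{|L|}$ may be ramified at $x_\ell$, in which case every member of $|L|$ through $x_\ell$ is singular there or tangent to a fixed direction. Since $x_\ell$ is precisely where the jets are read off, and may lie on $X\setminus X^o$ or in the singular locus of $X$, this cannot be absorbed into ``genericity''. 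Second, the reduction in your second paragraph is not valid: surjectivity onto the $\ell$-th summand requires a single section with prescribed $s_\ell$-jet at $x_\ell$ \emph{and} lying in $m_{X,x_j}^{s_j+1}$ at every other $x_j$; this is not an instance of the theorem for the profile $(s_\ell,0,\dots,0)$ (which only imposes vanishing to order $1$ at the other points), and surjectivity onto each summand separately does not imply surjectivity onto the direct sum. You flag this bookkeeping yourself as ``the main obstacle'' and offer only speculative fixes, including a blow-up formula for $S(IC_{X'}(\mu^\ast\bV))$ that is asserted without proof and is meaningless when some $x_\ell$ is a singular point of $X$. The clean repair is the paper's route: establish $0$-regularity once via Theorem \ref{thm_Nadel_vanishing} and invoke the regularity-to-jets machinery.
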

\begin{proof}
By Theorem \ref{thm_Nadel_vanishing}, it follows that $S(IC_X(\bV))\otimes L^{\otimes(\dim_\bC X+1)}\otimes G$ is $0$-regular in the sense of Castelnuovo-Mumford \cite{Mumford1966}. Consequently, the theorem follows from \cite[Theorem 1.1]{Shentu2020}.
\end{proof}

\bibliographystyle{plain}
\bibliography{CGM}

\end{document}